\documentclass[reqno, 11pt]{amsart}
\usepackage{color}
\usepackage{mathrsfs}
\usepackage{amscd}
\usepackage{amsmath}
\usepackage{latexsym}
\usepackage{amsfonts}
\usepackage{amssymb}
\usepackage{amsthm}
\usepackage{graphicx}
\usepackage{hyperref}
\usepackage{makecell}
\usepackage{array,color}
\usepackage{booktabs}
\usepackage{multirow}

\parindent = 20 pt
\parskip = 5pt
\textwidth 6.6in \textheight 9.15in
\setlength{\topmargin}{0.1in}
\addtolength{\topmargin}{-\headheight}
\addtolength{\topmargin}{-\headsep}

\setlength{\oddsidemargin}{0in} \oddsidemargin  0.0in
\evensidemargin 0.0in

%


\widowpenalty=10000

\newtheorem{theorem}{Theorem}[section]
\newtheorem{proposition}[theorem]{Proposition}

\newtheorem{lemma}[theorem]{Lemma}
\newtheorem{definition}[theorem]{Definition}

\newtheorem{remark}[theorem]{Remark}

\newcommand{\cqd}{\hfill$\Box$}

\numberwithin{equation}{section}

\title[Decay estimates for fourth-order Schr\"odinger operators in dimension two  ]
{Decay estimates for fourth-order Schr\"odinger operators  in  dimension two }

\author{Ping Li, Avy Soffer and Xiaohua Yao\textsuperscript{\dag }  }
\address{Ping Li, School of Informations  and  Mathematics , Yangtze University, Jingzhou 434000, China}
\email{liping@whu.edu.cn}
\address{Avy Soffer, Mathematics Department, Rutgers University, New Brunswick, NJ, 08903, USA}
\email{soffer@math.rutgers.edu}

\address{Xiaohua Yao, Department of Mathematics and  Hubei Province Key Laboratory of Mathematical Physics, Central China Normal University, Wuhan, 430079, P.R. China}
\email{yaoxiaohua@ccnu.edu.cn}


\date{\today}
\keywords{Decay estimates, Fourth-order Schr\"odinger operators, Asymptotic expansion of resolvent, Zero resonance, Dimension two }

\begin{document}

\begin{abstract}\baselineskip=13pt
 In this paper we study the decay estimates of the fourth order Schr\"{o}dinger  operator $H=\Delta^{2}+V(x)$ on $\mathbb{R}^2$ with a bounded decaying potential $V(x)$. We first deduce the asymptotic expansions of resolvent of $H$ near  zero threshold in the presence of resonances or eigenvalue, and then use them to establish
the $L^1-L^\infty$  decay estimates of $e^{-itH}$
generated by the fourth order Schr\"{o}dinger operator $H$. Our methods used in the decay estimates depend on Littlewood-Paley decomposition and oscillatory integral theory. Moreover, we also classify these zero resonances as the distributional solutions of $H\phi=0$ in  suitable weighted spaces.  Due to the degeneracy of $\Delta^{2}$  at zero threshold,   we remark that the asymptotic expansions of resolvent $R_V(\lambda^4)$ and the classifications of resonances are much more involved than  Schr\"odinger operator $-\Delta+V$ in dimension two.
\end{abstract}
		\maketitle

	\baselineskip=15pt
\section{Introduction}

\subsection{Backgrounds}
In this paper we will consider the time decay estimates of the propagator $e^{-itH}$ generated by the following fourth order Schr\"{o}dinger operator on $L^2(\mathbb{R}^2)$:
\begin{equation*}
   \begin{split}
 H= H_0 +V(x),\ H_0=\Delta^2,
  \end{split}
\end{equation*}
where $V(x)$ is a real measurable function on $\mathbb{R}^2$ satisfying $|V(x)|\lesssim (1+|x|)^{-\beta}$ for
some $\beta >0$.
	

 As $V=0$, we recall that the free propagator $e^{-it\Delta^2}$ on $L^2(\mathbb{R}^n)$ can be expressed by
\begin{equation}\label{Free group}
 e^{-it\Delta^2}f=\mathfrak{F^{-1}}(e^{-it|\xi|^4}\widehat{f})=\int_{\mathbb{R}^n}I_0(t,x-y) f(y)dy,
\end{equation}
where $\hat{f}$ ( or $\mathfrak{F}(f)$) denotes Fourier transform of $f$, $\mathfrak{F^{-1}}(f)$ denotes its inverse Fourier transform, and $I_0(t,x):=\mathfrak{F^{-1}}(e^{-it|\xi|^4})(x)$ is the kernel of $e^{-it\Delta^2}$.  It was well-known that the following sharp pointwise estimates hold for any $\alpha\in \mathbb{N}^n$ (see e.g. \cite{BKS}),
 \begin{equation}\label{dive-esti1}
 |D^\alpha I_0(t,x)| \lesssim |t|^{-\frac{n+|\alpha|}{4}} \big(1+|t|^{-\frac{1}{4}}|x|\big)^{-\frac{n-|\alpha|}{3}},\,
 |t|\neq 0,\,x\in\mathbb{R}^n,
\end{equation}
where $D=(\partial_{x_1}, \cdots, \partial_{x_n})$. Then by the identity \eqref{Free group} and Young's inequality,  the estimate (\ref{dive-esti1}) immediately implies the following
$L^1-L^\infty$ decay estimates:
 \begin{equation}\label{dive-esti2}
 \|D^\alpha e^{-itH_0}\|_{L^1(\mathbb{R}^n)\rightarrow L^\infty(\mathbb{R}^n)} \lesssim |t|^{-\frac{n+|\alpha|}{4}},\, |\alpha|\leq n.
\end{equation}

Recently, several works were devoted to the study of the time decay estimates of $e^{-itH}$ generated by the fourth order Schr\"{o}dinger operator $H=\Delta^2 +V$ with a decaying  potential $V$.  Feng, Soffer and Yao \cite{FSY} first gave the asymptotic expansion of the
resolvent $R_V=(H-z)^{-1}$ around zero threshold assume that zero is a regular point of $H$ for $n\geq 5$
 and $n=3$, and proved that the Kato-Jensen decay estimate of $e^{-itH}$ is
 $(1+|t|)^{-n/4}$ for $n\geq 5$  and $L^1-L^\infty$ decay estimate is $O(|t|^{-1/2})$ for $n=3$ in the regular
 case.

  Later, based on the higher energy estimates of the resolvent $R_V(z)$ of $H$ in \cite{FSY}, Erdog\u{a}n, Green
and Toprak \cite{Erdogan-Green-Toprak} for $n=3$ and Green-Toprak \cite{Green-Toprak19} for
$n=4$, derived the asymptotic expansions of $R_V(z)$ near zero in the presence of zero resonance or
eigenvalue, and  proved that
the  $L^1-L^\infty$ estimates of $e^{-itH}$ is $O(|t|^{-n/4})$ for $n=3,4$ if zero is a regular point,  and the time decay rate
will be changed as zero energy resonance occurs. More recently, Soffer, Wu and Yao \cite{SWY21} proved
the $L^1-L^\infty$ estimates of $e^{-itH}$ is $O(|t|^{-1/4})$ for dimension $n=1$ whatever zero is a regular point
or resonance. It should be emphasized that the different types of zero resonance do not change
the optimal time decay rate of $e^{-itH}$ in dimension one, except at the cost of faster decay rate of
potential. Moreover, we also note that there exists some interesting works \cite{Goldberg-Green21} and \cite{Erdogan-Green21} on the $L^p$ bounds of wave operators for $n\ge 3$ in the regular case.

Besides  the bi-lapalce operator $\Delta^2$, we remark that Kato-Jensen estimates and  asymptotic expansion of resolvent have been first established by Feng, et al \cite{FSWY} for poly-Schr\"odinger operators $(-\Delta)^m+V$ with $n\ge 2m\ge 4$. In  the previous work, Murata \cite{MM} has generalized Kato and Jensen's works \cite{JK, J}  to a certain class of
 $P(D)+V$ assume that $P(D)$ satisfies
 \begin{equation}\label{elliptic-condition}
 (\nabla P)(\xi_0)=0,\  \hbox{det}\big[ \partial_i\partial_j P(\xi)\big]\Big|_{\xi_0} \neq 0.
\end{equation}
However, the poly-harmonic operators $H_0=(-\Delta)^m$ do not satisfy the nondegenerate condition
(\ref{elliptic-condition}) at $\xi=0$ but the case $m=1$. Hence the results in \cite{FWY} and \cite{FSWY} were not
covered by \cite{MM}. In particular, the degeneracy of $(-\Delta)^m$ at zero energy  leads to
more complicated classifications of zero resonances and asymptotic expansion  of
$(-\Delta)^m+V $ as $m\geq2.$ These are actually some of  the main difficult points for the higher
order operators.

In this paper, we establish the $L^1-L^\infty$ estimates of the fourth order
Schr\"{o}dinger operator $\Delta^2+V$ in dimension two. We first deduce the asymptotic expansion of $R_V(z)$ near zero in the presence of resonance or eigenvalue, and then study the $L^1-L^\infty$ estimates of $e^{-itH}$ for each kind of zero resonance. In particular, we show that the $L^1-L^\infty$ estimates of $e^{-itH}$ have the optimal decay rate $O(|t|^{-\frac{1}{2}})$ if zero is a regular point or the first type of resonance. Moreover, we classify these zero resonance types as the distributional solutions of $H\phi=0$ in suitable weighted spaces. Due to the degeneracy of $\Delta^{2}$  at zero threshold and the lower even dimension, we remark that the asymptotic expansions of resolvent $R_V(\lambda^4)$ at $\lambda=0$  are much complicated compared with Schr\"odinger operator $-\Delta+V$ in dimension two ( see e.g. \cite {JN,JN2} ). One can see  Section \ref{the proof of inverse Pro} for more details.

 The studies of higher order elliptic operators were motivated by Schr\"odinger operator $-\Delta+V$ ( e.g. see \cite{Agmon, BS, H2, Kur, Schechter, DaHi, DDY, SYY, Herbst-Skibsted-Adv-2015, Herbst-Skibsted-Adv-2017} and so on ). The decay estimates of Schr\"odinger operator  have been  active topics of research in the last thirty years, and applied broadly to  nonlinear Schr\"odinger equations. For instance, Journ\'{e}, Soffer and Sogge \cite{JSS} first established the $L^1-L^\infty$ estimates in regular case when $n\geq 3$.  Later, Weder \cite{Weder1}, Rodnianski and Schlag \cite{RodSchl},
Schlag and Goldberg \cite{Goldberg-Schlag}, Schlag \cite{Schlag-CMP} have made further contributions for $n\le 3$. In particular, Yajima in \cite{Yajima-JMSJ-95}  has established the $L^{1}- L^{\infty}$  estimates for $-\Delta+V$ by wave operator method.
For more further studies, we refer to \cite{Weder,DaFa, ES1, ES2,  Goldberg-Green-1, Goldberg-Green-2,Schlag, Schlag21} and therein references.


\subsection{Main results}\label{main result}
In this subsection, we first introduce some notations and the definition of zero resonance, and then state our main results.

For $a\in \mathbb{R}$, we use $a\pm$ to denote $a\pm \epsilon$ for any small $\epsilon >0$. $[a]$ denotes the largest integer less than or equal to $a$.
For $a,b\in \mathbb{R}^+$, $a \lesssim b$ (or $a \gtrsim b$) means that there exists some constant $c>0$ such that $a\leq c b$(or $a \geq c b$). Moreover, if $a \lesssim b$ and $b\lesssim a $, then we write $a\thicksim b$. Let $\langle x \rangle=  (1+|x|^2)^{1/2}$ and denote
the $L^p$-weighted spaces by $ L^p_s(\mathbb{R}^2)=\big\{ f:\, \langle x \rangle^s f\in L^p(\mathbb{R}^2) \big\}$ for $s\in\mathbb{R}$ and $1\le p\le\infty.$
We use the smooth, even low energy cut-off $\chi$ defined by $\chi(\lambda)=1$ if $|\lambda|<\lambda_0\ll1$ and $\chi(\lambda)=0$ when $|\lambda|>2\lambda_0$ for some sufficiently small constant $0<\lambda_0\ll 1.$ In analyzing the high energy we utilize the complementary cut-off $\widetilde{\chi}(\lambda)=1-\chi(\lambda).$

In this paper, we will give the precise definitions of different types of zero resonances in terms of the projection operators $S_j$ onto the kernel spaces of operators $T_j$ ( see Definition \ref{definition of resonance} of Section 2 below ). Equivalently, we also give the characterizations of the zero resonance in terms of the distributional solutions to $H \phi=0$ in Theorem \ref{resonance solutions} of Section \ref{classification} below.

More specifically, we say that zero is a {\it resonance of the first kind} if $\exists 0\neq \phi \in L^\infty_{-1}(\mathbb{R}^2)$ but have no nonzero $\phi\in  L^\infty(\mathbb{R}^2)$ such that $H \phi=0$,  {\it the second kind} if $\phi \in L^\infty(\mathbb{R}^2)$ but have no nonzero $\phi\in L^\infty_{1}(\mathbb{R}^2)$, {\it the third kind} if $\phi \in L^\infty_1(\mathbb{R}^2)$ but have no nonzero $\phi\in L^2(\mathbb{R}^2)$, and  {\it the fourth kind} if $\exists 0\neq \phi \in L^2(\mathbb{R}^2)$
 ( i.e. the fourth kind resonance is exactly the zero eigenvalue ).  If zero is neither a resonance nor an eigenvalue, then we say that zero is a {\it regular point} of $H$.

 Now we are ready to present the main results as follows.
\begin{theorem}\label{thm-main results}
Let $|V(x)|\lesssim (1+|x|)^{-\beta}\ ( x\in \mathbb{R}^2 )$ for
some $\beta >0$. Assume that $H=\Delta^2+V(x)$ has no positive eigenvalues. Let $P_{ac}(H)$
denote the projection onto the absolutely continuous spectrum space of $H$. Then

(i)~If zero is the regular point and $\beta>10$, then
\begin{equation}\label{eq-main results-1}
\|e^{-itH}P_{ac}(H)\|_{L^1\rightarrow L^\infty}\lesssim |t|^{-\frac{1}{2}}.
\end{equation}

(ii)~If zero is  the first kind resonance and $\beta>14$, then
\begin{equation}\label{eq-main results-2}
\| e^{-itH}P_{ac}(H)\|_{L^1\rightarrow L^\infty}\lesssim |t|^{-\frac{1}{2}}.
\end{equation}

(iii)~If zero is the second kind resonance and $\beta>18$, then
\begin{equation}\label{eq-main results-3}
\| H^{\frac{1}{2}}e^{-itH}P_{ac}(H)\chi(H)\|_{L^1\rightarrow L^\infty}
+\| e^{-itH}\widetilde{\chi}(H)\|_{L^1\rightarrow L^\infty}
\lesssim |t|^{-\frac{1}{2}}.
\end{equation}

(iv) If zero is the third or fourth kind resonance and $\beta>18$, then
\begin{equation}\label{eq-main results-4}
\| H^{\frac{3}{2}}e^{-itH}P_{ac}(H)\chi(H)\|_{L^1\rightarrow L^\infty}
+\| e^{-itH}\widetilde{\chi}(H)\|_{L^1\rightarrow L^\infty}
\lesssim |t|^{-\frac{1}{2}}.
\end{equation}
Where $\chi(\lambda)$ and $\widetilde{\chi}(\lambda)$ in the \eqref{eq-main results-3} and \eqref{eq-main results-4} denote the low and high energy smooth cut-off functions,  respectively.
\end{theorem}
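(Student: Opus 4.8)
The plan is to follow the now-standard Duhamel/resolvent strategy adapted to the fourth-order setting, splitting the evolution into low- and high-energy parts via the cutoffs $\chi(H)$ and $\widetilde\chi(H)$. Write $e^{-itH}P_{ac}(H) = \frac{1}{2\pi i}\int_0^\infty e^{-it\lambda^4}\,\lambda^3\big[R_V(\lambda^4+i0)-R_V(\lambda^4-i0)\big]\,d\lambda$ after the substitution $z=\lambda^4$, and insert $\chi(\lambda)+\widetilde\chi(\lambda)=1$. For the high-energy piece, one expects the contribution $\|e^{-itH}\widetilde\chi(H)\|_{L^1\to L^\infty}\lesssim|t|^{-1/2}$ to hold in all four cases by the same argument (this is why the high-energy term appears identically in (iii) and (iv)); it should rely only on limiting absorption estimates for $R_V$ away from zero together with the sharp free kernel bounds \eqref{dive-esti1}–\eqref{dive-esti2}, the symbol-type behavior of $\widetilde\chi(\lambda)$, a finite Born/Neumann expansion $R_V = \sum_{k=0}^{N}(-R_0 V)^k R_0 + (R_0 V)^{N+1} R_V (VR_0)^{?}$, and integration by parts / stationary phase in $\lambda$ exploiting that the phase $\lambda^4 x\cdot\widehat\xi - t\lambda^4$-type oscillation has a single nondegenerate critical point in the relevant region. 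The Littlewood–Paley decomposition dyadically in $\lambda$ on $[\lambda_0,\infty)$, combined with the $|t|^{-1/2}$ decay on each piece and summability of the dyadic series, is the mechanism here.

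The core of the theorem is the low-energy analysis, and for this I would feed in the asymptotic expansions of $R_V(\lambda^4\pm i0)$ near $\lambda=0$ established earlier (the ones built from the operators $T_j$ and projections $S_j$ referenced in Definition \ref{definition of resonance}). In the regular case (i), $\chi(\lambda)R_V(\lambda^4+i0)$ should admit an expansion whose leading singular behavior, after taking the difference across the cut and multiplying by $\lambda^3$, produces an integrand of the form $e^{-it\lambda^4}\lambda^3\chi(\lambda)\big[a(\lambda)(\log\lambda)\mathcal{O}_{L^1\to L^\infty}(1) + \dots\big]$ with kernels that are, modulo acceptable error terms, of the same oscillatory type as the free kernel; the $L^1\to L^\infty$ bound then follows by the oscillatory-integral estimate $\big|\int_0^\infty e^{-it\lambda^4}\lambda^3 g(\lambda)\,d\lambda\big|\lesssim |t|^{-1/2}$ for suitable slowly varying $g$ (with the logarithmic factors absorbed, since $\int_0^{\lambda_0}\lambda^3|\log\lambda|\,d\lambda<\infty$ and oscillation gains the $|t|^{-1/2}$). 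In case (ii) (first-kind resonance) the resonance contributes a rank-one-type term $\lambda^{-2}$-singular in the resolvent expansion, but the extra vanishing $\lambda^3$ and the stronger decay hypothesis $\beta>14$ make the resonant term still integrable and still of $|t|^{-1/2}$ oscillatory type — this is the assertion that the first kind of resonance does not degrade the optimal rate. For (iii) and (iv) the resonant singularities are stronger ($\lambda^{-4}$, $\lambda^{-6}$, $\lambda^{-8}$ type), so one cannot close directly; instead one proves the estimate for $H^{1/2}e^{-itH}P_{ac}(H)\chi(H)$ (resp. $H^{3/2}$), i.e. one multiplies the Stone-formula integrand by an extra $\lambda^2$ (resp. $\lambda^6$), which kills precisely the extra singular powers coming from $S_2$ (resp. $S_3,S_4$) projections, reducing the analysis to the same type of bounded oscillatory integral as before.

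Concretely, the steps in order are: (1) reduce via the Stone formula and the $\chi/\widetilde\chi$ split to estimating finitely many scalar oscillatory integrals in $\lambda$ with operator-valued, $L^1\to L^\infty$-bounded coefficients; (2) handle the high-energy term uniformly in all cases by Born expansion + dyadic Littlewood–Paley + stationary phase, using \eqref{dive-esti1}; (3) in the low-energy regime, substitute the previously derived asymptotic expansion of $R_V(\lambda^4\pm i0)$, take the jump across the spectrum, and identify the leading terms, the resonant (singular-in-$\lambda$) terms, and a remainder; (4) for the leading and remainder terms, prove $|t|^{-1/2}$ by the elementary lemma that $\int_0^\infty e^{-it\lambda^4}\lambda^3 m(\lambda)\,d\lambda = O(|t|^{-1/2})$ whenever $m$ is supported near $0$ and satisfies symbol-type bounds $|m^{(k)}(\lambda)|\lesssim \lambda^{-k}(\text{mild log})$, via one integration by parts away from $\lambda=0$ and the van der Corput bound near $\lambda=0$; (5) for the resonant terms in cases (ii)–(iv), observe that the finite-rank projection structure lets one write the kernel as $\phi(x)\phi(y)$-type tensor products times a scalar integral in $\lambda$ that, after the compensating powers of $H$ in (iii)–(iv), again falls under step (4)'s lemma; (6) track the weight/decay requirements on $V$ at each stage — each application of the resolvent expansion to a given order and each $\langle x\rangle^{-\sigma}$-weighted bound on the resonance functions costs a fixed number of powers of $\langle x\rangle$, and the stated thresholds $\beta>10,14,18,18$ are exactly what makes all the weighted $L^1$/$L^\infty$ pairings and error estimates converge.

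The main obstacle I anticipate is step (3)–(5) in the most singular cases (iii) and (iv): correctly matching the orders of vanishing so that the compensating factors $H^{1/2}$ and $H^{3/2}$ (i.e. the powers $\lambda^2$ and $\lambda^6$ in the integrand) exactly cancel the $S_j$-induced singularities without over- or under-counting, and simultaneously controlling the $\log\lambda$ factors that are intrinsic to two dimensions (these do not appear in the analogous fourth-order results in $n=3,4$ and are what the authors flag as making the $n=2$ case "much more involved"). A secondary difficulty is ensuring the remainder terms in the resolvent expansion, when the expansion is carried only to finite order, still have kernels amenable to the oscillatory-integral lemma and are controlled in $L^1\to L^\infty$ rather than merely in weighted $L^2$; this is where the Littlewood–Paley decomposition in the spatial/frequency variables (as advertised in the abstract) enters to upgrade weighted $L^2$ resolvent bounds to the required $L^1\to L^\infty$ mapping properties of $e^{-itH}$.
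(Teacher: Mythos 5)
Your overall architecture does coincide with the paper's: Stone's formula with the $\chi/\widetilde\chi$ split, a dyadic Littlewood--Paley decomposition in the spectral variable, the expansions of the perturbed resolvent near zero through $(M^\pm(\lambda))^{-1}$, compensating powers $H^{1/2}$, $H^{3/2}$ in the singular resonance cases, and for high energy the twice-iterated resolvent identity plus limiting absorption bounds (Lemma \ref{lem-largeenergy}) and stationary phase. However, there is a genuine gap in your low-energy scheme. In dimension two each flanking factor is $R_0^\pm(\lambda^4)(x,y)=\lambda^{-2}F^\pm(\lambda|x-y|)$ with a nonvanishing constant term $b_\pm/\lambda^2$, so even in the regular case the sandwiched kernel $R_0^\pm v\,S_0D_1S_0\,v R_0^\pm$ is a priori of size $\lambda^{-4}$, and $\lambda^3\cdot\lambda^{-4}$ is not integrable near $\lambda=0$; your assertion that the regular-case integrand is already of the form $\lambda^3\chi(\lambda)\big(\log\lambda\cdot O(1)+\cdots\big)$ is therefore unjustified as stated. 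The paper's proof removes these singularities by pairing the orthogonality relations of the projections ($S_0v=0$, $Qv=0$, $S_1(x_iv)=0$, $S_4(x_ix_jv)=0$, etc., Lemma \ref{projiction-spaces-SjL2}) with the Taylor expansions of $F^\pm(\lambda|x-y|)$ in Lemma \ref{Taylor-low}, gaining one power of $\lambda$ per usable relation, and then estimates each dyadic piece by the refined oscillatory-integral Lemma \ref{lem-LWP}, whose spatial phase $\Phi(z)$ and the dichotomy $|N-N_0|\le2$ versus $|N-N_0|>2$ are exactly what make the bounds summable over $N$ uniformly in $x,y$. Your scalar lemma in step (4), which carries no spatial phase, cannot give uniform-in-$(x,y)$ control, since each $\lambda$-derivative falling on $e^{\pm i\lambda|x-y|}$ costs a factor $|x-y|$.

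This missing mechanism also makes your accounting for (ii)--(iv) partly inaccurate. The first-kind resonance contributes $g_0^\pm(\lambda)S_1D_2S_1$, which is only logarithmically worse than the regular case; the reason no regularization is needed there is that $S_1$ still enjoys the relations $S_1v=S_1(x_iv)=0$, so Lemma \ref{Taylor-low}(i)--(ii) recovers enough powers of $\lambda$. Conversely, the factors $H^{1/2}$ and $H^{3/2}$ in (iii)--(iv) are required not simply because ``the singularities are stronger'': for the second kind only the single relation $S_2v=0$ can be exploited on each side, because the higher derivatives of $F^\pm$ are not uniformly bounded (Remark \ref{Remark of second kind}); and for the third/fourth kinds the error term in Theorem \ref{thm-main-inver-M} is uniformly of size $O_1\big(\lambda^{-4}(\ln\lambda)^{-3}\big)$ with no projection structure at all, so no orthogonality can temper it and only the extra $\lambda^{4\alpha}$ from $H^\alpha$ does. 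Your step (5) reduction to tensor products $\phi(x)\phi(y)$ times a scalar integral does not capture either point, so as written the proposal would not close in the low-energy regime without importing the Taylor-expansion-plus-orthogonality argument and the phase-dependent dyadic lemma that the paper uses.
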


\begin{remark} Some further comments on Theorem \ref{thm-main results} :

(i) If zero is the regular point or the first kind resonance, then the decay estimates \eqref{eq-main results-1} and \eqref{eq-main results-2} of
$e^{-itH}$  are optimal base on the fact $$\|e^{-it\Delta^2}\|_{L^1(\mathbb{R}^2)\rightarrow L^\infty(\mathbb{R}^2)}\lesssim |t|^{-1/2}.$$  Moreover, in view of the decay estimates \eqref{dive-esti2} with a derivatives term $D^\alpha (|\alpha|\le 2)$ as $n=2$, we remark that it is possible to use the present methods to improve the estimates \eqref{eq-main results-1} and \eqref{eq-main results-2} to $O(|t|^{-(2+s)/4})$ by adding a regular term $H^{s/4}$ for $0\le s\le 2$.   For some related results, we refer to \cite{SWY21} with  a regular term $H^s$ for dimension $n=1$ (also see \cite{Hill} for the regular case), \cite{LWWY21} with a regular term $H^s$ and \cite{Goldberg-Green-2} with  weights for dimension $n=3$. In addition, note that the authors in  \cite{Erdogan-Green-13a, Erdogan-Green} and \cite{Toprak17} improved the decay estimates in the weighted spaces for Schr\"{o}dinger operators of dimension $n=2$.

(ii) If  zero is the second, third or fourth kind resonance,  then the high energy part $e^{-itH}\widetilde{\chi}(H)$ has always the sharp time decay estimates $O(|t|^{-1/2})$.

 For low energy part, however, since the zero singularity of the asymptotic expansions of $\big(M^\pm(\lambda)\big)^{-1}$ are of too high order to estimate $e^{-itH}P_{ac}(H)\chi(H)$ (See Section 2 below),
we need to put suitable regularity terms $H^s$  into the low energy part $e^{-itH}P_{ac}(H)\chi(H)$ to temper zero singularities, as shown in the above decay estimates \eqref{eq-main results-3}  and \eqref{eq-main results-4}, also see Remark \ref{Remark of second kind} below for more explanations.
\end{remark}

Besides  the assumptions of zero resonance,  Theorem \ref{thm-main results} also requires that $H=\Delta^2 +V$ has no positive
eigenvalues embedding into the absolutely continuous spectrum, which has been the indispensable assumption
in dispersive estimates. For Schr\"{o}dinger operator $-\Delta +V$, Kato in \cite{K} showed
the absence of positive eigenvalues for $H=-\Delta +V$ with the bounded decay potentials $V= o(|x|^{-1})$ as
$|x|\rightarrow +\infty$. Moreover, one can see \cite{Simon3, FHHH, IJ, KoTa} for more further results on the absence of positive eigenvalues of Schr\"{o}dinger operator.

However, the situation for $H=\Delta^2+V$
seems to be more complicated than the second order cases.  There exists compactly supported
smooth potentials such that the $H=\Delta^2+V$ has some positive eigenvalues (see \cite{FSWY}).
On the other hand, more interestingly, \cite{FSWY} has showed that $H=\Delta^2+V$ has
no positive eigenvalues by assuming that the potential $V$ is bounded and satisfies the repulsive
condition (i.e. $(x\cdot \nabla)V \leq 0$). Moreover, we also notice that for a general self-adjoint operator $\mathcal{H}$ on $L^{2}(\mathbf{R}^n)$, even if $\mathcal{H}$ has a simple embedded eigenvalue, Costin and Soffer in \cite{CoSo} have proved that $\mathcal{H}+\epsilon W$ can kick off the eigenvalue located in a small interval under generic small perturbation of potential.

\subsection{The outline of proof}

In order to  show Theorem \ref{thm-main results},
we will use the following Stone's formula:
\begin{equation}\label{stoneformula}
   \begin{split}
 H^\alpha e^{-itH}P_{ac}(H)f(x)=\frac{2}{\pi i} \int_0^\infty e^{-it\lambda^4}
 \lambda^{3+4\alpha}[ R_V^+(\lambda^4)-R_V^-(\lambda^4)]f(x)d\lambda, \ \alpha\ge0.
  \end{split}
\end{equation}
Hence we needs to investigate  the boundary behaviour of  the  resolvent  $R_V(z)=(H-z)^{-1}$ by
perturbations of the free resolvent $R_0(z)=(\Delta^2 -z\big)^{-1}$,  which depends on the following decomposition identity:
\begin{equation}\label{R0zlaplace}
   \begin{split}
   \big(\Delta^2 -z\big)^{-1}=\frac{1}{2z^\frac{1}{2}} \big(R(-\Delta;z^\frac{1}{2})
    -R(-\Delta;-z^\frac{1}{2})\big),\ z\in \mathbb{C} \setminus [0,\infty),
  \end{split}
\end{equation}
where the resolvent $ R(-\Delta;z^\frac{1}{2}):=(-\Delta-z^\frac{1}{2})^{-1}$ with $\Im z^\frac{1}{2}>0$.

For $\lambda>0$,
the incoming and outgoing resolvent operators $R_0^\pm(\lambda^4)$ of $\Delta^2$ are defined by
\begin{equation}\label{R0lambda-pm}
   \begin{split}
 R_0^\pm(\lambda^4):=R_0^\pm(\lambda^4 \pm i0)= \lim_{\epsilon \rightarrow 0^+}
 \big( \Delta^2-(\lambda^4 \pm i\epsilon) \big)^{-1},
  \end{split}
\end{equation}
and then we have
\begin{equation}\label{R0lambda-4pm}
   \begin{split}
 R^\pm_0(\lambda^4)=\frac{1}{2\lambda^2}\big( R^\pm(-\Delta; \lambda^2)-R(-\Delta; -\lambda^2) \big),
  \ \lambda>0.
  \end{split}
\end{equation}
It is well-known that $R^\pm(-\Delta;\lambda^2)$ are well-defined  as the bounded operators of $B(L^2_s,L^2_{-s})$ for any $s>1/2$ by the limiting absorption principle (see e.g. \cite{Agmon}),
 therefore $R^\pm_0(\lambda^4) $ are also well-defined between these weighted $L^2$-space. These bounded properties have  been extended to $R^\pm_V(\lambda^4)$ of $H$ for $\lambda >0$ and certain decay bounded potentials $V$, see \cite{FSY}.

In order to obtain time decay of the integral (\ref{stoneformula}),
 we also need to study the asymptotical properties of $R^\pm_V(\lambda^4)$ near $\lambda=0 $ and $+\infty$, which correspond to the low energy estimate and high energy estimate respectively. For lower  energy parts,  these investigations are much more involved. Note that $R^\pm_V(\lambda^4)$ are related to $R^\pm_0(\lambda^4)$ by the following symmetric  resolvent formula:
  \begin{equation*}
R^\pm_V(\lambda^4) = R_0^\pm(\lambda^4) -R^\pm_0(\lambda^4)v(M^\pm(\lambda))^{-1}vR^\pm_0(\lambda^4),
\end{equation*}
where $M^{\pm}(\lambda)=U+ vR^\pm_0(\lambda^4)v$, $U(x)=\hbox{sign}\big(V(x)\big)$ and $v(x)=|V(x)|^{1/2}$. Hence we will analyze the expansion of $\big(M^\pm(\lambda)\big)^{-1}$ at zero ( see Theorem \ref{thm-main-inver-M} below ).

By using the expansions of the free resolvent of $R^\pm_0(\lambda^4)$ in \eqref{R0lambda-4pm},  we can obtain that
 \begin{equation*}
   M^\pm(\lambda)=\frac{a_\pm}{\lambda^2}P +g_0^\pm(\lambda)v G_{-1} v +T_0+ c_\pm\lambda^2 vG_1 v
                   + \tilde{g}_2^\pm(\lambda)\lambda^4vG_2v+  \lambda^4 vG_3v
     +E_2^\pm(\lambda),
 \end{equation*}
 where $P,\ vG_iv(i=-1,0,1,2,3)$ are the specific bounded operators on $L^2(\mathbb{R}^2)$ and $E_2^\pm(\lambda)$ are the error terms, see Section \ref{expansion} below for specific descriptions.
Note that the expansions of the $M^\pm(\lambda)$ include two terms with logarithm factors $g_0^\pm(\lambda)$ and $\tilde{g}_2^\pm(\lambda)$, which make the calculation of the inversion processes $\big(M^\pm(\lambda)\big)^{-1}$ to be quite complicated.  We need to use  Lemma \ref{lemma-JN-matrix} twice to complete the whole inversion processes in Section \ref{the proof of inverse Pro}.

 Firstly, in the regular case, we need to show that the inverse operator
\begin{equation*}
   \begin{split}
 d= \Big( g^\pm_0(\lambda)(Q-S_0)vG_{-1}v(Q-S_0) + (Q-S_0)\big(T_0- T_0D_1T_0\big)(Q-S_0) \Big)^{-1}
   \end{split}
\end{equation*}
exist on  $(Q-S_0)L^2 $ for enough small $\lambda$.
Since dim$(Q-S_0)L^2 =2$, so the operators
$$(Q-S_0)vG_{-1}v(Q-S_0)\
{\rm and}
\ (Q-S_0)(T_0- T_0D_1T_0)(Q-S_0)$$
are essentially  two $2\times 2$-matrix linear transforms  acting on $(Q-S_0)L^2 $. Hence in order to obtain the existence of the inverse operator $d$  on $(Q-S_0)L^2$ as $\lambda$ is small, it suffices to prove the matrix $ (Q-S_0) vG_{-1}v (Q-S_0)$ is invertible  on the subspace $(Q-S_0)L^2$. This will be proved on Lemma \ref{lem-d-inver}.

Secondly, in the second kind of resonance, we will need to prove  that another inverse operator
$$ d_1:= \Big(g^\pm_0(\lambda)^2 L_1 + g^\pm_0(\lambda)L_2+ L_3\Big)^{-1}$$
exists on $(S_2-S_3)L^2$ for enough small $\lambda$, where
\begin{equation*}
   \begin{split}
L_1=& (S_2-S_3)\Big(vG_{-1}vPvG_{-1}v- vG_{-1}vT_0D_4T_0vG_{-1}v \Big)(S_2-S_3),\\
L_2=& (S_2-S_3)\Big(vG_{-1}vT_0 + T_0vG_{-1}v- vG_{-1}vT_0D_4(T_0^2-c_+a_+vG_1v) \\
      &-(T_0^2-c_+a_+vG_1v)D_4T_0vG_{-1}v \Big)(S_2-S_3),\\
L_3=&(S_2-S_3)\Big(T_0^2-c_+a_+vG_1v  - (T_0^2-c_+a_+vG_1v)D_4(T_0^2-c_+a_+vG_1v) \Big)(S_2-S_3).
 \end{split}
\end{equation*}
Since $(S_2-S_3)L^2$ is actually one dimensional space,  hence it suffices to prove that  $tr(L_1)\neq 0$ in order that $L_1$ is invertible acting on $(S_2-S_3)L^2$.  One can see Lemma \ref{lem-d1-inver} for the more details.

Moreover,
we will make use of the following Littlewood-Paley decompositions:
\begin{equation}
   \begin{split}
e^{-itH}P_{ac}(H)(x,y)
&= \sum_{N=-\infty}^{\infty}\frac{2}{ \pi i }\int_0^\infty e^{-it\lambda^4}\lambda^{3}
\varphi_0(2^{-N}\lambda)[R_V^+(\lambda^4) -R_V^-(\lambda^4)](x,y) d\lambda\\
&=\sum_{N=-\infty}^{\infty} K_N(t,x,y),
\end{split}
\end{equation}
where $\sum_{N=-\infty}^{\infty}
\varphi_0(2^{-N}\lambda)=1$ for $\lambda >0,$
and then estimate the bound of each dyadic kernel $K_N(t,x,y)$ by oscillatory integrals
methods.

 For each dyadic kernel $K_N(t,x,y)$,  we will make use of the asymptotic expansion of $R_V(\lambda^4)$ near zero for $N\ll 0$ ( i.e.  Theorem \ref{thm-main-inver-M} ) and the higher energy estimates of resolvent $R_V(\lambda^4)$ for $N\gtrsim0$ ( see \cite{FSY} or Lemma \ref{lem-largeenergy} below ).  By the scaling reductions for each $N$, we will  reduce  $K_N(t,x,y)$ into the following kind of oscillatory integrals:
\begin{equation}\label{oscillatory inte}
\int_0^\infty e^{-it2^{4N}s^4}
e^{\pm i2^Ns\Phi(z)} \Psi(2^Ns,z)\varphi_0(s) ds,
\end{equation}
where $\Phi(z)$ is some phase function on $\mathbb{R}^m$,  and  $\Psi(s,z)$ is suitable amplitude functions $\mathbb{R}^+\times \mathbb{R}^m$.  Under the suitable assumptions on $\Phi(z)$ and  $\Psi(s,z)$,  the stationary phase method can be applied to estimate the oscillatory integral \eqref{oscillatory inte}, see  Lemma \ref{lem-LWP} below for the details.

Noting that the similar oscillatory integrals \eqref{oscillatory inte} have appeared in one dimensional case \cite{SWY21}, however the estimates established in \cite{SWY21} cannot work in the two dimensional case.  Hence we need to establish  refined estimates for the oscillatory integrals \eqref{oscillatory inte}, i.e. Lemma \ref{lem-LWP}, which allow us to work well on the two and other dimensional
cases (see e.g. \cite{LWWY21}).


The paper is organized as follows. In Section 2, we first establish the expansions for the free resolvent,
and then state  the asymptotic expansion of $(M^\pm(\lambda))^{-1}$ when $\lambda$ is near zero.
In Section 3, we are devoted to proving Theorem \ref{thm-main results}. In Section 4, we prove Theorem \ref{thm-main-inver-M}. In Section 5, we characterize the resonance spaces for each kind of zero resonance.

\bigskip

\section{Resolvent asymptotic expansions near zero }\label{expansion}
In this section, we first study the free resolvent asymptotic
expansions of $R_0^\pm(\lambda^4)$, then we give the asymptotic expansions of $M^\pm(\lambda)^{-1}$.

We first gathered some frequently used notations throughout the paper.
For an operator $\mathcal{E}(\lambda),$ we write $ \mathcal{E}(\lambda)=O_1(\lambda^{-s})$ if it's kernel $\mathcal{E}(\lambda)(x,y)$ has the property
\begin{equation*}
\sup\limits_{x,y\in\mathbb{R}^2}\Big(\lambda^s|\mathcal{E}(\lambda)(x,y)|
+\lambda^{s+1}|\partial_\lambda\mathcal{E}(\lambda)(x,y)|\Big)<\infty, \,\lambda>0.
\end{equation*}
Similarly, we use the notation $\mathcal{E}(\lambda)=O_1\big(\lambda^{-s}g(x,y)\big)$ if $\mathcal{E}(\lambda)(x,y)$ satisfies
\begin{equation*}
 |\mathcal{E}(\lambda)(x,y)|+\lambda|\partial_\lambda\mathcal{E}(\lambda)(x,y)|\le C\lambda^{-s}g(x,y),\ \lambda>0,
\end{equation*}
for some constant $C$.

\subsection{The free resolvent  }
In this subsection, by using the identity (\ref{R0lambda-4pm}) and the Bessel
function representation of the free resolvents $R^\pm(-\Delta;\lambda^2)$,
we now derive the asymptotic expansions of the free resolvent $R_0^\pm(\lambda^4)$ of the bi-Laplace
operator $\Delta^2$.

Recall first  the free resolvent of Laplacian (see e.g. \cite{Erdogan-Green}):
\begin{equation}\label{reslolent-R}
 R^\pm (-\Delta;\lambda^2)(x,y)= \pm\frac{i}{4}H_0^\pm(\lambda|x-y|),
   \end{equation}
where $H_0^\pm (z) $ are Hankel functions of order zero defined by
\begin{equation}\label{id-H0}
H_0^\pm(z)= J_0(z)\pm i Y_0(z),
\end{equation}
and $H_0^-(z)= \overline{H_0^+(z)}$. From the series expansions for Bessel functions
( see e.g. \cite{AS64} ), we have that as $z \rightarrow 0, $
\begin{equation}\label{id-J0}
J_0(z)= 1- \frac{1}{4}z^2 + \frac{1}{64}z^4 -\frac{1}{2304}z^6 +O(z^8),
\end{equation}
\begin{equation}\label{id-Y0}
   \begin{split}
	 Y_0(z)=&\frac{2}{\pi}\big(\ln(\frac{z}{2})+\gamma\big) J_0(z)
              +\frac{2}{\pi}\big( \frac{1}{4}z^2 - \frac{3}{128}z^4 +\frac{11}{13824}z^6 +O(z^8)  \big),
		\end{split}
	\end{equation}
where $\gamma $ is Euler's constant. In addition, for $|z|>1$, we have the representation ( see e.g. \cite{AS64} )
\begin{equation}\label{id-H0big}
H^\pm_0(z)=e^{\pm iz}w_\pm(z),\ \ |w_\pm^{(\ell)}(z)|\lesssim (1+|z|)^{-\frac{1}{2}-\ell},
\ell= 0,1,2,\cdots.
\end{equation}
Noting that
\begin{equation*}
   \begin{split}
 R^\pm_0(\lambda^4)=\frac{1}{2\lambda^2}\big( R^\pm(-\Delta; \lambda^2)-R(-\Delta; -\lambda^2) \big),
  \ \lambda>0.
  \end{split}
\end{equation*}
Hence if $ \lambda|x-y|>1$, then by using (\ref{reslolent-R}) and (\ref{id-H0big}) it follows that
\begin{equation}\label{reso-big}
   \begin{split}
R_0^\pm(\lambda^4)(x,y)
  =&   \frac{i}{8\lambda^2}\Big[ \pm e^{\pm i\lambda |x-y|}w_\pm(\lambda|x-y|)
        - e^{- \lambda |x-y|}w_+(  i\lambda|x-y|)\Big] .
  \end{split}
\end{equation}

Now we compute the representation of $R^\pm_0(\lambda^4)(x,y)$ when $\lambda|x-y|\le 1 $.
Let $G_k$ denote the integral operators with the following kernels $G_k(x,y)$:
\begin{equation}\label{def-G0-G3}
   \begin{split}
   &G_{-1}(x,y)= |x-y|^2, \,
   G_0(x,y)= a_0|x-y|^2\ln|x-y|, \,
   G_1(x,y)= |x-y|^4, \,\\
  &G_2(x,y) = |x-y|^6, \,
   G_3(x,y)= a_2|x-y|^6\ln|x-y|,
     \end{split}
 \end{equation}
where the coefficients $a_0= \frac{1}{8\pi}$ and $ a_2= \frac{1}{4608\pi}$.

\begin{lemma}\label{lem-reso}
If $ \lambda|x-y| \ll 1$, then
\begin{equation}\label{free-R0pmlambda4}
   \begin{split}
	  R^\pm_0(\lambda^4)(x,y)=&\frac{b_\pm}{\lambda^2}
                       + g_0^\pm(\lambda) G_{-1}(x,y) + G_0(x,y) + c_\pm\lambda^2G_1(x,y)
             +\tilde{g}_2^\pm(\lambda) \lambda^4G_2(x,y)\\
             & + \lambda^4G_3(x,y)+ O\big(\lambda^{6-\epsilon}|x-y|^{8-\epsilon}\big),
		\end{split}
	\end{equation}
where
$ \displaystyle b_\pm = \pm \frac{i}{8}, c_\pm= \pm\frac{i}{512}$ ,
$\displaystyle g_0^\pm(\lambda)= a_0 \ln\lambda+ \alpha_\pm$,\ \
$\displaystyle \tilde{g}_2^\pm(\lambda)= a_2 \ln\lambda+\beta_\pm, $
 and $\alpha_\pm ,\beta_\pm \in \mathbb{C}$.
\end{lemma}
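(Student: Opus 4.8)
The plan is to start from the decomposition \eqref{R0lambda-4pm} together with the Hankel representation \eqref{reslolent-R}, and to expand the resulting Bessel functions around zero. Writing $r=|x-y|$, formula \eqref{id-H0} gives $R^\pm(-\Delta;\lambda^2)(x,y)=\pm\frac{i}{4}H_0^\pm(\lambda r)=\pm\frac{i}{4}J_0(\lambda r)-\frac14 Y_0(\lambda r)$, since $(\pm i)^2=-1$, while $R(-\Delta;-\lambda^2)(x,y)$ is the kernel of $(-\Delta+\lambda^2)^{-1}$, that is $\frac{1}{2\pi}K_0(\lambda r)$ (equivalently $\frac{i}{4}H_0^+(i\lambda r)$, as in \eqref{reso-big}). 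Substituting into \eqref{R0lambda-4pm},
\[
R_0^\pm(\lambda^4)(x,y)=\frac{1}{2\lambda^2}\Big[\pm\frac{i}{4}J_0(\lambda r)-\frac14 Y_0(\lambda r)-\frac{1}{2\pi}K_0(\lambda r)\Big],
\]
so the problem reduces to inserting the small-argument series for $J_0,Y_0,K_0$ and collecting powers of $\lambda$ and $r$.

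For $J_0,Y_0$ I would use \eqref{id-J0}--\eqref{id-Y0}, and for $K_0$ the classical expansion $K_0(z)=-(\ln(z/2)+\gamma)I_0(z)+\sum_{k\ge1}\frac{(z/2)^{2k}}{(k!)^2}H_k$ with $I_0(z)=\sum_{k\ge0}\frac{(z/2)^{2k}}{(k!)^2}$ and $H_k=1+\frac12+\cdots+\frac1k$. Because $Y_0$ carries the logarithmic factor $\frac2\pi(\ln(z/2)+\gamma)J_0(z)$ and $K_0$ carries $-(\ln(z/2)+\gamma)I_0(z)$, the combination $-\frac14 Y_0-\frac{1}{2\pi}K_0$ equals $\frac{1}{2\pi}(\ln(\lambda r/2)+\gamma)\big(I_0(\lambda r)-J_0(\lambda r)\big)$ minus a polynomial in $(\lambda r)^2$. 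Two cancellations then produce the claimed form: (a) $I_0-J_0$ vanishes to order $z^2$, so the pure $\ln r$ singularity cancels and the leading logarithmic term is $\sim(\lambda r)^2\ln(\lambda r)$; and (b) the order-$z^4$ Taylor coefficients of $I_0-J_0$ and of the non-logarithmic polynomial part of $-\frac14 Y_0-\frac1{2\pi}K_0$ both vanish, so there is no $\lambda^2 r^4\ln(\lambda r)$ term and the next logarithmic term is $\sim(\lambda r)^6\ln(\lambda r)$.

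With (a)--(b) in hand the rest is routine power counting. The $z^0$ term of $\pm\frac{i}{4}J_0(\lambda r)$ gives $\frac{1}{2\lambda^2}(\pm\frac{i}{4})=\frac{b_\pm}{\lambda^2}$; the $(\lambda r)^2$ and $(\lambda r)^2\ln(\lambda r)$ terms give $g_0^\pm(\lambda)G_{-1}(x,y)+G_0(x,y)$ after splitting $\ln(\lambda r)=\ln\lambda+\ln r$ and checking $a_0=\frac{1}{8\pi}$; the $(\lambda r)^4$ term of $J_0$ (with no accompanying logarithm, by (b)) gives $c_\pm\lambda^2 G_1(x,y)$ with $c_\pm=\pm\frac{i}{512}$; and the $(\lambda r)^6$ together with $(\lambda r)^6\ln(\lambda r)$ terms give $\tilde{g}_2^\pm(\lambda)\lambda^4 G_2(x,y)+\lambda^4 G_3(x,y)$, where the $z^6$ coefficient of $I_0-J_0$ pins down $a_2=\frac{1}{4608\pi}$. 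All remaining numerical constants — including the shifts from $\gamma-\ln2$ and from the non-logarithmic polynomial pieces — are absorbed into $\alpha_\pm,\beta_\pm\in\mathbb{C}$, while $b_\pm,c_\pm$ stay purely imaginary because they arise only from $\pm\frac{i}{4}J_0$. The remainders of the three series are $O(z^8)$, possibly times a logarithm; using $|\ln(\lambda r)|\lesssim(\lambda r)^{-\epsilon}$ for $\lambda r\ll1$ and multiplying by $\frac{1}{2\lambda^2}$ gives the error $O(\lambda^{6-\epsilon}|x-y|^{8-\epsilon})$.

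The only genuinely delicate point is the pair of cancellations (a)--(b): these depend on the exact Bessel coefficients in \eqref{id-J0}--\eqref{id-Y0} and in the $K_0$-series, so I would first verify $I_0(z)-J_0(z)=\frac12 z^2+\frac{1}{1152}z^6+O(z^{10})$ and that the order-$z^4$ pieces of $Y_0$'s polynomial part and of $\sum_{k\ge1}\frac{(z/2)^{2k}}{(k!)^2}H_k$ cancel, before reading off the coefficients; everything else is elementary bookkeeping. (Should a version of \eqref{free-R0pmlambda4} with $\partial_\lambda$-control, i.e. an $O_1$-type error term, be needed later, the same argument applies after differentiating the series termwise.)
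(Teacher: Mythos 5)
Your proposal is correct and is essentially the paper's own argument: both start from the splitting \eqref{R0lambda-4pm} with the Hankel-function kernels \eqref{reslolent-R}, insert the small-argument Bessel expansions \eqref{id-J0}--\eqref{id-Y0} (your $\frac{1}{2\pi}K_0(\lambda r)$ is exactly the paper's $\frac{i}{4}H_0^+(i\lambda r)$), and read off the coefficients after dividing by $2\lambda^2$, with the remainder handled via $|\ln(\lambda r)|\lesssim(\lambda r)^{-\epsilon}$. The only difference is presentational — you group the logarithmic parts of $Y_0$ and $K_0$ to exhibit the cancellations (a)--(b) structurally, whereas the paper expands the two resolvents term by term and subtracts — and your stated coefficients ($b_\pm=\pm\frac{i}{8}$, $a_0=\frac{1}{8\pi}$, $c_\pm=\pm\frac{i}{512}$, $a_2=\frac{1}{4608\pi}$) and cancellation checks all agree with the paper's computation.
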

\begin{proof}
Let $r=|x-y|$.
By using (\ref{reslolent-R} )-(\ref{id-Y0}), we have
\begin{equation*}
  \begin{split}
&R^\pm(-\Delta;\lambda^2)(x,y)=\frac{i}{4}H^\pm_0(\lambda r)\\
=& \Big(- \frac{1}{2\pi}\ln \lambda \pm \frac{i}{4} + \frac{\ln 2}{2\pi} - \frac{\gamma}{2\pi}\Big) -\frac{1}{2\pi}\ln r
+\Big(\frac{1}{8\pi}\ln\lambda \mp \frac{i}{16}- \frac{\ln2 - \gamma +1}{8\pi}\Big)\lambda^2r^2\\
&+ \frac{1}{8\pi}\lambda^2r^2\ln r
+ \Big(-\frac{1}{128\pi} \ln\lambda \pm \frac{i}{256}
  +\frac{2\ln2 -2\gamma +3 }{256\pi} \Big)\lambda^4r^4
  - \frac{1}{128\pi}\lambda^4r^4\ln r\\
&+ \Big(\frac{1}{4608\pi}\ln\lambda \mp \frac{i}{9216}
-\frac{\ln2}{4608\pi} +\frac{\gamma }{4608\pi}-\frac{11}{27648\pi}  \Big)\lambda^6r^6
+ \frac{1}{4608\pi}\lambda^6 r^6\ln r
+O\big((\lambda r)^{8-\epsilon}\big),
  \end{split}
\end{equation*}
\begin{equation*}
  \begin{split}
&R^+(-\Delta;-\lambda^2)(x,y)=\frac{i}{4}H^+_0(i\lambda r)\\
=& \Big(-\frac{1}{2\pi}\ln \lambda + \frac{\ln 2-\gamma}{2\pi} \Big) -\frac{1}{2\pi}\ln r
+\Big(-\frac{1}{8\pi}\ln\lambda + \frac{\ln2 - \gamma +1}{8\pi}\Big)\lambda^2 r^2
-\frac{1}{8\pi}\lambda^2 r^2\ln r \\
&+ \Big(-\frac{1}{128\pi} \ln\lambda
 + \frac{2\ln2 -2\gamma +3 }{256\pi} \Big)\lambda^4r^4
  -\frac{1}{128\pi}\lambda^4r^4\ln r
+ \Big(- \frac{1}{4608\pi}\ln\lambda+\frac{\ln2 - \gamma }{4608\pi}\\
&+\frac{11}{27648\pi} \Big)\lambda^6r^6
- \frac{1}{4608\pi}\lambda^6r^6\ln r
+O\big((\lambda r)^{8-\epsilon}\big).
  \end{split}
\end{equation*}
By using the identity,
\begin{equation*}
   \begin{split}
 R^\pm_0(\lambda^4)=\frac{1}{2\lambda^2}\big( R^\pm(-\Delta; \lambda^2)-R(-\Delta; -\lambda^2) \big),
  \, \lambda>0,
  \end{split}
\end{equation*}
then the desired conclusion \eqref{def-G0-G3} holds.
\end{proof}
\begin{lemma}\label{lem-reso-small+big}
 We obtain the following expansions of $R^\pm_0(\lambda^4)(x,y)$:
\begin{equation}
   \begin{split}
	  R^\pm_0(\lambda^4)(x,y)=& \frac{b_\pm }{\lambda^2}I(x,y)
                       + g_0^\pm(\lambda) G_{-1}(x,y) + G_0(x,y) +E^\pm_0(\lambda)(x,y),
		\end{split}
	\end{equation}
where the error term satisfies $ E^\pm_0(\lambda)(x,y)= O_1(\lambda^{\ell}|x-y|^{2+\ell})$ for
$0<\ell \leq 2$.
\end{lemma}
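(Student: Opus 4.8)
The plan is to patch together the short‑range expansion of Lemma~\ref{lem-reso}, valid on $\{\lambda|x-y|\lesssim 1\}$, with the large‑argument representation \eqref{reso-big} on the complementary region $\{\lambda|x-y|\gtrsim 1\}$, and to check on each region that
\[
E_0^\pm(\lambda)(x,y):=R_0^\pm(\lambda^4)(x,y)-\frac{b_\pm}{\lambda^2}I(x,y)-g_0^\pm(\lambda)G_{-1}(x,y)-G_0(x,y)
\]
obeys the claimed $O_1$‑bound. The decisive bookkeeping device is that the two logarithmic coefficients only ever occur multiplied by the matching polynomial weight and thus recombine into a logarithm of the \emph{scale‑invariant} quantity $\lambda|x-y|$: by \eqref{def-G0-G3} one has $g_0^\pm(\lambda)G_{-1}(x,y)+G_0(x,y)=|x-y|^2\big(a_0\ln(\lambda|x-y|)+\alpha_\pm\big)$ and $\tilde{g}_2^\pm(\lambda)\lambda^4G_2(x,y)+\lambda^4G_3(x,y)=\lambda^4|x-y|^6\big(a_2\ln(\lambda|x-y|)+\beta_\pm\big)$. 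Since $\sup_{0<t\le 1}t^{a}|\ln t|<\infty$ and $\ln t\lesssim t^{a}$ for $t\ge 1$ whenever $a>0$, a logarithm of $\lambda|x-y|$ is absorbed into an arbitrarily small power of $\lambda|x-y|$ on either region; in particular one should never estimate $\ln\lambda$ in isolation.

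On $\{\lambda|x-y|\lesssim 1\}$ I would invoke Lemma~\ref{lem-reso} directly (here $I(x,y)\equiv 1$), so that $E_0^\pm(\lambda)(x,y)=c_\pm\lambda^2G_1(x,y)+\tilde{g}_2^\pm(\lambda)\lambda^4G_2(x,y)+\lambda^4G_3(x,y)+O(\lambda^{6-\epsilon}|x-y|^{8-\epsilon})$. Pairing the $G_2$ and $G_3$ terms as above, every summand is $|x-y|^2$ times a nonnegative power of $\lambda|x-y|$, possibly multiplied by $|\ln(\lambda|x-y|)|$; the powers that appear are $(\lambda|x-y|)^{2-\ell}$, $(\lambda|x-y|)^{4-\ell}$, $(\lambda|x-y|)^{4-\ell}|\ln(\lambda|x-y|)|$ and $(\lambda|x-y|)^{6-\epsilon-\ell}$, each of which is $\lesssim 1$ on this region for any $\ell\in(0,2]$, whence $|E_0^\pm(\lambda)(x,y)|\lesssim\lambda^{\ell}|x-y|^{2+\ell}$. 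The companion $\lambda\,\partial_\lambda$‑bound follows the same way, using that differentiating $\ln\lambda$ contributes exactly the factor $\lambda^{-1}$ that the $O_1$‑notation allows (and that the remainder in Lemma~\ref{lem-reso} carries, from its proof via the Bessel expansions, the corresponding $\lambda$‑derivative control).

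On $\{\lambda|x-y|\gtrsim 1\}$ I would use \eqref{reso-big} together with the Hankel asymptotics \eqref{id-H0big}, which give $|R_0^\pm(\lambda^4)(x,y)|\lesssim\lambda^{-2}(\lambda|x-y|)^{-1/2}$ and $\lambda|\partial_\lambda R_0^\pm(\lambda^4)(x,y)|\lesssim\lambda^{-2}(\lambda|x-y|)^{1/2}$, up to an exponentially small contribution from the $e^{-\lambda|x-y|}$ part; simultaneously the subtracted terms $\frac{b_\pm}{\lambda^2}I(x,y)$ is of size $\lambda^{-2}$ (with matching $\lambda\partial_\lambda$‑bound) and $g_0^\pm(\lambda)G_{-1}(x,y)+G_0(x,y)=|x-y|^2\big(a_0\ln(\lambda|x-y|)+\alpha_\pm\big)$. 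Each of these contributions is $\lesssim\lambda^{\ell}|x-y|^{2+\ell}$ for every $\ell\in(0,2]$: for the resolvent and the $\lambda^{-2}$‑term this amounts to $(\lambda|x-y|)^{5/2+\ell}\gtrsim 1$, resp.\ $(\lambda|x-y|)^{3/2+\ell}\gtrsim 1$, resp.\ $(\lambda|x-y|)^{2+\ell}\gtrsim 1$, and for the last it follows from $\ln(\lambda|x-y|)\lesssim(\lambda|x-y|)^{\ell}$, all valid on $\{\lambda|x-y|\ge 1\}$. Combining the two regions yields $E_0^\pm(\lambda)=O_1(\lambda^{\ell}|x-y|^{2+\ell})$ for all $\ell\in(0,2]$, as claimed.

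The only genuinely delicate point is the one already flagged: one must keep each logarithmic factor attached to its polynomial weight so that it enters as $\ln(\lambda|x-y|)$ — which is dominated uniformly by small powers of $\lambda|x-y|$ across the whole transition $\lambda|x-y|\sim 1$ — rather than trying to control $|\ln\lambda|$ by itself, which fails precisely when $|x-y|$ is large while $\lambda|x-y|$ is of order one. Once this is understood, the proof is a routine term‑by‑term comparison of powers of $\lambda$ and $|x-y|$.
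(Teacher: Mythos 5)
Your proof is correct and follows essentially the same route as the paper: split at $\lambda|x-y|\sim 1$, apply Lemma \ref{lem-reso} on the small region, and on the large region use \eqref{reso-big} together with the recombination $g_0^\pm(\lambda)G_{-1}(x,y)+G_0(x,y)=|x-y|^2\big(a_0\ln(\lambda|x-y|)+\alpha_\pm\big)$ so that only $\ln(\lambda|x-y|)$, never $\ln\lambda$ alone, needs to be absorbed into powers of $\lambda|x-y|$. The term-by-term power comparisons and the $\lambda\partial_\lambda$ bounds you give match the paper's estimates.
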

\begin{proof}
We write that
\begin{equation}\label{label-hl-1}
   \begin{split}
R^\pm_0(\lambda^4)(x,y)= \chi(\lambda|x-y|)R^\pm_0(\lambda^4)(x,y)
          +\widetilde{\chi}(\lambda|x-y|) R^\pm_0(\lambda^4)(x,y).
   \end{split}
\end{equation}
Let $r=|x-y|$. If $\lambda r \ll 1$, then the statement is clear by (\ref{free-R0pmlambda4}).
If $ \lambda r \gtrsim 1 $, noting that
$$  g_0^\pm(\lambda) G_{-1}(x,y) + G_0(x,y)
=  |x-y|^2\Big(a_0\ln(\lambda|x-y|)+\alpha_\pm \Big), $$
by using (\ref{reso-big}) and (\ref{free-R0pmlambda4}), then it follows that for $\ell>0$,
\begin{equation}
   \begin{split}
&\Big| R^+_0(\lambda^4)(x,y)- \Big(  b_+\frac{1}{\lambda^2}+ g_0^+(\lambda) G_{-1}(x,y)
       + G_0(x,y) \Big)\Big|\\
= & \Big|\frac{1}{\lambda^2}\Big(\frac{i}{8}e^{i\lambda r}w_+(\lambda r )
   - \frac{i}{8}e^{-\lambda r}w_+(i\lambda r )-b_+\Big)-
       \Big( g_0^+(\lambda) G_{-1}(x,y)+ G_0(x,y) \Big)  \Big|\\
  \lesssim & \frac{1}{\lambda^2}\Big(|w_+(\lambda r )| +|w_+(i\lambda r )|+ c\Big)
     +\lambda^\ell  r^{2+\ell}\\
   \lesssim & \frac{1}{\lambda^2(1+\lambda r)^{\frac{1}{2}}}+ \frac{1}{\lambda^2}
     + \lambda^\ell  r^{2+\ell}
    \lesssim  \lambda^\ell r^{2+\ell}.
	\end{split}
\end{equation}
 Similarly, by the identity (\ref{label-hl-1}) we obtain that for $ 0<\ell \leq 2$,
 \begin{equation*}
   \begin{split}
\big| \partial_\lambda \big( R^+_0(\lambda^4)(x,y)\big)\big|
 \lesssim & \big|\partial_\lambda \big(R^+_0(\lambda^4)(x,y)\big)\chi(\lambda|x-y|) \big|
    + \big| \partial_\lambda \big(R^+_0(\lambda^4)(x,y)\big) \widetilde{\chi}(\lambda|x-y|) \big|\\
   \lesssim & \lambda^{-1}r^2 + \lambda r^4 \lesssim \lambda^{\ell-1}r^{2+\ell}.
	\end{split}
\end{equation*}
Thus the proof of this lemma is completed.
\end{proof}
\subsection{The resolvent expansions near zero }
In this subsection, we study the asymptotic expansions of the perturbed resolvent in a neighborhood of zero threshold.

Let $U(x)=\hbox{sign}\big(V(x)\big)$ and $v(x)=|V(x)|^{1/2}$, then we have $ V=Uv^2$ and
the following symmetric resolvent identity:
 \begin{equation}\label{id-RV}
R^\pm_V(\lambda^4) = R_0^\pm(\lambda^4) -R^\pm_0(\lambda^4)v(M^\pm(\lambda))^{-1}vR^\pm_0(\lambda^4),
\end{equation}
where $M^{\pm}(\lambda)=U+ vR^\pm_0(\lambda^4)v$. So, we will need to establish the expansions for
$(M^\pm(\lambda))^{-1}$.

Let $T_0= U+vG_0v$ and $P= \|V\|^{-1}_{L^1} v\langle v, \cdot \rangle$ denote the orthogonal projection onto the span space by $v$. Then we have the following expansions of $M^\pm(\lambda)$.
\begin{lemma}\label{lem-M}
Let $\displaystyle a_\pm= \pm\frac{i}{8} \|V  \|_1$ and  $ M^\pm(\lambda)= U+vR^\pm_0(\lambda^4)v$.
If $|V(x)|\lesssim (1+|x|)^{-\beta}$ with some $\beta > 0$, then the following identities of $M^\pm(\lambda)$ hold on $\mathbb{B}(L^2,L^2)$ for $\lambda>0$:

 (i) If $\beta > 10$, then we have
 \begin{equation}\label{Mpm-1}
   \begin{split}
   M^\pm(\lambda)= \frac{a_\pm}{\lambda^2}P +g_0^\pm(\lambda)v G_{-1} v +T_0
                  +E^\pm_0(\lambda),
     \end{split}
 \end{equation}
 where the error term satisfies $E_0^\pm(\lambda)=O_1\big(\lambda^2 v(x)|x-y|^4v(y)\big)$.

  (ii) If $\beta > 14$, then we have
 \begin{equation}\label{Mpm-2}
   \begin{split}
   M^\pm(\lambda)=& \frac{a_\pm}{\lambda^2}P +g_0^\pm(\lambda)v G_{-1} v +T_0+ c_\pm\lambda^2 vG_1 v
                 +E_1^\pm(\lambda),
     \end{split}
 \end{equation}
  where the error term satisfies $E_1^\pm(\lambda)=O_1\big(\lambda^{4-\epsilon}v(x)|x-y|^{6-\epsilon}v(y)\big)$.

  (iii) If $\beta > 18$, then we have
 \begin{equation}\label{Mpm3}
   \begin{split}
   M^\pm(\lambda)=& \frac{a_\pm}{\lambda^2}P +g_0^\pm(\lambda)v G_{-1} v +T_0+ c_\pm\lambda^2 vG_1 v
                   + \tilde{g}_2^\pm(\lambda)\lambda^4vG_2v+  \lambda^4 vG_3v
     +E_2^\pm(\lambda),
     \end{split}
 \end{equation}
 where the error term satisfies $E_2^\pm(\lambda)=O_1\big(\lambda^{6-\epsilon}v(x)|x-y|^{8-\epsilon}v(y)\big)$.
\end{lemma}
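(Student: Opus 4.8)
The plan is to substitute the free resolvent expansion of Lemma~\ref{lem-reso} (together with its large-argument counterpart \eqref{reso-big}) into the definition $M^\pm(\lambda)=U+vR_0^\pm(\lambda^4)v$, conjugate each term by $v$, and then check term by term which decay rate on $V$ makes every piece a bounded (indeed Hilbert--Schmidt) operator on $L^2(\mathbb{R}^2)$; the error term will turn out to be the binding constraint, producing precisely the thresholds $\beta>10,14,18$.

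The first step is to upgrade Lemma~\ref{lem-reso-small+big} so that it retains the higher-order terms of \eqref{free-R0pmlambda4} and holds for all $x,y$. Concretely, in case (iii) I would prove the kernel identity
\begin{equation*}
R_0^\pm(\lambda^4)(x,y)=\frac{b_\pm}{\lambda^2}+g_0^\pm(\lambda)G_{-1}(x,y)+G_0(x,y)+c_\pm\lambda^2 G_1(x,y)+\tilde g_2^\pm(\lambda)\lambda^4 G_2(x,y)+\lambda^4 G_3(x,y)+\mathcal{R}^\pm(\lambda)(x,y),
\end{equation*}
with $\mathcal{R}^\pm(\lambda)(x,y)=O_1\big(\lambda^{6-\epsilon}|x-y|^{8-\epsilon}\big)$, and the obvious truncations for cases (i),(ii) with remainders $O_1(\lambda^{2}|x-y|^{4})$ (which is just Lemma~\ref{lem-reso-small+big} with $\ell=2$) and $O_1(\lambda^{4-\epsilon}|x-y|^{6-\epsilon})$. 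In the region $\lambda|x-y|\ll1$ this is immediate from the Bessel series \eqref{id-J0}--\eqref{id-Y0}, differentiating term by term to obtain the $\partial_\lambda$-bound built into the $O_1$ notation. In the complementary region $\lambda|x-y|\gtrsim 1$ I would instead use \eqref{id-H0big}/\eqref{reso-big} to get $R_0^\pm(\lambda^4)(x,y)=O\big(\lambda^{-2}(1+\lambda|x-y|)^{-1/2}\big)$, and then observe that there \emph{each} retained term ($\lambda^{-2}$, $|x-y|^2(a_0\ln(\lambda|x-y|)+\alpha_\pm)=g_0^\pm(\lambda)G_{-1}+G_0$, $\lambda^2|x-y|^4$, $\lambda^4|x-y|^6$, $\lambda^4|x-y|^6\ln|x-y|$) is itself dominated by the remainder bound $\lambda^{m}|x-y|^{2+m}$: since $\lambda|x-y|\gtrsim1$ one may trade any surplus power of $|x-y|$ for a power of $\lambda$, and the growing logarithm is handled by $\ln t\lesssim t^{\epsilon}$ for $t\ge c>0$. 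The $\partial_\lambda$-estimates in this region follow in the same way after differentiating the exponential representation $e^{\pm i\lambda r}w_\pm(\lambda r)/\lambda^2$.

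The second step is routine once the kernel identity is in hand: conjugating by $v$ sends $G_j(x,y)\mapsto v(x)G_j(x,y)v(y)$, i.e.\ to the operator $vG_jv$, while the constant term becomes $\tfrac{b_\pm}{\lambda^2}v(x)v(y)=\tfrac{a_\pm}{\lambda^2}P(x,y)$ because $P$ has kernel $\|V\|_1^{-1}v(x)v(y)$ and $a_\pm=b_\pm\|V\|_1=\pm\tfrac{i}{8}\|V\|_1$; combined with $U+vG_0v=T_0$ this gives exactly \eqref{Mpm-1}--\eqref{Mpm3}, with $E_j^\pm(\lambda)$ the operator whose kernel is $v(x)\mathcal{R}^\pm(\lambda)(x,y)v(y)$. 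For the $\mathbb{B}(L^2,L^2)$-claim one checks Hilbert--Schmidt bounds: a kernel $v(x)|x-y|^{k}(\ln|x-y|)^{\ell}v(y)$ satisfies $|x-y|^{2k}(\ln|x-y|)^{2\ell}\lesssim\langle x\rangle^{2k+}+\langle y\rangle^{2k+}$, so its Hilbert--Schmidt integral is controlled by $\|V\|_1\int\langle x\rangle^{2k+}|V(x)|\,dx$, finite as soon as $\beta>2k+2$ (the $+2$ being the dimension). Applying this to the retained operators $vG_{-1}v,\dots,vG_3v$ and, crucially, to $E_0^\pm$ ($k=4$), $E_1^\pm$ ($k=6-\epsilon$), $E_2^\pm$ ($k=8-\epsilon$) reads off $\beta>10$, $\beta>14$, $\beta>18$ respectively, the error operator being the strongest requirement in each case, and the $O_1$-kernel bound on $E_j^\pm$ is inherited directly from that on $\mathcal{R}^\pm$.

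The main obstacle is the bookkeeping in the region $\lambda|x-y|\gtrsim1$: one must verify carefully that the (unboundedly growing) factor $\ln(\lambda|x-y|)$ and the polynomial factors $|x-y|^k$ appearing in the terms we keep explicit are genuinely absorbed into the remainder bound there, uniformly down to $\lambda|x-y|\sim1$, and that the same remains true after one $\lambda$-differentiation. This is the only place where the large-argument asymptotics \eqref{id-H0big}, the decay of $w_\pm$, and the elementary inequality $\ln t\lesssim t^{\epsilon}$ are essential, and it is where the small parameter $\epsilon$ in the stated error exponents gets fixed.
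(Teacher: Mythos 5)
Your proposal follows essentially the same route as the paper: split according to $\lambda|x-y|\ll1$ versus $\lambda|x-y|\gtrsim1$, use the small-argument expansion \eqref{free-R0pmlambda4} in the first region, absorb the retained terms into the remainder bound in the second region via \eqref{reso-big} (the argument of Lemma \ref{lem-reso-small+big}), and then conclude boundedness on $L^2$ through a Hilbert--Schmidt estimate whose finiteness pins down $\beta>10,14,18$; this is exactly the paper's (terse) proof, including the identification $b_\pm\|V\|_1=a_\pm$ and $T_0=U+vG_0v$.

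One point to tighten: your claim that in the region $\lambda|x-y|\gtrsim1$ \emph{each} retained term is individually dominated by the remainder is not literally true for the two highest-order terms in case (iii). Taken separately, $\tilde g_2^\pm(\lambda)\lambda^4G_2$ (which carries an $a_2\ln\lambda$) and $\lambda^4G_3=a_2\lambda^4|x-y|^6\ln|x-y|$ are of size $\lambda^{-2}|\ln\lambda|$ when $\lambda|x-y|\sim1$ and $\lambda\to0$, which exceeds the claimed bound $\lambda^{6-\epsilon}|x-y|^{8-\epsilon}\sim\lambda^{-2}$; the inequality $\ln t\lesssim t^{\epsilon}$ does not rescue either term alone. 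The cure is the same pairing you already use for $g_0^\pm(\lambda)G_{-1}+G_0$: write $\tilde g_2^\pm(\lambda)\lambda^4G_2+\lambda^4G_3=\lambda^4|x-y|^6\big(a_2\ln(\lambda|x-y|)+\beta_\pm\big)$, after which $|\ln(\lambda|x-y|)|\lesssim(\lambda|x-y|)^{\epsilon}$ absorbs the pair into $\lambda^{6-\epsilon}|x-y|^{8-\epsilon}$ (and the analogous grouping, with $|\ln(\lambda|x-y|)|\lesssim(\lambda|x-y|)^{-\epsilon}$, is what produces the $4-\epsilon,6-\epsilon$ exponents of $E_1^\pm$ in the small region). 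With that one-line adjustment the argument, including the derivative bounds required by the $O_1$ notation, goes through as in the paper.
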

\begin{proof}
We only prove (\ref{Mpm-1}) since the proofs of other conclusions are similar. Let $r=|x-y|$. If $\lambda r \ll 1$, then the statement is clear by (\ref{free-R0pmlambda4}).
If $ \lambda r \gtrsim 1 $, using (\ref{reso-big}) and (\ref{free-R0pmlambda4}), by the same arguments with
the proof of Lemma \ref{lem-reso-small+big}, we have
\begin{equation}
   \begin{split}
\big| M^\pm(\lambda) -\frac{a_\pm}{\lambda^2}P -g_0^\pm(\lambda)v G_{-1} v -T_0 \big) \big|
\lesssim \lambda^2 v(x)|x-y|^4 v(y),
	\end{split}
\end{equation}
\begin{equation}
   \begin{split}
\big| \partial_\lambda \big( M^\pm(\lambda) -\frac{a_\pm}{\lambda^2}P -g_0^\pm(\lambda)v G_{-1} v -T_0 \big) \big| \lesssim \lambda v(x)|x-y|^4 v(y).
	\end{split}
\end{equation}
Since the error term is a Hilbert-Schmidt operator if $|V(x)| \lesssim (1+|x|)^{-\beta}$ with $\beta>10$,  so we get (\ref{Mpm-1}) as bounded operators on $L^2(\mathbb{R}^2)$.
\end{proof}

Now we define the type of resonances that may occur at the zero threshold as follows:

Let $T_0= U+vG_0v$, $P= \|V\|^{-1}_{L^1} v\langle v, \cdot \rangle$, $ Q=I-P$ and $S_0$ be the orthogonal projection onto the kernel of $ QvG_{-1}vQ$
on $QL^2$.  By Lemma \ref{projiction-spaces-SjL2} (i) we have $S_0\neq 0$. Hence, $QvG_{-1}vQ$ is not invertible on $QL^2$. But $QvG_{-1}vQ+S_0$ is invertible on $QL^2$, in this case, we
define $D_0=(QvG_{-1}vQ+S_0)^{-1}$ as an operator on $QL^2$.
\begin{definition}\label{definition of resonance}
(i) If $S_0T_0S_0$ is invertible on $S_0L^2$, then we say that zero is a regular point of the spectrum of $H$. In this case, we define $D_1= (S_0T_0S_0)^{-1}$ as an operator on $S_0L^2$.

(ii) Assume that $S_0T_0S_0$ is  not invertible on $S_0L^2.$ Let $S_1$ be the Riesz projection onto the kernel of $S_0T_0S_0.$ Then $S_0T_0S_0+S_1$ is invertible on $S_0L^2$.  In this case, we define $D_1=\big(S_0T_0S_0+S_1\big)^{-1}$ as an operator on $S_0L^2$, which doesn't conflict with the previous definition since $S_1=0$ when zero is regular.
We say zero is a resonance of the first kind  if the operator
\begin{equation}\label{T_1}
T_1:= -S_1T_0(Q-S_0) \Big((Q-S_0)vG_{-1}v(Q-S_0)\Big)^{-1}(Q-S_0)T_0S_1
\end{equation}
is invertible on $S_1L^2$, where the inverse $\big((Q-S_0)vG_{-1}v(Q-S_0)\big)^{-1}$ is well-defined on the subspace $(Q-S_0)L^2$ by the the \eqref{positive-matrix} of Lemma \ref{lem-d-inver}.

(iii)~Assume that $T_1$ is not invertible on $S_1L^2.$ Let $S_2$ be the Riesz projection onto the kernel of $T_1.$ Then $T_1+S_2$ is invertible on $S_1L^2$.
 In this case, we define $D_2=\big(T_1+S_2\big)^{-1}$ as an operator on $S_1L^2$.
Let $T_2:= S_2vG_{-1}vvG_{-1}vS_2.$
 then $T_2$ is not invertible on $S_2L^2$ by Lemma \ref{projiction-spaces-SjL2}(iv).
 Let $S_3$ be the Riesz projection onto the kernel of $T_2$, then $T_2+S_3$ is invertible on $S_2L^2$, in this case, we define $D_3=(T_2+S_3)^{-1}$ as an operator on $S_2L^2$.
We say zero is a resonance of the second kind  if
\begin{equation}\label{T_3}
T_3:=S_3T_0^2S_3-c_\pm a_\pm S_3vG_1vS_3
\end{equation}
is invertible on $S_3L^2$.

(iv)~Assume that $T_3$ is not invertible on $S_3L^2$.  Let $S_4$ be the Riesz projection onto the kernel of $T_3.$ Then $T_3+S_4$ is invertible on $S_3L^2$.
In this case, we define $D_4=\big(T_3+S_4\big)^{-1}$ as an operator on $S_3L^2$.
We say zero is a resonance of  the third kind  if
\begin{equation}\label{T_4}
T_4:=S_4vG_2vS_4
\end{equation}
is invertible on $S_4L^2$.

(v)~ Finally,  if $T_4$ is not invertible on $S_4L^2$,
we say there is a resonance of the fourth kind at zero. In this case, the operator $T_5:=S_4vG_3vS_4$ is always invertible on $S_5L^2$ where $S_5$ be the Riesz projection onto the kernel of $T_4,$ see Lemma \ref{lemma-S5vG3vS5}. We define $D_5=(T_4+S_5)^{-1}$ as an operator on $S_4L^2$.
\end{definition}

From the definition above, we have $ S_0L^2 \supseteq S_1L^2 \supseteq S_2L^2
\supseteq S_3L^2 \supseteq S_4L^2 \supseteq S_5L^2$, which describe the zero energy resonance types of $H$ as follows:
\begin{itemize}
\item ~zero is a regular point of $H$ if and only if $S_1L^2(\mathbb{R}^2)=\{0\};$
\vskip0.2cm
\item~ zero is a first kind resonance of $H$ if and only if $S_1L^2(\mathbb{R}^2)\neq\{0\}$ and $S_2L^2(\mathbb{R}^2)=\{0\};$
\vskip0.2cm
\item~zero is a second kind resonance of $H$ if and only if $S_2L^2(\mathbb{R}^2)\neq\{0\}$ and $S_4L^2(\mathbb{R}^2)=\{0\};$
\vskip0.2cm
\item~zero is a third kind resonance of $H$ if and only if $S_4L^2(\mathbb{R}^2)\neq\{0\}$ and $S_5L^2(\mathbb{R}^2)=\{0\};$
\vskip0.2cm
\item~zero is an eigenvalue of $H$ ( i.e. the fourth kind resonance ) if and only if $S_5L^2(\mathbb{R}^2)\neq\{0\}.$
\end{itemize}
 In view of the statements above, we notice that the space $S_0L^2$ and $S_3L^2$  do not directly correspond to the resonance types, but as transient paces, they  actually play important roles in establishing the expansions of $M(\lambda)^{-1}$ if zero is the regular point and second resonance type ( see e.g.  Section \ref{the proof of inverse Pro} for the more details ).

Furthermore, since $vG_0v$ is a Hilbert-Schmidt operator, and $T_0$ is a compact perturbation of $U$ (see e.g. \cite{Erdogan-Goldberg-Green-14} and \cite{Green-Toprak19}). Hence $S_1$ is a finite-rank projection by the Fredholm alternative theorem. Notice that $S_5\leq S_4\leq S_3\leq S_2\leq S_1\leq S_0\leq Q$,  then all $S_j(j=1,\cdots,5)$ are finite-rank operators. Moreover, by the definitions of $S_j(j=0,1,\cdots,5)$, we have that
$S_iD_j=D_jS_i=S_i (i\geq j)$ and $S_iD_j=D_jS_i=D_j (i< j)$.

\begin{definition}
We say an operator $T:\,L^2(\mathbb{R}^2)\rightarrow L^2(\mathbb{R}^2)$ with kernel $T(\cdot, \cdot)$
 is absolutely bounded if the operator with the kernel $|T(\cdot, \cdot)|$ is bounded from $L^2(\mathbb{R}^2)$ into itsself.
\end{definition}

We remark that Hilbert-Schmidt and finite-rank operators are absolutely bounded operators. Moreover, we have the following proposition, which is similar to Lemma 8 in  \cite{Schlag-CMP} or Lemma 4.3 in\cite{Erdogan-Green-Toprak}.
\begin{proposition}\label{Pro-absulu-oper}
Let $|V(x)|\leq (1+|x|)^{-\beta}$ with $\beta>10$, then $S_0D_1S_0$ is absolutely bounded.
\end{proposition}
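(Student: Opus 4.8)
The plan is to follow the standard template for showing absolute boundedness of operators built from $T_0 = U + vG_0v$, as in Lemma 8 of \cite{Schlag-CMP} or Lemma 4.3 of \cite{Erdogan-Green-Toprak}. Recall that $D_1 = (S_0T_0S_0 + S_1)^{-1}$ is an operator on $S_0L^2$, and $S_0$ is the orthogonal projection onto $\ker(QvG_{-1}vQ) \subseteq QL^2$. The first step is to observe that $S_0$ itself is absolutely bounded: since $S_0 \le Q$ and $Q = I - P$ with $P$ a rank-one projection with kernel $\|V\|_{L^1}^{-1} v(x) v(y)$, the kernel of $Q$ is a bounded kernel plus a rank-one piece, and rank-one and finite-rank operators are absolutely bounded; in fact $S_0$ is finite rank by the Fredholm alternative (as noted in the excerpt, $S_1$ is finite rank, and a closer look gives $S_0$ finite rank as well since $QvG_{-1}vQ$ differs from a compact operator appropriately — or one simply uses that $S_0 \le Q$ and argues via the structure of $QvG_{-1}vQ$ directly). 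Either way, $S_0$ is absolutely bounded.

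Next I would handle $D_1$ acting on the finite-dimensional-after-projection structure. The key point is that $S_0 T_0 S_0 + S_1$ is invertible on $S_0 L^2$, and $S_0 L^2$ is \emph{not} finite dimensional in general (only $S_1 L^2$ is), so I cannot simply say $D_1$ is finite rank. Instead, write $S_0 T_0 S_0 = S_0 U S_0 + S_0 v G_0 v S_0$. The operator $S_0 v G_0 v S_0$ is Hilbert–Schmidt (since $G_0$ has kernel $a_0|x-y|^2 \ln|x-y|$ and $v$ decays, using $\beta > 10$ to get enough decay — this is exactly the kind of estimate in Lemma \ref{lem-M}), hence absolutely bounded. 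The operator $S_0 U S_0$ is a projection-sandwiched multiplication by $\pm 1$; the subtlety is that $U$ is just a bounded multiplication operator, so $S_0 U S_0$ need not be absolutely bounded on its own, but one writes $S_0 T_0 S_0 + S_1$ and inverts. Following the cited references, the trick is: on $S_0 L^2$ we have $S_0 U S_0 = S_0 - S_0 v G_0 v S_0 + (S_0 T_0 S_0 - S_0) $... more precisely, one uses the Neumann-series / Fredholm structure. Since $S_0 v G_0 v S_0$ is compact and absolutely bounded and $(S_0 T_0 S_0 + S_1)$ is invertible, write
\[
D_1 = \big( S_0 U S_0 + S_1 + S_0 v G_0 v S_0 \big)^{-1}.
\]
The operator $A := S_0 U S_0 + S_1$ is invertible on $S_0 L^2$ (this follows because $U$ is $\pm 1$-valued so $S_0 U S_0$ restricted appropriately is a bounded self-adjoint operator whose only possible non-invertibility is absorbed by adding the Riesz projection $S_1$; in the references this is where one checks $A^{-1}$ is absolutely bounded — indeed $A^{-1} = S_0 U S_0 + S_1 - (\text{finite rank corrections})$ because $U^2 = 1$, giving $A^{-1}$ absolutely bounded). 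Then by the resolvent identity,
\[
D_1 = A^{-1} - A^{-1} (S_0 v G_0 v S_0) D_1,
\]
and iterating (the Neumann series converges since $\lambda_0$ small or since we can choose parameters so that $S_0 v G_0 v S_0$ has small norm — actually one just needs the series to terminate in finitely many absolutely bounded terms, using that $D_1$ exists and $S_0vG_0vS_0$ is Hilbert–Schmidt) expresses $D_1$ as a finite sum of compositions of absolutely bounded operators, hence $D_1$ is absolutely bounded.

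Finally, since the class of absolutely bounded operators is closed under composition, $S_0 D_1 S_0$ is a composition $S_0 \cdot D_1 \cdot S_0$ of absolutely bounded operators and is therefore absolutely bounded, which is the claim. The main obstacle, and the step that needs genuine care rather than soft arguments, is establishing that the inverse $A^{-1}$ of $S_0 U S_0 + S_1$ on $S_0 L^2$ is absolutely bounded despite $S_0 L^2$ being infinite-dimensional — this is resolved exactly as in \cite{Schlag-CMP,Erdogan-Green-Toprak} by exploiting $U^2 = \mathrm{Id}$ to write $A^{-1}$ explicitly modulo finite-rank terms — together with verifying that the weighted kernel estimates for $S_0 v G_0 v S_0$ (and the error terms coming from the expansion of $R_0^\pm$) really do yield Hilbert–Schmidt operators under the hypothesis $\beta > 10$; both of these are where the decay assumption on $V$ enters quantitatively.
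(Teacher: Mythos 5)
Your overall route is the same as the paper's: split $T_0=U+vG_0v$, invert the $U$-part explicitly using $U^2=I$, and absorb $S_0vG_0vS_0$ as a Hilbert--Schmidt perturbation through a resolvent-type identity. However, there is a genuine gap at the pivotal step: you assert that $A:=S_0US_0+S_1$ is invertible on $S_0L^2$ because ``the only possible non-invertibility is absorbed by adding the Riesz projection $S_1$.'' That is not so. By definition $S_1$ is the Riesz projection onto $\ker(S_0T_0S_0)=\ker\big(S_0(U+vG_0v)S_0\big)$, which bears no relation to $\ker(S_0US_0)$; in particular zero can be regular (so $S_1=0$) while $S_0US_0$ has nontrivial kernel on $S_0L^2$ --- in the paper's proof this is exactly the case $\det(A)=0$ for the $3\times3$ moment matrix of $V$ --- and then your $A$ is simply not invertible, so the identity $D_1=A^{-1}-A^{-1}(S_0vG_0vS_0)D_1$ is meaningless. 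The paper handles this by a case distinction: when the moment matrix is nondegenerate it inverts $S_0US_0$ on $S_0L^2$ explicitly by Cramer's rule (this is where $U^2=1$ and the decay of $V$ enter, and it produces the formula $(S_0US_0)^{-1}g=Ug+c_0Uv+c_1Ux_1v+c_2Ux_2v$, i.e. $U$ plus an explicit finite-rank correction, hence absolutely bounded); when it is degenerate one must add $\pi_0$, the Riesz projection onto $\ker(S_0US_0)$ --- not $S_1$ --- and then write $S_0T_0S_0+S_1=(S_0US_0+\pi_0)+(S_0vG_0vS_0+S_1-\pi_0)$, the second summand still being Hilbert--Schmidt plus finite rank. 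Your sketch never supplies either the explicit inversion or the correct auxiliary projection, and these are precisely the substance of the proof.

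Two smaller points. First, the Neumann-series justification is off: there is no small parameter here ($S_0vG_0vS_0$ is a fixed operator, not something of size $\lambda_0$), so the series need not converge; but no iteration is needed at all, since a single application of $D_1=A^{-1}-A^{-1}(S_0vG_0vS_0)D_1$ (with the corrected $A$) exhibits $D_1$ as an absolutely bounded operator plus a product (bounded)$\cdot$(Hilbert--Schmidt)$\cdot$(bounded), which is Hilbert--Schmidt; this one-step identity is exactly the paper's $(S_0+W)^{-1}-S_0=-(S_0+W)^{-1}W$. Second, $S_0$ is not finite rank: $S_0L^2$ has finite codimension (it is the orthogonal complement of $\mathrm{span}\{v,x_1v,x_2v\}$), so $S_0$ is the identity minus a projection of rank at most three; it is absolutely bounded for that reason, while the Fredholm-alternative argument in the paper applies only to $S_1$ and hence to $S_j$ with $j\ge1$.
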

\begin{proof}
Recall that $D_1=(S_0T_0S_0+S_1)^{-1}$. Here we only prove the case $S_1=0$. Let $f\in S_0L^2$ and $S_0Uf=0$. Then $Uf=c_0v+c_1x_1v+c_2x_2v$, where $c_0, c_1$ and $c_2$ are some constants. Since $U^2=1$, then $f=c_0Uv+c_1Ux_1v+c_2Ux_2v$. Note that $f\in S_0L^2$ implies
$$ \langle f, v \rangle = \langle f, x_1v \rangle=\langle f, x_2v\rangle=0.$$
Hence, we have
\begin{equation}\label{equs-1}
\begin{cases}
c_0\langle Uv, v  \rangle+ c_1\langle Ux_1v,v \rangle+c_2\langle Ux_2v, v  \rangle=0,\\
c_0\langle Uv, x_1v  \rangle+ c_1\langle Ux_1v,x_1v \rangle+c_2\langle Ux_2v, x_1v  \rangle=0,\\
c_0\langle Uv, x_2v  \rangle+ c_1\langle Ux_1v,x_2v \rangle+c_2\langle Ux_2v, x_2v  \rangle=0.
\end{cases}
\end{equation}
Let $c=(c_0, c_1, c_2)^T$ and
$$A=
\begin{pmatrix}
\int V(x)dx & \int x_1V(x)dx &\int x_2V(x)dx\\
\int x_1V(x)dx & \int x_1^2V(x)dx &\int x_1x_2V(x)dx\\
\int x_2V(x)dx & \int x_1x_2V(x)dx &\int x_2^2V(x)dx
 \end{pmatrix}.
 $$
Notice that $Uv^2=V$, then $Ac=0$. Hence
 ker$_{S_0L^2}(S_0US_0)=\{0 \}$ if and only if $\det(A)\neq 0$.

\textbf{Case 1.} \  If $\det(A) \neq 0$, then we claim that $S_0US_0$ is invertible on $S_0L^2$. More precisely, by using the Cramer's Rule
we obtain that the solution of equation $S_0US_0f=g$ for any $g\in S_0L^2$, is expressed as
$$f=Ug+c_0Uv+c_1Ux_1v+c_2Ux_2v$$
with
$$c_0= \frac{\det(A_1)}{\det(A)}, \, c_1= \frac{\det(A_2)}{\det(A)}, \, c_3= \frac{\det(A_3)}{\det(A)},$$
where $b=\big( -\langle Ug,v\rangle, -\langle Ug,x_1v\rangle, -\langle Ug,x_2v\rangle\big)^T$, and
$A_i(i=1,2,3)$ are the matrixes  by replacing the $i$ column of matrix $A$ with $b$.

It is easy to check from the explicit formula above that $ S_0(S_0US_0)^{-1}S_0$ is absolutely bounded. Moreover,
we have the following identity on $S_0L^2$,
\begin{equation}\label{ab-id1}
\big(S_0(U+vG_0v)S_0\big)^{-1}=\big(S_0US_0\big)^{-1}\big(S_0+S_0vG_0v (S_0US_0)^{-1}S_0\big)^{-1}.
\end{equation}
Notice that $vG_0v$ is a Hilbert-Schmidt operator when $|V(x)|\lesssim (1+|x|)^{-\beta}$ with $\beta>10$,
then $W:=S_0vG_0v(S_0US_0)^{-1}S_0$ is also Hilbert-Schmidt operator. Finally, notice that the following identity holds on $S_0L^2$:
\begin{equation*}
\big(S_0+S_0vG_0v (S_0US_0)^{-1}S_0\big)^{-1}-S_0= - \big(S_0+ W  \big)^{-1}W.
\end{equation*}
Since the right hand side of the identity above is a Hilbert-Schmidt operator, hence by the identity (\ref{ab-id1}) we know that
$\big(S_0(U+ vG_0v)S_0\big)^{-1}$ is the composition of an absolutely bounded operator with the sum of an absolutely bounded operator
and a Hilbert-Schmidt operator. Thus, $S_0D_1S_0$ is absolutely bounded.

\textbf{Case 2.} \  If $\det(A) = 0$, that is, $S_0US_0$ is not invertible on $S_0L^2$, then $S_0US_0 +\pi_0$ is invertible on $S_0L^2$ where $\pi_0$ is the Riesz projection onto the kernel of $S_0US_0$. By the same argument as the case 1 we obtain that $\big(S_0US_0 +\pi_0\big)^{-1}$ is absolutely
 bounded on $S_0L^2$. Finally, by the identity
 $$ \big(S_0(U+ vG_0v)S_0\big)^{-1}= \big( S_0US_0+\pi_0 + S_0vG_0vS_0 - \pi_0 \big)^{-1} $$
on $S_0L^2$, we still obtain the desired statement.
\end{proof}


In the following, we further give the specific characterizations
of projection spaces $S_jL^2$.
\begin{lemma}\label{projiction-spaces-SjL2}
Let $S_j(j=0,1,\cdots, 5)$ be the projection operators given in Definition \ref{definition of resonance}. Then

(i) $S_0L^2=  \{ f \in QL^2 \big| \langle x_jv, f\rangle=0,j=1,2 \}$.

(ii) $S_1L^2=\{ f\in S_0L^2 \big| S_0T_0f=0 \}$.

(iii) $S_2L^2=\{ f\in S_1L^2 \big| QT_0f=0 \}$.

(iv) $S_3L^2 = \{ f\in S_2L^2 \big| \langle  |x|^2v, f\rangle =0  \}.$

(v)  $S_4L^2= \{ f\in S_3L^2 \big| \langle x_ix_jv, f\rangle =0, i, j=1,2, \text{and}\ PT_0f =0 \}.$

(vi) $S_5L^2= \{ f\in S_4L^2 \big| \langle x_ix_jx_kv, f\rangle =0, i,j,k=1,2\}.$
\end{lemma}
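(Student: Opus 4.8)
The plan is to unwind each projection definition from Definition 2.10 recursively, using at each stage the fact that $S_{j}L^2$ is (by construction) the kernel on $S_{j-1}L^2$ of an explicitly given operator, together with the concrete integral kernels $G_k(x,y)$ from \eqref{def-G0-G3}. The unifying mechanism is the following elementary observation: if $K$ is a self-adjoint, non-negative operator on a Hilbert space and $S$ is a projection with $SK S\ge 0$, then $SKSf=0 \iff \langle SKSf,f\rangle =0 \iff K^{1/2}Sf=0$; and when $K$ has a finite-dimensional range spanned by vectors $w_1,\dots,w_m$ (here tensors of the form $x^\alpha v$), this forces $\langle w_\ell, Sf\rangle = 0$ for all $\ell$. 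So I would proceed item by item.

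For (i): $S_0$ is the projection onto $\ker_{QL^2}(QvG_{-1}vQ)$. Since $G_{-1}(x,y)=|x-y|^2=|x|^2-2x\cdot y+|y|^2$, for $f\in QL^2$ one has $Pf=0$, i.e. $\langle v,f\rangle=0$, so the $|x|^2$ and $|y|^2$ pieces of $vG_{-1}vf$ drop out after applying $Q$ and pairing, leaving $QvG_{-1}vQf$ governed entirely by the rank-two piece $-2\sum_j (x_jv)\otimes (x_jv)$. Then $QvG_{-1}vQf=0 \iff \langle x_jv,f\rangle=0$ for $j=1,2$ (using $\langle v,f\rangle =0$ to replace $Qx_jv$ pairings by $x_jv$ pairings), which is exactly the stated description. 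For (iv) and (vi) the argument is identical in spirit: on $S_2L^2\subseteq S_1L^2\subseteq S_0L^2$ the already-imposed moment conditions $\langle v,f\rangle=\langle x_jv,f\rangle=0$ kill all but one new rank-one contribution coming from $vG_{-1}vvG_{-1}v$ (resp. the analogous higher operator $S_4vG_2vS_4$), namely the one attached to $\langle |x|^2 v, f\rangle$ (resp. $\langle x_ix_jx_kv,f\rangle$), and setting $T_2=S_2vG_{-1}vvG_{-1}vS_2$ equal to zero becomes equivalent to the vanishing of that single moment. Concretely for (iv): $vG_{-1}vf = v\big(|x|^2\langle v,f\rangle - 2\sum_j x_j\langle x_jv,f\rangle + \langle |y|^2v,f\rangle\big)$, and on $S_2L^2$ the first two groups vanish, so $vG_{-1}vf = \langle |x|^2v,f\rangle\, v$; then $T_2f = \langle |x|^2v,f\rangle^2\, \|V\|_1^{-1}\cdots$-type scalar multiple of $S_2 v$, giving $T_2 f =0 \iff \langle |x|^2v,f\rangle = 0$ (one must also check $S_2v\neq 0$, which follows since $v\notin S_0L^2$ would contradict $P=\|V\|_1^{-1}v\langle v,\cdot\rangle$). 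Part (vi) is the same computation one order higher, using $G_2(x,y)=|x-y|^6$ expanded and the moment conditions accumulated through $S_4L^2$.

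For (ii), (iii), (v), which involve $T_0=U+vG_0v$ rather than a non-negative operator, I would argue slightly differently. Part (ii) is immediate from the definition: $S_1$ is the Riesz projection onto $\ker_{S_0L^2}(S_0T_0S_0)$, and for $f\in S_0L^2$ we have $S_0T_0S_0 f = S_0T_0 f$, so $f\in S_1L^2 \iff S_0T_0f=0$. For (iii), $S_2$ is the Riesz projection onto $\ker_{S_1L^2}(T_1)$ with $T_1=-S_1T_0(Q-S_0)\big((Q-S_0)vG_{-1}v(Q-S_0)\big)^{-1}(Q-S_0)T_0S_1$; since the middle inverse (call it $d$) is positive-definite on $(Q-S_0)L^2$ by \eqref{positive-matrix} of Lemma \ref{lem-d-inver}, $T_1$ is of the form $-B^* d B$ with $B=(Q-S_0)T_0S_1$, hence $T_1f=0 \iff Bf=0 \iff (Q-S_0)T_0f=0$; combined with $f\in S_1L^2$ (so $S_0T_0f=0$, in particular $(Q-S_0)$ and $S_0$ components pin down $QT_0f$), this is equivalent to $QT_0f=0$. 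For (v), $S_4$ is the Riesz projection onto $\ker_{S_3L^2}(T_3)$ with $T_3=S_3T_0^2S_3 - c_\pm a_\pm S_3vG_1vS_3$; one expands $T_0^2 f = T_0(vG_0v f + Uf)$ and $vG_1vf$ using $G_1(x,y)=|x-y|^4$ and the moments already killed on $S_3L^2$, reducing $T_3 f$ to a combination of the new data $\langle x_ix_jv,f\rangle$ ($i,j=1,2$) and $PT_0f$; identifying the quadratic form of $T_3$ and showing it is definite on the relevant complement then yields $T_3f=0 \iff \langle x_ix_jv,f\rangle=0 \text{ and } PT_0f=0$.

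The main obstacle I anticipate is not any single identity but the bookkeeping in parts (iii) and (v): one must carefully track how the moment and orthogonality conditions accumulated at earlier stages ($S_0\supseteq S_1\supseteq\cdots$) collapse the many terms in the expansions of $T_0^2$, $vG_1v$, and the conjugated operator $T_1$, and one must verify the relevant reduced quadratic forms are definite (so that "operator $=0$" really is equivalent to "all listed functionals $=0$", with no extra hidden constraint and no missing one). This definiteness is exactly where Lemma \ref{lem-d-inver} and the structure $-B^*dB$ are used in (iii); for (v) one needs the analogous sign information on $S_3T_0^2S_3 - c_\pm a_\pm S_3vG_1vS_3$, which should follow from the same circle of ideas (writing the cross terms via $P$ and $Q$ and using that $G_0, G_1$ give non-negative quadratic forms after the projections). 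The remaining parts (i), (iv), (vi) are, by contrast, direct computations with $|x-y|^2$ and $|x-y|^6$ and the already-established moment vanishing.
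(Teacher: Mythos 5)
Your proposal is correct and follows essentially the same route as the paper's proof: for each $j$ you unwind the definition of $S_j$, use the orthogonality/moment conditions accumulated at the earlier stages to reduce the kernels $|x-y|^2$, $|x-y|^4$, $|x-y|^6$, and exploit the (semi)definiteness of the resulting quadratic forms — in particular the strict negativity of $(Q-S_0)vG_{-1}v(Q-S_0)$ from \eqref{positive-matrix} of Lemma \ref{lem-d-inver} for part (iii) and the sum-of-squares structure of $\langle T_3f,f\rangle$ (via $QT_0f=0$, so $T_0f=PT_0f$, together with $c_\pm a_\pm<0$) for part (v), exactly as the paper does. Two harmless slips: in (iii) the inverse $d$ is negative, not positive, definite (the conclusion $T_1f=0\iff(Q-S_0)T_0f=0$ is unaffected), and in (iv) the vector multiplying $\langle |x|^2v,f\rangle$ is $S_2(vG_{-1}vv)$ rather than $S_2v$ (which is $0$ since $Qv=0$); your own announced quadratic-form mechanism, $\langle T_2f,f\rangle=\|vG_{-1}vf\|_{L^2}^2=\|V\|_{L^1}\langle |x|^2v,f\rangle^2$, repairs this and is precisely the paper's argument.
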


\begin{remark}\label{rem-dim1}
We remark that these spaces $(S_1-S_2)L^2$, $(S_2-S_4)L^2$, $(S_4-S_5)L^2$ and $S_5L^2$  correspond to  each zero resonance type, respectively. In particular, we have that  the dimensions of $S_4L^2$ and $S_5L^2$ are finite from Remark \ref{rem5-2} below. The dimension
of $(S_1-S_4)L^2$ is $3$ by  Propositions \ref{lemma-spectral-S1L2} and  \ref{lemma-spectral-S3L2}), the dimension of
$(S_2-S_4)L^2$ is $1$ by Propositions \ref{lemma-spectral-S2L2} and  \ref{lemma-spectral-S3L2}, hence
$(S_1-S_2)L^2$ is  $2$.
The dimension of $(S_4-S_5)L^2$ is  $4$ by (v) and (vi) of Lemma \ref{projiction-spaces-SjL2}. As a consequence, $S_i(i=1,\cdots, 5)$ are finite rank operators.
\end{remark}
\begin{proof}
(i) Let $f\in S_0L^2$. Then we have
$$0=\langle QvG_{-1}vQ f, f\rangle = \langle vG_{-1}vf, f\rangle.$$
By using (\ref{def-G0-G3}) and the orthogonality $\langle v, f\rangle=0$,
one has
\begin{equation*}
	\begin{split}
0=&\langle v G_{-1}vf, f\rangle
= \int_{\mathbb{R}^2}v(x)(G_{-1}vf)(x)\overline{ f(x)}dx\\
=& \int_{\mathbb{R}^2}v(x)\overline{f(x)}\int_{\mathbb{R}^2}|x-y|^2v(y)f(y)dydx\\
=&\int_{\mathbb{R}^2}v(x)\overline{f(x)}\int_{\mathbb{R}^2}\big(|x|^2-2(x_1y_1+x_2y_2)+|y|^2\big)
         v(y)f(y)dydx\\
=& -2\Big(  \int_{\mathbb{R}^2}x_1v(x)f(x) dx\Big)^2
  - 2\Big(  \int_{\mathbb{R}^2}x_2v(x)f(x) dx\Big)^2,
	\end{split}
\end{equation*}
which leads to $\langle x_jv, f\rangle=0, j=1,2 $.

On the other hand, if $\langle x_jv, f\rangle=0( j=1,2) $ and $f\in QL^2$, we can obtain
$QvG_{-1}vQf=0$.

(ii) Let $f\in S_1L^2$. Then  $f\in S_0L^2$ and we have $S_0T_0f=0$ by the definition of $S_1$.

(iii) Let $f\in S_2L^2$, then $f\in S_1L^2$, therefore $S_0T_0f=0$.
Note that  $$ QT_0f  = QT_0f -S_0T_0f =(Q-S_0)T_0f,$$ we only need to prove $(Q-S_0)T_0 f =0 $.
Indeed, if $f\in S_2L^2=\hbox{ker}(T_1)$, then we have
\begin{equation}
  \begin{split}
0=&\langle -T_1f,f\rangle
=  \Big\langle S_1T_0(Q-S_0)\big((Q-S_0)vG_{-1}v(Q-S_0)\big)^{-1}(Q-S_0)T_0S_1f, \ f  \Big\rangle \\
=&\Big \langle\big((Q-S_0)vG_{-1}v(Q-S_0)\big)^{-1}(Q-S_0)T_0f,\,   (Q-S_0)T_0f  \Big\rangle.
\end{split}
\end{equation}
Since $ \big((Q-S_0)vG_{-1}v(Q-S_0)\big)^{-1}<0 $ on the space $(Q-S_0)L^2$ by the \eqref{positive-matrix} of Lemma \ref{lem-d-inver}, so we must have $ (Q-S_0)T_0f = 0$.

On the other hand ,  if $f\in QL^2$ and $\langle x_jv, f\rangle=0(j=1,2) $ and $QT_0 f = 0 $,  then it is easy to check that $\langle T_1f, f\rangle=0 $.

(iv) Let $f\in S_3L^2:= \hbox{ker}(T_2) =\hbox{ker}(S_2vG_{-1}vvG_{-1}vS_2)$.
Then we have
\begin{align*}
0=&\langle S_2vG_{-1}vvG_{-1}vS_2 f, f\rangle=\langle vG_{-1}vf, vG_{-1}vf\rangle.
\end{align*}
Note that the orthogonality $\langle v, f\rangle=0$ and $\langle x_jv, f\rangle=0(j=1,2)$, by (\ref{def-G0-G3}) we have
\begin{equation*}
	\begin{split}
0=&\langle v G_{-1}vf, v G_{-1}vf\rangle=\int_{\mathbb{R}^2}\Big| v(x)(G_{-1}vf)(x) \Big|^2 \, dx\\
=& \int_{\mathbb{R}^2}\Big| v(x)\Big(\int_{\mathbb{R}^2}|x-y|^2v(y)f(y)\,dy\Big)\Big|^2 \,dx\\
 =&\int_{\mathbb{R}^2} |v(x)|^2 \Big(\int_{\mathbb{R}^2}|y|^2 v(y)f(y) dy \Big)^2dx,\\
 =&\langle |x|^2v, f \rangle^2\|V\|_{L^1},
	\end{split}
\end{equation*}
which leads to $ \langle |x|^2v, f \rangle=0$.

On the other hand, if $f\in S_2L^2$ and $ \langle |x|^2v, f \rangle=0$, we can obtain $ T_2f=0 $.

(v)
Let $f\in S_4L^2$. Note that $QT_0f=0$ and $P=\frac{\langle v, \cdot \rangle}{\|V\|_{L^2}}v$, we have
$$S_3T_0^2S_3f(x)= S_3T_0(PT_0f)(x)= \frac{\langle v, T_0f \rangle}{\|V\|_{L^1}} S_3(T_0 v)(x), $$
which leads to
\begin{equation}\label{S3T0S3}
  \begin{split} \langle S_3T_0^2S_3f, f\rangle
  =&\frac{\langle v, T_0f \rangle}{\|V\|_{L^1}}  \langle S_3(T_0 v), f\rangle
=\frac{\langle v, T_0f \rangle}{\|V\|_{L^1}}  \langle  v, T_0S_3f\rangle
=\frac{\langle v, T_0f \rangle^2}{\|V\|_{L^1}}.
\end{split}
\end{equation}
Since
$$|x-y|^4 = |x|^4 -4|x|^2(x_1y_1 +x_2y_2)-4|y|^2(x_1y_1+x_2y_2) + 4(x_1^2y_1^2+ x_2^2y_2^2+2x_1x_2y_1y_2)
+2|x|^2|y|^2 +|y|^4,$$
and (\ref{def-G0-G3}), one has
\begin{equation}\label{S3vG1vS3}
  \begin{split}
\langle S_3vG_1vS_3f, f \rangle = & \langle vG_1vf, f  \rangle\\
=&\int_{\mathbb{R}^2}\Big(\int_{\mathbb{R}^2}|x-y|^4v(y)f(y)dy\Big) v(x)\overline{f(x)}dx\\
 =& 4\langle x_1^2v, f \rangle^2 +4\langle x_2^2v, f \rangle^2+8\langle x_1x_2v, f \rangle^2.
   \end{split}
\end{equation}
Combing (\ref{S3T0S3}) and (\ref{S3vG1vS3}), we have
\begin{equation*}
  \begin{split}
0=\langle T_3f, f \rangle
=&\frac{\langle v, T_0f \rangle^2}{\|V\|_{L^1}} + \frac{\|V\|_{L^1}}{1024}\langle x_1^2v, f \rangle^2
+\frac{\|V\|_{L^1}}{1024}\langle x_2^2v, f \rangle^2+\frac{\|V\|_{L^1}}{512}\langle x_1x_2v, f \rangle^2 \geq 0,
   \end{split}
\end{equation*}
which implies that  $\langle v, T_0f \rangle=0 $ and
$ \langle x_ix_jv, f \rangle= 0(i,j=1,2 )$. Note that,
$ \langle v, T_0f \rangle= 0 $ means $ PT_0f=0$ by $P= \frac{\langle v, \cdot \rangle}{\|V\|_{L^1}} v$,
we obtain that $ T_0f =PT_0f +QT_0f=0$.

On the other hand, if  $f\in S_3L^2$ and $PT_0f=0$,  it is easy to check that
$f\in S_4L^2$.

(vi) Let $f\in S_5L^2$. We only need to prove $ \langle x_ix_jx_kv, f\rangle =0,\
i,j,k=1,2  $. Indeed, notice that
\begin{equation*}
   \begin{split}
|x-y|^6=& |x|^6+3|x|^4|y|^2-6|x|^4x\cdot y+12|x|^2(x\cdot y)^2 -12|x|^2|y|^2x\cdot y -8(x\cdot y)^3\\
&+12|y|^2(x\cdot y)^2-6|y|^4x\cdot y +3|x|^2|y|^4 +|y|^6,
\end{split}
\end{equation*}
by using  $ \langle v, f\rangle =\langle x_iv, f \rangle
= \langle x_ix_jv, f\rangle=0, 1 \leq i,j \leq 2$, we have
\begin{equation*}
   \begin{split}
0=&\langle S_4vG_2vS_4 f,  f \rangle= \langle vG_2vf,  f \rangle\\
=&-12 \int_{\mathbb{R}^2}v(x)\int_{\mathbb{R}^2} |x|^2|y|^2(x\cdot y )v(y)f(y)dy  \overline{f(x)}dx
- 8\int_{\mathbb{R}^2}v(x)\int_{\mathbb{R}^2} (x\cdot y)^3 v(y)f(y)dy  \overline{f(x)}dx.
\end{split}
\end{equation*}
Note that $ (x\cdot y)^2= x_1^2y_1^2+ 2x_1x_2y_1y_2+ x_2^2y_2^2$,
$ (x\cdot y)^3= x_1^3y_1^3+ 3x_1^2y_1^2x_2y_2+ 3x_1y_1x_2^2y_2^2+x_2^3y_2^3$,
then
\begin{equation*}
   \begin{split}
0=&-12 \sum_{j=1}^2 \Big|\int_{\mathbb{R}^2} |x|^2x_j v(x)f(x)dx\Big|^2
-8\sum_{j=1}^2\Big|\int_{\mathbb{R}^2} x_j^3 v(x)f(x)dx\Big|^2\\
&-24\Big| \int_{\mathbb{R}^2} x_1^2x_2 v(x)f(x)dx\Big|^2
-24\Big|\int_{\mathbb{R}^2} x_1x_2^2v(x)f(x)dx\Big|^2,
\end{split}
\end{equation*}
which leads to $ \langle x_ix_jx_kv, f\rangle=0,\, 1\leq i,j,k\leq 2$.

On the other hand, if $ f\in S_4L^2$ and $ \langle x_ix_jx_kv, f\rangle=0(1 \leq i,j,k \leq 2)$,
it is easy to check that $ S_4vG_2vS_4f=0$.

Thus the proof of this lemma is completed.
\end{proof}

Throughout the paper, $\Gamma^k_{i,j}=\Gamma^k_{i,j}(\lambda)$ denote the absolutely bounded operators on $L^2(\mathbb{R}^2)$ which depend on $\lambda$ and satisfy
\begin{equation*}
\big\|\Gamma^k_{i,j}(\lambda)\big\|_{L^2\rightarrow L^2}=O_1(1),\, 0<\lambda \ll 1.
\end{equation*}
For the asymptotic expansions of $\big(M^\pm(\lambda)\big)^{-1}$
near $\lambda=0$, we have the following theorem.
\begin{theorem}\label{thm-main-inver-M}
Let $g_0^\pm(\lambda)$ and $\widetilde{g}_2^\pm(\lambda)$ be functions defined in Lemma \ref{lem-reso}.  Suppose that $|V(x)| \lesssim (1+|x|)^{-\beta}$ with some $\beta>0$. Then for $0<\lambda \ll 1$,  the following
conclusions hold:

(i) If zero is a regular point and $\beta > 10$, then
\begin{equation}\label{thm-regularinver-M0 }
	\begin{split}
\big(M^\pm(\lambda)\big)^{-1}=& S_0D_1S_0+ g_0^\pm(\lambda)^{-1}Q\Gamma^0_{0,1}Q
  + g_0^\pm(\lambda)^2\lambda^2S_0\Gamma^0_{2,1}S_0\\
&+ g_0^\pm(\lambda)\lambda^2\Big( S_0\Gamma^0_{2,2}Q+Q\Gamma^0_{2,3}S_0\Big)
 + O_1(\lambda^2).
\end{split}
\end{equation}

(ii) If zero is a resonance of the first kind and $\beta > 14$, then
\begin{equation}\label{thm-resoinver-M1 }
	\begin{split}
\big(M^\pm (\lambda)\big)^{-1}
 = &g^\pm_0(\lambda) S_1D_2 S_1+\Big( S_0\Gamma_{0,1}^1 S_0 + S_1\Gamma_{0,2}^1 Q
+ Q\Gamma_{0,3}^1 S_1\Big)+g^\pm_0(\lambda)^{-1}Q\Gamma_{0,4}^1Q\\
&+g^\pm_0(\lambda)^4\lambda^2S_1\Gamma_{2,1}^1S_1
+g^\pm_0(\lambda)^3\lambda^2\Big(S_1\Gamma_{2,2}^1Q
+ Q\Gamma_{2,3}^1S_1 \Big)
+g^\pm_0(\lambda)^2\lambda^2\\
&\Big(Q \Gamma_{2,4}^1Q + S_1\Gamma_{2,5}^1 +\Gamma_{2,6}^1S_1 \Big)
+g^\pm_0(\lambda)\lambda^2\Big( Q\Gamma_{2,7}^1 +\Gamma_{2,8}^1Q  \Big)+ O_1(\lambda^2).
\end{split}
\end{equation}

(iii) If zero is a resonance of the second kind and $\beta > 18$, then
\begin{equation}\label{thm-resoinver-M2 }
  \begin{split}
\big(M^\pm(\lambda)\big)^{-1}
=&\frac{h^\pm(\lambda)}{\lambda^2} S_2\Gamma^2_{-2,1}S_2
+g^\pm_0(\lambda)^5\Big( S_2\Gamma^2_{0,1} +\Gamma^2_{0,2}S_2 +
S_1\Gamma^2_{0,3}Q + Q\Gamma^2_{0,4}S_1 \Big)\\
&+g^\pm_0(\lambda)^{10}\lambda^2 \Big( S_2\Gamma^2_{2,1}
+\Gamma^2_{2,2}S_2 +S_1\Gamma^2_{2,3}Q + Q\Gamma^2_{2,4}S_1 \Big)\\
&+g^\pm_0(\lambda)\lambda^2\Big( S_2\Gamma^2_{2,5} +\Gamma^2_{2,6}S_2
 +S_1\Gamma^2_{2,7}Q + Q\Gamma^2_{2,8}S_1 \Big)+  O_1(\lambda^2),
\end{split}
\end{equation}
where $h^\pm(\lambda)= \Big(c_1+ g^\pm_0(\lambda)^{-1}c_2 + g^\pm_0(\lambda)^{-2}c_3\Big)^{-1}$,
$ c_1$ and $c_2$  are constants defined in Lemma \ref{lem-d1-inver}.
Moreover, $ \displaystyle  h^\pm(\lambda)=O_1(1)$ when $\lambda$ is small enough.

(iv) If zero is a resonance of the third kind and $\beta > 18$, then
\begin{equation}\label{thm-resoinver-M3}
   \begin{split}
\big(M^\pm(\lambda) \big)^{-1}
=& \frac{ \widetilde{g}_2^\pm(\lambda)^{-1} }{\lambda^4}S_4\Gamma_{-4,1}^3S_4 -\frac{\widetilde{g}_2^\pm(\lambda)^{-2}}{\lambda^4}S_4\Gamma_{-4,2}^3S_4
 +O_1\big(\lambda^{-4}(\ln\lambda)^{-3}\big).
\end{split}
\end{equation}

(v) If zero is a resonance of the fourth kind ( i.e. zero eigenvalue )and $\beta > 18$, then
\begin{equation}\label{thm-resoinver-M4}
   \begin{split}
\big(M^\pm(\lambda)\big)^{-1}
= &\frac{1}{\lambda^4}S_5D_6S_5 +\frac{\widetilde{g}_2^\pm(\lambda)^{-1}}{\lambda^4} \Big(S_4\Gamma_{-4,1}^4S_4\Big)
 +\frac{\widetilde{g}_2^\pm(\lambda)^{-2}}{\lambda^4}S_4\Gamma_{-4,2}^4S_4\\
&+ O_1\big(\lambda^{-4}(\ln\lambda)^{-3}\big).
\end{split}
\end{equation}
Here $S_j(j=0,1,\cdots,5)$ and $D_j(j=1,\cdots)$ are the operators given in Definition \ref{definition of resonance},
$\Gamma^k_{i,j}$ denote the absolutely bounded operators on $L^2$ depending on $\lambda$ and satisfying
\begin{equation*}
\big\|\Gamma^k_{i,j}(\lambda)\big\|_{L^2\rightarrow L^2}=O_1(1),\, 0<\lambda \ll 1.
\end{equation*}
\end{theorem}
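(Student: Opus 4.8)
The strategy is a symmetric-resolvent-expansion argument based on the inversion lemma for two-by-two operator-matrix block decompositions (Lemma \ref{lemma-JN-matrix} as referenced in the excerpt), applied iteratively to the expansions of $M^\pm(\lambda)$ supplied by Lemma \ref{lem-M}. In each of the five cases one starts from the relevant expansion \eqref{Mpm-1}, \eqref{Mpm-2} or \eqref{Mpm3}, notes that $M^\pm(\lambda)$ fails to be invertible at $\lambda=0$ because of the singular rank-one term $\frac{a_\pm}{\lambda^2}P$ (and, in the resonant cases, further degeneracies on $QL^2$, $S_0L^2$, $S_1L^2$, $S_3L^2$, $S_4L^2$), and peels off the kernel directions one projection at a time. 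The general mechanism is the Feshbach/Schur complement: writing $L^2 = PL^2 \oplus QL^2$, inverting $M^\pm(\lambda)$ reduces to inverting the Schur complement on $QL^2$, which up to leading order is $g_0^\pm(\lambda) QvG_{-1}vQ + QT_0Q + \dots$; this itself is not invertible on $QL^2$ (since $S_0\neq 0$), so one repeats the splitting $QL^2 = (Q-S_0)L^2\oplus S_0L^2$, and so on down the chain $S_0\supseteq S_1\supseteq\cdots\supseteq S_5$.

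**Order of the steps.** First I would record the algebraic inversion identity for $2\times2$ block operators (the analogue of Jensen–Nenciu's lemma) and the bookkeeping conventions for the $\Gamma^k_{i,j}(\lambda)$ symbols, checking that sums, products and (bounded) inverses of $O_1(1)$ absolutely bounded operators are again of that type, and that $g_0^\pm(\lambda)^{-1}=O_1((\ln\lambda)^{-1})$, $\widetilde g_2^\pm(\lambda)^{-1}=O_1((\ln\lambda)^{-1})$, $\lambda^2=O_1(\lambda^2)$, etc., obey the obvious multiplication rules. Second, the regular case (i): split off $P$, form the Schur complement $d_Q(\lambda)$ on $QL^2$, split off $S_0$, show that $(Q-S_0)vG_{-1}v(Q-S_0)$ is invertible on the two-dimensional space $(Q-S_0)L^2$ (this is Lemma \ref{lem-d-inver}, already cited), so that the inverse operator $d$ exists for small $\lambda$ and has a $g_0^\pm(\lambda)^{-1}$ expansion, then use invertibility of $S_0T_0S_0$ (Proposition \ref{Pro-absulu-oper} gives absolute boundedness of $D_1$) to close the recursion and collect the terms into the form \eqref{thm-regularinver-M0 }. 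Third, cases (ii)–(v): in each, carry the expansion of $M^\pm(\lambda)$ to one higher order in $\lambda$ (hence the stronger decay hypotheses $\beta>14$, $\beta>18$), and apply the block inversion one more level down — in (ii) at $S_1$ with $D_2$, in (iii) at $S_3$ where the new obstruction $d_1 = (g_0^\pm(\lambda)^2 L_1 + g_0^\pm(\lambda)L_2+L_3)^{-1}$ must be shown to exist on the one-dimensional $(S_2-S_3)L^2$ via $\mathrm{tr}(L_1)\neq0$ (Lemma \ref{lem-d1-inver}), and in (iv),(v) at $S_4$, $S_5$ using $T_4=S_4vG_2vS_4$, $T_5=S_4vG_3vS_4$ and the fact (Lemma \ref{lemma-S5vG3vS5}) that $T_5$ is always invertible on $S_5L^2$. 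Throughout, the $\frac1{\lambda^2}$ in (iii) and $\frac1{\lambda^4}$ in (iv),(v) are tracked by multiplying the $QL^2$-inverse back through the off-diagonal blocks $vR_0^\pm v$ coupling to $PL^2$.

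**Main obstacle.** The genuinely hard steps are the two invertibility statements about the small-dimensional reduced operators — that $(Q-S_0)vG_{-1}v(Q-S_0)$ is (negative) definite on $(Q-S_0)L^2$ (Lemma \ref{lem-d-inver}) and that $\mathrm{tr}(L_1)\neq0$ on $(S_2-S_3)L^2$ (Lemma \ref{lem-d1-inver}) — since they are what make the formally leading logarithmic terms actually invertible and thereby control the size of $(M^\pm(\lambda))^{-1}$; these I would quote from the cited lemmas. The remaining difficulty is organizational rather than conceptual: because the expansion of $M^\pm(\lambda)$ carries \emph{two} independent logarithmic factors $g_0^\pm(\lambda)$ and $\widetilde g_2^\pm(\lambda)$, each round of block inversion generates powers and inverse powers of $g_0^\pm(\lambda)$ multiplied by powers of $\lambda^2$, and one must carefully verify that every such product is absorbed into one of the listed $\Gamma^k_{i,j}$ terms or into the error $O_1(\lambda^2)$ (resp. $O_1(\lambda^{-4}(\ln\lambda)^{-3})$), in particular checking the differentiated bounds $\lambda^{s+1}|\partial_\lambda(\cdot)|$ that the $O_1$ notation demands. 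I expect the bulk of the write-up to be this disciplined but routine term-chasing, with the differentiability of $g_0^\pm,\widetilde g_2^\pm$ (which are smooth in $\lambda$ for $\lambda>0$ with $\partial_\lambda g_0^\pm(\lambda)=a_0/\lambda$) making the $\partial_\lambda$ estimates mechanical.
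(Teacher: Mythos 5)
Your plan matches the paper's proof: both proceed by the iterated Jensen--Nenciu/Feshbach inversion along the chain $P, Q, S_0, S_1, \dots, S_5$, with Neumann series at each stage, the two key invertibility inputs being exactly Lemma \ref{lem-d-inver} on the two-dimensional space $(Q-S_0)L^2$ and Lemma \ref{lem-d1-inver} ($\mathrm{tr}(L_1)\neq 0$) on the one-dimensional space $(S_2-S_3)L^2$, plus Lemma \ref{lemma-S5vG3vS5} for the eigenvalue case, and the remainder being the bookkeeping of powers of $g_0^\pm(\lambda)$, $\widetilde g_2^\pm(\lambda)$ and $\lambda^2$ into the $\Gamma^k_{i,j}$ and error terms. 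No gaps beyond what you correctly identify as routine term-chasing.
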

As we will see, since the degenerate term $g_0^\pm(\lambda) G_{-1}(x,y)$ with the logarithm factor occurs in the expansions of the free resolvent $R^\pm_0(\lambda^4)(x,y)$ in $\mathbb{R}^2$, it leads to quite complicate calculations for the inversion expansion of $\big(M^\pm(\lambda)\big)^{-1}$. More seriously, such logarithm factor also lead that all expansions terms essentially have the highest singularity $O(\lambda^{-4})$ with only logarithm order differences in the cases of  the third kind and fourth kind resonances. Note that  the orthogonality of projections $S_j(j=1,2,3,4, 5)$ can not be used to smooth the singularity of the remainder terms in the expansions \eqref{thm-resoinver-M3} and \eqref{thm-resoinver-M4}, so this explains why we need a higher regular term $H^{3/2}$ in the lower energy decay estimates \eqref{thm-low-3} and \eqref{thm-low-4} of  Section \ref{proof of main results} below.  We will give the proof of this theorem in Section \ref{the proof of inverse Pro}.

\section{The proof of Theorem \ref{thm-main results}} \label{proof of main results}
In this section, we are devoted to proving Theorem \ref{thm-main results}.
Choosing a fixed even function $ \varphi \in C^\infty_c(\mathbb{R})$ such that
 $\varphi(s)=1$ for
$ |s|\leq \frac{1}{2}$ and $ \varphi(s)=0$ for $ |s| \geq 1$.
Let $\varphi_N(s)=\varphi(2^{-N}s)- \varphi(2^{-N+1}s),\ N\in \mathbb{Z}$. Then
$\varphi_N(s)=\varphi_0(2^{-N}s)$,
$\hbox{supp}\varphi_0 \subset [ \frac{1}{4}, 1]$, and
\begin{equation}\label{unit-deco}
\sum_{N=-\infty}^{\infty}\varphi_0(2^{-N}s)=1,\  s\in \mathbb{R}\setminus \{0\}.
\end{equation}
By using  Stone's formula, one has
\begin{equation}\label{Stone-Paley}
   \begin{split}
H^\alpha e^{-itH}P_{ac}(H)f=& \frac{2}{ \pi i } \int_0^\infty e^{-it\lambda^4}\lambda^{3+4\alpha}
[R_V^+(\lambda^4) -R_V^-(\lambda^4)]f d\lambda\\
= &\frac{2}{ \pi i } \int_0^\infty e^{-it\lambda^4}\lambda^{3+4\alpha}
\sum_{N=-\infty}^{\infty}\varphi_0(2^{-N}\lambda)[R_V^+(\lambda^4) -R_V^-(\lambda^4)]f d\lambda\\
=&\frac{2}{ \pi i }\int_0^\infty e^{-it\lambda^4}\lambda^{3+4\alpha}\chi(\lambda)[R_V^+(\lambda^4) -R_V^-(\lambda^4)]f d\lambda\\
&+\frac{2}{ \pi i } \int_0^\infty e^{-it\lambda^4}\lambda^{3+4\alpha}\widetilde{\chi}(\lambda)[R_V^+(\lambda^4) -R_V^-(\lambda^4)]f d\lambda,
\end{split}
\end{equation}
where $\chi(\lambda)=\sum\limits_{N=-\infty}^{N'}\varphi_0(2^{-N}\lambda)$  and
$\widetilde{\chi}(\lambda)=\sum\limits_{N=N'+1}^{+\infty}\varphi_0(2^{-N}\lambda)$ for some $ N'<0$. We remark that the choice of the constant $N'$
 depends on a sufficiently small neighborhood of $\lambda=0$ in which the expansions of all resonance types in Theorem \ref{thm-main-inver-M} hold.

In order to prove Theorem \ref{thm-main results}, it suffices to show the following two theorems.

\begin{theorem}\label{thm-low}(\textbf{Low energy decay estimates})\\
Let $|V(x)|\lesssim (1+|x|)^{-\beta}$ for $x\in \mathbb{R}^2$ and some $\beta>0$. Assume that $H=\Delta^2+V$ has no positive embedded eigenvalues and $P_{ac}(H)$ is the projection onto absolutely continuous spectrum
space of $H$. Then the following statements hold:

(i)~If zero is a regular point and $\beta>10$, then
\begin{equation}\label{thm-low-1}
\|e^{-itH}P_{ac}(H)\chi(H)\|_{L^1\rightarrow L^\infty}\lesssim |t|^{-\frac{1}{2}}.
\end{equation}

(ii)~If zero is a resonance of the first kind and $\beta>14$, then
\begin{equation}\label{thm-low-2}
\| e^{-itH}P_{ac}(H)\chi(H)\|_{L^1\rightarrow L^\infty}\lesssim |t|^{-\frac{1}{2}}.
\end{equation}

(iii)~If zero is a resonance of the second kind and $\beta>18$, then
\begin{equation}\label{thm-low-3}
\| H^{\frac{1}{2}}e^{-itH}P_{ac}(H)\chi(H)\|_{L^1\rightarrow L^\infty}\lesssim |t|^{-\frac{1}{2}}.
\end{equation}

(iv) If zero is a resonance of the third and fourth kinds,  and $\beta>18$, then
\begin{equation}\label{thm-low-4}
\|H^\frac{3}{2}e^{-itH}P_{ac}(H)\chi(H)\|_{L^1\rightarrow L^\infty}\lesssim |t|^{-\frac{1}{2}}.
\end{equation}
\end{theorem}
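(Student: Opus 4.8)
The plan is to prove the four estimates \eqref{thm-low-1}--\eqref{thm-low-4} simultaneously through the Littlewood--Paley decomposition of the low-energy part of Stone's formula, reducing everything to oscillatory integrals of the form \eqref{oscillatory inte} and estimating them by stationary phase. With $\alpha=0$ in cases (i)--(ii), $\alpha=\tfrac12$ in case (iii) and $\alpha=\tfrac32$ in case (iv), and writing
\begin{equation*}
K_N^{\alpha}(t,x,y):=\frac{2}{\pi i}\int_0^\infty e^{-it\lambda^4}\lambda^{3+4\alpha}\varphi_0(2^{-N}\lambda)\bigl[R_V^+-R_V^-\bigr](\lambda^4)(x,y)\,d\lambda
\end{equation*}
as in \eqref{Stone-Paley}, the claims reduce to $\sup_{x,y\in\mathbb{R}^2}\sum_{N\le N'}|K_N^{\alpha}(t,x,y)|\lesssim|t|^{-1/2}$. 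The exponent $4\alpha$ is forced by Theorem~\ref{thm-main-inver-M}: it furnishes exactly the power of $\lambda$ needed to absorb the leading zero-energy singularities $O(\lambda^{-2})$ in the second-kind case and $O(\lambda^{-4})$ in the third/fourth-kind cases, while in the regular and first-kind cases Theorem~\ref{thm-main-inver-M}(i)--(ii) is bounded except for the harmless logarithmic coefficients $g_0^\pm$. In each $K_N^{\alpha}$ I would rescale $\lambda=2^Ns$, so that
\begin{equation*}
K_N^{\alpha}(t,x,y)=\frac{2^{(4+4\alpha)N+1}}{\pi i}\int_0^\infty e^{-it2^{4N}s^4}\,s^{3+4\alpha}\varphi_0(s)\,\bigl[R_V^+-R_V^-\bigr]\bigl((2^Ns)^4\bigr)(x,y)\,ds,
\end{equation*}
and split via the symmetric resolvent identity \eqref{id-RV}, $R_V^\pm=R_0^\pm-R_0^\pm v\bigl(M^\pm(\lambda)\bigr)^{-1}vR_0^\pm$, into a free piece and a perturbed piece. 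The free piece is the low-frequency part of $H_0^{\alpha}e^{-itH_0}\chi(H_0)$, which is $O(|t|^{-1/2})$ directly from the free kernel bounds \eqref{dive-esti1}--\eqref{dive-esti2}, so all the work is in the perturbed piece.

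Into the perturbed piece I would substitute the free-resolvent expansions of Lemmas~\ref{lem-reso}--\ref{lem-reso-small+big} (together with the large-argument representation \eqref{reso-big}) and the relevant expansion of $\bigl(M^\pm(\lambda)\bigr)^{-1}$ from Theorem~\ref{thm-main-inver-M}, then carry out the intermediate $z$-integrations against the absolutely bounded operator kernels and the weights $v(z_j)$; the decay thresholds $\beta>10,14,18$ are precisely what make every operator $vG_jv$, $vG_j$, each $\Gamma^k_{i,j}$, and each error term absolutely bounded, so that these integrations converge uniformly in $x,y$. This rewrites $\bigl[R_V^+-R_V^-\bigr]\bigl((2^Ns)^4\bigr)(x,y)$ as a finite sum of terms, each of which, after the rescaling, is an oscillatory integral of exactly the form \eqref{oscillatory inte}, with $\Phi$ a sum of the distances $|x-z_1|,|z_1-z_2|,|z_2-y|$ (fewer of them in the lower-order terms) and $\Psi$ collecting the kernels, the weights, the slowly-decaying Hankel factors $w_\pm,J_0$ of \eqref{reso-big}, and the logarithmic coefficients $g_0^\pm(2^Ns),\widetilde g_2^\pm(2^Ns)$. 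To each of these integrals I would apply the refined stationary-phase estimate of Lemma~\ref{lem-LWP}: on $s\in[\tfrac14,1]$ the phase $-t2^{4N}s^4\pm2^Ns\Phi$ has at most one critical point, which is nondegenerate with second derivative of size $\sim t2^{4N}$, so one gains a factor $\lesssim\min\bigl(1,(t2^{4N})^{-1/2}\bigr)$ at the single dyadic scale where that critical point lies and rapid decay in $t2^{4N}+2^N\Phi$ away from it; the additional $(2^N\Phi)^{-1/2}$ coming from the Hankel factors when $2^N\Phi\gtrsim1$, and the fact that $R_0^\pm v$ restricted to the range of the relevant $S_j$ (or of $Q$) carries no $\lambda^{-2}$ factor — a consequence of $Qv=0$ and of $S_j$ annihilating $v$ and $x_kv$ for all $j$, and also $|x|^2v$ for $j\ge3$, by Lemma~\ref{projiction-spaces-SjL2} — are what make the estimate strong enough in each term.

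Assembling these bounds, each term of $K_N^{\alpha}$ obeys an estimate of the shape $\lesssim 2^{c_\alpha N}(1+|N|)^{m}\min\bigl(1,(t2^{4N})^{-1/2}\bigr)$ together with rapid off-resonance decay in $2^N\Phi$, where $m$ is a fixed integer and $c_\alpha\ge2$ in every spectral regime (this is where the choice of $\alpha$ and the cancellation of the $\lambda^{-2}$-behavior of $R_0^\pm$ against the orthogonality of $S_j$, $Q$ are used). Summing over $N\le N'$ and splitting at the critical scale $2^N\sim|t|^{-1/4}$: on $2^N\lesssim|t|^{-1/4}$ one gets the geometric series $\sum 2^{c_\alpha N}(1+|N|)^m\lesssim |t|^{-c_\alpha/4}(\log|t|)^m\lesssim|t|^{-1/2}$ whenever $c_\alpha>2$, while on $2^N\gtrsim|t|^{-1/4}$ one gets $\lesssim|t|^{-1/2}\sum 2^{(c_\alpha-2)N}(1+|N|)^m$, again controlled by the fixed top scale $N=N'$ when $c_\alpha>2$; the borderline $c_\alpha=2$ pieces (the genuinely $\lambda^{-2}$-singular ones) are handled separately, their $(x,y)$-constant part directly from the $\lambda$-integral and the rest from the off-resonance decay, which leaves only $O(1)$ dyadic scales — uniformly in $x,y$ — contributing the full $|t|^{-1/2}$. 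This gives $\sup_{x,y}\sum_{N\le N'}|K_N^{\alpha}(t,x,y)|\lesssim|t|^{-1/2}$. The hard part will be the bookkeeping: carrying this casework faithfully through every term produced by the resolvent expansions in all five spectral regimes, and in particular controlling the logarithmic factors $g_0^\pm,\widetilde g_2^\pm$ (which grow like $|N|$ after the rescaling) and the genuinely singular $O(\lambda^{-2}),O(\lambda^{-4})$ contributions so that the dyadic sum closes without a $\log|t|$ loss — which is exactly where the strength of Lemma~\ref{lem-LWP}, the choice of the regularizing power $\alpha$, and the vanishing properties of the projections $S_j$ must all be brought to bear together.
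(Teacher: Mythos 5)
Your overall architecture is the one the paper uses: Stone's formula with the Littlewood--Paley decomposition, the symmetric resolvent identity \eqref{id-RV} combined with Theorem~\ref{thm-main-inver-M}, rescaling each dyadic piece to an oscillatory integral treated by Lemma~\ref{lem-LWP}, the regularizing powers $\alpha=0,\tfrac12,\tfrac32$, and the summation via the dichotomy $|N-N_0|\le 2$ versus $|N-N_0|>2$. The genuine gap is in how you propose to realize the cancellations. You plan to exploit the vanishing moments of $S_j$ (against $v$, $x_kv$, and $|x|^2v$) directly on each factor $R_0^\pm v$, but this is not implementable beyond first order: writing $R_0^\pm(\lambda^4)(x,y)=\lambda^{-2}F^\pm(\lambda|x-y|)$, only $F^\pm$ and $(F^\pm)'$ are uniformly bounded, while $(F^\pm)''(p)\sim a_0\ln p$ blows up as $p\to0$ because of the $p^2\ln p$ term in \eqref{free-R0pmlambda4}; a second- or third-order Taylor expansion in the sense of Lemma~\ref{Taylor-low} applied to a single $F^\pm$ therefore produces an amplitude that violates the hypothesis $|\partial_s^k\Psi(2^Ns,z)|\lesssim 1$ of Lemma~\ref{lem-LWP} (this is precisely the point of Remark~\ref{Remark of second kind}, and also why merely ``dropping the $\lambda^{-2}$ constant'' is not enough: the remaining kernel $|x-u|^2\ln$ is not bounded uniformly in $x$). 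The paper obtains the extra orders only after splitting the difference $R_V^+-R_V^-$ as in \eqref{firstres-firstterm}, \eqref{secondres-firstterm}, \eqref{third-Fourth kind}, \eqref{fourthreson-firstterm}, so that each term contains either the factor $R_0^+-R_0^-$ (whose reduced kernel $\bar F$, and after subtracting $\tfrac{i}{16}p^2$ the kernel $\widetilde F$, is smooth with vanishing derivatives, allowing second/third-order expansion with bounded amplitudes) or a coefficient difference such as $g_0^+-g_0^-=\mathrm{const}$, $h^+-h^-=O_1((\ln\lambda)^{-1})$, $\widetilde g_2^+(\lambda)^{-1}-\widetilde g_2^-(\lambda)^{-1}$. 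Your proposal never uses this $\pm$-difference structure.

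Without it the estimate does not close at the claimed rate. For instance, in the first-kind case the leading term carries the coefficient $g_0^\pm(\lambda)$, which after rescaling grows like $|N|$; with only the one admissible moment $S_1v=0$ per individual free resolvent, the per-scale bound becomes $2^{2N}(1+|N|)$ times the dichotomy of Lemma~\ref{lem-LWP}, and summing over $N\le N'$ yields $|t|^{-1/2}\log|t|$ rather than $|t|^{-1/2}$. In the paper the extra power of $\lambda$ gained from the second-order expansion of $\bar F$ (and the constancy of $g_0^+-g_0^-$ in the third piece) is exactly what absorbs $\ln\lambda$ into a bounded amplitude $\lambda g_0^\pm(\lambda)$ (Proposition~\ref{prop-first-S1gamma101S1}); the second/third/fourth-kind cases likewise use all moments of $S_2,S_4,S_5$ only on the difference factor and a single moment on the remaining $R_0^\pm$, which is the real reason the regularizers $H^{1/2}$ and $H^{3/2}$ are both necessary and sufficient. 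So to repair the plan you need to insert the $\pm$-difference decompositions and run the Taylor machinery of Lemma~\ref{Taylor-low} on $\bar F$ and $\widetilde F$ (keeping track of Remark~\ref{remark-beha-Rpm}); with that in place your exponent bookkeeping and dyadic summation go through essentially as in the paper.
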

\begin{theorem}\label{thm-high}(\textbf{ High energy decay estimates})\\
Let $|V(x)|\lesssim (1+ |x|)^{-\beta}$ for $x\in \mathbb{R}^2$ and $\beta>5$. Assume that $H$ has no positive eigenvalues and $P_{ac}(H)$
is the projection onto absolutely continuous spectrum space of $H$. Then
\begin{equation}\label{thm-high-1}
\|e^{-itH}P_{ac}(H)\widetilde{\chi}(\lambda)\|_{L^1\rightarrow L^\infty}\lesssim |t|^{-\frac{1}{2}}.
\end{equation}
\end{theorem}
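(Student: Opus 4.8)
The plan is to feed Stone's formula \eqref{stoneformula} (with $\alpha=0$, so the operator in \eqref{thm-high-1} has kernel $\frac{2}{\pi i}\int_0^\infty e^{-it\lambda^4}\lambda^3\widetilde\chi(\lambda)\big[R_V^+(\lambda^4)-R_V^-(\lambda^4)\big](x,y)\,d\lambda$) and the symmetric resolvent identity \eqref{id-RV} into the Littlewood--Paley and scaling scheme described in the outline. Since $\widetilde\chi$ is supported in $\lambda\gtrsim\lambda_0$, writing
\begin{equation*}
\lambda^{3}\big[R_V^+(\lambda^4)-R_V^-(\lambda^4)\big]
=\lambda^{3}\big[R_0^+(\lambda^4)-R_0^-(\lambda^4)\big]
-\lambda^{3}\big[R_0^+ v(M^+)^{-1}vR_0^+-R_0^- v(M^-)^{-1}vR_0^-\big](\lambda^4)
\end{equation*}
splits the kernel into a free part $\Omega_0(t,x,y)$ and a perturbed part $\Omega_1(t,x,y)$, and it suffices to prove $\sup_{x,y}|\Omega_j(t,x,y)|\lesssim|t|^{-1/2}$ for $j=0,1$. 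The free part $\Omega_0$ is a strictly easier instance of the treatment of $\Omega_1$ (no factor $(M^\pm)^{-1}$ and no spatial integrations), so I describe $\Omega_1$.

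For $\Omega_1$, decompose $\widetilde\chi(\lambda)=\sum_{N>N'}\varphi_0(2^{-N}\lambda)$ and, for each $N$, rescale $\lambda=2^Ns$, $s\in[\tfrac14,1]$. The two free resolvents are inserted through \eqref{reso-big} on the region $2^Ns|x-z|\gtrsim1$ and through Lemma \ref{lem-reso} on $2^Ns|x-z|\lesssim1$; in both cases one has $R_0^\pm((2^Ns)^4)(x,z)=2^{-2N}e^{\pm i2^Ns|x-z|}a_N^\pm(s,|x-z|)+2^{-2N}b_N(s,|x-z|)$, where $a_N^\pm$ obeys $|\partial_s^{\ell}a_N^\pm(s,r)|\lesssim 2^{N\ell}(1+2^Nsr)^{-1/2}$ and the non-oscillatory remainder $b_N(s,r)$, which is $K_0(2^Nsr)$ up to a bounded factor, is exponentially decaying and yields a strictly better-behaved contribution. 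Using the high energy resolvent estimates of \cite{FSY} (recorded below as Lemma \ref{lem-largeenergy}; here the absence of positive eigenvalues enters) to control $v(M^\pm((2^Ns)^4))^{-1}v$ and its $s$-derivatives, the $z_1,z_2$-integrations reduce each dyadic block $\Omega_{1,N}(t,x,y)$ to a finite superposition of oscillatory integrals of the form \eqref{oscillatory inte} with $m=4$, phase $\Phi(z)=|x-z_1|+|z_2-y|$ and amplitude built from $a_N^\pm$, $v(z_1)$, $v(z_2)$ and the kernel of $(M^\pm)^{-1}$; the hypothesis $\beta>5$ is what keeps these $z$-integrals, and the extra terms arising when integration by parts in $s$ brings down powers of $|x-z_j|$, absolutely convergent.

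Applying the stationary phase estimate Lemma \ref{lem-LWP} to each such integral — whose phase, after undoing the rescaling, is $t\lambda^4\pm\lambda\,\Phi(z)$ — gives, for fixed $z$, a bound of size roughly $|t|^{-1/2}(1+2^N\Phi(z))^{-1/2}$ by van der Corput on the dyadic block, improved by repeated integration by parts away from the at most one critical frequency $2^N\sim(\Phi(z)/|t|)^{1/3}$, after which one integrates in $z$. It then remains to sum over $N>N'$. For $2^N\lesssim|t|^{-1/4}$ the elementary $L^1_\lambda$ bounds on the (finitely many) such blocks suffice, with $\lesssim|t|^{-1/2}$ total contribution. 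For $2^N\gtrsim|t|^{-1/4}$ one isolates the $O(1)$ blocks near the critical frequency — where $(1+2^N\Phi)^{-1/2}\le1$, so they contribute $\lesssim|t|^{-1/2}$ after the $z$-integration — from the non-stationary blocks, on which many integrations by parts in $\lambda$ (where now $|\partial_\lambda(t\lambda^4\pm\lambda\Phi)|$ is large) produce a rapidly convergent geometric series, a short case distinction on whether $\Phi\gtrsim|t|^{1/4}$ settling the intermediate range. I expect this summation to be the main obstacle: the naive per-block bound is merely $O(|t|^{-1/2})$ with no decay in $N$, so one must genuinely exploit the position of the critical frequency of $t\lambda^4\pm\lambda\Phi$ relative to $2^N$, $|t|^{-1/4}$ and $\Phi$ (the content of Lemma \ref{lem-LWP}), and one must carry $(M^\pm)^{-1}$ and its $s$-derivatives through the $z$-integrations without losing summable powers of $2^N$ (the role of Lemma \ref{lem-largeenergy}). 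Combining the bounds for $\Omega_0$ and $\Omega_1$ yields \eqref{thm-high-1}.
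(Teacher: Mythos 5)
Your overall strategy (Littlewood--Paley decomposition, rescaling $\lambda=2^Ns$, the dichotomized stationary-phase bound of Lemma \ref{lem-LWP}, and the high-energy limiting absorption estimates of Lemma \ref{lem-largeenergy}) is the same as the paper's, and your treatment of the free part and of the summation over $N$ is in line with Propositions \ref{prop-free estimates} and \ref{largeenergy-firstterm}. But there is a genuine gap in how you reduce the perturbed part to the oscillatory integral \eqref{oscillatory inte}. First, a structural point: Lemma \ref{lem-largeenergy} bounds $R_V^\pm(\lambda)$ in $\mathcal{B}(L^2_\sigma,L^2_{-\sigma})$, not $(M^\pm(\lambda))^{-1}$; to transfer it you need the identity $(M^\pm)^{-1}=U-UvR_V^\pm vU$, after which $R_0^\pm v(M^\pm)^{-1}vR_0^\pm=R_0^\pm VR_0^\pm-R_0^\pm VR_V^\pm VR_0^\pm$, i.e. exactly the paper's iterated resolvent identity \eqref{highenergy-reso}. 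Second, and more seriously: for the term carrying $(M^\pm)^{-1}$ (equivalently $R_V^\pm$), your plan is to apply the stationary-phase bound pointwise in $z=(z_1,z_2)$ with phase $\Phi(z)=|x-z_1|+|z_2-y|$ and only afterwards integrate in $z$. That step requires pointwise (or absolutely integrable) control of the kernel of $(M^\pm(\lambda))^{-1}$, together with $\lambda$-derivative bounds on that kernel, uniformly for all $\lambda\gtrsim\lambda_0$ -- information that Lemma \ref{lem-largeenergy} does not provide and that is not established anywhere at high energy (the absolute-boundedness results of Section 2 are purely low-energy statements about $S_0D_1S_0$ etc.). With only weighted-$L^2$ operator norms at hand, the $z$-dependent oscillating factors cannot be separated from the unknown kernel in the way your reduction assumes, so the final ``integrate in $z$'' step does not go through for this term; it does go through for the second Born term $R_0VR_0$, where the amplitude is pointwise dominated by $|V|$ and $\beta>3$ suffices, as in Proposition \ref{largeenergy-firstterm}.

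The missing device is the duality argument of Proposition \ref{largeenergy-secondterm}: one factors out the $z$-independent oscillation $e^{\pm i\lambda(|x|+|y|)}$, keeps the remaining factors $e^{\pm i\lambda(|\cdot-y|-|y|)}\widetilde R^\pm(\lambda|\cdot-y|)$ and $e^{\mp i\lambda(|x-\cdot|-|x|)}\widetilde R^\mp(\lambda|x-\cdot|)$ inside a weighted-$L^2$ pairing against $VR_V^\pm V$, and applies Lemma \ref{lem-LWP} with the phase $\Phi=|x|+|y|$ and the scalar amplitude $E_2^\pm(2^Ns;x,y)$. Each $s$-derivative then either hits $R_V^\pm((2^Ns)^4)$, which by Lemma \ref{lem-largeenergy} costs $2^{kN}(2^Ns)^{-3}$, or hits $e^{\pm i2^Ns(|x-\cdot|-|x|)}$, which costs $2^N\langle\cdot\rangle$ and is absorbed by the weights -- this is precisely where $\beta>5$ (so that $\|V\langle\cdot\rangle^{\sigma+1}\|_{L^2}<\infty$ for some $\sigma>3/2$) is used, rather than in keeping your pointwise $z$-integrals convergent. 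Unless you either adopt this pairing argument or prove uniform-in-$\lambda$ pointwise kernel bounds for $(M^\pm(\lambda))^{-1}$ above the cutoff (a nontrivial task not needed in the paper), the hardest term in your decomposition is not actually estimated.
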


The proofs of the low and high energy parts will be given in the sequent subsections. To proceed,
 let's first establish the decay estimates of the free group $e^{-it\Delta^2}$ by resolvent
methods, which indicates some key ideas on how to prove the potential cases.

\subsection{ The decay estimates for the free case}
 We first give a lemma which will be used frequently later.
\begin{lemma}\label{lem-LWP}
Let $A$ be some subset of $\mathbb{Z}$. Suppose that $\Psi(s,z)$ is a function on $\mathbb{R}\times\mathbb{R}^m$ which is smooth for the first variable $s$, and satisfies for any $s\in [1/4,1],\ z\in \mathbb{R}^m$,
\begin{equation*}\label{lem-LWP-condition}
   \begin{split}
 | \partial_s^k \Psi(2^Ns,z)| \lesssim 1, \,k=0,1,\, N  \in A  \subset  \mathbb{Z} .
\end{split}
\end{equation*}
 Suppose that $\varphi_0(s)$ is a smoothing function of $\mathbb{R}$ defined in \eqref{unit-deco}, $\Phi(z)$ is a real value function on $\mathbb{R}^m$ and
$ N_0 =\big[\frac{1}{3}\log_2\frac{|\Phi(z)|}{|t|}\big]$. Then for each $z\in \mathbb{R}^m$ and $t\neq0$,
\begin{equation}\label{lem-LWP-con1}
\Big|\int_0^\infty e^{-it2^{4N}s^4}
e^{\pm i2^Ns\Phi(z)} \Psi(2^Ns,z)\varphi_0(s) ds \Big| \le C
\begin{cases}
(1+|t| 2^{4N})^{-\frac{1}{2}},  & \hbox{if}\  |N-N_0|\leq 2,\\
(1+|t|2^{4N})^{-1}, & \hbox{if}\  |N-N_0|> 2,
\end{cases}
\end{equation}
where $C$ is a constant independent of $t$ and $z$.  In particular,  we have
\begin{equation}\label{lem-LWP-con2}
\sup\limits_{z\in \mathbb{R}^m}\Big|\int_0^\infty e^{-it2^{4N}s^4}
e^{\pm i2^Ns\Phi(z)} \Psi(2^Ns,z)\varphi_0(s) ds \Big| \le C' (1+|t| 2^{4N})^{-\frac{1}{2}},\  \ N\in A,
\end{equation}
where $C'$ is a constant independent of $t$.
\end{lemma}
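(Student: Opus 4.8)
The plan is to estimate the oscillatory integral
\begin{equation*}
I_N(t,z):=\int_0^\infty e^{-it2^{4N}s^4}e^{\pm i2^Ns\Phi(z)}\Psi(2^Ns,z)\varphi_0(s)\,ds
\end{equation*}
by the stationary (or non-)stationary phase method, splitting into cases according to the size of $|t|2^{4N}$ and the location of the critical point of the phase. First I would rescale so that the relevant parameters are $\tau:=t2^{4N}$ and $\eta:=2^N\Phi(z)$, writing $I_N=\int_0^\infty e^{i(-\tau s^4\pm \eta s)}\Psi(2^Ns,z)\varphi_0(s)\,ds$ with $\varphi_0$ supported in $[1/4,1]$; the amplitude and its $s$-derivative are $O(1)$ uniformly in $N\in A$ by hypothesis. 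When $|\tau|\lesssim 1$ the bound $|I_N|\lesssim 1$ is trivial (integrate the absolutely bounded integrand over a compact set), so both right-hand sides in \eqref{lem-LWP-con1} are comparable to a constant and there is nothing to prove; hence one may assume $|\tau|\gg 1$.

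\textbf{The main case $|\tau|\gg 1$.} The phase is $\psi(s)=-\tau s^4\pm\eta s$, with $\psi'(s)=-4\tau s^3\pm\eta$ and $\psi''(s)=-12\tau s^2$. The only possible critical point on $\mathrm{supp}\,\varphi_0$ is $s_c=(\pm\eta/4\tau)^{1/3}$, and this lies in $[1/4,1]$ precisely when $|\eta/\tau|\sim 1$, i.e. (taking logarithms and recalling $N_0=[\tfrac13\log_2(|\Phi(z)|/|t|)]$) when $|N-N_0|\le 2$. In that range I apply the standard van der Corput / stationary phase estimate: since $|\psi''(s)|=12|\tau| s^2\gtrsim|\tau|$ on the support, van der Corput's lemma (second derivative version, with the amplitude contributing its $L^\infty$ norm plus the $L^1$ norm of its derivative, both $O(1)$) gives $|I_N|\lesssim|\tau|^{-1/2}$, which is exactly the first line of \eqref{lem-LWP-con1}. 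When $|N-N_0|>2$, the critical point $s_c$ is bounded away from $[1/4,1]$ (either $|\eta/\tau|\ll 1$, so $\psi'\sim\tau s^3\gtrsim\tau$, or $|\eta/\tau|\gg 1$, so $\psi'\sim\eta\gtrsim\tau$ on the support, using that $|\eta|\gtrsim|\tau|$ there); in either sub-case $|\psi'(s)|\gtrsim|\tau|$ uniformly on $[1/4,1]$, and a single non-stationary phase integration by parts, $I_N=\int (i\psi')^{-1}\tfrac{d}{ds}\big[\Psi\varphi_0\big]e^{i\psi}\,ds$ minus boundary terms (which vanish since $\varphi_0$ is compactly supported in the open interval), yields $|I_N|\lesssim|\tau|^{-1}$, the second line. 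The constant is independent of $t,z$ because all bounds used only the $O(1)$ control of $\Psi$ and its first $s$-derivative.

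\textbf{The uniform bound \eqref{lem-LWP-con2}.} This is immediate from \eqref{lem-LWP-con1}: in the regime $|N-N_0|\le 2$ we already have the bound $(1+|\tau|)^{-1/2}$, while for $|N-N_0|>2$ we have $(1+|\tau|)^{-1}\le (1+|\tau|)^{-1/2}$; taking the supremum over $z$ and writing $\tau=t2^{4N}$ gives $\sup_z|I_N(t,z)|\le C'(1+|t|2^{4N})^{-1/2}$ for every $N\in A$.

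\textbf{Main obstacle.} The routine part is the van der Corput estimate; the point that needs care is the case division by the size of $s_c=(\pm\eta/4\tau)^{1/3}$ relative to the support $[1/4,1]$, and in particular showing that when $|N-N_0|>2$ one genuinely has $|\psi'|\gtrsim|\tau|$ rather than merely $|\psi'|\gtrsim|\eta|$ — i.e. that in the "large $\eta$" sub-case the lower bound on $|\psi'|$ is still at least of order $|\tau|$. This follows because $|N-N_0|>2$ forces $|2^{3N}\Phi(z)/t|\notin[1/8,8]$ roughly, so either $|\eta|\le c|\tau|$ (then $|\psi'|\ge 4|\tau|s^3-|\eta|\gtrsim|\tau|$ on $s\ge 1/4$) or $|\eta|\ge C|\tau|$ (then $|\psi'|\ge|\eta|-4|\tau|s^3\gtrsim|\eta|\gtrsim|\tau|$); tracking the numerical thresholds so that the gap "$>2$" in $N$ indeed produces a definite multiplicative gap in $|\eta/\tau|$ is the only delicate bookkeeping. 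A secondary technical point is handling the lower endpoint behaviour of the $s$-integral near $0$: this is a non-issue because $\varphi_0$ is supported in $[1/4,1]$, so all integrations by parts have no boundary contributions and the phase is smooth with non-vanishing relevant derivative throughout.
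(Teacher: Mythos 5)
Your proposal is correct and takes essentially the same route as the paper: rescale to $\tau=t2^{4N}$, $\eta=2^N\Phi(z)$, dispose of $|\tau|\lesssim1$ trivially, use van der Corput (with $|\psi''|\gtrsim|\tau|$ on $[1/4,1]$ and the $O(1)$ amplitude bounds) when $|N-N_0|\le2$, and a single non-stationary integration by parts with $|\psi'|\gtrsim|\tau|$ when $|N-N_0|>2$. The only nit is in your bookkeeping remark: the definition of $N_0$ forces $|\eta|\le 2^{-6}|\tau|$ or $|\eta|\ge 2^{9}|\tau|$ (not roughly $|\eta/\tau|\notin[1/8,8]$, which would not beat $4|\tau|s^3\ge|\tau|/16$ near $s=1/4$); with the correct thresholds the lower bound $|\psi'|\gtrsim|\tau|$ on $[1/4,1]$ holds exactly as in the paper's proof.
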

\begin{proof}
 Without loss of generality, we assume that $\Phi>0$. We set $t\geq0$, similar to get the conclusions as desired for $t<0$. For each $N\in A$, we write
 \begin{equation*}
   \begin{split}
K_N^\pm(t,z)=&\int_0^\infty e^{-it2^{4N}s^4}
e^{\pm i2^Ns\Phi(z)} \Psi(2^Ns,z)\varphi_0(s) ds.
\end{split}
\end{equation*}
Now we shall divide into  $2^N \Phi(z) <1$ and  $2^N \Phi(z)\geq 1$ two cases to estimate the integral above.

\textbf{Case 1.} If $2^N \Phi(z)< 1$, then we divide into $|t| 2^{4N} <1$ and $|t| 2^{4N} \geq 1$ two cases again. As for $|t|2^{4N} <1$. Note that $ s \in \hbox{supp}\varphi_0 \subset[\frac{1}{4}, 1]$, one has
\begin{equation}\label{lem-Lp1}
   \begin{split}
|K_N^\pm(t,z)| \lesssim \int_{\frac{1}{4}}^1 \Big|  \Psi(2^Ns,z)\varphi_0(s) \Big|ds
\lesssim (1+|t|\cdot2^{4N})^{-1}.
\end{split}
\end{equation}
As for $|t|2^{4N} \geq 1$. By integration by parts,  one has
$$K_N^\pm(t,z)= \frac{1}{4it2^{4N}}\int_{\frac{1}{4}}^1 e^{-it2^{4N}s^4}
\partial_s\big( s^{-3}\varphi_0(s)e^{\pm i2^Ns\Phi(z)}\Psi(2^Ns,z)\big) ds. $$
Notice that for $k=0,1$,
$$\big|\partial_s^k\big( e^{\pm i2^Ns\Phi(z)}\big)\big|  \lesssim 1,\,\,
  \big|\partial_s^k \Psi(2^Ns,z)\big|\lesssim 1. $$
Thus, we obtain that $|K_N^\pm(t,z)| $ is controlled by $\big(1+|t|2^{4N}\big)^{-1}$ uniformly in $z$.

\textbf{Case 2.} If $2^N \Phi(z)\geq  1$, then we consider $K_N^-(t,z)$ and $K_N^+(t,z)$ case by case.
We begin with estimating $K_N^-(t,z)$. If $|t| 2^{4N} < 1$,  then we can obtain that $|K_N^-(t,z)| $ is bounded by $\big(1+|t|2^{4N}\big)^{-1}$ uniformly in $z$.
If $|t| 2^{4N} \geq 1$, then we write
 \begin{equation*}
   \begin{split}
K_N^-(t,z)=&\int_{\frac{1}{4}}^1e^{-iU_-(t,s,z,N)}V(s,z,N)ds.
\end{split}
\end{equation*}
where $U_-(t,s,z,N)= t2^{4N}s^4+2^Ns\Phi(z)$ and
$V(s,z,N)=\Psi(2^Ns,z)\varphi_0(s)$.
Since
$$ |\partial_sU_-(t,s,z,N)| = | 4t2^{4N}s^3 + 2^N\Phi(z)| \gtrsim  1+|t|2^{4N},$$
so $U_-(t,s,z,N)$ doesn't have critical point. By integration by parts we obtain that $|K_N^-(t,z)|$ is controlled by $\big(1+|t|\cdot2^{4N}\big)^{-1}$ uniformly in $z$.

Next we turn to the estimate of $K_N^+(t,z)$. If $|t| 2^{4N} < 1$,  then it is clear that
$|K_N^+(t,z)|$ is controlled by $\big(1+|t|\cdot2^{4N}\big)^{-1}$ uniformly in $z$.
If $|t| 2^{4N} \geq 1$, set
 $U_+(t,s,z,N)= t2^{4N}s^4-2^Ns\Phi(z)$ and $V(s,z,N)=\Psi(2^Ns,z)\varphi_0(s)$,
then
 \begin{equation*}
   \begin{split}
K_N^+(t,x,y)=&\int_\frac{1}{4}^1 e^{-iU_+(t,s,z,N)}V(s,z,N)ds.
\end{split}
\end{equation*}
Note that $ \partial_sU_+(t,s,z,N)= 4t2^{4N}s^3-2^N\Phi(z) $, hence $U_+(t,s,z,N)$ may have a critical point
$s_0$ located at $[\frac{1}{4}, 1]$ and satisfying  $2^N\Phi(z)=4t2^{4N}s_0^3$.

If $ |N-N_0|<2$, then $ -2+N < \frac{1}{3}\log_2\frac{\Phi(z)}{t}< 3+N$
( i.e. $\frac{1}{2^6}t2^{4N} < 2^N\Phi(z)< 2^9t2^{4N}$), which implies the critical point $s_0$ exists.
Since
$$|\partial_s^2 u_+(t,s,z,N)|  \gtrsim |t|2^{4N}, \ s \in [1/4, 1],$$
hence by Van der Corput lemma (see \cite{Stein}, p.334), we obtain that
$$|K_N^+(t,z)| \lesssim \big( |t|2^{4N}\big)^{-\frac{1}{2}}\Big(V(1,z,N)+ \int_\frac{1}{4}^1
\big|\partial_s V(s,z,N)\big|ds \Big).$$
Note that $V(1,z,N)=0 $ and $\big|\partial_s V(s,z,N)\big|\lesssim 1$, we have
$$|K_N^+(t,x,y)| \lesssim \big( |t|2^{4N}\big)^{-\frac{1}{2}}\lesssim
\big(1+ |t|2^{4N}\big)^{-\frac{1}{2}}.$$

If $|N-N_0|\geq 2 $, then $2^N\Phi(z)\leq \frac{1}{2^6}t2^{4N}$ or $2^N\Phi(z)\geq 2^9 t2^{4N}$, thus the critical point $s_0$ doesn't exist. Using integration by parts again, we obtain that $|K_N^+(t,z)|$ is controlled by $ \big(1+|t|2^{4N}\big)^{-1}$.

Combining \textbf{Case 1} with \textbf{Case 2}, we immediately obtain that (\ref{lem-LWP-con1}) holds.

Finally, it immediately follows from (\ref{lem-LWP-con1}) that (\ref{lem-LWP-con2})
is bounded by $(1+|t| 2^{4N})^{-\frac{1}{2}}$ uniformly in $z$.
\end{proof}

Next we turn to the time decay estimates of $e^{-it\Delta^2}$. From Stone's formula one has
\begin{equation*}\label{Stone-Paley}
   \begin{split}
e^{-it\Delta^2}f=& \frac{2}{ \pi i } \int_0^\infty e^{-it\lambda^4}\lambda^3
[R_0^+(\lambda^4) -R_0^-(\lambda^4)]f d\lambda.
\end{split}
\end{equation*}
We need to split the free resolvent kernel $R_0^\pm(\lambda^4)(x,y)$ into
 low and high parts based on the size of $\lambda|x-y|$ as follows:
 \begin{equation*}\label{label-freeresonance-twoparts}
   \begin{split}
R^\pm_0(\lambda^4)(x,y)= \chi(\lambda|x-y|)R^\pm_0(\lambda^4)(x,y)
          +\widetilde{\chi}(\lambda|x-y|) R^\pm_0(\lambda^4)(x,y).
   \end{split}
\end{equation*}
Set
$$R^\pm_0(\lambda^4)(x,y)=\frac{e^{\pm i\lambda|x-y|}}{\lambda^2}\widetilde{R}^\pm(\lambda|x-y|).$$
Then we have by (\ref{reso-big}) and Lemma \ref{lem-reso},
\begin{equation}\label{freeres-lagrepart-Rtubap}
   \begin{split}
\widetilde{R}^\pm(p)= &e^{\mp ip}\Big[ b_\pm+ a_0 p^2\ln p+ \alpha_\pm p^2
  + c_\pm p^4  + a_2p^6\ln p+ \beta_\pm  p^6 +O_1\big(p^{8-\epsilon} \big)\Big]\chi(p)\\
 &+ \frac{1}{8}\Big( \pm i w_\pm(p)
        -ie^{\mp i p } e^{- p}w_+(  ip)\Big)\widetilde{\chi}(p).
  \end{split}
\end{equation}

\begin{proposition}\label{prop-free estimates}
Let $ N_0=\big[ \frac{1}{3}\log_2\frac{|x-y|}{|t|}  \big]$ and $N \in \mathbb{Z}$. Then for $x\neq y$ and $t\neq0$,
\begin{equation*}
\Big|\int_0^\infty e^{-it\lambda^4}\lambda^3\varphi_0(2^{-N}\lambda) R_0^\pm(\lambda^4)(x,y) d\lambda \Big|
 \lesssim 2^{2N}
\begin{cases}
(1+|t| 2^{4N})^{-\frac{1}{2}},  & \hbox{if}\, |N-N_0|\leq 2\\
(1+|t| 2^{4N})^{-1}, & \hbox{if}\,  |N-N_0|>2.
\end{cases}
\end{equation*}
As a consequence,
\begin{equation}\label{pro-k3}
\sup\limits_{x,y\in \mathbb{R}^2}\Big|\int_0^\infty e^{-it\lambda^4}\lambda^3
R_0^\pm(\lambda^4)(x,y) d\lambda \Big|
 \lesssim|t|^{-\frac{1}{2}},
\end{equation}
which immediately implies that
$\big\|e^{-it\Delta^2}\big\|_{L^1\rightarrow L^\infty} \lesssim |t|^{-\frac{1}{2}}.$
\end{proposition}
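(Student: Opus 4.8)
The plan is to rescale each dyadic piece to the normalized oscillatory integral of Lemma \ref{lem-LWP} and then sum the resulting bounds over $N$. First I would insert the representation $R^\pm_0(\lambda^4)(x,y)=\lambda^{-2}e^{\pm i\lambda|x-y|}\widetilde{R}^\pm(\lambda|x-y|)$ into the integral and substitute $\lambda=2^Ns$. Writing $z=x-y\in\mathbb{R}^2$ and $\Phi(z)=|z|$, and setting $\widetilde{\varphi}_0(s):=s\varphi_0(s)$ (still smooth and supported in $[1/4,1]$) and $\Psi(2^Ns,z):=\widetilde{R}^\pm(2^Ns|z|)$, one gets
\begin{equation*}
\int_0^\infty e^{-it\lambda^4}\lambda^3\varphi_0(2^{-N}\lambda)R_0^\pm(\lambda^4)(x,y)\,d\lambda
=2^{2N}\int_0^\infty e^{-it2^{4N}s^4}e^{\pm i2^Ns\Phi(z)}\Psi(2^Ns,z)\,\widetilde{\varphi}_0(s)\,ds.
\end{equation*}
Since the proof of Lemma \ref{lem-LWP} uses only that the cutoff is a fixed smooth function supported in $[1/4,1]$, it applies verbatim with $\widetilde{\varphi}_0$ in place of $\varphi_0$; moreover its $N_0=[\frac{1}{3}\log_2(|\Phi(z)|/|t|)]$ coincides with the $N_0$ in the statement because $\Phi(z)=|x-y|$. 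Thus the first assertion follows from \eqref{lem-LWP-con1} (with $A=\mathbb{Z}$) and the prefactor $2^{2N}$, once the amplitude hypothesis $|\partial_s^k\Psi(2^Ns,z)|\lesssim1$ ($k=0,1$) is checked uniformly in $s\in[1/4,1]$, $N\in\mathbb{Z}$, $z\in\mathbb{R}^2$.

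For that verification I would split according to the size of $p:=2^Ns|z|$, following the decomposition \eqref{freeres-lagrepart-Rtubap}. On the support of $\chi$ one has $p\lesssim1$ and $\widetilde{R}^\pm(p)=e^{\mp ip}[b_\pm+a_0p^2\ln p+\alpha_\pm p^2+c_\pm p^4+a_2p^6\ln p+\beta_\pm p^6+O_1(p^{8-\epsilon})]$, which is bounded on $(0,1]$; since $\partial_s$ applied to $\widetilde{R}^\pm(2^Ns|z|)$ produces $(p/s)(\widetilde{R}^\pm)'(p)$ with $s\in[1/4,1]$, it suffices to note that $|p(\widetilde{R}^\pm)'(p)|\lesssim1$ on $(0,1]$, which holds because $p\,\partial_p(p^2\ln p)=2p^2\ln p+p^2$ is bounded there and likewise for the remaining terms. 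On the support of $\widetilde{\chi}$ one has $p\gtrsim1$ and $\widetilde{R}^\pm(p)=\frac{1}{8}\big(\pm iw_\pm(p)-ie^{\mp ip}e^{-p}w_+(ip)\big)\widetilde{\chi}(p)$; the bounds $|w_\pm^{(\ell)}(p)|\lesssim(1+|p|)^{-1/2-\ell}$ of \eqref{id-H0big}, together with the exponential gain $e^{-p}$ in the second term, give $|\widetilde{R}^\pm(p)|\lesssim1$ and $|p(\widetilde{R}^\pm)'(p)|\lesssim1$. Hence the hypothesis of Lemma \ref{lem-LWP} holds with a constant independent of $N$, $z$ and $t$, and multiplying \eqref{lem-LWP-con1} by $2^{2N}$ proves the first claim.

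For the consequence \eqref{pro-k3} I would sum the dyadic bound over $N\in\mathbb{Z}$ using \eqref{unit-deco}, obtaining
\begin{equation*}
\Big|\int_0^\infty e^{-it\lambda^4}\lambda^3R_0^\pm(\lambda^4)(x,y)\,d\lambda\Big|
\lesssim\sum_{|N-N_0|\le2}2^{2N}\big(1+|t|2^{4N}\big)^{-\frac{1}{2}}
+\sum_{N\in\mathbb{Z}}2^{2N}\big(1+|t|2^{4N}\big)^{-1}.
\end{equation*}
The first sum has at most five terms, and for every $N$ one has $2^{2N}(1+|t|2^{4N})^{-1/2}\lesssim|t|^{-1/2}$ (treat $2^N\lesssim|t|^{-1/4}$ and $2^N\gtrsim|t|^{-1/4}$ separately), so it is $\lesssim|t|^{-1/2}$. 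In the second sum I would split at $2^N\sim|t|^{-1/4}$: for $2^N\lesssim|t|^{-1/4}$ the summand is $\lesssim2^{2N}$, a geometric series dominated by its largest term $\lesssim|t|^{-1/2}$; for $2^N\gtrsim|t|^{-1/4}$ the summand is $\lesssim|t|^{-1}2^{-2N}$, again geometric and dominated by the endpoint value $\lesssim|t|^{-1/2}$. This yields \eqref{pro-k3}; since by Stone's formula $e^{-it\Delta^2}$ has kernel $\frac{2}{\pi i}\int_0^\infty e^{-it\lambda^4}\lambda^3[R_0^+(\lambda^4)(x,y)-R_0^-(\lambda^4)(x,y)]\,d\lambda$, the bound $\|e^{-it\Delta^2}\|_{L^1\to L^\infty}\lesssim|t|^{-1/2}$ follows.

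The step I expect to be the main obstacle is the uniform-in-$N$ control of the rescaled amplitude $\Psi(2^Ns,z)=\widetilde{R}^\pm(2^Ns|z|)$ across the transition region $\lambda|x-y|\sim1$: one has to check that the bounds on $\widetilde{R}^\pm$ and on $p(\widetilde{R}^\pm)'$ coming from the small-argument Bessel expansion (whose derivative carries the $\ln$ terms) and from the large-argument Hankel asymptotics \eqref{id-H0big} fit together into a single bound valid on all dyadic scales simultaneously. Everything else — the scaling, the appeal to Lemma \ref{lem-LWP}, and the geometric summation over $N$ — is routine bookkeeping.
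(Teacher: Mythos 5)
Your proposal is correct and follows essentially the same route as the paper: the same representation $R_0^\pm(\lambda^4)=\lambda^{-2}e^{\pm i\lambda|x-y|}\widetilde{R}^\pm(\lambda|x-y|)$, the same rescaling $\lambda=2^Ns$, the same appeal to Lemma \ref{lem-LWP} with $\Phi=|x-y|$ and amplitude $\widetilde{R}^\pm(2^Ns|x-y|)$ (the paper keeps the extra factor $s$ alongside $\varphi_0$ rather than absorbing it into the cutoff, which is immaterial), and the same dyadic summation split at $2^N\sim|t|^{-1/4}$. Your explicit verification of the uniform amplitude bounds across the regimes $p\ll1$ and $p\gtrsim1$ is exactly the check the paper carries out via \eqref{freeres-lagrepart-Rtubap}.
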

\begin{proof}
We write
$$ K_{0,N}^\pm(t,x,y)= \int_0^\infty e^{-i t\lambda^4}
                \lambda^3\varphi_0(2^{-N}\lambda)R^\pm_0(\lambda^4)(x,y)d\lambda.$$
Set $$ R^\pm_0(\lambda^4)(x,y)=\frac{e^{\pm i\lambda|x-y|}}{\lambda^2}\widetilde{R}(\lambda|x-y|).$$
Let $\lambda =2^Ns$, then we have
\begin{equation*}
   \begin{split}
 K_{0,N}^\pm(t,x,y)=&\int_0^\infty e^{-i t\lambda^3}\lambda \varphi_0(2^{-N}\lambda)
  e^{\pm i\lambda|x-y|}\widetilde{R}^\pm(\lambda|x-y|)d\lambda\\
  =& 2^{2N}\int_0^\infty e^{-i t 2^{4N}s^4}e^{\pm i2^N s |x-y|}\ s\,\varphi_0(s)\widetilde{R}^\pm(2^Ns|x-y|)ds.
	\end{split}
\end{equation*}
Note that $ s\in \hbox{supp}\varphi_0 \subset [\frac{1}{4}, 1] $, by (\ref{freeres-lagrepart-Rtubap}) we obtain that for any $x, y$
$$ \big|\partial_s^k \widetilde{R}^\pm(2^Ns|x-y|)\big|\lesssim 1, \ k=0,1. $$
By Lemma \ref{lem-LWP} with $z=(x,y)$, $\Phi(z)=|x-y|$ and
$$\Psi(2^Ns;z)=\widetilde{R}^\pm(2^Ns|x-y|),$$
we have $|K_{0,N}^\pm(t,x,y)|$ is dominated by $2^{2N}(1+|t|2^{4N})^{-1/2}$
when $ |N-N_0|\leq 2$, bounded by  $2^{2N}(1+|t|2^{4N})^{-1}$ when $ |N -N_0|>2$.

Note that $\sum\limits_{N=-\infty}^{\infty}\varphi_0(2^{-N}s)=1, \ s>0$, then we have
\begin{equation*}
   \begin{split}
K_0^\pm(t,x,y):= \int_0^\infty e^{-it\lambda^4}\lambda^3 R_0^\pm(\lambda^4)(x,y) d\lambda
   = \sum_{N=-\infty}^{+\infty}K_{0,N}^\pm(t,x,y).
  \end{split}
\end{equation*}
Since for $t\neq 0$, there exists a constant $N_0' \in \mathbb{Z}$ such that $|t|2^{4N_0'}\sim 1$.
Hence we have for $x\neq y,$
\begin{equation*}
   \begin{split}
 |K_0^\pm(t,x,y)|\leq &\sum_{|N-N_0|\leq 2} 2^{2 N}( 1+|t|2^{4N})^{-\frac{1}{2}}
+\sum_{N=-\infty}^{+\infty} 2^{2 N}( 1+|t|2^{4N})^{-1}\\
\lesssim &\sum_{|N-N_0|\leq 2}|t|^{-\frac{1}{2}}+ \sum_{N=-\infty}^{N_0'}2^{2 N}( 1+|t|2^{4N})^{-1}
+\sum_{N=N_0'+1}^{+\infty}2^{2 N}( 1+|t|2^{4N})^{-1}\\
\lesssim &|t|^{-\frac{1}{2}} +\sum_{N=-\infty}^{N_0'}2^{2N}
+ |t|^{-1}\sum_{N=N_0'+1}^{+\infty}2^{-2N}\\
\lesssim &|t|^{-\frac{1}{2}}.
  \end{split}
\end{equation*}
Thus the proof of this lemma is completed.
\end{proof}

\subsection{Low energy decay estimates  }
In this subsection, we are devoted to proving low energy decay estimates (\ref{thm-low-1})-(\ref{thm-low-4})
case by case. The following lemma plays an important role in making use of cancellations
 of projection operators $S_j$ in the asymptotic expansions of resolvent $R_V^\pm(z)$ near zero.
\begin{lemma}\label{Taylor-low}
Assume that $\lambda>0$, $F(p)\in C^\infty(\mathbb{R})$,  $\bar{y}=(-y_2,y_1) \in \mathbb{R}^2$, \ $\displaystyle w \equiv w(x)=\frac{x}{|x|}$ for $x\neq 0$ and $w(x)=0$ for $x=0$.
Let $\theta\in [0,1]$ and $\displaystyle |y| \cos\alpha= \langle y, w(x-\theta y)\rangle$, where $\alpha\equiv \alpha(x, y, \theta )$ is the angle between the vectors $y$ and $x-\theta y $. Then

$(i)$ We have
\begin{equation}\label{lem-Taylor1}
   \begin{split}
F(\lambda|x-y|)= F(\lambda|x|)-\lambda |y|\int_0^1F'(\lambda|x-\theta y|)\cos\alpha d\theta.
\end{split}
\end{equation}

$(ii)$ If $F'(0)=0$, then
\begin{align*}
F(\lambda|x-y|)
=&F(\lambda|x|)-\lambda  F'(\lambda|x|)\big\langle w(x), y \big\rangle\\
&+\lambda^2|y|^2\int_0^1(1-\theta)\Big(
F''(\lambda|x-\theta y|) \cos^2\alpha
+\frac{F'(\lambda|x-\theta y|)}{\lambda|x-\theta y|} \sin^2\alpha \Big) d\theta.
\end{align*}

$(iii)$ If $ F'(0)=F''(0)=0$, then
\begin{equation*}
   \begin{split}
 F(\lambda|x-y|)=
 & F(\lambda|x|) -\lambda F'(\lambda|x|)\big\langle w(x) , y \big\rangle\\
&+\frac{\lambda^2}{2} \Big(\frac{F'(\lambda|x|)}{\lambda|x|}
\big\langle w(x), \bar{y}\big\rangle^2 +
F''(\lambda|x|)\big\langle w(x), y \big\rangle^2\Big)
+\frac{\lambda^3|y|^3}{2}\int_0^1(1-\theta)^2\\
&\Big[ \Big( \frac{ F'(\lambda|x-\theta y|)}{\lambda^2|x-\theta y|^2}
- \frac{ F''(\lambda|x-\theta y|)}{\lambda|x-\theta y|} \Big)
3\cos\alpha \sin^2\alpha
-F'''(\lambda|x-\theta y|)\cos^3\alpha \Big]d\theta.
\end{split}
\end{equation*}
Here $ \langle w(x),\bar{y} \rangle^2=|y|^2- \langle w(x), y\rangle^2$.

$(iv)$ If $F^{(k)}(0)=0, k=1,2,3$, then
\begin{equation*}
   \begin{split}
F(\lambda||x-y|)=&F(\lambda |x|)-\lambda F'(\lambda|x|)\langle w(x), y\rangle
+\frac{\lambda^2}{2}\Big( \frac{F'(\lambda|x|)}{\lambda|x|}\langle w(x), \bar{y}\rangle^2
+F''(\lambda|x|)\langle w(x),y\rangle^2   \Big)\\
&+\frac{\lambda^3 }{3!}\Big[ \Big( \frac{F'(\lambda|x|)}{\lambda^2|x|^2}
-\frac{F''(\lambda|x|)}{\lambda|x|}\Big)3\langle w(x),y\rangle  \langle w(x),\bar{y}\rangle^2     - F'''(\lambda|x|)\langle w(x),y\rangle^3\Big]\\
&+\frac{\lambda^4|y|^4}{3!}\int_0^1(1-\theta)^3\Big[\Big(\frac{F'(\lambda|x-\theta y|)}
{\lambda^3|x-\theta y|^3} - \frac{F''(\lambda|x-\theta y|)}{\lambda^2|x-\theta y|^2}\Big)
(15\cos^2\alpha \sin^2\alpha\\
& -3\sin^2\alpha)
+ \frac{F'''(\lambda|x-\theta y|)}{\lambda|x-\theta y|} 6\cos^2\alpha\sin^2\alpha
+F^{(4)}(\lambda|x-\theta y|) \cos^4\alpha \Big]d\theta.
\end{split}
\end{equation*}
\end{lemma}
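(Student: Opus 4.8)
The plan is to prove each of the four Taylor-type expansions by a uniform strategy: apply the one-dimensional Taylor theorem with integral remainder to the composite function $g(\theta) = F(\lambda|x-\theta y|)$ on $[0,1]$, then carefully evaluate the derivatives $g^{(k)}(\theta)$ in terms of the geometry of the vectors $x$, $y$, and $x-\theta y$. The point of the hypotheses $F^{(j)}(0)=0$ is that various terms of the form $F'(\lambda|x-\theta y|)/(\lambda|x-\theta y|)$, which a priori look singular where $x-\theta y = 0$, are in fact smooth: writing $F'(p) = p\int_0^1 F''(tp)\,dt$ (and similarly for higher derivatives) near $p=0$ removes the apparent singularity, so all integrands in the remainder terms are genuinely bounded and the formulas make sense pointwise in $x,y$.

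The first key computation is the chain rule for $\partial_\theta |x - \theta y|$. Writing $r(\theta) = |x-\theta y|$, one has $r'(\theta) = -\langle y, w(x-\theta y)\rangle = -|y|\cos\alpha$, which is exactly the quantity appearing in part $(i)$; so $g'(\theta) = -\lambda|y|\cos\alpha\, F'(\lambda r(\theta))$ and integrating from $0$ to $1$ gives $(i)$ immediately via the fundamental theorem of calculus. For the higher-order statements I would compute $\partial_\theta$ of $\cos\alpha$ and of $1/r(\theta)$: the identity $\langle y, w(u)\rangle$ for $u = x-\theta y$ differentiates to produce the $\sin^2\alpha$ terms, using $\partial_u w(u) = \frac{1}{|u|}(I - w(u)\otimes w(u))$, which is the projection onto the orthogonal complement of $w(u)$ scaled by $|u|^{-1}$. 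Concretely, $\frac{d}{d\theta}\cos\alpha = -\frac{|y|}{r(\theta)}\sin^2\alpha$ (this is where the decomposition $|y|^2 = \langle w,y\rangle^2 + \langle w,\bar y\rangle^2$ enters, since $\sin^2\alpha = 1 - \cos^2\alpha$). Iterating this bookkeeping yields $g''(\theta)$, $g'''(\theta)$, $g^{(4)}(\theta)$ as the combinations of $F', F'', F''', F^{(4)}$ with the trigonometric weights $\cos^k\alpha\sin^j\alpha$ and the powers $r(\theta)^{-\ell}$ displayed in $(ii)$--$(iv)$.

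The remaining step is to assemble the Taylor expansion: for $(ii)$ one uses $F(\lambda|x-y|) = g(1) = g(0) + g'(0) + \int_0^1 (1-\theta) g''(\theta)\,d\theta$, for $(iii)$ one more term with remainder $\frac{1}{2}\int_0^1(1-\theta)^2 g'''(\theta)\,d\theta$, and for $(iv)$ the remainder $\frac{1}{3!}\int_0^1(1-\theta)^3 g^{(4)}(\theta)\,d\theta$; then substitute $\theta = 0$, where $r(0) = |x|$, $w(x-0\cdot y) = w(x)$, and $\cos\alpha|_{\theta=0} = \langle w(x), y\rangle/|y|$, to read off the explicit polynomial-in-$y$ main terms. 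The hypotheses $F'(0)=0$ etc. are used precisely to guarantee that the $g^{(k)}(0)$ are finite (no $|x|^{-\ell}$ blow-up when one also wants the formula to hold for all $x$, including small $|x|$) and, more importantly in applications, that the main terms have the stated homogeneity so that the orthogonality relations $\langle x_j v, f\rangle = 0$ etc. can later annihilate them.

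The main obstacle I anticipate is purely organizational rather than conceptual: correctly tracking the proliferation of terms when differentiating products of $\cos\alpha$, $\sin^2\alpha$, $r(\theta)^{-\ell}$, and $F^{(j)}(\lambda r(\theta))$ three or four times, and verifying that everything collapses to exactly the coefficients $3\cos\alpha\sin^2\alpha$, $15\cos^2\alpha\sin^2\alpha - 3\sin^2\alpha$, $6\cos^2\alpha\sin^2\alpha$, and so on, with the right signs. A secondary subtlety is the behavior at points where $x - \theta y = 0$ for some $\theta \in (0,1)$: here $w$ is defined to be $0$ and $\alpha$ is ambiguous, but since the hypotheses force each apparently singular factor $F^{(j)}(\lambda r)/(\lambda r)^{\ell}$ to extend to a bounded continuous function of $r \geq 0$, the integrands are bounded and the measure of such $\theta$ is zero anyway, so the identities hold as stated; I would note this explicitly rather than belabor it.
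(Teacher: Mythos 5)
Your plan is correct and, at its core, it is the same Taylor-with-integral-remainder argument as the paper's: your one-dimensional expansion of $g(\theta)=F(\lambda|x-\theta y|)$ in $\theta$ is exactly the paper's multivariable Taylor expansion in $y$ restricted to the ray $\theta\mapsto\theta y$, and your chain-rule identities $r'(\theta)=-|y|\cos\alpha$ and $\tfrac{d}{d\theta}\cos\alpha=-\tfrac{|y|}{r(\theta)}\sin^2\alpha$ reproduce the displayed integrands (I checked that your $g''(\theta)$ gives part $(ii)$ and $g''(0)$ gives the $\langle w(x),\bar y\rangle^2$ term). Where you genuinely differ is in how the degenerate configurations $x=\theta_* y$, $\theta_*\in[0,1]$, are treated: the paper sidesteps them by regularizing, $G_\varepsilon(y)=F\bigl(\lambda\sqrt{\varepsilon^2+|x-y|^2}\bigr)$, applying Taylor to the smooth $G_\varepsilon$, and passing to the limit $\varepsilon\to0$ with dominated convergence and L'H\^opital (this is exactly where the hypotheses $F^{(j)}(0)=0$ are consumed), whereas you apply Taylor directly to $g$ and argue the singular factors extend continuously. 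That route works, but be precise about why: the remark that the set of bad $\theta$ has measure zero does not by itself license the order-$k$ Taylor formula at a collinear pair $(x,y)$, since the formula is an identity whose proof needs regularity of $g$ on all of $[0,1]$. What saves you is that at such a pair $g(\theta)=F(\lambda|y|\,|\theta-\theta_*|)$, and the hypotheses $F'(0)=\dots=F^{(k-1)}(0)=0$ make $g^{(k-1)}$ Lipschitz (hence absolutely continuous) across $\theta_*$, so the Taylor formula with integral remainder still applies; alternatively, prove the identity off the collinear set and extend by continuity of both sides. State one of these explicitly (for part $(i)$ only Lipschitz continuity of $g$ and the fundamental theorem of calculus are needed, so no vanishing condition is required there). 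With that point made, your approach buys a slightly more elementary proof that avoids the $\varepsilon$-smoothing, at the cost of the bookkeeping you already anticipate for $g'''$ and $g^{(4)}$.
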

\begin{proof}
Let $G_\varepsilon(y)= F(\lambda\sqrt{\varepsilon^2+|x-y|^2}),\, \varepsilon \neq 0$.
Then $G_\varepsilon(y)\in C^{k+1}(\mathbb{R}^2)$  for $\varepsilon \neq 0$ and
$F(\lambda|x-y|)=\lim_{\varepsilon\rightarrow0}G_\varepsilon(y)$.
By Taylor's expansions, we have
\begin{equation}\label{lem-Taylorformula}
   \begin{split}
G_\varepsilon(y)=\sum_{|\alpha|<k}\frac{\partial^\alpha G_\varepsilon(0)}{\alpha !}y^\alpha
+k\int_0^1 (1-\theta)^{k-1}\sum_{|\alpha|=k}\frac{(\partial^\alpha
G_\varepsilon )(\theta y)y^\alpha}{\alpha !}d\theta.
\end{split}
\end{equation}

$(i)$ Observe that
$$\partial_{y_j}G_\varepsilon(y)
=\frac{-\lambda(x_j-y_j)}{(\varepsilon^2+|x-y|^2)^{\frac{1}{2}}}G'(\lambda\sqrt{\varepsilon^2+|x-y|^2}),
\ j=1,2.$$
Since there exists a constants $C=C(\lambda,x,y)$ such that $|(\partial_{y_i}G_\varepsilon)(\theta y)|
 \leq C (i=1,2)$ for $0\leq \theta\leq 1$ and $0<\varepsilon \leq 1$,
then by Lebesgue's dominated convergence theorem, we have for $x-\theta y \neq 0$,
\begin{equation*}
   \begin{split}
\lim_{\varepsilon\rightarrow 0}\int_0^1(\partial_{y_i}G_\varepsilon)(\theta y)d\theta
=& \int_0^1 \frac{-\lambda(x_j-\theta y_j)}{|x-\theta y|}F'(\lambda|x-\theta y|)d\theta, \ j=1,2,
\end{split}
\end{equation*}
and $\lim_{\varepsilon\rightarrow 0}\int_0^1(\partial_{y_i}G_\varepsilon)(\theta y)d\theta=0, j=1,2 $ for
$x-\theta y=0 $.
From Taylor expansions (\ref{lem-Taylorformula}) with $k=1$, we obtain that
\begin{equation*}
   \begin{split}
F(\lambda|x-y|)= F(\lambda|x|)-\lambda |y|\int_0^1F'(\lambda|x-\theta y|)\cos\alpha d\theta.
\end{split}
\end{equation*}

$(ii)$ Note that
\begin{equation*}
   \begin{split}
\partial_{y_j^2}^2G_\varepsilon(y)=& \frac{\lambda^2\varepsilon^2
+\lambda^2\big(|x-y|^2-(x_j-y_j)^2\big)}{(\varepsilon^2+|x-y|^2)}
\frac{F'(\lambda\sqrt{\varepsilon^2+|x-y|^2})}{\lambda(\varepsilon^2+|x-y|^2)^{\frac{1}{2}}}\\
&+\frac{\lambda^2(x_j-y_j)^2}{\varepsilon^2+|x-y|^2}F''(\lambda\sqrt{\varepsilon^2+|x-y|^2}), j=1,2,
\end{split}
\end{equation*}
\begin{equation*}
   \begin{split}
\partial_{y_1y_2}^2G_\varepsilon(y)
=&\frac{\lambda^2(x_1-y_1)(x_2-y_2)}{\varepsilon^2+|x-y|^2}
\Big( F''(\lambda\sqrt{\varepsilon^2+|x-y|^2})
-\frac{F'(\lambda\sqrt{\varepsilon^2+|x-y|^2})}{\lambda(\varepsilon^2+|x-y|^2)^\frac{1}{2}}  \Big).
\end{split}
\end{equation*}
Hence, we have for $x\neq \theta y$,
$$\lim_{\varepsilon\rightarrow0}\partial_{y_j^2}^2G_\varepsilon(\theta y)=
\lambda^2\Big(1- \frac{(x_j-\theta y_j)^2}{|x-\theta y|^2}\Big)\frac{F'(\lambda|x-\theta y|)}{\lambda|x-\theta y|}
 +\frac{\lambda^2(x_j-\theta y_j)^2}{|x-\theta y|^2}F''(\lambda|x-\theta y|), \ j=1,2.$$
Note that $F'(0)=0$, by Hospital's formula we have for $x=\theta y$,
$$\lim_{\varepsilon\rightarrow0}(\partial_{y_j^2}^2G_\varepsilon)(\theta y)=
 \lambda^2 \lim_{\varepsilon\rightarrow0}\frac{F'(\lambda\varepsilon)}{\lambda\varepsilon}
  = \lambda^2 \lim_{\varepsilon\rightarrow0}F''(\lambda \varepsilon ) = \lambda^2F''(0), \ j=1,2.$$
Similarly, we obtain that
$$
\lim_{\varepsilon\rightarrow0}(\partial_{y_1y_2}^2G_\varepsilon)(\theta y)=
\lambda^2w(x_1-\theta y_1)w(x_2-\theta y_2)\Big(F''(\lambda|x-\theta y|)
-\frac{F'(\lambda|x-\theta y|)}{\lambda|x-\theta y|}  \Big).
$$
It is easy to check that $$ \lim_{\varepsilon\rightarrow0} \partial_{y_j}G_\varepsilon(0)
=-\lambda w(x_j)F'(\lambda|x|),\ j=1,2.$$
Note that $ \sin^2\alpha = 1-|\langle w(y), w(x-\theta y)\rangle|^2 = |\langle w(\bar{y}), w(x-\theta y))\rangle |^2$, by Taylor's expansions and Lebesgue's dominated convergence  theorem,
we obtain that
\begin{align*}
F(\lambda|x-y|)
=&F(\lambda|x|)-\lambda F'(\lambda|x|)\langle w(x), y \rangle\\
&+\lambda^2|y|^2\int_0^1(1-\theta)\Big(\frac{F'(\lambda|x-\theta y|)}{\lambda|x-\theta y|} \sin^2 \alpha+
 F''(\lambda|x-\theta y|) \cos^2\alpha\Big) d\theta.
\end{align*}

Similarly, we can prove that $(iii)$ and $(iv)$ hold, these details are left to the interested readers.
\end{proof}
\begin{remark}\label{remark-beha-Rpm}
To use Taylor's expansion above, we will use the following notations and facts related to $R^\pm_0(\lambda)$ later.

$(i)$ Set
 $R^\pm_0(\lambda^4)(x,y)=\frac{1}{\lambda^2}F^\pm(\lambda|x-y|).$
Then by (\ref{reso-big}) and Lemma \ref{lem-reso} we have
\begin{equation}\label{freeres-lagrepart-Fp}
   \begin{split}
F^\pm(p)=& \big(b_\pm+ a_0 p^2\ln p+ \alpha_\pm p^2
  + c_\pm p^4  + a_2p^6\ln p+ \beta_\pm  p^6 +O_1(p^{8-\epsilon})\big)\chi(p)\\
  &   +\big(\pm \frac{i}{8} e^{\pm ip} w_\pm(p)
        -\frac{i}{8} e^{- p}w_+(  ip)\big)\widetilde{\chi}(p).
  \end{split}
\end{equation}

$(ii)$ By (\ref{freeres-lagrepart-Fp}) it follows that
 $$\big[R_0^+(\lambda^4)-R^-_0(\lambda^4)\big](x,y)
 = \frac{1}{\lambda^2}(F^+(\lambda|x-y|)- F^-(\lambda|x-y|))
 := \frac{1}{\lambda^2}\bar{F}(\lambda|x-y|),$$
 where
\begin{equation}\label{reso-R+RF-big}
   \begin{split}
\bar{F}(p)=&\frac{i}{4}\big(1-\frac{1}{4}p^2+\frac{1}{64}p^4-\frac{1}{2304}p^6+O(p^8)\big)\chi(p)
+\frac{i}{8}\big( e^{ip}w_+(p)+ e^{-ip}w_-(p) \big)\widetilde{\chi}(p).
\end{split}
\end{equation}
Moreover, $\bar{F}(p)\in C^\infty(\mathbb{R})$, $\bar{F}'(0)=0$ and $ \bar{F}''(0)\neq 0$.

$(iii)$ Set $ \widetilde{F}(p)= \bar{F}(p)-\frac{i}{16}p^2$. Then we have
\begin{equation}\label{reso-R+R-Ftuba-big}
   \begin{split}
\widetilde{F}(p)=&\frac{i}{4}\big(1+\frac{1}{64}p^4-\frac{1}{234}p^6+O(p^8)\big)\chi(p)
+\frac{i}{8}\big( e^{ip}w_+(p)+ e^{-ip}w_-(p) -\frac{p^2}{2} \big)\widetilde{\chi}(p),
\end{split}
\end{equation}
which satisfies that $\widetilde{F}(p)\in C^\infty(\mathbb{R})$, $\widetilde{F}^{(k)}(0)=0(k=1,2,3)$,
and $\widetilde{F}^{(4)}(0)\neq 0$.
\end{remark}

\subsubsection{\textbf{Regular case} }
Let's begin with showing  Theorem \ref{thm-low} in regular case. Recall that
Stone's formula
\begin{equation}\label{stone-formula-low}
   \begin{split}
H^\alpha e^{-itH}P_{ac}(H)\chi(H)f=& \frac{2}{\pi i}\int_0^\infty e^{-it\lambda^4}\chi(\lambda)\lambda^{3+4\alpha}
[R_V^+(\lambda^4)- R_V^-(\lambda^4)]fd\lambda\\
=&\sum_{N=-\infty}^{N'}\sum_{\pm}\frac{\pm 2}{\pi i}
\int_0^\infty e^{-it\lambda^4}\varphi_0(2^{-N}\lambda)\lambda^{3+4\alpha}R^\pm_V(\lambda^4)fd\lambda.
\end{split}
\end{equation}
If zero is a regular point, then using (\ref{id-RV}) and (\ref{thm-regularinver-M0 })
we have
\begin{equation}\label{RV-regular}
   \begin{split}
R_V^\pm(\lambda^4)
= &R^\pm_0(\lambda^4)- R^\pm_0(\lambda^4)v\Big(S_0D_1S_0 \Big)vR^\pm_0(\lambda^4)
-R^\pm_0(\lambda^4)v\Big(g_0^\pm(\lambda)^{-1}Q\Gamma^0_{0,1}Q \Big)\\
&\times vR^\pm_0(\lambda^4)
-R^\pm_0(\lambda^4)v\Big(g_0^\pm(\lambda)^2\lambda^2S_0\Gamma^0_{2,1}S_0 \Big) vR^\pm_0(\lambda^4)
-R^\pm_0(\lambda^4)v\\
&\times \Big(g_0^\pm(\lambda)\lambda^2\big( S_0\Gamma^0_{2,2}Q
+Q\Gamma^0_{2,3}S_0\big) \Big) vR^\pm_0(\lambda^4)
-R^\pm_0(\lambda^4)\Big(vO_1(\lambda^2) v\Big) R^\pm_0(\lambda^4).
  \end{split}
\end{equation}
Hence, to prove Theorem \ref{thm-low} in regular case, combining with the Proposition \ref{prop-free estimates}, by (\ref{stone-formula-low}) and
(\ref{RV-regular}) it suffices to prove the following
Propositions \ref{prop-S0D1S0}--\ref{prop-reg-freeterms}.
\begin{proposition}\label{prop-S0D1S0}
Assume that $|V(x)|\lesssim (1+|x|)^{-\beta}$ with $\beta > 10$. If $N\in\mathbb{Z}$ and $N\leq N'$,
then
\begin{equation*}
   \begin{split}
\sup\limits_{x,y\in \mathbb{R}^2}\Big|\int_0^\infty e^{-it\lambda^4}\lambda^{3}\varphi_0(2^{-N}\lambda)\big[R_0^\pm(\lambda^4)v(S_0D_1S_0)vR_0^\pm(\lambda^4)\big](x,y)
 d\lambda\Big|\lesssim 2^{2N}(1+|t| 2^{4N})^{-1/2}.
\end{split}
\end{equation*}
\end{proposition}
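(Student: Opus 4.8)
The plan is to unfold $\big[R_0^\pm(\lambda^4)v(S_0D_1S_0)vR_0^\pm(\lambda^4)\big](x,y)$ as a double integral over intermediate variables $u,w$, to use the orthogonality built into the projection $S_0$ to recover two extra powers of $\lambda$, and then, after the rescaling $\lambda=2^Ns$, to recognize the $\lambda$-integral as an instance of \eqref{oscillatory inte} and apply Lemma \ref{lem-LWP}.

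Writing $R_0^\pm(\lambda^4)(x,u)=\lambda^{-2}F^\pm(\lambda|x-u|)$ as in Remark \ref{remark-beha-Rpm}(i), the kernel equals
\[
\frac{1}{\lambda^{4}}\int_{\mathbb{R}^2}\!\!\int_{\mathbb{R}^2}F^\pm(\lambda|x-u|)\,v(u)\,(S_0D_1S_0)(u,w)\,v(w)\,F^\pm(\lambda|w-y|)\,du\,dw .
\]
Since $Pv=v$ and $S_0\le Q$ one has $S_0v=0$, and Lemma \ref{projiction-spaces-SjL2}(i) gives $S_0L^2\perp\{x_1v,x_2v\}$; being self-adjoint, $S_0D_1S_0$ therefore annihilates $v$ and $x_jv$ from both sides. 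Consequently, in the $u$-integral we may subtract from $F^\pm(\lambda|x-u|)$ its first order Taylor polynomial in $u$ at $u=0$, and by Lemma \ref{Taylor-low}(ii) (legitimate since $(F^\pm)'(0)=0$, via the approximation argument of its proof together with the expansions \eqref{free-R0pmlambda4} and \eqref{reso-big} of $F^\pm$) the surviving contribution is $\lambda^{2}|u|^{2}\!\int_0^1(1-\theta)\big[(F^\pm)''(\lambda|x-\theta u|)\cos^2\alpha+\tfrac{(F^\pm)'(\lambda|x-\theta u|)}{\lambda|x-\theta u|}\sin^2\alpha\big]d\theta$; the same reduction applies on the $w$-side. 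This yields the crucial gain $\lambda^{2}\cdot\lambda^{2}$, which cancels the prefactor $\lambda^{-4}$, so that the kernel becomes $\lambda^{0}$ times an integrand which is absolutely integrable in $(u,w)$ once paired against the absolutely bounded operator $S_0D_1S_0$ (Proposition \ref{Pro-absulu-oper}); this is where $\beta>10$ enters, so that $\langle u\rangle^{3}v(u)$ and its relatives lie in $L^2(\mathbb{R}^2)$.

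I would then rescale $\lambda=2^Ns$ and split according to whether $\lambda|x-u|\lesssim1$ or $\gtrsim1$, and similarly for $\lambda|w-y|$. In the near regime the only singular ingredient of \eqref{free-R0pmlambda4} is a logarithm contributing a factor $C(1+|N|)$ after the weighted $L^2$ pairing; in the far regime I use \eqref{reso-big} and write $e^{\pm i\lambda|x-\theta u|}=e^{\pm i\lambda|x|}e^{\pm i\lambda(|x-\theta u|-|x|)}$, the second factor being bounded together with its $s$-derivative because $\big||x-\theta u|-|x|\big|\le|u|$, while the surviving Hankel symbol obeys $|w_\pm^{(\ell)}(\lambda|x-\theta u|)|\lesssim(1+\lambda|x-\theta u|)^{-1/2-\ell}\lesssim1$. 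In every case the $\lambda$-integral becomes $2^{4N}\int_0^\infty e^{-it2^{4N}s^4}e^{\pm i2^Ns\Phi(x,y)}\Psi(2^Ns,x,y)\varphi_0(s)\,ds$ with $\Phi(x,y)=\pm|x|\pm|y|$ and $|\partial_s^k\Psi(2^Ns,x,y)|\lesssim(1+|N|)^{C}$ for $k=0,1$, uniformly in $x,y$ and $N\le N'$. Writing $\Psi=(1+|N|)^{C}\widetilde\Psi$ with $|\partial_s^k\widetilde\Psi|\lesssim1$, Lemma \ref{lem-LWP} (see \eqref{lem-LWP-con2}) bounds the $s$-integral by $(1+|N|)^{C}(1+|t|2^{4N})^{-1/2}$; since $2^{4N}(1+|N|)^{C}\lesssim2^{2N}$ for $N\le N'\ll0$, this gives $\lesssim2^{2N}(1+|t|2^{4N})^{-1/2}$ for each sign, and adding the two signs yields the assertion.

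The step I expect to be the main obstacle is the uniform control of $\Psi$ and $\partial_s\Psi$ in $x,y$. In the far regime, $s$-differentiating the phase produces a factor $|x-\theta u|$ that is unbounded; this is handled by first peeling off the scalar phase $e^{\pm i\lambda|x|}$ and absorbing the residual $\big||x-\theta u|-|x|\big|\le|u|$ into the decay of $v$, which is precisely why one must spend extra decay and work with the margin $\beta>10$ rather than a smaller exponent. One must also verify that the $\theta$-integral in the Taylor remainder converges through the integrable logarithmic singularities of $(F^\pm)''$ and $(F^\pm)'(p)/p$ at $p=0$, and that $s$-differentiating the cut-offs $\chi(\lambda|x-u|)$ does not spoil the estimates. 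Keeping all the weighted $L^2$-bounds on $v$ (with one $\lambda$-derivative) and the absolute boundedness of $S_0D_1S_0$ within the budget $\beta>10$, together with the bookkeeping at the near/far interface, is the quantitative heart of the argument.
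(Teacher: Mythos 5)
Your overall architecture (unfold the kernel, exploit the orthogonality of $S_0$, rescale $\lambda=2^Ns$, invoke Lemma \ref{lem-LWP}) is the right one, but the pivotal step as you set it up is not justified and is also more than is needed. You apply the second-order expansion, Lemma \ref{Taylor-low}(ii), to $F^{\pm}$ itself. However $F^{\pm}$ is not $C^2$ at $p=0$: by \eqref{freeres-lagrepart-Fp} it contains the term $a_0p^2\ln p$, so $(F^\pm)''(p)\sim 2a_0\ln p$ blows up at coincidence points $\lambda|x-\theta u|\to0$. Consequently the rescaled amplitude you build contains $(F^\pm)''(2^Ns|x-\theta u|)$ and $(F^\pm)'(p)/p$, which are \emph{not} uniformly bounded in the inner variables, so Lemma \ref{lem-LWP} cannot be applied pointwise in $(u,w,\theta_1,\theta_2)$ as one would like; you are forced to put the $u,w$-integrations inside $\Psi$ and then assert $|\partial_s^k\Psi|\lesssim(1+|N|)^C$ uniformly in $x,y$. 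That assertion is exactly where the difficulty sits: one must integrate the $\ln^-|x-\theta u|$ singularity (including the regime $\theta\to0$), control $\ln^+|x-\theta u|$ via the cut-off $\lambda|x-\theta u|\lesssim1$, and track what happens when $\partial_s$ hits the cut-offs and the log terms — none of which is carried out; you yourself flag it as "the main obstacle" without resolving it. It is likely repairable (the singularities are integrable and you have the slack $2^{4N}(1+|N|)^C\lesssim2^{2N}$ for $N\le N'\ll0$), but as written the key estimate is unproven, and note also that the paper deliberately reserves second- and higher-order expansions for the differences $\bar F=F^+-F^-$ and $\widetilde F$ of Remark \ref{remark-beha-Rpm}, which \emph{are} smooth, precisely to avoid this log-singularity issue for a single sign.

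The paper's proof avoids all of this by using less orthogonality: only $S_0v=0$ together with the first-order formula, Lemma \ref{Taylor-low}(i), applied on each side. This yields the kernel as $\lambda^{-2}$ times an integral of $(F^\pm)'(\lambda|x-\theta_2u_2|)(F^\pm)'(\lambda|y-\theta_1u_1|)$ against $|u_1||u_2|[vS_0D_1S_0v]$, and the factorization $(F^\pm)'(p)=e^{\pm ip}F_1^\pm(p)$ from \eqref{freereso-Fp-daoshu}--\eqref{freereso-Fp-daoshu-small} gives an amplitude with $|\partial_s^kF_1^\pm(2^Ns r)|\lesssim1$ uniformly (the worst behavior $p\ln p$ at zero and $p^{-1/2}$ at infinity are harmless). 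One then takes $\Phi(z)=|x-\theta_2u_2|+|y-\theta_1u_1|$ with $z$ including the inner variables, applies Lemma \ref{lem-LWP} uniformly in $z$, and finishes with H\"older's inequality using $\|u_iv\|_{L^2}$ and the absolute boundedness of $S_0D_1S_0$ (Proposition \ref{Pro-absulu-oper}). This gives exactly the stated $2^{2N}(1+|t|2^{4N})^{-1/2}$ with no $(1+|N|)^C$ losses, no peeling of $e^{\pm i\lambda|x|}$, and no second derivatives of $F^\pm$. You should either switch to this first-order argument or, if you keep the second-order route, supply the missing uniform bound on $\Psi$ and $\partial_s\Psi$ through the logarithmic singularities and justify the Taylor formula for the non-$C^2$ function $F^\pm$.
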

\begin{proof}
We write
$$K_{1,N}^{0,\pm}(t;x,y)= \int_0^\infty e^{-it\lambda^4}\lambda^3\varphi_0(2^{-N}\lambda)\big[R_0^\pm(\lambda^4)vS_0D_1S_0vR_0^\pm(\lambda^4)\big](x,y)
 d\lambda.$$
Let $\displaystyle R_0^\pm(\lambda^4)(x,y)=\frac{1}{\lambda^2}F^\pm(\lambda|x-y|)$.
 Using the orthogonality $S_0v=0$ and Lemma \ref{Taylor-low}(i) we have
\begin{equation*}
   \begin{split}
&[R_0^\pm(\lambda^4) vD_1S_0v R_0^\pm(\lambda^4)](x,y)\\
=& \frac{1}{\lambda^4}\int_{\mathbb{R}^4}F^\pm(\lambda|x-u_2|)
     [vS_0D_1S_0v](u_2,u_1)F^\pm(\lambda |y-u_1|)du_1du_2\\
=&\frac{1}{\lambda^2}\int_{\mathbb{R}^4}\int_0^1\int_0^1(F^\pm)'(\lambda|x-\theta_2u_2|)
(F^\pm)'(\lambda|y-\theta_1u_1|)\cos\alpha_2\cos\alpha_1d\theta_1d\theta_2 \\
&|u_1||u_2|[vS_0D_1S_0v](u_2,u_1)du_1du_2,
  \end{split}
\end{equation*}
where $\cos\alpha_1= \cos\alpha(y,u_1,\theta_1) $ and $\cos\alpha_2= \cos\alpha(x,u_2,\theta_2)$.

Thus, we have
\begin{equation*}
   \begin{split}
K_{1,N}^{0,\pm}(t;x,y)=&\int_{\mathbb{R}^4}\Big(\int_0^1\int_0^1\Big(\int_0^\infty e^{-it\lambda^4}\lambda
\varphi_0(2^{-N}\lambda)(F^\pm)'(\lambda|x-\theta_2u_2|)(F^\pm)'(\lambda|y-\theta_1u_1|)d\lambda\Big)\\
&\cos\alpha_2\cos\alpha_1d\theta_1d\theta_2\Big)
\, |u_1|\, |u_2|\, [vS_0D_1S_0v](u_2,u_1)\,du_1du_2.
 \end{split}
\end{equation*}
Let
\begin{equation*}
   \begin{split}
E^{0,\pm}_{1,N}(t;x,y,\theta_1,\theta_2,u_1,u_2)= \int_0^\infty e^{-it\lambda^4}\lambda
\varphi_0(2^{-N}\lambda)(F^\pm)'(\lambda|x-\theta_2u_2|)(F^\pm)'(\lambda|y-\theta_1u_1|)d\lambda.
  \end{split}
\end{equation*}
Then
\begin{equation}\label{prop-esti-k1N}
   \begin{split}
|K_{1,N}^{0,\pm}(t;x,y)|\lesssim & \int_{\mathbb{R}^4}\Big(\int_0^1\int_0^1|E^{0,\pm}_{1,N}(t;x,y,\theta_1,\theta_2,u_1,u_2)|d\theta_1d\theta_2\Big)\times\\
&\ \ \ \ \ |u_1||u_2|\,|[vS_0D_1S_0v](u_2,u_1)|du_1du_2.
  \end{split}
\end{equation}
Now we begin to estimate $E^{0,\pm}_{1,N}(t;x,y,\theta_1,\theta_2,u_1,u_2)$. In fact, set
\begin{equation}\label{freereso-Fp-daoshu}
   \begin{split}
(F^\pm)'(p)=e^{\pm ip}F^\pm_1(p),\ p\geq 0.
  \end{split}
\end{equation}
Then from (\ref{freeres-lagrepart-Fp}) we have
\begin{equation}\label{freereso-Fp-daoshu-big}
   \begin{split}
F^\pm_1(p)=\frac{1}{8}\Big(-w_\pm(p)\pm i w_\pm'(p)
           +ie^{\mp ip}e^{-p}\big(w_+(ip)-w'_+(ip)\big) \Big), \  p \gg 1,
  \end{split}
\end{equation}
\begin{equation}\label{freereso-Fp-daoshu-small}
   \begin{split}
F_1^\pm(p)=&  e^{\mp ip}\Big(2a_0p\ln p+(a_0+2\alpha_\pm)p +4c_\pm p^3
+6a_2p^5\ln p+(a_2+6\beta_\pm)p^5 \\
&+O(p^{7-\epsilon}) \Big),\  p\ll1.
  \end{split}
\end{equation}
Let $\lambda=2^Ns$, then we have
\begin{equation*}
   \begin{split}
&E^{0,\pm}_{1,N}(t;x,y,\theta_1,\theta_2,u_1,u_2)\\
=& \int_0^\infty e^{-it\lambda^4}\lambda
\varphi_0(2^{-N}\lambda)e^{\pm i\lambda|x-\theta_2u_2|}e^{\pm i\lambda|y-\theta_1u_1|}
F^\pm_1(\lambda|x-\theta_2u_2|)F^\pm_1(\lambda|y-\theta_1u_1|)d\lambda\\
=& 2^{2N}\int_0^\infty e^{-it2^{4N}s^4}s\varphi_0(s)
e^{\pm i2^Ns(|x-\theta_2u_2| +|y-\theta_1u_1| )}
F^\pm_1(2^Ns|x-\theta_2u_2|)F^\pm_1(2^Ns|y-\theta_1u_1|)ds.
  \end{split}
\end{equation*}
Note that $s\in[1/4,1]$ and $N\in\mathbb{Z}$, by (\ref{freereso-Fp-daoshu-big})
and (\ref{freereso-Fp-daoshu-small})
it is easy to check that for $k=0,1$,
$$ |\partial_s^kF^\pm_1(2^Ns|x-\theta_2u_2|)| \lesssim 1\  \hbox {and}\
|\partial_s^kF^\pm_1(2^Ns|y-\theta_1u_1|)| \lesssim 1. $$
Let $z= (x,y,\theta_1,\theta_1,u_1,u_2)\in \mathbb{R}^{10}$, $\Phi(z)=|x-\theta_2u_2| +|y-\theta_1u_1| $ and
 $$ \Psi(2^Ns, z)=F^\pm_1(2^Ns|x-\theta_2u_2|)F^\pm_1(2^Ns|y-\theta_1u_1|).$$
By Lemma \ref{lem-LWP}, we immediately obtain that for any $z$,
$$|E^{0,\pm}_{1,N}(t;x,y,\theta_1,\theta_2,u_1,u_2)|\lesssim 2^{2N}(1+|t|2^{4N})^{-1/2}.$$
Finally, from (\ref{prop-esti-k1N})and  H\"{o}lder inequality  we obtain that for any $x,y$,
\begin{equation*}
   \begin{split}
|K_{1,N}^{0,\pm}(t;x,y)|
\lesssim & 2^{2N}(1+|t|2^{4N})^{-1/2}\big\|u_1v(u_1)\big\|_{L^2_{u_1}} \big\|u_2v(u_2)\big\|_{L^2_{u_2}}
\big\|S_0D_1S_0\big\|_{L^2\rightarrow L^2}\\
\lesssim & 2^{2N}(1+|t|2^{4N})^{-1/2}.
  \end{split}
\end{equation*}
Thus the proof of this proposition is completed.
\end{proof}
\label{prop-QD1Q}
\begin{proposition}\label{prop-QlambdaQ}
Assume that $|V(x)|\lesssim (1+|x|)^{-\beta}$ with $\beta >10$. If $N\in \mathbb{Z}$ and
 $N \leq N'$, then
\begin{equation*}
\sup\limits_{x,y}\Big|\int_0^\infty e^{-it\lambda^4}\lambda^3\varphi_0(2^{-N}\lambda)\big[R_0^\pm(\lambda^4)v
\big(g_0^\pm(\lambda)^{-1}Q\Gamma_{0,1}^0(\lambda) Q\big) vR_0^\pm(\lambda^4)\big](x,y)
 d\lambda\Big|\lesssim \frac{2^{2N}}{(1+|t|2^{4N})^{\frac{1}{2}}}.
\end{equation*}
\end{proposition}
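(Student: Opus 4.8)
The plan is to follow the proof of Proposition~\ref{prop-S0D1S0}, with one structural modification forced by the $\lambda$-dependent absolutely bounded operator $\Gamma^0_{0,1}(\lambda)$ sitting between the two copies of $Q$: since this operator is controlled only through $\|\Gamma^0_{0,1}(\lambda)\|_{L^2\to L^2}=O_1(1)$ and carries no usable pointwise kernel bound, the oscillatory factors of the two free resolvents must be extracted \emph{before} the $L^2$-pairing with $\Gamma^0_{0,1}(\lambda)$ is formed, rather than being absorbed into an inner oscillatory integral over the $u$-variables as in Proposition~\ref{prop-S0D1S0}.

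First I would write $R^\pm_0(\lambda^4)(x,y)=\lambda^{-2}F^\pm(\lambda|x-y|)$ and $(F^\pm)'(p)=e^{\pm ip}F^\pm_1(p)$ as in Remark~\ref{remark-beha-Rpm} and \eqref{freereso-Fp-daoshu}. Using the orthogonality $Qv=0$, which annihilates the leading (constant-in-$u$) Taylor term of $R^\pm_0(\lambda^4)(x,\cdot)$, together with Lemma~\ref{Taylor-low}(i), one obtains
\begin{equation*}
\big(R^\pm_0(\lambda^4)vQ\big)(x,u)=\frac{e^{\pm i\lambda|x|}}{\lambda}\,v(u)\,\Xi^\pm_x(\lambda,u),\qquad
\big(QvR^\pm_0(\lambda^4)\big)(u,y)=\frac{e^{\pm i\lambda|y|}}{\lambda}\,v(u)\,\Xi^\pm_y(\lambda,u)
\end{equation*}
where $\Xi^\pm_x(\lambda,u)$ is a linear combination of $|u|\int_0^1 e^{\pm i\lambda(|x-\theta u|-|x|)}F^\pm_1(\lambda|x-\theta u|)\cos\alpha(x,u,\theta)\,d\theta$ and $\int|v(w)|^2|w|\int_0^1 e^{\pm i\lambda(|x-\theta w|-|x|)}F^\pm_1(\lambda|x-\theta w|)\cos\alpha(x,w,\theta)\,d\theta\,dw$, with $\theta\in[0,1]$ and $\alpha$ as in Lemma~\ref{Taylor-low}. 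Hence the kernel of the composition in the statement equals
\begin{equation*}
\big[R^\pm_0(\lambda^4)vQ\,\Gamma^0_{0,1}(\lambda)\,QvR^\pm_0(\lambda^4)\big](x,y)=\frac{e^{\pm i\lambda(|x|+|y|)}}{\lambda^2}\,\big\langle\,\overline{v\Xi^\pm_x(\lambda,\cdot)}\,,\;\Gamma^0_{0,1}(\lambda)\big[v\Xi^\pm_y(\lambda,\cdot)\big]\,\big\rangle .
\end{equation*}

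From \eqref{freereso-Fp-daoshu-big}--\eqref{freereso-Fp-daoshu-small} one has $|F^\pm_1(p)|+|p(F^\pm_1)'(p)|\lesssim1$ for all $p\ge0$, while $\big||x-\theta u|-|x|\big|\le|u|$; since $\beta>10$ makes $\|\langle u\rangle^2v(u)\|_{L^2}$ finite, a direct computation gives
\begin{equation*}
\sup_{x\in\mathbb{R}^2}\Big(\big\|v\Xi^\pm_x(\lambda,\cdot)\big\|_{L^2}+\lambda\big\|\partial_\lambda\big(v\Xi^\pm_x(\lambda,\cdot)\big)\big\|_{L^2}\Big)\lesssim1,\qquad 0<\lambda\ll1.
\end{equation*}
Together with $\|\Gamma^0_{0,1}(\lambda)\|_{L^2\to L^2}=O_1(1)$, with $\big|g^\pm_0(\lambda)^{-1}\big|+\lambda\big|\partial_\lambda g^\pm_0(\lambda)^{-1}\big|\lesssim1$ (immediate from $g^\pm_0(\lambda)=a_0\ln\lambda+\alpha_\pm$), and with the Leibniz rule, this shows that the scalar amplitude $\Psi^\pm(\lambda;x,y):=g^\pm_0(\lambda)^{-1}\big\langle\overline{v\Xi^\pm_x(\lambda,\cdot)},\Gamma^0_{0,1}(\lambda)[v\Xi^\pm_y(\lambda,\cdot)]\big\rangle$ satisfies $\sup_{x,y\in\mathbb{R}^2}\big(|\Psi^\pm(\lambda;x,y)|+\lambda|\partial_\lambda\Psi^\pm(\lambda;x,y)|\big)\lesssim1$ for $0<\lambda\ll1$. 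Denoting by $K^\pm_N(t;x,y)$ the integral in the statement and inserting the two displays above, the substitution $\lambda=2^Ns$ gives
\begin{equation*}
K^\pm_N(t;x,y)=2^{2N}\int_0^\infty e^{-it2^{4N}s^4}\,e^{\pm i2^Ns(|x|+|y|)}\,s\,\Psi^\pm(2^Ns;x,y)\,\varphi_0(s)\,ds .
\end{equation*}
The amplitude $s\mapsto s\,\Psi^\pm(2^Ns;x,y)$ and its first $s$-derivative are bounded uniformly in $x,y$ and in $N\le N'$ (the chain rule converts the $O_1$-weight $\lambda=2^Ns$ into the harmless factor $s^{-1}$), so Lemma~\ref{lem-LWP} with $z=(x,y)\in\mathbb{R}^4$ and $\Phi(z)=|x|+|y|$ applies and yields $\sup_{x,y\in\mathbb{R}^2}|K^\pm_N(t;x,y)|\lesssim 2^{2N}(1+|t|2^{4N})^{-1/2}$, which is the assertion.

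The only delicate point is the one flagged at the start: because $\Gamma^0_{0,1}(\lambda)$ cannot be treated pointwise, the whole mechanism that produces the decay factor $(1+|t|2^{4N})^{-1/2}$ — the competition between the phases $t\lambda^4$ and $\lambda(|x|+|y|)$ exploited in Lemma~\ref{lem-LWP} — has to be arranged by pulling $e^{\pm i\lambda|x|}$ and $e^{\pm i\lambda|y|}$ out of the resolvent kernels in the very first step, after which all remaining $\lambda$-dependence ($\Xi^\pm_x$, $\Xi^\pm_y$, $\Gamma^0_{0,1}$, and $g^\pm_0(\lambda)^{-1}$) is swept into a single $O_1(1)$ scalar amplitude via the operator-norm bound. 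The verification of the weighted $L^2$-bounds on $\partial_\lambda(v\Xi^\pm_x)$ — the only place the hypothesis $\beta>10$ is really needed — is routine and parallels the analogous estimates in Proposition~\ref{prop-S0D1S0}.
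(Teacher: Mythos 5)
Your proof is correct and takes essentially the same route as the paper: exploit $Qv=0$ via Lemma \ref{Taylor-low}(i) to gain a factor of $\lambda$, pull the phases $e^{\pm i\lambda|x|}$, $e^{\pm i\lambda|y|}$ out of the resolvent kernels \emph{before} forming the $L^2$-pairing so that $\Gamma^0_{0,1}(\lambda)$ and $g_0^\pm(\lambda)^{-1}$ enter only through their $O_1(1)$ bounds (with Cauchy--Schwarz and the weights $|u|v$, $|u|^2v$ supplied by $\beta>10$), and conclude with Lemma \ref{lem-LWP} for the phase $\Phi=|x|+|y|$. The only difference is bookkeeping: you fold the $\theta$-integrals and the rank-one part of $Q$ into one scalar amplitude with $z=(x,y)$, whereas the paper keeps $z=(x,y,\theta_1,\theta_2)$ and the $u$-integrals inside the amplitude — nothing essential changes.
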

\begin{proof}
Denote
\begin{equation*}
   \begin{split}
K_{2,N}^{0,\pm}(t;x,y)=\int_0^\infty e^{-it\lambda^4}\lambda^3\varphi_0(2^{-N}\lambda)\big[R_0^\pm(\lambda^4)v
\big(g_0^\pm(\lambda)^{-1}Q\Gamma_{0,1}^0(\lambda) Q\big) vR_0^\pm(\lambda^4)\big](x,y)
 d\lambda.
  \end{split}
\end{equation*}
Let
$$R_0^\pm(\lambda^2)(x,y)=\frac{1}{\lambda^2}F^\pm(\lambda|x-y|).$$
Using the orthogonality $Qv=0$, by Lemma \ref{Taylor-low}(i) we have
\begin{equation*}
   \begin{split}
&\big[R_0^\pm(\lambda^4) v\big(g_0^\pm(\lambda)^{-1}Q\Gamma_{0,1}^0(\lambda) Q\big)v R_0^\pm(\lambda^4)\big](x,y)\\
=& \frac{g_0^\pm(\lambda)^{-1}}{\lambda^4}\int_{\mathbb{R}^4}F^\pm(\lambda|x-u_2|)
     [vQ\Gamma_{0,1}^0(\lambda) Qv](u_2,u_1)F^\pm(\lambda |y-u_1|)du_1du_2\\
=&\frac{g_0^\pm(\lambda)^{-1}}{\lambda^2}\int_{\mathbb{R}^4}\int_0^1\int_0^1(F^\pm)'(\lambda|x-\theta_2u_2|)
(F^\pm)'(\lambda|y-\theta_1u_1|)\cos\alpha_2\cos\alpha_1d\theta_1d\theta_2 \\
&|u_1||u_2|[vQ\Gamma_{0,1}^0(\lambda) Qv](u_2,u_1)du_1du_2.
  \end{split}
\end{equation*}
Thus, by Fubini's theorem we have
\begin{equation*}
   \begin{split}
&K_{2,N}^{0,\pm}(t;x,y)\\
=&\int_0^1\int_0^1\Big(\int_0^\infty e^{-it\lambda^4}\lambda
\varphi_0(2^{-N}\lambda) g^\pm_0(\lambda)^{-1}\int_{\mathbb{R}^4}(F^\pm)'(\lambda|x-\theta_2u_2|)
(F^\pm)'(\lambda|y-\theta_1u_1|)\\
&|u_1||u_2|v(u_1)v(u_2)[Q\Gamma_{0,1}^0(\lambda) Q](u_2,u_1)du_1du_2
d\lambda\Big)
\cos\alpha_2\cos\alpha_1d\theta_1d\theta_2.
 \end{split}
\end{equation*}
Let
\begin{equation*}
   \begin{split}
E^{0,\pm}_{2,N}(t;x,y,\theta_1,\theta_2)=& \int_0^\infty e^{-it\lambda^4}\lambda
\varphi_0(2^{-N}\lambda) g^\pm_0(\lambda)^{-1}\int_{\mathbb{R}^4}(F^\pm)'(\lambda|x-\theta_2u_2|)\\
&(F^\pm)'(\lambda|y-\theta_1u_1|)
|u_1||u_2|v(u_1)v(u_2)[Q\Gamma_{0,1}^0(\lambda) Q](u_2,u_1)du_1du_2
d\lambda.
  \end{split}
\end{equation*}
Thus, we have
\begin{equation}\label{esti-K2N0pm}
   \begin{split}
|K_{2,N}^{0,\pm}(t;x,y)|\lesssim \int_0^1\int_0^1|E^{0,\pm}_{2,N}(t;x,y,\theta_1,\theta_2)|d\theta_1d\theta_2
  \end{split}
\end{equation}
If for any $x,y\in \mathbb{R}^2$, and $0 \leq \theta_i \leq 1, i=1,2$, then
\begin{equation}\label{estu-E0pm2N}
   \begin{split}
|E^{0,\pm}_{2,N}(t;x,y,\theta_1,\theta_2)| \lesssim 2^{(2+\alpha)N}(1+|t|2^{4N})^{-1/2}.
  \end{split}
\end{equation}
By (\ref{esti-K2N0pm} ) we immediately obtain that $|K_{2,N}^{0,\pm}(t;x,y)|$
is bounded by $2^{2N}(1+|t|2^{4N})^{-1/2}$ uniformly in  $x,y$.

Finally, it remains the proof of (\ref{estu-E0pm2N}). Indeed, let
$$ (F^\pm)'(p)=e^{\pm ip}F_1^\pm(p),\ p\geq 0. $$
Set $\lambda =2^Ns$, we have
\begin{equation*}
   \begin{split}
&E^{0,\pm}_{2,N}(t;x,y,\theta_1,\theta_2)\\
=&2^{2N} \int_0^\infty e^{-it2^{2N}s^4} e^{\pm i2^Ns(|x|+|y|)}s\varphi_0(s)
\Big( g^\pm_0(2^Ns)^{-1}\int_{\mathbb{R}^4}e^{\pm i2^Ns(|x-\theta_2u_2|-|x|)}
e^{\pm i2^Ns(|y-\theta_1u_1|-|y|)}\\
&F_1^\pm(2^Ns|x-\theta_2u_2|)F_1^\pm(2^Ns|y-\theta_1u_1|)
|u_1||u_2|v(u_1)v(u_2)[Q\Gamma_{0,1}^0(2^Ns) Q](u_2,u_1)du_1du_2\Big) ds.
  \end{split}
\end{equation*}
Let $z=(x,y, \theta_1, \theta_1)$, $\Phi(z)=|x|+|y|$ and
\begin{equation*}
   \begin{split}
\Psi(2^Ns,z)=&g^\pm_0(2^Ns)^{-1}\int_{\mathbb{R}^4}e^{\pm i2^Ns(|x-\theta_2u_2|-|x|)}
e^{\pm i2^Ns(|y-\theta_1u_1|-|y|)}F_1^\pm(2^Ns|x-\theta_2u_2|)\\
&F_1^\pm(2^Ns|y-\theta_1u_1|)
|u_1||u_2|v(u_1)v(u_2)[Q\Gamma_{0,1}^0(2^Ns) Q](u_2,u_1)du_1du_2.
  \end{split}
\end{equation*}
By (\ref{freereso-Fp-daoshu-big})
and (\ref{freereso-Fp-daoshu-small})
it is easy to check that for $k=0,1$,
$$ |\partial_s^kF^\pm_1(2^Ns|x-\theta_2u_2|)| \lesssim 1,\ \
|\partial_s^kF^\pm_1(2^Ns|y-\theta_1u_1|)| \lesssim 1, \ |\partial_s^k{(g^\pm_0(2^Ns)^{-1})}| \lesssim 1.$$
$$\big|\partial_s^k e^{\pm i2^Ns(|x-\theta_2u_2|-|x|)}\big| \lesssim (2^N|u_2|)^k,\
 \  \big|\partial_s^k e^{\pm i2^Ns(|y-\theta_1u_1|-|y|)}\big| \lesssim (2^N|u_1|)^k. $$
Note that $\| \Gamma_{0,2}^0(\lambda)\|_{L^2\rightarrow L^2}= O_1(1)$,
by H\"{o}lder's inequality we have
\begin{equation*}
   \begin{split}
|\Psi(2^Ns,z)| \lesssim & \| u_1v(u_1)\|_{L^2_{u_1}}\| u_2v(u_2)\|_{L^2_{u_2}}.
  \end{split}
\end{equation*}
Observe that $2^N\cdot (2^Ns)^{-1} \lesssim 1$ by $ s\in \hbox{supp}\varphi_0\subset [\frac{1}{4},1]$, then
$$2^N \| Q\partial_s \Gamma_{0,1}^0(2^Ns)Q\|_{L^2\rightarrow L^2}
\lesssim 2^N \| \partial_s \Gamma_{0,1}^0(2^Ns)\|_{L^2\rightarrow L^2}
\lesssim 2^N \cdot (2^Ns)^{-1}\lesssim 1,  $$
 by H\"{o}lder's inequality, we have
\begin{equation*}
   \begin{split}
|\partial_s\Psi(2^Ns,z)
\lesssim & (\| u_1v(u_1)\|_{L^2_{u_1}}\| u_2v(u_2)\|_{L^2_{u_2}}
 +\| |u_1|^2v(u_1)\|_{L^2_{u_1}}\| u_2v(u_2)\|_{L^2_{u_2}} \\
  &+\| u_1v(u_1)\|_{L^2_{u_1}}\| |u_2|^2v(u_2)\|_{L^2_{u_2}}\lesssim 1.
  \end{split}
\end{equation*}
By Lemma \ref{lem-LWP}, we  obtain that the inequality (\ref{estu-E0pm2N}) holds.
\end{proof}

Next, since we use the same orthogonal relation $Qv=0$ as in the proof of Proposition \ref{prop-QlambdaQ}, so we immediately obtain the following proposition.
\begin{proposition}\label{prop-reg-otherterms}
Assume that $|V(x)|\lesssim (1+|x|)^{-\beta}$ with $\beta >10$. Let $\Omega^\pm(\lambda)$ be one of
the three operators:
$g_0^\pm(\lambda)^2\lambda^2S_0\Gamma^0_{2,1}(\lambda)S_0$, $g_0^\pm(\lambda)\lambda^2S_0\Gamma^0_{2,2}(\lambda)Q$
 and  $g_0^\pm(\lambda)\lambda^2Q\Gamma^0_{2,3}(\lambda)S_0$.
If $N\in \mathbb{Z}$ and $N \leq N'$, then
\begin{equation*}
\sup\limits_{x,y}\Big|\int_0^\infty e^{-it\lambda^4}\lambda^3\varphi_0(2^{-N}\lambda)\big[R_0^\pm(\lambda^4)v\Omega^\pm(\lambda) vR_0^\pm(\lambda^4)\big](x,y)d\lambda\Big|\lesssim 2^{2N}(1+|t|2^{4N})^{-1/2}.
\end{equation*}
\end{proposition}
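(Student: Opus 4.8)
The plan is to reproduce, with the same bookkeeping, the argument used in Propositions~\ref{prop-S0D1S0} and~\ref{prop-QlambdaQ}, the only structural input being that in each of the three cases $\Omega^\pm(\lambda)=g_0^\pm(\lambda)^k\lambda^2\,A\,\Gamma^0_{2,j}(\lambda)\,B$ with $k\in\{1,2\}$ and $A,B\in\{S_0,Q\}$, \emph{both} flanking projections annihilate $v$: indeed $Qv=0$, and $S_0v=0$ since $S_0L^2\subset QL^2$. Writing $R_0^\pm(\lambda^4)(x,y)=\lambda^{-2}F^\pm(\lambda|x-y|)$ as in Remark~\ref{remark-beha-Rpm}, the kernel $\big[R_0^\pm(\lambda^4)v\Omega^\pm(\lambda)vR_0^\pm(\lambda^4)\big](x,y)$ equals $g_0^\pm(\lambda)^k\lambda^{-2}$ times $\int\!\!\int F^\pm(\lambda|x-u_2|)\,v(u_2)\,[A\Gamma^0_{2,j}(\lambda)B](u_2,u_1)\,v(u_1)\,F^\pm(\lambda|y-u_1|)\,du_1du_2$. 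Applying Lemma~\ref{Taylor-low}(i) to each of $F^\pm(\lambda|x-u_2|)$ and $F^\pm(\lambda|y-u_1|)$, every term in which at least one factor is replaced by its $u$-independent leading value $F^\pm(\lambda|x|)$ or $F^\pm(\lambda|y|)$ vanishes after integration, because $\int v(u_2)[A\Gamma^0_{2,j}(\lambda)B](u_2,u_1)\,du_2=0$ ($Av=0$) and $\int[A\Gamma^0_{2,j}(\lambda)B](u_2,u_1)v(u_1)\,du_1=0$ ($Bv=0$); only the product of the two first-order remainders survives, contributing the weights $\lambda|u_2|$, $\lambda|u_1|$, the derivatives $(F^\pm)'(\lambda|x-\theta_2u_2|)$, $(F^\pm)'(\lambda|y-\theta_1u_1|)$, the bounded factors $\cos\alpha_2$, $\cos\alpha_1$, and two integrations $\theta_1,\theta_2\in[0,1]$.

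The two extra powers of $\lambda$ cancel the $\lambda^{-2}$, so after inserting the Stone weight $\lambda^{3}$ and substituting $\lambda=2^Ns$, and after writing $(F^\pm)'(p)=e^{\pm ip}F_1^\pm(p)$ to extract the phase $e^{\pm i2^Ns(|x|+|y|)}$, one is reduced, for each fixed $\theta_1,\theta_2\in[0,1]$, to estimating $2^{4N}$ times an oscillatory integral of the type \eqref{oscillatory inte} with $z=(x,y,\theta_1,\theta_2)$, $\Phi(z)=|x|+|y|$, and amplitude $\Psi(2^Ns,z)$ equal to $g_0^\pm(2^Ns)^k$ times the $u_1,u_2$-integral of $e^{\pm i2^Ns(|x-\theta_2u_2|-|x|)}e^{\pm i2^Ns(|y-\theta_1u_1|-|y|)}F_1^\pm(2^Ns|x-\theta_2u_2|)F_1^\pm(2^Ns|y-\theta_1u_1|)\cos\alpha_2\cos\alpha_1\,|u_1||u_2|v(u_1)v(u_2)\,[A\Gamma^0_{2,j}(2^Ns)B](u_2,u_1)$ (the harmless factor $s^3$ being absorbed into $\Psi$). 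From \eqref{freereso-Fp-daoshu-big}--\eqref{freereso-Fp-daoshu-small} one has $|\partial_s^\ell F_1^\pm(2^Ns|\cdot|)|\lesssim1$ ($\ell=0,1$); moreover $|\partial_s^\ell e^{\pm i2^Ns(|x-\theta_iu_i|-|x|)}|\lesssim(2^N|u_i|)^\ell\lesssim|u_i|^\ell$, $\|\Gamma^0_{2,j}(2^Ns)\|_{L^2\to L^2}=O(1)$ with $2^N\|\partial_s\Gamma^0_{2,j}(2^Ns)\|_{L^2\to L^2}\lesssim 2^N(2^Ns)^{-1}\lesssim1$, and $|\partial_s^\ell g_0^\pm(2^Ns)^k|\lesssim|N|^k$; plugging these into H\"older's inequality in $u_1,u_2$ — where the condition $\langle u\rangle^2v(u)\in L^2$ (so $\beta>6$ suffices, and $\beta>10$ is assumed) is used — gives $|\partial_s^\ell\Psi(2^Ns,z)|\lesssim|N|^k$ uniformly in $z$. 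Lemma~\ref{lem-LWP}, applied after factoring out the $s$-independent constant $|N|^k$, bounds the oscillatory integral by $C|N|^k(1+|t|2^{4N})^{-1/2}$; integrating $\theta_1,\theta_2$ over $[0,1]^2$ costs nothing, so the quantity in the statement is $\lesssim|N|^k2^{4N}(1+|t|2^{4N})^{-1/2}$.

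Finally, since $N\le N'<0$, the elementary fact that $\sup_{N\le N'}|N|^k2^{2N}<\infty$ turns $|N|^k2^{4N}$ into $C2^{2N}$ with $C$ independent of $N$ and $t$, which is precisely the claimed bound. This last step is the only point requiring a moment's care — and thus the (mild) obstacle of the proof: unlike the factor $g_0^\pm(\lambda)^{-1}$ occurring in Proposition~\ref{prop-QlambdaQ}, the factors $g_0^\pm(\lambda)^k$ here diverge logarithmically as $\lambda\to0$, i.e. grow like $|N|^k$ after the dyadic rescaling, and one must observe that the extra $\lambda^2$ tucked inside $\Omega^\pm(\lambda)$ yields the improved prefactor $2^{4N}$ (rather than $2^{2N}$), whose surplus $2^{2N}$ absorbs the $|N|^k$ loss uniformly in the low-energy range $N\le N'$. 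Everything else — the vanishing of the leading Taylor terms via $Qv=S_0v=0$, the scaling reduction to \eqref{oscillatory inte}, and the use of Lemma~\ref{lem-LWP} with phase $|x|+|y|$ — is identical to Propositions~\ref{prop-S0D1S0} and~\ref{prop-QlambdaQ}.
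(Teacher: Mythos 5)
Your proof is correct and follows essentially the argument the paper itself invokes: the paper proves this proposition by a one-line reference to the proof of Proposition \ref{prop-QlambdaQ}, using exactly the ingredients you use — the orthogonality $Qv=0$ and $S_0v=S_0Qv=0$ on both sides so that only the product of the two first-order Taylor remainders survives, Fubini, the phase $\Phi(z)=|x|+|y|$ with the $e^{\pm i2^Ns(|x-\theta_2u_2|-|x|)}$ factors kept in the amplitude, H\"older in $u_1,u_2$, and Lemma \ref{lem-LWP}. The only (harmless) difference is bookkeeping of the factor $g_0^\pm(\lambda)^k\lambda^2$: rather than extracting the prefactor $2^{4N}$ and absorbing the logarithmic loss via $\sup_{N\le N'}|N|^k2^{2N}<\infty$ as you do, one may keep $\lambda^2 g_0^\pm(\lambda)^k=O_1(1)$ (valid for $\lambda\le 2^{N'}\ll1$) inside the amplitude and obtain the stated bound $2^{2N}(1+|t|2^{4N})^{-1/2}$ directly; both routes are valid.
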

\begin{proposition}\label{prop-reg-freeterms} Suppose that $|V(x)|\lesssim (1+|x|)^{-\beta}$ with $\beta >10$ and  $\Gamma^0(\lambda)=O_1(\lambda^2)$ be the error term of the regular expansions of $(M^\pm(\lambda))^{-1}$. If $N\in \mathbb{Z}$ and
$N \leq N'$, then
\begin{equation*}
\sup\limits_{x,y}\Big|\int_0^\infty e^{-it\lambda^4}\lambda^3\varphi_0(2^{-N}\lambda)\big[R_0^\pm(\lambda^4)v\Gamma^0(\lambda) vR_0^\pm(\lambda^4)\big](x,y)d\lambda\Big|\lesssim 2^{2N}(1+|t|2^{4N})^{-1/2}.
\end{equation*}
\end{proposition}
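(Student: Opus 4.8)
The plan is to estimate this error term by the same oscillatory‑integral scheme used for Propositions \ref{prop-S0D1S0}--\ref{prop-reg-otherterms}, with the $\lambda^2$‑decay of $\Gamma^0(\lambda)$ playing the role that the cancellations $S_0v=0$ or $Qv=0$ played there. Fix $N\le N'$ and put
\begin{equation*}
K_N^{\pm}(t;x,y)=\int_0^\infty e^{-it\lambda^4}\lambda^3\varphi_0(2^{-N}\lambda)\big[R_0^\pm(\lambda^4)v\Gamma^0(\lambda)vR_0^\pm(\lambda^4)\big](x,y)\,d\lambda .
\end{equation*}
I would insert the representation $R_0^\pm(\lambda^4)(x,y)=\lambda^{-2}e^{\pm i\lambda|x-y|}\widetilde{R}^\pm(\lambda|x-y|)$ coming from \eqref{reso-big} and Lemma \ref{lem-reso}, recalling from the proof of Proposition \ref{prop-free estimates} that $|\widetilde{R}^\pm(p)|\lesssim1$ for all $p\ge0$ and, more generally, $|\partial_s^k[\widetilde{R}^\pm(2^Ns\,r)]|\lesssim1$ for $k=0,1$, $s\in[1/4,1]$, uniformly in $r\ge0$ and $N\in\mathbb{Z}$. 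Using $\Gamma^0(\lambda)=O_1(\lambda^2)$, write $\Gamma^0(\lambda)=\lambda^2\widetilde{\Gamma}(\lambda)$ with $\widetilde{\Gamma}(\lambda)$ absolutely bounded, $\|\widetilde{\Gamma}(\lambda)\|_{L^2\to L^2}\lesssim1$ and $\|\partial_\lambda\widetilde{\Gamma}(\lambda)\|_{L^2\to L^2}\lesssim\lambda^{-1}$. The $\lambda$‑powers then balance exactly, $\lambda^3\cdot\lambda^{-2}\cdot\lambda^{-2}\cdot\lambda^2=\lambda$, which is precisely the weight needed to feed Lemma \ref{lem-LWP}.

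Next, exactly as in the proof of Proposition \ref{prop-QlambdaQ}, I would keep the $\lambda$‑dependent operator $\widetilde{\Gamma}(\lambda)$ inside the $\lambda$‑integral, substitute $\lambda=2^Ns$, and extract only the main phase $e^{\pm i2^Ns(|x|+|y|)}$, folding the remainder phases $e^{\pm i2^Ns(|x-u_2|-|x|)}$ and $e^{\pm i2^Ns(|y-u_1|-|y|)}$ into the amplitude. This reduces matters to
\begin{equation*}
K_N^{\pm}(t;x,y)=2^{2N}\int_0^\infty e^{-it2^{4N}s^4}e^{\pm i2^Ns\Phi(z)}\,s\,\varphi_0(s)\,\Psi(2^Ns,z)\,ds ,
\end{equation*}
with $z=(x,y)$, $\Phi(z)=|x|+|y|$, and
\begin{equation*}
\begin{split}
\Psi(2^Ns,z)=\int_{\mathbb{R}^4}&e^{\pm i2^Ns(|x-u_2|-|x|)}e^{\pm i2^Ns(|y-u_1|-|y|)}\\
&\times\widetilde{R}^\pm(2^Ns|x-u_2|)\,v(u_2)\,\widetilde{\Gamma}(2^Ns)(u_2,u_1)\,v(u_1)\,\widetilde{R}^\pm(2^Ns|y-u_1|)\,du_1\,du_2 .
\end{split}
\end{equation*}

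The one substantive point is then the bound $|\partial_s^k\Psi(2^Ns,z)|\lesssim1$ for $k=0,1$, $s\in[1/4,1]$, uniformly in $z\in\mathbb{R}^4$ and $N\le N'$. For $k=0$, Hölder's inequality and $|\widetilde{R}^\pm|\lesssim1$ give $|\Psi(2^Ns,z)|\le\|v\|_{L^2}^2\,\big\||\widetilde{\Gamma}(2^Ns)|\big\|_{L^2\to L^2}\lesssim1$, using $v\in L^2$ and the absolute boundedness of $\widetilde{\Gamma}$. For $k=1$ one differentiates under the integral sign: differentiating a remainder phase yields a factor $2^N(|x-u_2|-|x|)$, bounded by $2^N|u_2|\le|u_2|$ since $2^N\le2^{N'}\lesssim1$, and the extra $|u_2|$ is absorbed into $v(u_2)$ at the cost of $\big\||\cdot|\,v\big\|_{L^2}<\infty$ (needing only $\beta>4$, hence covered by $\beta>10$); differentiating $\widetilde{R}^\pm(2^Ns|x-u_2|)$ contributes $O(1)$ uniformly in $u_2$ and $N$ as recalled above; and differentiating $\widetilde{\Gamma}(2^Ns)(u_2,u_1)$ produces $2^N(\partial_\lambda\widetilde{\Gamma})(2^Ns)$, whose $L^2\to L^2$ norm is $\lesssim2^N(2^Ns)^{-1}=s^{-1}\lesssim1$. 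Summing the finitely many terms gives $|\partial_s\Psi|\lesssim1$, and Lemma \ref{lem-LWP} — applied with amplitude $s\varphi_0(s)\Psi(2^Ns,z)$, phase $\Phi(z)=|x|+|y|$, and $A=\{N\in\mathbb{Z}:N\le N'\}$ — then gives $\sup_{x,y}|K_N^{\pm}(t;x,y)|\lesssim2^{2N}(1+|t|2^{4N})^{-1/2}$, as claimed (summing over $N\le N'$ reproduces the $|t|^{-1/2}$ rate as in Proposition \ref{prop-free estimates}). I do not expect a real obstacle here: this is the error term of the regular expansion of $(M^\pm(\lambda))^{-1}$, so its $O_1(\lambda^2)$‑decay already supplies the integrable weight without any orthogonality‑driven Taylor cancellation; the only place demanding care is the uniform‑in‑$N$ verification of $|\partial_s\Psi|\lesssim1$, where the $2^N$ factors from the remainder phases and the $2^N(2^Ns)^{-1}$ from differentiating $\widetilde{\Gamma}(2^Ns)$ are all harmless because $N\le N'<0$.
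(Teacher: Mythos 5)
Your proposal is correct and follows essentially the same route as the paper's proof: both exploit the $O_1(\lambda^2)$ decay of $\Gamma^0(\lambda)$ in place of any orthogonality cancellation, extract the phase $\Phi(z)=|x|+|y|$, fold the remainder phases $e^{\pm i2^Ns(|x-u_j|-|x|)}$ and the operator $\Gamma^0(2^Ns)$ into the amplitude, verify the $C^1$-in-$s$ bounds uniformly in $N\le N'<0$ (using $\|\partial_\lambda\Gamma^0(\lambda)\|_{L^2\to L^2}\lesssim\lambda$ and $2^N|u_j|\lesssim |u_j|$ absorbed by $\langle\cdot\rangle v\in L^2$), and conclude by Lemma \ref{lem-LWP}. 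The only cosmetic difference is that the paper phrases the reduction via Riesz's theorem and bounds the pairing $\langle [v\Gamma^0 v](R_0^\pm f),(R_0^\pm)^*g\rangle$ using only the $L^2\to L^2$ operator norm of $\Gamma^0$, whereas you write out the kernel of $\Gamma^0$ and invoke its absolute boundedness, which is equivalent bookkeeping here.
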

\begin{proof}
According to Riesz's theorem, it suffices to prove that for any $f,g\in L^1$,
\begin{equation*}
   \begin{split}
\Big|\int_0^\infty e^{-it\lambda^4}\lambda^3\varphi_0(2^{-N}\lambda)
\big\langle [v\Gamma^0(\lambda) v] (R_0^\pm(\lambda^4)f), ~\big(R_0^\pm(\lambda^4)\big)^*g  \big\rangle
d\lambda\Big|
  \end{split}
\end{equation*}
is bounded by $2^{2N}(1+|t|2^{4N})^{-1/2}\|f\|_{L^1}\|g\|_{L^1}$.
Hence it is enough to show that the following kernel
\begin{equation*}
   \begin{split}
K^{0,\pm}_{3,N}(t;x,y):= \int_0^\infty e^{-it\lambda^4}\lambda^3\varphi_0(2^{-N}\lambda)
\big\langle [v\Gamma^0(\lambda) v]\big(R_0^\pm(\lambda^4)(*, y)\big)(\cdot), ~(R_0^\pm)^*(\lambda^4)(x,\cdot)   \big\rangle d\lambda
  \end{split}
\end{equation*}
is bounded uniformly  in $x,y$ by $2^{2N}(1+|t|2^{4N})^{-1/2}$.

 Set $R_0^\pm(\lambda^4)(x,y)= \frac{ e^{\pm i\lambda|x-y|}}{\lambda^2} \widetilde{R}^\pm(\lambda|x-y|)$ where the kernel $\widetilde{R}^\pm(\lambda|x-y|)$ are defined in \eqref{freeres-lagrepart-Rtubap}.
Then we have
\begin{equation*}
   \begin{split}
&\Big\langle [v\Gamma^0(\lambda) v]\big(R_0^\pm(\lambda^4)(*,y)\big)(\cdot),~ R_0^\mp(\lambda)(x,\cdot)   \Big\rangle\\
=&\frac{1}{\lambda^4}\Big\langle[v\Gamma^0(\lambda) v] \big(e^{\pm i\lambda|*-y|}\widetilde{R}^\pm(\lambda|*-y|)\big)(\cdot),~
\big(e^{\mp i\lambda|x-\cdot|}\widetilde{R}^\mp(\lambda|x-\cdot|)\big)   \Big\rangle\\
=&\frac{1}{\lambda^4}e^{\pm i\lambda(|x|+|y|)}\Big\langle[v\Gamma^0(\lambda) v] \big(e^{\pm i\lambda(|*-y|-|y|)}\widetilde{R}^\pm(\lambda|*-y|)\big)(\cdot),~
\big(e^{\mp i\lambda(|x-\cdot|-|x|)}\widetilde{R}^\mp(\lambda|x-\cdot|)\big)   \Big\rangle\\
:=&\frac{1}{\lambda^4}e^{\pm i\lambda(|x|+|y|)}E^{0,\pm}_{3}(\lambda;x,y).
  \end{split}
\end{equation*}
Let $\lambda =2^Ns$, then
\begin{equation*}
   \begin{split}
K^{0,\pm}_{3,N}(t;x,y)= \int_0^\infty e^{-it2^{4N}s^4}s^{-1}\varphi_0(s)
e^{\pm i2^Ns(|x|+|y|)}
E^{0,\pm}_{3}(2^Ns;x,y) ds.
  \end{split}
\end{equation*}
By (\ref{freeres-lagrepart-Rtubap}) it is easy to check that for $k=0,1$,
$$ \big|\partial_s^k\big(e^{\pm i\lambda(|\cdot-y|-|y|)}\widetilde{R}^\pm(\lambda|\cdot-y|)\big)\big|
 \lesssim \langle \cdot \rangle^k\ \ {\rm and }\ \
  \big|\partial_s^k\big(e^{\pm i\lambda(|x-\cdot|-|x|)}\widetilde{R}^\pm(\lambda|x-\cdot|)\big)\big|
 \lesssim \langle \cdot \rangle^k.$$
Note that $\|\Gamma^0(\lambda)\|_{L^2\rightarrow L^2}= O_1(\lambda^2)$ ( Choosing some $N'\in \mathbb{Z}$
such that $\lambda\ll1$ and $\lambda<  2^{N'}$ ), then we have
$$\big\|\partial_s^k\big(\Gamma^0(2^Ns)\big)\big\|_{L^2\rightarrow L^2} \lesssim 2^{2N}s^{2-k},
 k=0,1.$$
Hence,
$$\big|\partial_s^k \big(E^{0,\pm}_{3}(2^Ns;x,y)\big)\big| \lesssim \big\|\partial_s^k\big(\Gamma^0(2^Ns)\big)\big\|_{L^2\rightarrow L^2} \lesssim 2^{2N}s^{2-k},
 k=0,1.$$
Let $ z=(x,y)$, $ \Phi(z)=|x|+|y|$, and $\Psi(2^Ns;z)= E^{0,\pm}_{3}(2^Ns;x,y)$.
 Using Lemma \ref{lem-LWP} again, we obtain that $ |K^{0,\pm}_{3,N}(t;x,y)|$ is
bounded by $2^{2N}(1+|t|2^{4N})^{-1/2}$ uniformly in  $x,y$.
\end{proof}

\subsubsection{\textbf{The first kind of resonance}}
If zero is a resonance of the first kind, then by using (\ref{id-RV}) and (\ref{thm-resoinver-M1 })
one has
\begin{equation}\label{RV-first resonance}
   \begin{split}
R_V^\pm(\lambda^4)
=& R^\pm_0(\lambda^4)- R^\pm_0(\lambda^4)v\Big(g^\pm_0(\lambda) S_1D_2 S_1\Big)vR^\pm_0(\lambda^4)
-R^\pm_0(\lambda^4)v\Big( S_0\Gamma_{0,1}^1 S_0\\
&+ S_1\Gamma_{0,2}^1 Q+ Q\Gamma_{0,3}^1 S_1\Big)vR^\pm_0(\lambda^4)
-R^\pm_0(\lambda^4)v\Big(g^\pm_0(\lambda)^{-1}Q\Gamma_{0,4}^1Q\Big)vR^\pm_0(\lambda^4)\\
&-R^\pm_0(\lambda^4)v\Big(g^\pm_0(\lambda)^4\lambda^2
S_1\Gamma_{2,1}^1S_1\Big)vR^\pm_0(\lambda^4)
-g^\pm_0(\lambda)^3\lambda^2R^\pm_0(\lambda^4)v\Big(S_1\Gamma_{2,2}^1Q\\
&+ Q\Gamma_{2,3}^1S_1 \Big)vR^\pm_0(\lambda^4)
-g^\pm_0(\lambda)^2\lambda^2R^\pm_0(\lambda^4)v\Big(Q \Gamma_{2,4}^1Q
+ S_1\Gamma_{2,5}^1 +\Gamma_{2,6}^1S_1 \Big)\\
&\times vR^\pm_0(\lambda^4)
-g^\pm_0(\lambda)\lambda^2R^\pm_0(\lambda^4)v\Big(
Q\Gamma_{2,7}^1 +\Gamma_{2,8}^1Q  \Big) vR^\pm_0(\lambda^4)\\
&-R^\pm_0(\lambda^4)\Big(vO_1(\lambda^2)v\Big)R^\pm_0(\lambda^4).
\end{split}
\end{equation}
In order to prove Theorem \ref{thm-low}(ii), we will make use of the resolvent difference $R_V^+(\lambda^4)-R_V^-(\lambda^4)$ in
Stone's formula
\begin{equation}\label{stoneformula-first+alpha}
   \begin{split}
 e^{-itH}P_{ac}(H)\chi(H)f=& \frac{2}{\pi i}\int_0^\infty e^{-it\lambda^4}\chi(\lambda)\lambda^3
[R_V^+(\lambda^4)- R_V^-(\lambda^4)]fd\lambda.
\end{split}
\end{equation}
In view of the analysis of regular case, it is suffice to consider the term
$$R^\pm_0(\lambda^4)v\Big(g^\pm_0(\lambda) S_1D_2S_1\Big)vR^\pm_0(\lambda^4):=\Omega^\pm_1(\lambda).$$
Note that
\begin{equation}\label{firstres-firstterm}
   \begin{split}
\Omega^+_1(\lambda)-\Omega^-_1(\lambda)
=&\big(R^+_0(\lambda^4)-R_0^-(\lambda^4)\big) v\Big(g^+_0(\lambda) S_1D_2 S_1\Big)vR^+_0(\lambda^4)\\
&+R^-_0(\lambda^4)v \Big(g^-_0(\lambda)S_1 D_2 S_1\Big) v\big(R^+_0(\lambda^4)-R_0^-(\lambda^4)\big)\\
&+ R^-_0(\lambda^4)v\Big(\big( g^+_0(\lambda)-g^-_0(\lambda)\big) S_1D_2 S_1\Big)vR^+_0(\lambda^4)\\
:=&\Omega_{1,1}(\lambda)+\Omega_{1,2}(\lambda)+\Omega_{1,3}(\lambda).
\end{split}
\end{equation}
 Hence we only need to prove the following proposition.
\begin{proposition}\label{prop-first-S1gamma101S1}
Assume that $|V(x)|\lesssim (1+|x|)^{-\beta}$ with $\beta >14$. Let $\Omega_{1,i}(\lambda)(i=1,2,3)$ be the operators defined in (\ref{firstres-firstterm}).
then for $N\in \mathbb{Z}$ and $N \leq N'$,
\begin{equation*}
   \begin{split}
&\sup\limits_{x,y}\Big|\int_0^\infty e^{-it\lambda^4}\lambda^3\varphi_0(2^{-N}\lambda)
\Omega_{1,i}(\lambda)(x,y)d\lambda\Big|\lesssim 2^{2N}(1+|t|2^{4N})^{-1/2},\ i=1,2,3.
\end{split}
\end{equation*}
\end{proposition}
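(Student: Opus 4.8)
The plan is to reduce, for each fixed $N\le N'$ and each $i\in\{1,2,3\}$, the oscillatory integral $\int_0^\infty e^{-it\lambda^4}\lambda^3\varphi_0(2^{-N}\lambda)\Omega_{1,i}(\lambda)(x,y)\,d\lambda$ to the model form treated in Lemma \ref{lem-LWP}, exactly as in the proof of Proposition \ref{prop-S0D1S0}, but squeezing more out of the orthogonality of $S_1$. First observe that in \eqref{firstres-firstterm} the middle slot of $\Omega_{1,3}$ carries the constant $g_0^+(\lambda)-g_0^-(\lambda)=\alpha_+-\alpha_-$ (no logarithm), while $\Omega_{1,1}$ and $\Omega_{1,2}$ each contain one genuine free resolvent and one resolvent difference $R_0^+(\lambda^4)-R_0^-(\lambda^4)$, whose kernel $\lambda^{-2}\bar F(\lambda|x-y|)$ is logarithm-free with $\bar F'(0)=0$ and $\bar F''(0)\neq0$ (Remark \ref{remark-beha-Rpm}(ii)). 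Hence in every $\Omega_{1,i}$ at most two logarithmic factors occur: the explicit $g_0^\pm(\lambda)\sim\ln\lambda$ (only for $i=1,2$) and the $\ln$-type behaviour near $0$ of the second derivative $(F^\pm)''$ that appears when a genuine resolvent is Taylor expanded.

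For the reduction I write each outer factor of $\Omega_{1,i}$ as $\lambda^{-2}F^\pm(\lambda|\cdot|)$ or $\lambda^{-2}\bar F(\lambda|\cdot|)$ and use the orthogonalities $S_1v=0$ and $S_1x_jv=0$ ($j=1,2$), the latter being available because $S_1L^2\subseteq S_0L^2$ and $S_0x_jv=0$ by Lemma \ref{projiction-spaces-SjL2}(i). Since $(F^\pm)'(0)=\bar F'(0)=0$, I apply the second order Taylor expansion Lemma \ref{Taylor-low}(ii) to both outer factors; against $vS_1D_2S_1v$ the term $F(\lambda|x|)$ (constant in the integration variable) and the term $-\lambda F'(\lambda|x|)\langle w(x),y\rangle$ (linear in it) are annihilated, leaving from each factor a gain $\lambda^2|u_j|^2$ times a remainder built from $F''(\lambda|x-\theta u_j|)$ and $F'(\lambda|x-\theta u_j|)/(\lambda|x-\theta u_j|)$. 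The two prefactors $\lambda^{-2}$ are thereby fully absorbed, so $\Omega_{1,i}(\lambda)(x,y)$ is $O_1(1)$ in $\lambda$, carrying the fixed weighted kernel $|u_2|^2|u_1|^2[vS_1D_2S_1v](u_2,u_1)$ (integrable against the $L^2_{u}$ weights used below since $S_1D_2S_1$ is finite rank, hence absolutely bounded, and $\beta>14$), bounded oscillatory profiles from $\bar F$ (the difference factor's remainder is bounded and logarithm-free, using $|w_\pm^{(\ell)}(p)|\lesssim(1+p)^{-1/2-\ell}$), a $\ln(\lambda|u_1|)$-type profile from the genuine resolvent's remainder, and, for $i=1,2$, the factor $g_0^\pm(\lambda)$. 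Factoring the oscillations out via $(F^\pm)'=e^{\pm ip}F_1^\pm$, $(F^\pm)''=e^{\pm ip}F_2^\pm$ and the analogues for $\bar F$ (all profiles having $s$-uniform derivative bounds after rescaling, the chain rule cancelling the $p^{-1}$ and $p\ln p$ singularities at $p=0$ as in Proposition \ref{prop-S0D1S0}, and the oscillation-correction factors $e^{\pm i2^Ns(|x-\theta_ju_j|-|x|)}$ contributing at most $2^N|u_j|\le|u_j|$ to each $s$-derivative), one arrives at an integral of the shape
\begin{equation*}
2^{4N}\int_0^\infty e^{-it2^{4N}s^4}e^{\pm i2^Ns\Phi(z)}\Psi(2^Ns,z)\varphi_0(s)\,ds,\qquad z=(x,y,\theta_1,\theta_2,u_1,u_2),\ \ \Phi(z)=|x-\theta_2u_2|+|y-\theta_1u_1|,
\end{equation*}
where $\Psi$ satisfies $|\partial_s^k\Psi(2^Ns,z)|\lesssim(|N|+1)^2$ for $k=0,1$, uniformly in $z$ up to the $L^2_{u}$ weights. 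Applying Lemma \ref{lem-LWP} pointwise in $z$ (to $\Psi$ normalised by $(|N|+1)^2$ times its $u$-weight), then integrating in $\theta_1,\theta_2,u_1,u_2$ and using H\"older's inequality, yields
\begin{equation*}
\sup_{x,y}\Big|\int_0^\infty e^{-it\lambda^4}\lambda^3\varphi_0(2^{-N}\lambda)\Omega_{1,i}(\lambda)(x,y)\,d\lambda\Big|\lesssim 2^{4N}(|N|+1)^2(1+|t|2^{4N})^{-\frac12},\qquad i=1,2,3.
\end{equation*}

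It remains to discard the polynomial-in-$N$ factor. Since these estimates are used only for $N\le N'$ with $N'<0$, and — as noted after \eqref{Stone-Paley} — $N'$ may be fixed as negative as we please, we choose $N'$ so small that $2^{2N}(|N|+1)^2\le1$ for every $N\le N'$; then $2^{4N}(|N|+1)^2\le 2^{2N}$ and the claimed bound $2^{2N}(1+|t|2^{4N})^{-\frac12}$ follows. The main obstacle, and the reason the argument is not a verbatim copy of the regular case (Propositions \ref{prop-S0D1S0}--\ref{prop-reg-freeterms}), is precisely this growing logarithm $g_0^\pm(\lambda)$ carried by the leading term $g_0^\pm(\lambda)S_1D_2S_1$ of $(M^\pm(\lambda))^{-1}$: it is absorbed by combining the extra power $\lambda^2$ furnished by the second orthogonality $S_1x_jv=0$ (which upgrades the Littlewood--Paley weight from $2^{2N}$ to $2^{4N}$) with the freedom to push the low-energy cutoff $N'$ far enough into $\{\lambda\ll1\}$ that the logarithmic loss is beaten by the extra $2^{2N}$. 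Verifying that the $\ln$-singularities of $(F^\pm)''$ and $(F^\pm)'/p$ at the origin together with the oscillation corrections produce no $N$-dependence worse than polynomial is the only genuinely computational point.
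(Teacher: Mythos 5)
Your route differs from the paper's, and it has a genuine gap at its central step. You propose to apply the second\nobreakdash-order expansion, Lemma \ref{Taylor-low}(ii), to \emph{both} outer factors of $\Omega_{1,i}$, in particular to the genuine free resolvent $F^{\pm}$. But the remainder in Lemma \ref{Taylor-low}(ii) then contains $(F^{\pm})''(\lambda|y-\theta_1u_1|)$ and $(F^{\pm})'(\lambda|y-\theta_1u_1|)/(\lambda|y-\theta_1u_1|)$, and by \eqref{free-R0pmlambda4} both behave like $2a_0\ln\big(\lambda|y-\theta_1u_1|\big)$ as the argument tends to $0$. Consequently the rescaled amplitude $\Psi(2^Ns,z)$ you feed into Lemma \ref{lem-LWP} is \emph{not} bounded uniformly in $z$, not even up to the factor $(|N|+1)^2$ you allow: the divergence sits at $|y-\theta_1u_1|=0$, i.e.\ it depends jointly on $y,\theta_1,u_1$, so it cannot be delegated to an $L^2_{u}$ weight, and Lemma \ref{lem-LWP} (which requires $|\partial_s^k\Psi(2^Ns,z)|\lesssim1$ uniformly in $z$) does not apply as you use it. This is exactly the obstruction the paper records in Remark \ref{Remark of second kind}: the profiles $(F^{\pm})^{(k)}$ are uniformly bounded only for $k=0,1$, which is why only the orthogonality $S_1v=0$ (first\nobreakdash-order expansion) can be exploited on a genuine resolvent factor, while the full second\nobreakdash-order expansion is reserved for the difference kernel $\bar F$, whose derivatives are bounded. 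A repair along your lines would require redoing Lemma \ref{lem-LWP} with a $z$\nobreakdash-dependent constant and then checking integrability of the resulting $\ln^{-}|y-\theta_1u_1|$ singularity against the $\theta_1$- and $u_1$-integrals (including the degenerate region $\theta_1\to0$) uniformly in $x,y$; none of this is done, and your contrary claim of a uniform $(|N|+1)^2$ bound is false as stated.

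By contrast, the paper's proof of Proposition \ref{prop-first-S1gamma101S1} expands asymmetrically: Lemma \ref{Taylor-low}(ii) on the $\bar F$ factor (gain $\lambda^2$, bounded remainder profiles \eqref{Ftuba-onetwodaoshu-big}--\eqref{Ftuba-onetwodaoshu-small}) and Lemma \ref{Taylor-low}(i) on the genuine resolvent (gain $\lambda$, bounded profile $F_1^{\pm}(p)\sim p\ln p$), leaving the integrand $\lambda^2 g_0^{+}(\lambda)$; after rescaling, the factor $2^Ns\,g_0^{+}(2^Ns)$ is $O_1(1)$, so the logarithm is absorbed by a single spare power of $\lambda$ and the clean bound $2^{2N}(1+|t|2^{4N})^{-1/2}$ comes out with no $N$-dependent loss and no need to shrink $N'$ (your device of pushing $N'$ further down is legitimate in itself, but here it is compensating for losses your scheme should not incur, and it cannot fix the non-uniform amplitude bound above). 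Also note that for $\Omega_{1,3}$ the middle factor is the constant $g_0^{+}-g_0^{-}$, so first-order expansions on both sides, exactly as in Proposition \ref{prop-S0D1S0}, already suffice; second-order expansion there is both unnecessary and subject to the same obstruction.
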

\begin{proof}
We first prove the case with $\Omega_{1,1}(\lambda)$.  We write that
\begin{equation*}
   \begin{split}
K^{1,\pm}_{1,N}(t;x,y)=\int_0^\infty e^{-it\lambda^4}\lambda^3\varphi_0(2^{-N}\lambda)\big[\big(R^+_0(\lambda^4)
-R_0^-(\lambda^4)\big)
v\big(g^+_0(\lambda) S_1D_2 S_1\big)vR^+_0(\lambda^4)\big](x,y)d\lambda.
\end{split}
\end{equation*}
 $$R_0^+(\lambda^4)(x,y)=\frac{1}{\lambda^2}F^+(\lambda|x-y|) \  \hbox{and}\
\big[R_0^+(\lambda^4)-R^-_0(\lambda^4)\big](x,y)=\frac{1}{\lambda^2}\bar{F}(\lambda|x-y|).$$
From Remark \ref{remark-beha-Rpm} we know that $\bar{F}(p)\in C^3(\mathbb{R})$, $\bar{F}'(0)=0$.
Using the orthogonality $S_1v=0$ and $S_1(x_jv)=0(j=1,2)$,
it follows by Lemma \ref{Taylor-low}(i) and (ii) that
\begin{equation*}
   \begin{split}
&\Big[\big(R^+_0(\lambda^4)
-R_0^-(\lambda^4)\big)
v\big(g^+_0(\lambda) S_1D_2 S_1\big)vR^+_0(\lambda^4)\Big](x,y)\\
=& \frac{g^+_0(\lambda)}{\lambda^4}\int_{\mathbb{R}^4}\bar{F}(\lambda|x-u_2|)
[vS_1D_2 S_1v](u_2,u_1)F^+(\lambda |y-u_1|)du_1du_2\\
=&\frac{g^+_0(\lambda)}{\lambda}\int_{\mathbb{R}^4}\Big(\int_0^1\int_0^1
\Big(\bar{F}''(\lambda|x-\theta_2u_2|)\cos^2\alpha_2+
\frac{\bar{F}'(\lambda|x-\theta_2u_2|)}{\lambda|x-\theta_2u_2|}\sin^2\alpha_2\Big)
(F^+)'(\lambda|y-\theta_1u_1|)\\
&(1-\theta_2) \cos\alpha_1d\theta_1d\theta_2\Big)\
|u_1||u_2|^2[vS_1D_2 S_1v](u_2,u_1)du_1du_2.
  \end{split}
\end{equation*}
Thus,  we have
\begin{equation*}
   \begin{split}
K_{1,N}^{1,\pm}(t;x,y)=&\int_{\mathbb{R}^4}\int_0^1\int_0^1
\bigg(\int_0^\infty e^{-it\lambda^4}\varphi_0(2^{-N}\lambda)\lambda^2g^+_0(\lambda) \Big(\bar{F}''(\lambda|x-\theta_2u_2|)\cos^2\alpha_2\\
&+\frac{\bar{F}'(\lambda|x-\theta_2u_2|)}{\lambda|x-\theta_2u_2|}\sin^2\alpha_2\Big)
(F^+)'(\lambda|y-\theta_1u_1|) d\lambda\bigg)
(1-\theta_2) \cos\alpha_1d\theta_1d\theta_2\\
&\times|u_1||u_2|^2[vS_1D_2 S_1v](u_2,u_1)du_1du_2.
\end{split}
\end{equation*}
Set
\begin{equation*}
   \begin{split}
E^{1,\pm}_{1,N}(t;x,y,\theta_1,\theta_2,u_1,u_2)=& \int_0^\infty e^{-it\lambda^4}\varphi_0(2^{-N}\lambda)\lambda^2g^+_0(\lambda) \Big(\bar{F}''(\lambda|x-\theta_2u_2|)\cos^2\alpha_2\\
&+\frac{\bar{F}'(\lambda|x-\theta_2u_2|)}{\lambda|x-\theta_2u_2|}\sin^2\alpha_2\Big)
(F^+)'(\lambda|y-\theta_1u_1|) d\lambda.
  \end{split}
\end{equation*}
Then we have
\begin{equation}\label{prop-esti-sec-k11N}
   \begin{split}
|K_{1,N}^{1,\pm}(t;x,y)|\lesssim & \int_{\mathbb{R}^4}\Big(\int_0^1\int_0^1|E^{1,\pm}_{1,N}(t;x,y,\theta_1,\theta_2,u_1,u_2)||(1-\theta_2)|d\theta_1d\theta_2\Big)\\
&\ \ \ \times|u_1||u_2|^2|[vS_1D_2S_1v](u_2,u_1)|du_1du_2.
  \end{split}
\end{equation}

Now let's begin to estimate $E^{1,\pm}_{1,N}(t;x,y,\theta_1,\theta_2,u_1,u_2)$. In fact,
set
$$ (F^+)'(p)= e^{ ip}F^+_1(p)  \ \hbox{and}\
\frac{\bar{F}'(p)\sin^2\alpha_2}{p}+\bar{F}''(p)\cos^2\alpha_2=e^{ip}\bar{F}_{11}(p)+e^{-ip}\bar{F}_{12}(p),$$
where $F^+_1(p) $ is the same function as in \eqref{freereso-Fp-daoshu}, 
$\bar{F}_{11}(p)$ and $\bar{F}_{12}(p)$ satisfy that by (\ref{reso-R+RF-big})
\begin{equation}\label{Ftuba-onetwodaoshu-big}
   \begin{split}
\bar{F}_{11}(p)=&\frac{1}{8p}\Big( -w_+(p)+iw_+'(p)\Big)\sin^2\alpha_2
- \frac{i}{8}\Big(w_+(p)+w_-(p) \Big)\cos^2\alpha_2,\ p\gg1,\\
\bar{F}_{11}(p)=& e^{-ip}\Big[\frac{i\sin^2\alpha_2}{4}\Big(-\frac{1}{2}p+\frac{1}{16}p^2-\frac{1}{39}p^4\Big)
+\frac{i\cos^2\alpha_2}{4}\Big(-p+\frac{3}{16}p^2-\frac{5}{39}p^4\Big)\\
&\ \ \ \ \ \ \ \ \ \,+O(p^6)\Big],\ p\ll 1.
  \end{split}
\end{equation}
\begin{equation}\label{Ftuba-onetwodaoshu-small}
   \begin{split}
\bar{F}_{12}(p)=&\frac{1}{8p}\Big(w_-(p)+iw_-'(p) \Big)\sin^2\alpha_2
+  \frac{1}{4}\Big(w_+'(p)+w_-'(p)\Big)\cos^2\alpha_2,\ p\gg1, \\  
\bar{F}_{12}(p)=& e^{ip}\Big[\frac{i\sin^2\alpha_2}{4}\Big(-\frac{1}{2}p+\frac{1}{16}p^2-\frac{1}{39}p^4\Big)
+\frac{i\cos^2\alpha_2}{4}\Big(-p+\frac{3}{16}p^2-\frac{5}{39}p^4\Big)\\
&\ \ \ \ \ \ \ \ \ \,+O(p^6)\Big],\ p\ll 1.
  \end{split}
\end{equation}
Hence  we have
\begin{equation}\label{E11Ntxy}
   \begin{split}
&E^{1,\pm}_{1,N}(t;x,y,\theta_1,\theta_2,u_1,u_2)\\
=& \int_0^\infty e^{-it\lambda^4}\varphi_0(2^{-N}\lambda)\lambda^2g^+_0(\lambda)
e^{i\lambda|x-\theta_2u_2|}e^{i\lambda|y-\theta_1u_1|}\bar{F}_{11}(\lambda|x-\theta_2u_2|)
F^+_1(\lambda|y-\theta_1u_1|) d\lambda\\
&+\int_0^\infty e^{-it\lambda^4}\varphi_0(2^{-N}\lambda)\lambda^2g^+_0(\lambda)
e^{-i\lambda|x-\theta_2u_2|}e^{i\lambda|y-\theta_1u_1|}\bar{F}_{12}(\lambda|x-\theta_2u_2|)
F^+_1(\lambda|y-\theta_1u_1|) d\lambda\\
:=&E^{1,\pm}_{11,N}(t;x,y,\theta_1,\theta_2,u_1,u_2)+E^{1,\pm}_{12,N}(t;x,y,\theta_1,\theta_2,u_1,u_2).
 \end{split}
\end{equation}
For the first term $E^{1,\pm}_{11,N}(t;x,y,\theta_1,\theta_2,u_1,u_2)$ in \eqref{E11Ntxy}.
Let $\lambda =2^Ns$, we have
\begin{equation*}
   \begin{split}
&E^{1,\pm}_{11,N}(t;x,y,\theta_1,\theta_2,u_1,u_2)\\
=& \int_0^\infty e^{-it\lambda^4}\varphi_0(2^{-N}\lambda)\lambda^2g^+_0(\lambda)
e^{i\lambda|x-\theta_2u_2|}e^{i\lambda|y-\theta_1u_1|}\bar{F}_{11}(\lambda|x-\theta_2u_2|)
F^+_1(\lambda|y-\theta_1u_1|) d\lambda\\
=& 2^{2N}\int_0^\infty e^{-it2^{4N}s^4}s\varphi_0(s)
e^{i2^Ns (|x-\theta_2u_2|+|y-\theta_1u_1|)}(2^Ns)g^+_0(2^Ns)\bar{F}_{11}(2^Ns|x-\theta_2u_2|)\\
&\ \ \ \ \ \ \ \ F^+_{11}(2^Ns|y-\theta_1u_1|) ds.
  \end{split}
\end{equation*}
Let $z=(x,y,\theta_1,\theta_2,u_1,u_2)$, $\Phi(z)= |x-\theta_2u_2|+|y-\theta_1u_1|$, and
$$\Psi_1(2^Ns, z)=(2^Ns)g^+_0(2^Ns)\bar{F}_{11}(2^Ns|x-\theta_2u_2|)F^+_1(2^Ns|y-\theta_1u_1|) . $$
By (\ref{freereso-Fp-daoshu-big}), (\ref{freereso-Fp-daoshu-small}) and (\ref{Ftuba-onetwodaoshu-big}), 
it is easy check that for any $z$,
$$ |\partial_s^k \Psi_1(2^Ns, z)|\lesssim 1,\ k=0,1. $$
Hence by using Lemma \ref{lem-LWP} again, we immediately obtain that $|E^{1,\pm}_{11,N}(t;x,y,\theta_1,\theta_2,u_1,u_2)|$  is bounded by $2^{2N}(1+|t|2^{4N})^{-1/2}.$ 
Similarly, we obtain the same bounds for $|E^{1,\pm}_{12,N}(t;x,y,\theta_1,\theta_2,u_1,u_2)|$.
By \eqref{E11Ntxy}, we immediately get that  $|E^{1,\pm}_{1,N}(t;x,y,\theta_1,\theta_2,u_1,u_2)|$ is bounded by $2^{2N}(1+|t|2^{4N})^{-1/2}$.

Finally, by (\ref{prop-esti-sec-k11N}) and H\"{o}lder's inequality
we obtain that $|K_{1,N}^{1,\pm}(t;x,y)|$ is bounded by $2^{2N}(1+|t|2^{4N})^{-1/2}$
uniformly in $x,y$.

For the case with $\Omega_{1,2}(\lambda)$, we can obtain the same estimates by the similar proof to $\Omega_{1,1}(\lambda)$.
For the case with $\Omega_{1,3}(\lambda)$. Note that
$\Omega_{1,3}(\lambda)= -\frac{i}{16}R^-_0(\lambda^4)v\Big( S_1D_2 S_1\Big)vR^+_0(\lambda^4)$,
hence we can prove that the conclusion holds by the same argument as the proof of Proposition \ref{prop-S0D1S0}.
\end{proof}

\subsubsection{\textbf{The second kind of resonance}}
If zero is the second kind resonance  of $H$, then using (\ref{id-RV}) and
(\ref{thm-resoinver-M2 }) one has
\begin{equation}\label{secondresinance-RV}
  \begin{split}
R^\pm_V(\lambda^4)=&R^\pm_0(\lambda^4)
-R^\pm_0(\lambda^4)v\Big(\frac{h^\pm(\lambda)}{\lambda^2} S_2\Gamma^2_{-2,1}S_2\Big)vR^\pm_0(\lambda^4)
-R^\pm_0(\lambda^4)v\Big(g^\pm_0(\lambda)^5\big( S_2\Gamma^2_{0,1} \\
&+\Gamma^2_{0,2}S_2 +S_1\Gamma^2_{0,3}Q + Q\Gamma^2_{0,4}S_1 \big)\Big)vR^\pm_0(\lambda^4)
-R^\pm_0(\lambda^4)v\Big(g^\pm_0(\lambda)^{10}\lambda^2 \big( S_2\Gamma^2_{2,1}\\
& +\Gamma^2_{2,2}S_2 +S_1\Gamma^2_{2,3}Q + Q\Gamma^2_{2,4}S_1\big) \Big)vR^\pm_0(\lambda^4)
-R^\pm_0(\lambda^4)v\Big(g^\pm_0(\lambda)\lambda^2\big( S_2\Gamma^2_{2,5}\\
& +\Gamma^2_{2,6}S_2+S_1\Gamma^2_{2,7}Q + Q\Gamma^2_{2,8}S_1\big) \Big)vR^\pm_0(\lambda^4) -R^\pm_0(\lambda^4)\Big(vO_1(\lambda^2)v\Big)R^\pm_0(\lambda^4).
\end{split}
\end{equation}
In order to prove Theorem \ref{thm-low}(iii),
 comparing with the cases of the regular zero point and the first kind zero resonance,
 it suffices to study the following term in \eqref{secondresinance-RV},
$$\Omega^\pm_2(\lambda):=R^\pm_0(\lambda^4)v\Big( \frac{h^\pm(\lambda)}{\lambda^2}S_2\Gamma_{-2,1}^2S_2 \Big)vR^\pm_0(\lambda^4).$$
Note that
\begin{equation}\label{secondres-firstterm}
   \begin{split}
\Omega^+_2(\lambda)-\Omega^-_2(\lambda)
&=\big(R^+_0(\lambda^4)-R_0^-(\lambda^4)\big) v\Big(\frac{h^+(\lambda)}{\lambda^2}S_2\Gamma_{-2,1}^2(\lambda)S_2 \Big)vR^+_0(\lambda^4)\\
&+R^-_0(\lambda^4)v\Big(\frac{h^-(\lambda)}{\lambda^2}S_2\Gamma_{-2,1}^2(\lambda)S_2 \Big)v
\big(R^+_0(\lambda^4)-R_0^-(\lambda^4)\big)\\
&+R_0^-(\lambda^4)v\Big(\frac{h^+(\lambda)-h^-(\lambda)}{\lambda^2}
S_2\Gamma_{-2,1}^2(\lambda)S_2   \Big)vR^+_0(\lambda^4)\\
:=&\Omega_{2,1}(\lambda)+\Omega_{2,3}(\lambda)+\Omega_{2,3}(\lambda).
\end{split}
\end{equation}
Hence, it suffices to prove the following proposition.
\begin{proposition}\label{prop-sencond-S22-01-S2}
Assume that $|V(x)|\lesssim (1+|x|)^{-\beta}$ with some $\beta >18$. If $\Omega_{2,i}(\lambda)\ ( i=1,2,3)$ be the operators defined in (\ref{secondres-firstterm}), then for $N\in \mathbb{Z}$ and $N \leq N'$,
\begin{equation*}
   \begin{split}
\sup\limits_{x,y}\Big|\int_0^\infty e^{-it\lambda^4}\lambda^{3+4\alpha}\varphi_0(2^{-N}\lambda)\Omega_{2,i}(\lambda)(x,y)d\lambda\Big|
\lesssim  2^{(1+4\alpha)N}(1+|t|2^{4N})^{-1/2}, \ i=1,2;
\end{split}
\end{equation*}
\begin{equation*}
   \begin{split}
\sup\limits_{x,y}\Big|\int_0^\infty e^{-it\lambda^4}\lambda^{3+4\alpha}\varphi_0(2^{-N}\lambda)\Omega_{2,3}(\lambda)(x,y)d\lambda\Big|
\lesssim  2^{4\alpha N}(1+|t|2^{4N})^{-1/2}.
\end{split}
\end{equation*}
In particular, for $\alpha =\frac{1}{2} $,
\begin{equation*}
   \begin{split}
\sup\limits_{x,y}\Big|\int_0^\infty e^{-it\lambda^4}\lambda^{3+4\alpha}\ \Omega_{2,i}(\lambda)(x,y)d\lambda\Big|
\lesssim |t|^{-\frac{1}{2}},\, i=1,2,3.
\end{split}
\end{equation*}
\end{proposition}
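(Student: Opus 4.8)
The plan is to handle the three pieces $\Omega_{2,i}(\lambda)$ ($i=1,2,3$) of \eqref{secondres-firstterm} one at a time: for each I reduce the dyadic integral $\int_0^\infty e^{-it\lambda^4}\lambda^{3+4\alpha}\varphi_0(2^{-N}\lambda)\,\Omega_{2,i}(\lambda)(x,y)\,d\lambda$ to a finite combination of oscillatory integrals of the shape \eqref{oscillatory inte} and then apply Lemma \ref{lem-LWP}, mirroring the treatment of the regular and first-kind cases in Propositions \ref{prop-S0D1S0}--\ref{prop-first-S1gamma101S1}. Throughout I write $R_0^\pm(\lambda^4)(x,y)=\lambda^{-2}F^\pm(\lambda|x-y|)$ and $[R_0^+(\lambda^4)-R_0^-(\lambda^4)](x,y)=\lambda^{-2}\bar F(\lambda|x-y|)$ as in Remark \ref{remark-beha-Rpm}, and rely on two structural facts: (a) since $S_2L^2\subseteq S_0L^2$, Lemma \ref{projiction-spaces-SjL2}(i) yields the orthogonality $S_2v=0$ and $S_2(x_jv)=0$ for $j=1,2$; (b) $h^\pm(\lambda)=O_1(1)$ for small $\lambda$ by Theorem \ref{thm-main-inver-M}(iii). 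I also use that $(F^\pm)'(0)=0$, $\bar F'(0)=0$, $\bar F''(0)\ne0$, whereas $(F^\pm)''(p)\sim\ln p$ is unbounded as $p\to0^+$.

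For $\Omega_{2,1}(\lambda)=\big(R_0^+(\lambda^4)-R_0^-(\lambda^4)\big)v\big(\lambda^{-2}h^+(\lambda)\,S_2\Gamma^2_{-2,1}S_2\big)vR_0^+(\lambda^4)$, and symmetrically for $\Omega_{2,2}$, I first write
\[
\Omega_{2,1}(\lambda)(x,y)=\frac{h^+(\lambda)}{\lambda^6}\int_{\mathbb{R}^4}\bar F(\lambda|x-u_2|)\,[vS_2\Gamma^2_{-2,1}S_2v](u_2,u_1)\,F^+(\lambda|y-u_1|)\,du_1\,du_2.
\]
I then apply Lemma \ref{Taylor-low}(ii) to the factor $\bar F(\lambda|x-u_2|)$, which is legitimate since $\bar F''$ and $\bar F'(\cdot)/(\cdot)$ are bounded: the constant term $\bar F(\lambda|x|)$ is killed by $S_2v=0$ and the linear term by $S_2(x_jv)=0$, leaving a remainder carrying the weight $\lambda^2|u_2|^2$ and the factor $\bar F''(\lambda|x-\theta_2u_2|)\cos^2\alpha_2+\frac{\bar F'(\lambda|x-\theta_2u_2|)}{\lambda|x-\theta_2u_2|}\sin^2\alpha_2$; likewise Lemma \ref{Taylor-low}(i) applied to $F^+(\lambda|y-u_1|)$ kills $F^+(\lambda|y|)$ via $S_2v=0$ and leaves the weight $\lambda|u_1|$ and the factor $\int_0^1(F^+)'(\lambda|y-\theta_1u_1|)\cos\alpha_1\,d\theta_1$. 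The net power of $\lambda$ is $\lambda^{-6}\cdot\lambda^2\cdot\lambda=\lambda^{-3}$, so $\lambda^{3+4\alpha}\Omega_{2,1}(\lambda)(x,y)=\lambda^{4\alpha}h^+(\lambda)\times(\cdots)$ with $(\cdots)$ a $\theta$- and $u$-integral. Substituting $\lambda=2^Ns$ and extracting the oscillatory factors $e^{\pm i\lambda|x-\theta_2u_2|}$, $e^{\pm i\lambda|y-\theta_1u_1|}$ from the large-argument parts of $\bar F,F^+$ (the polynomial small-argument parts contributing a trivial phase) turns the integral into $2^{(1+4\alpha)N}$ times a finite sum of integrals \eqref{oscillatory inte} with $\Phi(z)=\pm|x-\theta_2u_2|+|y-\theta_1u_1|$ and $\Psi(2^Ns,z)$ assembling $s^{4\alpha}$, $h^+(2^Ns)$, the bounded functions $F_1^\pm$, $\bar F_{11}$, $\bar F_{12}$ of \eqref{freereso-Fp-daoshu-big}--\eqref{Ftuba-onetwodaoshu-small}, and the $u$-integral against $|u_1|v(u_1)\,|u_2|^2v(u_2)\,[S_2\Gamma^2_{-2,1}S_2](u_2,u_1)$. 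For $\beta>18$ the weighted norms $\||u|v\|_{L^2}$, $\||u|^2v\|_{L^2}$ are finite, $\|\Gamma^2_{-2,1}(\lambda)\|_{L^2\to L^2}=O_1(1)$, and $2^N/(2^Ns)\lesssim1$ on $s\in[\frac{1}{4},1]$, so H\"older's inequality gives $|\partial_s^k\Psi|\lesssim1$ ($k=0,1$) uniformly in $z$ and in $N\le N'$; Lemma \ref{lem-LWP} then produces the asserted bound $\lesssim2^{(1+4\alpha)N}(1+|t|2^{4N})^{-1/2}$ for $i=1,2$.

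For $\Omega_{2,3}(\lambda)=R_0^-(\lambda^4)v\big(\lambda^{-2}(h^+(\lambda)-h^-(\lambda))\,S_2\Gamma^2_{-2,1}S_2\big)vR_0^+(\lambda^4)$ both outer factors are $F^\pm$ rather than the smoothed difference $\bar F$. Since $(F^\pm)''$ is only logarithmically bounded near the origin, Lemma \ref{Taylor-low}(ii) is unavailable, so I use Lemma \ref{Taylor-low}(i) on \emph{both} sides (each valid because $(F^\pm)'$ is bounded). This costs one power of $\lambda$: the net weight is $\lambda^{-6}\cdot\lambda\cdot\lambda=\lambda^{-4}$, whence $\lambda^{3+4\alpha}\Omega_{2,3}(\lambda)(x,y)=\lambda^{4\alpha-1}\big(h^+(\lambda)-h^-(\lambda)\big)\times(\cdots)$ and $\lambda=2^Ns$ yields $2^{4\alpha N}$ times an integral \eqref{oscillatory inte} — precisely the weaker exponent in the statement. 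It is worth retaining the smallness of $h^+(\lambda)-h^-(\lambda)$: since $g_0^+(\lambda)-g_0^-(\lambda)=\alpha_+-\alpha_-$ is a constant, the explicit formula for $h^\pm$ in Theorem \ref{thm-main-inver-M}(iii) gives $h^+(\lambda)-h^-(\lambda)=O_1\big((\ln\lambda)^{-2}\big)$, so the corresponding factor of $\Psi$ has $C^1$-norm $O(|N|^{-2})$ on $[\frac{1}{4},1]$ while all other factors of $\Psi$ are $O(1)$ together with one $s$-derivative; Lemma \ref{lem-LWP} then gives $\lesssim2^{4\alpha N}|N|^{-2}(1+|t|2^{4N})^{-1/2}$.

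It remains to derive the final assertion by summing over $N\le N'$ with $\alpha=1/2$, splitting at $N_0'$ with $|t|2^{4N_0'}\sim1$ exactly as in Proposition \ref{prop-free estimates}. For $i=1,2$ the per-$N$ bound $2^{3N}(1+|t|2^{4N})^{-1/2}$ sums to $\sum_{N\le N_0'}2^{3N}+|t|^{-1/2}\sum_{N_0'<N\le N'}2^{N}\lesssim2^{3N_0'}+|t|^{-1/2}\lesssim|t|^{-1/2}$. For $i=3$ the borderline contribution $\sum_{N_0'<N\le N'}2^{2N}(|t|2^{4N})^{-1/2}=|t|^{-1/2}(N'-N_0')$ would only give $|t|^{-1/2}\log|t|$, but the extra factor $|N|^{-2}$ gained from $h^+-h^-$ makes $\sum_{N_0'<N\le N'}|N|^{-2}\le\sum_{k\ge1}k^{-2}<\infty$, and $\sum_{N\le N_0'}2^{2N}|N|^{-2}\lesssim 2^{2N_0'}\lesssim|t|^{-1/2}$, so again $\lesssim|t|^{-1/2}$. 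The main obstacle I anticipate is exactly this power-counting bookkeeping — recognizing that only the smoothed difference $\bar F$, not $F^\pm$, tolerates a second-order Taylor expansion (which forces the weaker exponent $2^{4\alpha N}$ for $\Omega_{2,3}$), and then recovering the logarithmic gain from $h^+-h^-$ to keep the dyadic sum convergent for that piece.
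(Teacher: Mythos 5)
Your per-dyadic argument is essentially the paper's: the same splitting \eqref{secondres-firstterm}; the same use of the $S_2$ orthogonality (all three relations $S_2v=S_2(x_1v)=S_2(x_2v)=0$ on the factor carrying the smoothed difference $\bar F$ via Lemma \ref{Taylor-low}(ii), and only $S_2v=0$ on the factors carrying $F^\pm$ via Lemma \ref{Taylor-low}(i), since $(F^\pm)''$ is not bounded near $0$); the same power counting giving $2^{(1+4\alpha)N}$ for $i=1,2$ and $2^{4\alpha N}$ for $i=3$; and the same reduction to Lemma \ref{lem-LWP} using $h^\pm(\lambda)=O_1(1)$, $\|\Gamma^2_{-2,1}(\lambda)\|_{L^2\to L^2}=O_1(1)$ and H\"older with the moments of $v$. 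Where you genuinely diverge is the final $\alpha=\tfrac12$ summation for $\Omega_{2,3}$: the paper records only $h^+-h^-=O_1\big((\ln\lambda)^{-1}\big)$ together with the uniform per-$N$ bound $2^{2N}(1+|t|2^{4N})^{-1/2}$, and the borderline dyadic sum is closed implicitly through the non-stationary dichotomy bound $(1+|t|2^{4N})^{-1}$ for $|N-N_0|>2$ in Lemma \ref{lem-LWP}, exactly as in Proposition \ref{prop-free estimates}; you instead exploit that $g_0^+-g_0^-=\alpha_+-\alpha_-$ is a constant, so $h^+-h^-=O_1\big((\ln\lambda)^{-2}\big)$ (correct, since $c_1=\hbox{tr}(L_1)\neq0$ by Lemma \ref{lem-d1-inver}), and the resulting $|N|^{-2}$ gain makes $\sum_{N_0'<N\le N'}$ convergent. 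Both routes are valid; yours is a self-contained fix of the logarithmic borderline that the displayed per-$N$ estimate alone would not close, while the paper's recycles the dichotomy already built into Lemma \ref{lem-LWP}. One bookkeeping caution for $i=1,2$: since $\Gamma^2_{-2,1}(\lambda)$ depends on $\lambda$, you cannot simultaneously keep $u_1,u_2$ in the phase variable $z$ and claim the $u$-integration sits inside $\Psi$; the clean arrangement is that of Proposition \ref{prop-QlambdaQ} (Fubini, phase $|x|+|y|$, factors $e^{\pm i2^Ns(|x-\theta u|-|x|)}$ inside $\Psi$ with $s$-derivatives costing $2^N|u|$, absorbed by the decay of $V$, and $2^N\|\partial_s\Gamma^2_{-2,1}(2^Ns)\|\lesssim1$), which is in fact what your estimates use, so this is a presentational rather than a substantive issue.
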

\begin{proof}
For the case with $ \Omega_{2,1}(\lambda)$, we first use the orthogonality  $S_2v=S_2(x_1v)=S_2(x_2v)=0$ of $S_2$ for  on the left hand of $\Omega_{2,1}(\lambda)$,  and use the orthogonality $S_2v=0$ of $S_2$ on the right hand of
$\Omega_{2,1}(\lambda)$. Then by the same argument with the proof of Proposition \ref{prop-first-S1gamma101S1}, it is easy to check that the statement holds. For the case of $\Omega_{2,2}(\lambda)$, the proof proceeds similarly as the case $ \Omega_{2,1}(\lambda)$.

Next, we turn to the case of $ \Omega_{2,3}(\lambda)$.
We write that
\begin{equation*}
   \begin{split}
&K_{1,N}^{2,\pm}(t;x,y)\\
=&\frac{h^+(\lambda)-h^-(\lambda)}{\lambda^2}\int_0^\infty  e^{-it\lambda^4}\lambda^{3+4\alpha}\varphi_0(2^{-N}\lambda)\big[R_0^-(\lambda^4)v
S_2\Gamma_{-2,1}^2(\lambda) S_2 vR_0^+(\lambda^4)\big](x,y)
 d\lambda.
  \end{split}
\end{equation*}
Let $R_0^\pm(\lambda^2)(x,y)=\frac{1}{\lambda^2}F^\pm(\lambda|x-y|).$
Then by using the orthogonality $S_2v=0$ and Lemma \ref{Taylor-low}(i),
\begin{equation}\label{S2-proof}
   \begin{split}
&\big[R_0^-(\lambda^4)v
S_2\Gamma_{-2,1}^2(\lambda) S_2 vR_0^+(\lambda^4)\big](x,y)\\
=& \frac{1}{\lambda^4}\int_{\mathbb{R}^4}F^-(\lambda|x-u_2|)
     [vS_2\Gamma_{-2,1}^2(\lambda)S_2v](u_2,u_1)F^+(\lambda |y-u_1|)du_1du_2\\
=&\frac{1}{\lambda^2}\int_{\mathbb{R}^4}\Big(\int_0^1\int_0^1(F^-)'(\lambda|x-\theta_2u_2|)
(F^+)'(\lambda|y-\theta_1u_1|)\cos\alpha_2\cos\alpha_1d\theta_1d\theta_2\Big) \\
&\ \ \ \ \ \ \ \ \ \times|u_1||u_2|[vS_2\Gamma_{-2,1}^2(\lambda) S_2v](u_2,u_1)du_1du_2.
  \end{split}
\end{equation}
Hence we have
\begin{equation*}
   \begin{split}
&K_{1,N}^{2,\pm}(t;x,y)\\
=&\int_0^1\int_0^1\Big(\int_0^\infty e^{-it\lambda^4}\lambda^{4\alpha}
\varphi_0(2^{-N}\lambda)\frac{h^+(\lambda)-h^-(\lambda)}{\lambda}
\int_{\mathbb{R}^4}(F^-)'(\lambda|x-\theta_2u_2|)
(F^+)'(\lambda|y-\theta_1u_1|)\\
& \ \ \ \ \ \ \ \ \ \times|u_1||u_2|v(u_1)v(u_2)[S_2\Gamma_{-2,1}^2(\lambda) S_2](u_2,u_1)du_1du_2
d\lambda\Big)
\cos\alpha_2\cos\alpha_1d\theta_1d\theta_2.
 \end{split}
\end{equation*}
Let
\begin{equation*}
   \begin{split}
E^{2,\pm}_{1,N}(t;x,y,\theta_1,\theta_2)= & \int_0^\infty e^{-it\lambda^4}\lambda^{4\alpha}
\varphi_0(2^{-N}\lambda)\frac{h^+(\lambda)-h^-(\lambda)}{\lambda}\Big(\int_{\mathbb{R}^4}
(F^-)'(\lambda|x-\theta_2u_2|)\\
& \ \ \ \times(F^+)'(\lambda|y-\theta_1u_1|)
|u_1||u_2|v(u_1)v(u_2)[S_2\Gamma_{-2,1}^2(\lambda) S_2](u_2,u_1)du_1du_2\Big)d\lambda.
  \end{split}
\end{equation*}
Then we have
\begin{equation}\label{esti-second-K2N0pm}
   \begin{split}
|K_{1,N}^{2,\pm}(t;x,y)|\lesssim \int_0^1\int_0^1|E^{2,\pm}_{1,N}(t;x,y,\theta_1,\theta_2)|d\theta_1d\theta_2.
  \end{split}
\end{equation}
Notice that $h^+(\lambda)-h^-(\lambda)= O_1\big((\ln\lambda)^{-1}\big)$, by using similar arguments as in the proof of Proposition \ref{prop-QlambdaQ}, we have for any $x,y\in \mathbb{R}^2$ and $0 \leq \theta_i \leq 1, i=1,2$,
\begin{equation}\label{estu-second-E0pm2N}
   \begin{split}
|E^{0,\pm}_{2,N}(t;x,y,\theta_1,\theta_2)| \lesssim 2^{4\alpha N}(1+|t|2^{4N})^{-1/2}.
  \end{split}
\end{equation}
Therefore from \eqref{esti-second-K2N0pm}, we immediately obtain that $|K_{2,N}^{0,\pm}(t;x,y)|$
is boudned by $2^{4\alpha N}(1+|t|2^{4N})^{-1/2}$ uniformly in $x,y$.
\end{proof}

\begin{remark}\label{Remark of second kind}
We notice that the projection $S_2$ actually
has the three orthogonal relations $S_2v=S_2x_1v=S_2x_2v=0$. However, for the kernel $\big[R_0^-(\lambda^4)vS_2\Gamma_{-2,1}^2(\lambda) S_2 vR_0^+(\lambda^4)\big](x,y)$ in \eqref{S2-proof} above, the only orthogonal relation $S_2v=0$ can be used since the derivatives
$(F^\pm)^{(k)}(p)$ of $F^{\pm}(p)$ is bounded uniformly in $p$ as $k=0,1$, but not uniformly bounded for $k\ge2$. This leads to need an extra regular term $H^{1/2}$ to modify the singularity of $\lambda$ near zero in order to obtain the decay estimates $O(|t|^{-1/2})$ of $ e^{-itH}P_{ac}(H)$.
 In principle, if all orthogonal relations of $S_2$ can be used in \eqref{S2-proof}, then the desired time decay estimate $O(|t|^{-1/2})$ should be obtained without such regular factor $H^{1/2}$. Moreover, we remark that that some regular terms $H^{s}$ are also indispensable  based on such similar reasons and the existences of worse error terms in the following third and fourth resonance cases.
\end{remark}

\subsubsection{\textbf{The third and fourth kind resonances}}
If zero is the  third kind resonance of $H$, then using (\ref{id-RV}) and
(\ref{thm-resoinver-M3}) one has
\begin{equation}\label{Rv-thirdreso}
   \begin{split}
R^\pm_V(\lambda^4)=&R^\pm_0(\lambda^4)
- R^\pm_0(\lambda^4)v\Big( \frac{ \widetilde{g}_2^\pm(\lambda)^{-1} }{\lambda^4}S_4\Gamma_{-4,1}^3S_4   \Big)vR^\pm_0(\lambda^4)
-R^\pm_0(\lambda^4)v\\
&\times \Big( \frac{\widetilde{g}_2^\pm(\lambda)^{-2}}{\lambda^4}S_4\Gamma_{-4,1}^3S_4    \Big)vR^\pm_0(\lambda^4)
-R^\pm_0(\lambda^4)\Big(vO_1\big(\lambda^{-4}(\ln\lambda)^{-3}\big)v\Big)R^\pm_0(\lambda^4).
  \end{split}
 \end{equation}
If zero is the fourth kind resonance of $H$, then using (\ref{id-RV}) and
(\ref{thm-resoinver-M4}) one has
\begin{equation}\label{Rv-fourthreso}
   \begin{split}
&R^\pm_V(\lambda^4)\\
=&R^\pm_0(\lambda^4)
- R^\pm_0(\lambda^4)v\Big(\frac{1}{\lambda^4}S_5D_6S_5 \Big)vR^\pm_0(\lambda^4)
-R^\pm_0(\lambda^4)v\Big( \frac{\widetilde{g}_2^+(\lambda)^{-1}}{\lambda^4} S_4\Gamma_{-4,1}^4S_4\Big)
vR^\pm_0(\lambda^4)\\
&-R^\pm_0(\lambda^4)v\Big( \frac{\widetilde{g}_2^+(\lambda)^{-2}}{\lambda^4}S_4\Gamma_{-4,2}^4S_4    \Big)vR^\pm_0(\lambda^4)
-R^\pm_0(\lambda^4)\Big(vO_1\big(\lambda^{-4}(\ln\lambda)^{-3}\big)v\Big)R^\pm_0(\lambda^4).
  \end{split}
 \end{equation}

In order to prove Theorem \ref{thm-low}(iv), we plug (\ref{Rv-thirdreso}) and (\ref{Rv-fourthreso}) into Stone's formula (\ref{Stone-Paley}), respectively.
Then combining with Proposition \ref{prop-free estimates},
it suffices to show the following Propositions \ref{prop-third-S43-01-S4} and \ref{prop-third-errorterm}.

\begin{proposition}\label{prop-third-S43-01-S4}
Assume that $V(x)\lesssim (1+|x|)^{-\beta}$ with $\beta>18$. Let $ \Omega_{3,1}^\pm(\lambda):= \frac{ \widetilde{g}_2^\pm(\lambda)^{-1} }{\lambda^4}S_4\Gamma_{-4,1}^3S_4 $ and $ \Omega_{4,1}^\pm(\lambda):=\frac{1}{\lambda^4}S_5D_6S_5$.
Then for $i=3,4$ and $N \leq N_0'$
\begin{equation*}
   \begin{split}
&\sup\limits_{x,y}\Big|\int_0^\infty e^{-it\lambda^4}\lambda^{3+4\alpha}\varphi_0(2^{-N}\lambda)
\big[R^\pm_0(\lambda^4)v\Omega_{i,1}^\pm(\lambda)vR^\pm_0(\lambda^4)\big](x,y)d\lambda\Big| \lesssim
2^{(-2+4\alpha)N}(1+|t|2^{4N})^{-1/2}.
\end{split}
\end{equation*}
In particular, for $\alpha=1$,
\begin{equation*}
   \begin{split}
&\sup\limits_{x,y}\Big|\int_0^\infty e^{-it\lambda^4}\lambda^{3+4\alpha}
\big[R^\pm_0(\lambda^4)v\Omega_{i,1}^\pm(\lambda)vR^\pm_0(\lambda^4)\big](x,y)d\lambda\Big| \lesssim
|t|^{-1/2},\, i=3,4.
\end{split}
\end{equation*}
\end{proposition}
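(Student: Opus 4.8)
The plan is to follow the scheme already used for the regular and first-kind cases, pushing the extra $\lambda^{-4}$ singularity of $\Omega_{i,1}^\pm(\lambda)$ against the $\lambda^{3+4\alpha}$ weight in Stone's formula together with the cancellation afforded by the large number of orthogonality relations satisfied by $S_4$ and $S_5$. Concretely, I would write
\[
K^{i,\pm}_{1,N}(t;x,y)=\int_0^\infty e^{-it\lambda^4}\lambda^{3+4\alpha}\varphi_0(2^{-N}\lambda)\big[R^\pm_0(\lambda^4)v\,\Omega_{i,1}^\pm(\lambda)\,v R^\pm_0(\lambda^4)\big](x,y)\,d\lambda,
\]
set $R^\pm_0(\lambda^4)(x,y)=\lambda^{-2}F^\pm(\lambda|x-y|)$ as in Remark \ref{remark-beha-Rpm}, so that the two free resolvents contribute a factor $\lambda^{-4}$, and the amplitude $\Omega_{i,1}^\pm(\lambda)$ contributes $\lambda^{-4}$ (times a bounded operator $\Gamma$ with at most a logarithmic factor in $\lambda$, which is harmless since $\widetilde g_2^\pm(\lambda)^{-1}=O_1((\ln\lambda)^{-1})$). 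To recover enough powers of $\lambda$ I use the orthogonality relations from Lemma \ref{projiction-spaces-SjL2}: for $S_4$ we have $S_4v=S_4x_jv=S_4x_ix_jv=0$, and for $S_5$ additionally $S_5x_ix_jx_kv=0$. Using the Taylor expansions of Lemma \ref{Taylor-low} — parts (ii)/(iii) on the $S_4$ side and parts (iii)/(iv) on the $S_5$ side — each vanishing moment converts one factor $F^\pm(\lambda|x-u|)$ into $\lambda^m$ times a derivative of $F^\pm$ evaluated along the segment $x-\theta u$, with the $\theta$-integrals and the $u_1,u_2$-kernel integrals handled exactly as in Proposition \ref{prop-S0D1S0}.

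The second step is the bookkeeping of powers. On each side, applying Lemma \ref{Taylor-low} up to the first nonvanishing moment gains two powers of $\lambda$ for $S_4$ (using $F'(0)=0$ together with the two first-order and one second-order moment conditions, effectively a $\lambda^2$-type gain from the $(iii)$-expansion applied to $\bar F$, resp. higher for $S_5$). Counting: the $\lambda^{-4}$ from $\Omega_{i,1}^\pm$ and $\lambda^{-4}$ from the two free resolvents are offset by $\lambda^{3+4\alpha}$ and by $\lambda^{2}$ from each Taylor expansion on the two sides. Taking $\alpha=1$ gives weight $\lambda^{7}$, which with $\lambda^{-8}$ and the two $\lambda^2$ gains leaves a net $\lambda^{3}$; after the substitution $\lambda=2^Ns$ and factoring the $e^{\pm i2^Ns\Phi(z)}$ oscillation with $\Phi(z)=|x-\theta_2u_2|+|y-\theta_1u_1|$ (or the appropriate sum of the distances), the inner $s$-integral becomes exactly of the form $\int_0^\infty e^{-it2^{4N}s^4}e^{\pm i2^Ns\Phi(z)}\Psi(2^Ns,z)\varphi_0(s)\,ds$ with an amplitude $\Psi$ satisfying $|\partial_s^k\Psi(2^Ns,z)|\lesssim1$ for $k=0,1$ uniformly in $z$ (this uses the bounds on $F^\pm$, on $(F^\pm)'$ and $(F^\pm)''$ coming from \eqref{freeres-lagrepart-Fp}, and the $O_1(1)$ bound on $\Gamma^3_{-4,j}$, together with $|\partial_s^k(\widetilde g_2^\pm(2^Ns)^{-1})|\lesssim1$). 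Then Lemma \ref{lem-LWP} gives the bound $2^{(2+4\alpha-8+\text{gain})N}(1+|t|2^{4N})^{-1/2}=2^{(-2+4\alpha)N}(1+|t|2^{4N})^{-1/2}$. The fourth-kind term $\Omega_{4,1}^\pm(\lambda)=\lambda^{-4}S_5D_6S_5$ is treated the same way, using the extra third-order moment of $S_5$ and part $(iv)$ of Lemma \ref{Taylor-low} to produce the required gain; $D_6$ is absolutely bounded so no logarithmic factor appears. After the $\theta$- and $u$-integrations, H\"older's inequality in $u_1,u_2$ against $\langle u\rangle^{m}v(u)\in L^2$ (which requires $\beta$ large enough, covered by $\beta>18$) finishes the estimate for fixed $N$.

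Finally, summing over $N\le N_0'$ as in Proposition \ref{prop-free estimates}: with $\alpha=1$ the per-$N$ bound is $2^{2N}(1+|t|2^{4N})^{-1/2}$, and the geometric-type sum $\sum_{N\le N'}2^{2N}(1+|t|2^{4N})^{-1/2}\lesssim|t|^{-1/2}$ exactly as in the free case, giving the stated $O(|t|^{-1/2})$. The main obstacle I anticipate is the careful power-counting together with verifying that \emph{after} differentiating the amplitude $\Psi(2^Ns,z)$ in $s$ one still has a bound uniform in $z=(x,y,\theta_1,\theta_2,u_1,u_2)$: each $\partial_s$ can fall on an exponential $e^{\pm i2^Ns(|x-\theta_2u_2|-|x|)}$ producing a factor $2^N|u_2|\lesssim\langle u_2\rangle$ (absorbed into the weighted $L^2$ norm of $v$), on $F^\pm$ or its derivatives, or on $\widetilde g_2^\pm(2^Ns)^{-1}$ producing $(2^Ns)^{-1}(\ln 2^Ns)^{-2}\cdot 2^N\lesssim1$; one must check that no combination produces an unbounded factor, and that the number of vanishing moments used is not more than the regularity of $F^\pm$ permits (this is why Lemma \ref{Taylor-low} is stated only up to order four and why the remainder integrands involve only $F^\pm,\dots,F^{(4)}_\pm$, each of which is controlled by \eqref{freeres-lagrepart-Fp} and \eqref{id-H0big}). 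Granting this routine but delicate verification, the proposition follows.
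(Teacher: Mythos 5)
There is a genuine gap at the heart of your plan. You propose to gain $\lambda^2$ (and more, for $S_5$) on \emph{each} side by applying Lemma \ref{Taylor-low}(ii)--(iv) to the single-resolvent kernels $F^\pm$, exploiting all the vanishing moments of $S_4$, $S_5$. But those expansions produce amplitudes containing $(F^\pm)''(\lambda|x-\theta u|)$ and $(F^\pm)'(\lambda|x-\theta u|)/(\lambda|x-\theta u|)$, and by \eqref{freeres-lagrepart-Fp} the kernel $F^\pm(p)$ contains $a_0p^2\ln p$, so these quantities behave like $\ln(\lambda|x-\theta u|)$ as the argument tends to zero; they are \emph{not} uniformly bounded in $z$, so the hypothesis $|\partial_s^k\Psi(2^Ns,z)|\lesssim 1$ of Lemma \ref{lem-LWP} fails. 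This is precisely the obstruction recorded in Remark \ref{Remark of second kind}: against a single $F^\pm$ only the relation $S_jv=0$ (one moment, which produces only $(F^\pm)'$, bounded together with its $s$-derivative) can be used; it is the very reason the regularity factors $H^{1/2},H^{3/2}$ appear. The paper circumvents this by estimating the $\pm$ \emph{difference} (which is all that enters Stone's formula), splitting $R_0^+v\Omega^+vR_0^+ - R_0^-v\Omega^-vR_0^-$ into three terms so that the difference kernel $\bar F$ of \eqref{reso-R+RF-big}, which has no logarithms, sits on one side; and since $\bar F''(0)\neq 0$, even there one must first subtract $\tfrac{i}{16}p^2$ — legitimate because of the second-order moments $S_4(x_ix_jv)=0$ — to get $\widetilde F$ of \eqref{reso-R+R-Ftuba-big} with $\widetilde F^{(k)}(0)=0$, $k=1,2,3$, before Lemma \ref{Taylor-low}(iii) (resp.\ (iv) for $S_5$) can be applied; on the other side only $S_4v=0$ is used, as in Proposition \ref{prop-S0D1S0}. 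Your passing remark that the gain comes "from the (iii)-expansion applied to $\bar F$" points in the right direction but is not implemented: you never introduce the difference decomposition, and without the $p^2$-subtraction Lemma \ref{Taylor-low}(iii) does not apply to $\bar F$ at all.

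Two smaller points. First, your power counting is both muddled and over-ambitious: the stated per-$N$ exponent $-2+4\alpha$ actually requires only a total gain of $\lambda^2$, i.e.\ one moment ($S_4v=0$ or $S_5v=0$) on each side exactly as in Proposition \ref{prop-S0D1S0}; had you argued this way the displayed bound for each fixed sign would follow without the problematic higher-order expansions (the paper obtains stronger per-$N$ exponents for two of its three pieces, which it can afford only because of the difference/subtraction mechanism). Second, in the final summation keep the dichotomy of Lemma \ref{lem-LWP}: the bound $(1+|t|2^{4N})^{-1/2}$ for all $N$ does not sum to $|t|^{-1/2}$ (it loses a logarithm); one needs the $(1+|t|2^{4N})^{-1}$ bound off the window $|N-N_0|\le 2$, as in Proposition \ref{prop-free estimates}, which your appeal to the free case uses only implicitly.
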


\begin{proof}
We begin with proving the case with $\Omega^\pm_{3,1}(\lambda)$. First note that
\begin{equation}\label{third-Fourth kind}
   \begin{split}
&R^+_0(\lambda^4)v\Omega^+_{3,1}(\lambda)vR^+_0(\lambda^4)
-R^-_0(\lambda^4)v\Omega^-_{3,1}(\lambda)vR^-_0(\lambda^4)\\
=&\big(R^+_0(\lambda^4)-R_0^-(\lambda^4)\big) v\Big(\frac{ \widetilde{g}_2^+(\lambda)^{-1} }{\lambda^4}S_4\Gamma_{-4,1}^3S_4 \Big)vR^+_0(\lambda^4)
+R^-_0(\lambda^4)v\Big(\frac{ \widetilde{g}_2^-(\lambda)^{-1} }{\lambda^4}S_4\Gamma_{-4,1}^3S_4 \Big)\\
&\times v\big(R^+_0(\lambda^4)-R_0^-(\lambda^4)\big)
+R^-_0(\lambda^4)v\Big(\frac{\widetilde{g}_2^+(\lambda)^{-1}- \widetilde{g}_2^-(\lambda)^{-1} }{\lambda^4}S_4\Gamma_{-4,1}^3S_4 \Big)vR^+_0(\lambda^4)\\
&:=I+II+III.
\end{split}
\end{equation}
For the first term $I$, we write that
\begin{equation*}
K^{3,\pm}_{1,N}(t;x,y)
=\int_0^\infty e^{-it\lambda^4}\lambda^{3+4\alpha}\varphi_0(2^{-N}\lambda)\frac{\widetilde{g}^+_2(\lambda)^{-1}}{\lambda^4}
\big[\big(R^+_0(\lambda^4)
-R_0^-(\lambda^4)\big)
vS_4\Gamma_{-4,1}^3 S_4vR^+_0(\lambda^4)\big](x,y)d\lambda.
\end{equation*}
We below will use the orthogonal relations $S_4v=0, S_4(x_i v)=0, S_4(x_ix_j v)=0( i,j=1,2)$ for $S_4$ on the left side of the first term $I$, and the orthogonality $S_4v=0$ for $S_4$ on the right side of the first term $I$.

Let  $$R_0^+(\lambda^4)(x,y)=\frac{1}{\lambda^2}F^+(\lambda|x-y|),\ \hbox{ and} \
\big[R_0^+(\lambda^4)-R^-_0(\lambda^4)\big](x,y)=\frac{1}{\lambda^2}\bar{F}(\lambda|x-y|).$$
Then by using the orthogonality of $S_4$ we first obtain
 \begin{equation*}
   \begin{split}
&\big[\big(R^+_0(\lambda^4)-R_0^-(\lambda^4)\big)
v S_4\Gamma_{-4,1}^3 S_4vR^+_0(\lambda^4)\big](x,y)\\
=& \frac{1}{\lambda^4}\int_{\mathbb{R}^4}\bar{F}(\lambda|x-u_2|)
[vS_4\Gamma_{-4,1}^3 S_4v](u_2,u_1)F^+(\lambda |y-u_1|)du_1du_2\\
=&\frac{1}{\lambda^4}\int_{\mathbb{R}^4}
\big( \bar{F}(\lambda|x-u_2|)-\frac{i}{16}\lambda^2|x-u_2|^2\big)
[vS_4\Gamma_{-4,1}^3 S_4v](u_2,u_1)F^+(\lambda |y-u_1|)du_1du_2\\
=&\frac{1}{\lambda^4}\int_{\mathbb{R}^4}\widetilde{F}(\lambda|x-u_2|)
[vS_4\Gamma_{-4,1}^3 S_4v](u_2,u_1)F^+(\lambda |y-u_1|)du_1du_2,
  \end{split}
\end{equation*}
 where $ \widetilde{F}(p)= \bar{F}(p)-\frac{i}{16}p^2,\ p\in \mathbb{R}$.

Next, note that $\widetilde{F}(p)\in C^4(\mathbb{R})$ and $\widetilde{F}^{(k)}(0)=0 (k=1,2,3) $ by Remark \ref{remark-beha-Rpm}.
Hence by using Lemma \ref{Taylor-low}(i) and (iii), and the orthogonal relations $S_4v(x)=0, S_4(x_j v)=0$ and $S_4(x_jx_k v)=0 (j,k=1,2)$,
we have
 \begin{equation*}
   \begin{split}
&\big[\big(R^+_0(\lambda^4)-R_0^-(\lambda^4)\big)
v S_4\Gamma_{-4,1}^3S_4vR^+_0(\lambda^4)\big](x,y)\\
=&\frac{1}{2}\int_{\mathbb{R}^4}\Big(\int_0^1\int_0^1
\Big[\Big(\frac{\widetilde{F}'(\lambda|x-\theta_2u_2|)}{\lambda^2|x-\theta_2u_2|^2}
-\frac{\widetilde{F}''(\lambda|x-\theta_2u_2|)}{\lambda|x-\theta_2u_2|}\Big)3\cos\alpha_2\sin^2\alpha_2\\
&-\widetilde{F}'''(\lambda|x-\theta_2u_2|)\cos^3\alpha_2\Big]
(F^+)'(\lambda|y-\theta_1u_1|)(1-\theta_2)^2 \cos\alpha_1d\theta_1d\theta_2\Big)\\
&\ \ \ \ \ \times|u_1||u_2|^3[vS_4\Gamma_{-4,1}^3 S_4v](u_2,u_1)du_1du_2.
  \end{split}
\end{equation*}
Thus, by Fubini's theorem  we write
\begin{equation*}
   \begin{split}
&K_{1,N}^{3,\pm}(t;x,y)\\
=& \frac{1}{2}\int_{\mathbb{R}^4}\bigg(\int_0^1\int_0^1
\Big(\int_0^\infty e^{-it\lambda^4}\varphi_0(2^{-N}\lambda)
\lambda^{-1+4\alpha}\widetilde{g}^+_2(\lambda)^{-1}
\Big[\Big(\frac{\widetilde{F}'(\lambda|x-\theta_2u_2|)}{\lambda^2|x-\theta_2u_2|^2}
-\frac{\widetilde{F}''(\lambda|x-\theta_2u_2|)}{\lambda|x-\theta_2u_2|}\Big)\\
&\times 3\cos\alpha_2\sin^2\alpha_2
-\widetilde{F}'''(\lambda|x-\theta_2u_2|)\cos^3\alpha_2\Big]
(F^+)'(\lambda|y-\theta_1u_1|) d\lambda\Big)
(1-\theta_2)^2 \cos\alpha_1d\theta_1d\theta_2\bigg)\\
&\ \ \ \ \ \ \times|u_1||u_2|^3[v S_4\Gamma_{-4,1}^3 S_4v](u_2,u_1)du_1du_2.
\end{split}
\end{equation*}
 Let
\begin{equation*}
   \begin{split}
&E^{3,\pm}_{1,N}(t;x,y,\theta_1,\theta_2,u_1,u_2)\\
=&\int_0^\infty e^{-it\lambda^4}\varphi_0(2^{-N}\lambda)
\lambda^{-1+4\alpha}\widetilde{g}^+_2(\lambda)^{-1}
\Big[\Big(\frac{\widetilde{F}'(\lambda|x-\theta_2u_2|)}{\lambda^2|x-\theta_2u_2|^2}
-\frac{\widetilde{F}''(\lambda|x-\theta_2u_2|)}{\lambda|x-\theta_2u_2|}\Big)
3\cos\alpha_2\sin^2\alpha_2\\
&\ \ \ \ \ \ \ \ \ -\widetilde{F}'''(\lambda|x-\theta_2u_2|)\cos^3\alpha_2\Big]
(F^+)'(\lambda|y-\theta_1u_1|) d\lambda.
  \end{split}
\end{equation*}
 Then we have
 \begin{equation}\label{prop-esti-third-k13N}
   \begin{split}
|K_{1,N}^{3,\pm}(t;x,y)|\lesssim & \int_{\mathbb{R}^4}\Big(\int_0^1\int_0^1|E^{3,\pm}_{1,N}(t;x,y,\theta_1,\theta_2,u_1,u_2)|
|(1-\theta_2)^2|d\theta_1d\theta_2\Big)\\
&\ \ \ \ \ \ \ \times|u_1||u_2|^3|[v S_4\Gamma_{-4,1}^3 S_4v](u_2,u_1)|du_1du_2.
  \end{split}
\end{equation}

Now we begin to estimate $E^{3,\pm}_{1,N}(t;x,y,\theta_1,\theta_2,u_1,u_2)$.
In fact, we write that
\begin{equation}\label{Ftuba-onetwosandaoshu-small}
  \begin{split}
&\ \ \ \ \ \ \ \ \ \ \ \ \ \ \ \ \ \ \ \ \ \ (F^+)'(p)=e^{ip}F_1^+(p),\\
& \Big(\frac{\widetilde{F}'(p)}{p^2}-\frac{\widetilde{F}''(p)}{p}\Big)3\cos\alpha_2\sin^2\alpha_2
-\widetilde{F}'''(p)\cos^3\alpha_2 = e^{ip}\widetilde{F}_{11}(p)+e^{-ip}\widetilde{F}_{12}(p).
  \end{split}
\end{equation}
By \eqref{Ftuba-onetwosandaoshu-small}, we have 
\begin{equation}\label{E31Nx}
   \begin{split}
&E^{3,\pm}_{1,N}(t;x,y,\theta_1,\theta_2,u_1,u_2)\\
=&2^{4\alpha N} \int_0^\infty e^{-it2^{4N}s^4}\varphi_0(s)s^{-1+4\alpha}
\big(\widetilde{g}_0^\pm(2^Ns)\big)^{-1}
e^{i2^Ns (|x-\theta_2u_2|+|y-\theta_1u_1|)} \widetilde{F}_{11}(2^Ns|x-\theta_2u_2|)\\
&\times F^+_1(2^Ns|y-\theta_1u_1|) ds
+2^{4\alpha N} \int_0^\infty e^{-it2^{4N}s^4}\varphi_0(s)s^{-1+4\alpha}
\big(\widetilde{g}_0^\pm(2^Ns)\big)^{-1}\\
&\times e^{-i2^Ns (|x-\theta_2u_2|-|y-\theta_1u_1|)}
\widetilde{F}_{12}(2^Ns|x-\theta_2u_2|)F^+_1(2^Ns|y-\theta_1u_1|) ds\\
:=&E^{3,\pm}_{11,N}(t;x,y,\theta_1,\theta_2,u_1,u_2)+E^{3,\pm}_{12,N}(t;x,y,\theta_1,\theta_2,u_1,u_2).
  \end{split}
\end{equation}
For the first term $E^{3,\pm}_{11,N}(t;x,y,\theta_1,\theta_2,u_1,u_2)$ in \eqref{E31Nx}.
Let $\lambda =2^Ns$, then we get that
\begin{equation*}
   \begin{split}
E^{3,\pm}_{11,N}(t;x,y,\theta_1,\theta_2,u_1,u_2)
&=2^{4\alpha N} \int_0^\infty e^{-it2^{4N}s^4}\varphi_0(s)s^{-1+4\alpha}
\big(\widetilde{g}_0^\pm(2^Ns)\big)^{-1}
e^{i2^Ns (|x-\theta_2u_2|+|y-\theta_1u_1|)}\\
&\ \ \ \ \ \ \ \ \widetilde{F}_{11}(2^Ns|x-\theta_2u_2|)F^+_1(2^Ns|y-\theta_1u_1|) ds.
  \end{split}
\end{equation*}
Furthermore, set $z=(x,y,\theta_1,\theta_2,u_1,u_2)$, $\Phi(z)= |x-\theta_2u_2|+|y-\theta_1u_1|$ and
$$\Psi_1(2^Ns, z)= \big(\widetilde{g}_0^+(2^Ns)\big)^{-1}
\widetilde{F}_{11}(2^Ns|x-\theta_2u_2|)F^+_1(2^Ns|y-\theta_1u_1|).$$
By using  (\ref{reso-R+R-Ftuba-big}), (\ref{freereso-Fp-daoshu-big}), (\ref{freereso-Fp-daoshu-small}) and   (\ref{Ftuba-onetwosandaoshu-small}), one can get that for any $z$,
$$ |\partial_s^k \Psi_1(2^Ns, z)|\lesssim 1,\ k=0,1.$$
Hence it immediately follows from Lemma \ref{lem-LWP} that
\begin{equation}\label{3.43-a}
	|E^{3,\pm}_{11,N}(t;x,y,\theta_1,\theta_2,u_1,u_2)|\lesssim  2^{4\alpha N}(1+|t|2^{4N})^{-1/2}.\end{equation}
Similarly, we get the same bound for  $|E^{3,\pm}_{12,N}(t;x,y,\theta_1,\theta_2,u_1,u_2)| $  as the \eqref{3.43-a}. 
By \eqref{E31Nx}, we immediately get that $|E^{3,\pm}_{1,N}(t;x,y,\theta_1,\theta_2,u_1,u_2)| $ is bounded 
by $2^{4\alpha N}(1+|t|2^{4N})^{-1/2}$. 
Thus by \eqref{prop-esti-third-k13N} and H\"{o}lder's inequality,  we obtain that $|K_{1,N}^{3,\pm}(t;x,y)|$ is bounded by
 $ 2^{4\alpha N}(1+|t|2^{4N})^{-1/2}$ uniformly in $x,y$.

For the second term $II$ in \eqref{third-Fourth kind},  the proof proceeds identically with the first term $I$.

For the last term $III$ in \eqref{third-Fourth kind}, we write that
\begin{equation*}
   \begin{split}
&K_{2,N}^{3,\pm}(t;x,y)\\
=&\frac{\widetilde{g}_2^+(\lambda)^{-1}- \widetilde{g}_2^-(\lambda)^{-1} }{\lambda^4}\int_0^\infty  e^{-it\lambda^4}\lambda^{3+4\alpha}\varphi_0(2^{-N}\lambda)\big[R_0^-(\lambda^4)v
S_4\Gamma_{-4,1}^3S_4  vR_0^+(\lambda^4)\big](x,y)
 d\lambda\\
 =&-\frac{i}{9216 \lambda^4} \int_0^\infty  e^{-it\lambda^4}\lambda^{3+4\alpha}\varphi_0(2^{-N}\lambda)\big[R_0^-(\lambda^4)v
S_4\Gamma_{-4,1}^3S_4  vR_0^+(\lambda^4)\big](x,y) d\lambda.
  \end{split}
\end{equation*}
Here we use the orthogonality $S_4v=0$ and the same arguments as the proof of
Proposition \ref{prop-QlambdaQ}, we can conclude that $|K_{2,N}^{3,\pm}(t;x,y)|$ is bounded by
$2^{(-2 +\alpha)N}(1+|t|2^{4N})^{-1/2} $ uniformly in $x,y$.

Finally,  we come to prove the case with $\Omega^\pm_{4,1}(\lambda)$. Notice that
\begin{equation}\label{fourthreson-firstterm}
   \begin{split}
&R^+_0(\lambda^4)v\Big(\frac{1}{\lambda^4}S_5D_6S_5 \Big)vR^+_0(\lambda^4)
-R^-_0(\lambda^4)v\Big(\frac{1}{\lambda^4}S_5D_6S_5\Big)vR^-_0(\lambda^4)\\
&=\big(R^+_0(\lambda^4)-R_0^-(\lambda^4)\big) v\Big(\frac{1}{\lambda^4}S_5D_6S_5\Big)vR^+_0(\lambda^4)\\
&+R^-_0(\lambda^4)v\Big(\frac{1}{\lambda^4}S_5D_6S_5 \Big)v
\big(R^+_0(\lambda^4)-R_0^-(\lambda^4)\big),
\end{split}
\end{equation}
and the following orthogonal relations of $S_5$:
$$S_5v(x)=0, S_5(x_i v)=S_5(x_ix_j v)=S_5(x_ix_jx_kv)=0, i,j,k=1,2,$$
hence by the same arguments as the case with $\Omega^\pm_{3,1}(\lambda)$ above,  we get that the desired estimates hold.
\end{proof}

\begin{proposition}\label{prop-third-errorterm}
Assume that $|V(x)|\lesssim (1+|x|)^{-\beta}$ with $\beta >18$. Let $\Gamma^3(\lambda)= O_1(\lambda^{-4}\big(\ln\lambda)^{-3}\big)$ be the error term of the third or fourth kind resonance in the expansions of $\big(M^\pm(\lambda)\big)^{-1}$.
Then for $N\in \mathbb{Z}$ and $N \leq N'$,
\begin{equation*}
   \begin{split}
&\sup\limits_{x,y}\Big|\int_0^\infty e^{-it\lambda^4}\lambda^{3+4\alpha}\varphi_0(2^{-N}\lambda)
\big[R^\pm_0(\lambda^4)v\Gamma^3(\lambda)vR^\pm_0(\lambda^4)\big](x,y)d\lambda\Big| \lesssim
 2^{(-4+4\alpha) N}(1+|t|2^{4N})^{-1/2}.
\end{split}
\end{equation*}
In particular, for $\alpha=\frac{3}{2}$,
\begin{equation*}
   \begin{split}
&\sup\limits_{x,y}\Big|\int_0^\infty e^{-it\lambda^4}\lambda^{3+4\alpha}
\big[R^\pm_0(\lambda^4)v\Gamma^3(\lambda)vR^\pm_0(\lambda^4)\big](x,y)d\lambda\Big| \lesssim
 |t|^{-1/2}.
\end{split}
\end{equation*}
\end{proposition}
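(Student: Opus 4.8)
The plan is to run the argument of Proposition \ref{prop-reg-freeterms} essentially unchanged, the only new feature being that the error operator $\Gamma^3(\lambda)$ now carries the much worse rate $O_1\big(\lambda^{-4}(\ln\lambda)^{-3}\big)$ in place of $O_1(\lambda^2)$, a loss which is exactly absorbed by the extra power $\lambda^{4\alpha}$ coming from $H^\alpha$ with $\alpha=\tfrac32$. First I would invoke Riesz's theorem to reduce the assertion to a pointwise bound, uniform in $x,y$, on the kernel
\begin{equation*}
K^{3,\pm}_N(t;x,y):=\int_0^\infty e^{-it\lambda^4}\lambda^{3+4\alpha}\varphi_0(2^{-N}\lambda)\big\langle[v\Gamma^3(\lambda)v]\big(R_0^\pm(\lambda^4)(\cdot,y)\big),\,(R_0^\pm)^*(\lambda^4)(x,\cdot)\big\rangle\,d\lambda,
\end{equation*}
which I want to control by $2^{(-4+4\alpha)N}(1+|t|2^{4N})^{-1/2}$ on the diagonal and by $2^{(-4+4\alpha)N}(1+|t|2^{4N})^{-1}$ off it.

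Next I would write $R_0^\pm(\lambda^4)(x,y)=\lambda^{-2}e^{\pm i\lambda|x-y|}\widetilde R^\pm(\lambda|x-y|)$ as in \eqref{freeres-lagrepart-Rtubap}, pull the phase $e^{\pm i\lambda(|x|+|y|)}$ out of the pairing, and absorb the remaining (uniformly bounded, with $\partial_s^k$ controlled by $\langle\cdot\rangle^k$) factors $e^{\pm i\lambda(|\cdot-y|-|y|)}\widetilde R^\pm(\lambda|\cdot-y|)$ and $e^{\mp i\lambda(|x-\cdot|-|x|)}\widetilde R^\mp(\lambda|x-\cdot|)$ into a single amplitude $E^{3,\pm}(\lambda;x,y)$, exactly as in Proposition \ref{prop-reg-freeterms}. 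Since $\beta>18$ the weights $v\langle\cdot\rangle^k$ lie in $L^2$ for $k=0,1$, so $\|\Gamma^3(\lambda)\|_{L^2\to L^2}=O_1\big(\lambda^{-4}(\ln\lambda)^{-3}\big)$ gives $|\partial_\lambda^k E^{3,\pm}(\lambda;x,y)|\lesssim\lambda^{-4-k}(\ln\lambda)^{-3}$ for $k=0,1$ and $0<\lambda\ll1$. Substituting $\lambda=2^Ns$ then rewrites $K^{3,\pm}_N$ as $2^{(4\alpha-4)N}\int_0^\infty e^{-it2^{4N}s^4}e^{\pm i2^Ns\Phi(z)}\Psi(2^Ns,z)\varphi_0(s)\,ds$ with $z=(x,y)$, $\Phi(z)=|x|+|y|$ and $\Psi(2^Ns,z)=s^{4\alpha-1}\,2^{4N}E^{3,\pm}(2^Ns;x,y)$; because $s\in[1/4,1]$ and $2^N\le 2^{N'}\ll1$, both the logarithmic weight $(\ln(2^Ns))^{-3}$ and its $s$-derivative $-3(\ln(2^Ns))^{-4}s^{-1}$ are bounded, so $|\partial_s^k\Psi(2^Ns,z)|\lesssim1$ for $k=0,1$, uniformly in $z$.

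At this stage Lemma \ref{lem-LWP} applies verbatim, with $N_0=\big[\tfrac13\log_2\tfrac{|x|+|y|}{|t|}\big]$, and produces the desired dyadic bound in its sharp (diagonal versus off-diagonal) form. The last step is to sum over $N\le N'$, exactly as in the proof of Proposition \ref{prop-free estimates}: with $\alpha=\tfrac32$ the front exponent is $4\alpha-4=2$, the at most five near-diagonal terms sum to $\lesssim|t|^{-1/2}$, and the off-diagonal terms split into a tail $\sum 2^{2N}$ over the $N$ with $|t|2^{4N}\lesssim1$ and a tail $|t|^{-1}\sum 2^{-2N}$ over the rest, each of which is again $\lesssim|t|^{-1/2}$; this yields the ``in particular'' assertion.

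The part to be careful about is the exponent bookkeeping together with the logarithm. The two free resolvents contribute $\lambda^{-4}$ and the insertion $v\Gamma^3(\lambda)v$ contributes $\lambda^{-4}(\ln\lambda)^{-3}$, so convergence of the rescaled $s$-integral near $\lambda=0$ forces $\alpha\ge\tfrac32$ — for $\alpha<\tfrac32$ the weight $\lambda^{3+4\alpha}$ is not enough and the integrand fails to be integrable — and the summation over $N$ genuinely needs the off-diagonal decay $(1+|t|2^{4N})^{-1}$ of Lemma \ref{lem-LWP}, not only the uniform $(1+|t|2^{4N})^{-1/2}$ bound, since the latter would merely give $|t|^{-1/2}\log|t|$. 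The weaker logarithmic singularity of $\Gamma^3$ only enters through its $\lambda$-derivative, and that is harmless on $[1/4,1]$ for $\lambda\ll1$; apart from these two bookkeeping points the argument is word-for-word that of Proposition \ref{prop-reg-freeterms}.
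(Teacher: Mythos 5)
Your proposal is correct and follows essentially the same route as the paper, whose proof of this proposition is precisely the one-line remark that the argument of Proposition \ref{prop-reg-freeterms} carries over; you have simply written out the details (Riesz reduction, pulling out the phase $e^{\pm i\lambda(|x|+|y|)}$, the operator-norm bound $O_1(\lambda^{-4}(\ln\lambda)^{-3})$ for $\Gamma^3$, the rescaling $\lambda=2^Ns$, Lemma \ref{lem-LWP}, and the dyadic summation as in Proposition \ref{prop-free estimates}), and the exponent bookkeeping $2^{(4\alpha-4)N}$ with $\alpha=\tfrac32$ is exactly right.
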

\begin{proof}
By the same arguments as in the proof of Proposition \ref{prop-reg-freeterms}, we can prove that this proposition holds.
\end{proof}

\subsection{Large energy decay estimates }
In this subsection, we will show Theorem \ref{thm-high-1}. To complete the proof, we use the formula,
\begin{equation}\label{highenergy-formula}
  e^{-itH}\widetilde{\chi}(H)f(x) =\frac{2}{\pi i}
\int_{\mathbb{R}^2}\Big(\int_0^\infty e^{-it\lambda^4}\lambda^3\widetilde{\chi}(\lambda)
\big[R^+_V(\lambda^4)-R^-_V(\lambda)\big](x,y)d\lambda \Big)f(y)dy,
\end{equation}
and the resolvent identity,
\begin{equation}\label{highenergy-reso}
   \begin{split}
R^\pm_V(\lambda^4)=R^\pm_0(\lambda^4)-R^\pm_0(\lambda^4)VR^\pm_0(\lambda^4)
+R^\pm_0(\lambda^4)VR^\pm_V(\lambda^4)VR^\pm_0(\lambda^4).
  \end{split}
\end{equation}
Combining with Proposition \ref{prop-free estimates}, it suffices  to prove
the following Propositions \ref{largeenergy-firstterm} and \ref{largeenergy-secondterm}.
\begin{proposition}\label{largeenergy-firstterm}
 Assume that $|V(x)|\lesssim (1+|x|)^{-\beta}$ with $\beta>3$. Then
\begin{equation}\label{prop-high-1}
\Big\|\int_0^\infty e^{-it\lambda^4}\lambda^3\widetilde{\chi}(\lambda)
R^\pm_0(\lambda^4)VR^\pm_0(\lambda^4)d\lambda \Big\|_{L^1\rightarrow L^\infty}
\lesssim |t|^{-\frac{1}{2}}.
\end{equation}
\end{proposition}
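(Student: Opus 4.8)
The plan is to reduce, after a Littlewood--Paley decomposition in $\lambda$, the kernel of the second Born term to the oscillatory integral of Lemma \ref{lem-LWP}, with estimates uniform in the spatial variables, and then to absorb the middle potential in $L^1$.

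First I would write out the kernel. Using $R_0^\pm(\lambda^4)(x,y)=\lambda^{-2}e^{\pm i\lambda|x-y|}\widetilde{R}^\pm(\lambda|x-y|)$ from \eqref{freeres-lagrepart-Rtubap}, one has
\[
\big[R_0^\pm(\lambda^4)VR_0^\pm(\lambda^4)\big](x,y)=\frac{1}{\lambda^4}\int_{\mathbb{R}^2}e^{\pm i\lambda(|x-u|+|u-y|)}\,\widetilde{R}^\pm(\lambda|x-u|)\,\widetilde{R}^\pm(\lambda|u-y|)\,V(u)\,du ,
\]
so, cutting $\lambda$ into dyadic pieces with $\widetilde{\chi}(\lambda)=\sum_{N\ge N'+1}\varphi_0(2^{-N}\lambda)$ (on each bounded dyadic piece the $u$ and $\lambda$ integrations may be exchanged by Fubini), the kernel to be bounded equals $\sum_{N\ge N'+1}\int_{\mathbb{R}^2}V(u)\,J_N^\pm(t;x,y,u)\,du$ with
\[
J_N^\pm(t;x,y,u)=\int_0^\infty e^{-it\lambda^4}\lambda^{-1}\varphi_0(2^{-N}\lambda)\,e^{\pm i\lambda(|x-u|+|u-y|)}\,\widetilde{R}^\pm(\lambda|x-u|)\,\widetilde{R}^\pm(\lambda|u-y|)\,d\lambda .
\]

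The next step is the rescaling $\lambda=2^Ns$; since $\lambda^{-1}\,d\lambda=s^{-1}\,ds$, this brings $J_N^\pm$ precisely into the form treated by Lemma \ref{lem-LWP}, with $z=(x,y,u)$, $\Phi(z)=|x-u|+|u-y|$ and amplitude $\Psi(2^Ns,z)=s^{-1}\widetilde{R}^\pm(2^Ns|x-u|)\,\widetilde{R}^\pm(2^Ns|u-y|)$. On $\operatorname{supp}\varphi_0\subset[\tfrac14,1]$ the weight $s^{-1}$ and its derivative are bounded, and, exactly as in the proof of Proposition \ref{prop-free estimates}, one verifies $|\partial_s^k\widetilde{R}^\pm(2^Ns|x-u|)|\lesssim1$ for $k=0,1$ uniformly in $x,u,N$: for $2^Ns|x-u|\gtrsim1$ the derivative of $\widetilde{R}^\pm$ decays like $(2^Ns|x-u|)^{-3/2}$ by \eqref{id-H0big}, which compensates the factor $2^N|x-u|\gtrsim1$ coming from the chain rule, while for $2^Ns|x-u|\lesssim1$ both factors are $O(1)$. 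Hence $|\partial_s^k\Psi(2^Ns,z)|\lesssim1$, $k=0,1$, uniformly in $z$ and $N$, and Lemma \ref{lem-LWP} gives
\[
|J_N^\pm(t;x,y,u)|\lesssim
\begin{cases}
(1+|t|2^{4N})^{-1/2}, & |N-N_0(z)|\le 2,\\
(1+|t|2^{4N})^{-1}, & |N-N_0(z)|> 2,
\end{cases}
\qquad N_0(z)=\Big[\tfrac13\log_2\tfrac{\Phi(z)}{|t|}\Big].
\]

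It remains to sum in $N$ and integrate in $u$. Here the essential high-energy gain is that $2^{4N}\ge c_0:=2^{4(N'+1)}>0$ for all $N\ge N'+1$, so each of the at most five ``resonant'' terms $|N-N_0(z)|\le2$ obeys $(1+|t|2^{4N})^{-1/2}\le(1+c_0|t|)^{-1/2}\lesssim|t|^{-1/2}$, and the remaining ``non-resonant'' terms form a tail $\sum_{N\ge N'+1}(1+|t|2^{4N})^{-1}\lesssim|t|^{-1/2}$ by the same routine dyadic splitting used at the end of the proof of Proposition \ref{prop-free estimates} (the implied constants depending only on the fixed $\lambda_0,N'$). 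Therefore $\sum_{N\ge N'+1}|J_N^\pm(t;x,y,u)|\lesssim|t|^{-1/2}$ uniformly in $(x,y,u)$, whence
\[
\sup_{x,y\in\mathbb{R}^2}\Big|\int_0^\infty e^{-it\lambda^4}\lambda^3\widetilde{\chi}(\lambda)\big[R_0^\pm(\lambda^4)VR_0^\pm(\lambda^4)\big](x,y)\,d\lambda\Big|\lesssim|t|^{-1/2}\|V\|_{L^1(\mathbb{R}^2)}\lesssim|t|^{-1/2},
\]
since $\beta>3$ in particular forces $V\in L^1(\mathbb{R}^2)$; as the left side is exactly the $L^1\to L^\infty$ operator norm of the kernel, \eqref{prop-high-1} follows. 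I expect the one genuinely delicate point to be the uniform-in-$u$ amplitude bound for $\Psi$, i.e. checking that the factor $2^N|x-u|$ produced when $\partial_s$ hits $\widetilde{R}^\pm$ is always compensated; the rest is standard dyadic bookkeeping.
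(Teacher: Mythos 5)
Your proposal is correct, and it takes a genuinely different (and in this instance slightly cleaner) route than the paper. The paper keeps the outer variables as the phase: it writes the phase as $|x|+|y|$, and pushes the whole $u$-integral, together with the correction factors $e^{\pm i2^Ns(|x-u|-|x|)}e^{\pm i2^Ns(|y-u|-|y|)}$, into the amplitude $\Psi(2^Ns,(x,y))$; differentiating those factors in $s$ costs a factor $2^N|u|$, so the paper only gets $|\partial_s\Psi|\lesssim 2^N$ (which is why it needs $\int(1+|u|)|V(u)|\,du<\infty$, consistent with the hypothesis $\beta>3$), and it then sums bounds of the form $2^N(1+|t|2^{4N})^{-1/2}$, resp.\ $2^N(1+|t|2^{4N})^{-1}$, over $N>N'$. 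You instead apply Fubini on each dyadic block and use Lemma \ref{lem-LWP} for each \emph{fixed} $u$ with phase $\Phi=|x-u|+|u-y|$; since $|p(\widetilde R^\pm)'(p)|\lesssim1$ uniformly (small $p$ from \eqref{freeres-lagrepart-Rtubap}, large $p$ from \eqref{id-H0big}), your amplitude obeys the lemma's hypotheses with no $2^N$ loss, the resonant window $|N-N_0(z)|\le2$ has at most five terms for each $u$, and the tail sums to $|t|^{-1/2}$ because $N\ge N'+1$ (for small $|t|$ this uses $\log(1/|t|)\lesssim|t|^{-1/2}$, which your "routine dyadic splitting" remark covers); finally only $\|V\|_{L^1}$ enters, so your argument would even work for $\beta>2$. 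Two harmless points to be aware of: absorbing $s^{-1}$ into the amplitude/cutoff is a mild extension of the literal statement of Lemma \ref{lem-LWP}, but the paper does exactly the same in Propositions \ref{prop-reg-freeterms} and \ref{largeenergy-secondterm}; and the paper's choice of the frozen phase $|x|+|y|$ is not gratuitous — it is the template that is actually needed for the next term $R_0^\pm VR_V^\pm VR_0^\pm$ (Proposition \ref{largeenergy-secondterm}), where the middle factor is only controlled in weighted $L^2$ and one cannot argue pointwise in $u$ as you do here.
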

\begin{proof}
Set
\begin{equation*}
   \begin{split}
L^{\pm}_{1,N}(t;x,y)=\int_0^\infty e^{-it\lambda^4}\lambda^3\varphi_0(2^{-N}\lambda)
\big[ R^\pm_0(\lambda^4)VR^\pm_0(\lambda^4) \big](x,y)d\lambda.
\end{split}
\end{equation*}
Then
$$\int_0^\infty e^{-it\lambda^4}\lambda^3\widetilde{\chi}(\lambda)
\big[ R^\pm_0(\lambda^4)VR^\pm_0(\lambda^4) \big](x,y)d\lambda
   = \sum_{N=N'+1}^{+\infty}L^{\pm}_{1,N}(t;x,y).$$
Let $R_0^\pm(\lambda^4)(x,y):= \frac{e^{\pm i\lambda |x-y|}}{\lambda^2}\widetilde{R}^\pm(\lambda|x-y|)$ and
$\lambda =2^Ns$. Then
\begin{equation*}
   \begin{split}
L^{\pm}_{1,N}(t;x,y)
= &\int_{\mathbb{R}^2} \int_0^\infty e^{-it\lambda^4}\lambda^3\varphi_0(2^{-N}\lambda)
 R^\pm_0(\lambda^4)(x,u_1)V(u_1)R^\pm_0(\lambda^4)(u_1,y)d\lambda du_1\\
=& \int_0^\infty \int_{\mathbb{R}^2} e^{-it2^{4N}s^4}e^{\pm i2^Ns(|x|+|y|)} s^{-1}\varphi_0(s)
\Big( e^{\pm i2^Ns(|x-u_1|-|x|)}e^{\pm i2^Ns(|y-u_1|-|y|)}\\
& \ \ \ \ \ \ \ \widetilde{R}^\pm(2^Ns|x-u_1|)V(u_1)\widetilde{R}^\pm(2^Ns|y-u_1|)\Big) du_1 ds.
\end{split}
\end{equation*}
Let $z=(x,y)$, $\Phi(z)= |x|+|y|$ and
$$ \Psi(2^Ns, z)=\int_{\mathbb{R}^2}e^{\pm i2^Ns(|x-u_1|-|x|)}e^{\pm i2^Ns(|y-u_1|-|y|)}
\widetilde{R}^\pm(2^Ns|x-u_1|)V(u_1)\widetilde{R}^\pm(2^Ns|y-u_1|)du_1.$$
Then
\begin{equation*}
   \begin{split}
L^{\pm}_{1,N}(t, z)
=  \int_0^\infty e^{-it2^{4N}s^4}e^{\pm i2^Ns\Phi(z)} s^{-1}\varphi_0(s)
 \Psi(2^Ns,z)ds.
\end{split}
\end{equation*}
By (\ref{freeres-lagrepart-Rtubap}) and $s\in \hbox{supp} \varphi_0 \subset [\frac{1}{4}, 1]$,
one can check that for $k=0,1$,
$$\big|\partial_s^ke^{\pm i2^Ns(|x-u_1|-|x|)}\big| \lesssim (2^N|u_1|)^k,\,\,\,\,
\big|\partial_s^ke^{\pm i2^Ns(|y-u_1|-|y|)}\big| \lesssim (2^N|u_1|)^k,$$
$$\Big|\partial_s^k\Big(\widetilde{R}^\pm(2^Ns|x-u_1|)V(u_1)\widetilde{R}^\pm(2^Ns|y-u_1|)\Big)\Big|
\lesssim |V(u_1)|.$$
Hence for any $z$, we have
$$\partial_s^k|\Psi(2^Ns, z)|\lesssim 2^{kN}\int_{\mathbb{R}^2}(1+|u_1|)^k|V(u_1)|du_1\lesssim 2^{kN}, \ \ k=0,1.$$
Let $N_0=\big[\frac{1}{3}\log_2\frac{|x|+|y|}{|t|}\big]$ and $N>N'$, Then by using Lemma \ref{lem-LWP}, we immediately obtain that $|L^{\pm}_{1,N}(t;x,y)| $
is bounded by $2^{N}(1+|t| 2^{4N})^{-\frac{1}{2}}$ if $|N- N_0| \leq 2$, and bounded by
$2^{N}(1+|t|  2^{4N})^{-1}$ if $|N- N_0| > 2$.

Therefore it immediately follows that for  any $x,y$
\begin{equation*}
   \begin{split}
 |L^{\pm}_1(t;x,y)|\le & \sum_{N=N'+1}^{+\infty}|L^{\pm}_{1,N}(t;x,y)|\\
 \lesssim &\sum_{|N-N_0|\leq 2,N>N'} 2^{N}( 1+|t|2^{4N})^{-\frac{1}{2}}
+\sum_{|N-N_0|>2,N>N'} 2^{N}( 1+|t|2^{4N})^{-1}\\
\lesssim &\sum_{|N-N_0|\leq 2}|t|^{-\frac{1}{2}}+ \sum_{N=-\infty}^{+\infty}2^{2 N}( 1+|t|2^{4N})^{-1}\\
\lesssim &|t|^{-\frac{1}{2}},
  \end{split}
\end{equation*}
which implies the conclusion (\ref{prop-high-1}).
\end{proof}

Before we deal with the term $ R^\pm_0(\lambda^4)VR^\pm_V(\lambda^4)VR^\pm_0(\lambda^4)$, we state a lemma
as follows, see \cite{FSY}.
\begin{lemma}\label{lem-largeenergy}
Let $k\geq 0$ and $|V(x)| \lesssim (1+|x|)^{-k-1-}$ such that
$H=\Delta^2 +V$ has no embedded positive eigenvalues. Then for any $\sigma > k+\frac{1}{2}$,
 $R^\pm_V(\lambda)\in \mathcal{B}\big( L^2_\sigma(\mathbb{R}^d), L^2_{-\sigma}(\mathbb{R}^d)\big)$
are $C^k$-continuous for all $\lambda>0$. Furthermore,
$$ \big\|\partial_\lambda R^\pm_V(\lambda)\big\|_{L^2_\sigma(\mathbb{R}^2)\rightarrow  L^2_{-\sigma}(\mathbb{R}^2)} = O\big(|\lambda|^{\frac{-3(k+1)}{4}}\big), k=0,1,\  \hbox{as}\ \lambda \rightarrow +\infty. $$
\end{lemma}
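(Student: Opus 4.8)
This lemma is established in \cite{FSY}; here I indicate the scheme of its proof. The plan is to first extract high-energy weighted estimates for the free resolvent from its explicit kernel, and then transfer them to $R_V^\pm$ by a Neumann-series/Fredholm perturbation argument.

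First I would record the behaviour of $R_0^\pm(\lambda^4)$ at high energy. Writing $\mu=\lambda^{1/4}$ and using the representation \eqref{reso-big} together with the symbol bounds $|w_\pm^{(\ell)}(z)|\lesssim(1+|z|)^{-1/2-\ell}$ from \eqref{id-H0big}, one sees that the kernel of $\partial_\mu^{\,j}\big(R_0^\pm(\mu^4)\big)$ equals $\mu^{-2}e^{\pm i\mu|x-y|}$ times a symbol that is $O\big(\mu^{-1/2}\langle\mu|x-y|\rangle^{-1/2}|x-y|^{j}\big)$, each $\mu$-derivative either falling on the oscillatory phase (producing a factor $|x-y|$) or improving the symbol. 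A Schur-test argument, bounding $|x-y|^{j}\lesssim\langle x\rangle^{j}\langle y\rangle^{j}$, then gives, for each $\sigma>j+\tfrac12$,
\[
\big\|\partial_\mu^{\,j}\big(R_0^\pm(\mu^4)\big)\big\|_{L^2_\sigma(\R^2)\to L^2_{-\sigma}(\R^2)}\lesssim \mu^{-2},\qquad \mu\ge 1,
\]
and the chain rule $\partial_\lambda=\tfrac14\mu^{-3}\partial_\mu$ converts these into the claimed powers of $\lambda$ for the free resolvent; the continuity for $0<\lambda\le1$ is immediate from the same representation together with the small-argument expansions of Section \ref{expansion}.

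Second, the perturbative step. Since $\|R_0^\pm(\lambda)V\|_{L^2_{-\sigma}\to L^2_{-\sigma}}\to0$ as $\lambda\to+\infty$ (here one uses $|V(x)|\lesssim\langle x\rangle^{-k-1-}$ to absorb the weight $\langle x\rangle^{2\sigma}$ when $\sigma$ is taken just above $k+\tfrac12$), a Neumann series shows that $\big(I+R_0^\pm(\lambda)V\big)^{-1}$ exists and is uniformly bounded on $L^2_{-\sigma}$ for $\lambda$ large; for $\lambda$ in a compact subset of $(0,\infty)$ this invertibility is exactly the limiting absorption principle for $H=\Delta^2+V$, valid since $H$ has no positive eigenvalues (and, as part of that principle, no positive-energy resonances, via a Rellich-type uniqueness argument for $\Delta^2+V$). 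Feeding this into the resolvent identity $R_V^\pm(\lambda)=\big(I+R_0^\pm(\lambda)V\big)^{-1}R_0^\pm(\lambda)$ gives the $C^0$ statement and the bound for $k=0$. For the derivatives one differentiates $\big(I+R_0^\pm(\lambda)V\big)R_V^\pm(\lambda)=R_0^\pm(\lambda)$ and solves, obtaining
\[
\partial_\lambda R_V^\pm(\lambda)=\big(I+R_0^\pm(\lambda)V\big)^{-1}\big(\partial_\lambda R_0^\pm(\lambda)\big)\big(I-VR_V^\pm(\lambda)\big),
\]
and iterating this identity (Leibniz on the products, the further $\lambda$-derivatives on the free-resolvent factors being controlled by the first step and those on $\big(I+R_0^\pm V\big)^{-1}$ handled recursively by the same formula) yields the $C^k$-continuity for all $\lambda>0$ and the stated high-energy decay for $k=0,1$.

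The main obstacle is the uniform-in-$\lambda$ invertibility of $I+R_0^\pm(\lambda)V$ on the whole half-line: at high energy it is automatic from the decay of the free resolvent, but on compact $\lambda$-intervals it requires the full limiting absorption principle for the fourth-order operator, whose proof rests on the assumed absence of embedded positive eigenvalues together with a unique-continuation argument ruling out positive-energy resonances for $\Delta^2+V$ — this being the technical heart of \cite{FSY}.
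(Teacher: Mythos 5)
Note first that the paper does not prove this lemma at all: it is quoted verbatim from \cite{FSY}, so the comparison is really with the argument there, whose overall scheme (free estimates plus perturbation) you correctly identify. The genuine gap is in your first step, and it is quantitative, not cosmetic. Your pointwise bound carries a spurious factor $\mu^{-1/2}$: by \eqref{reso-big} and Lemma \ref{lem-reso}, for $\mu|x-y|\lesssim 1$ the kernel of $R_0^\pm(\mu^4)$ is of size $\mu^{-2}$, so it is not $O\big(\mu^{-5/2}\langle \mu|x-y|\rangle^{-1/2}\big)$; the correct pointwise bound is only $O\big(\mu^{-2}\langle \mu|x-y|\rangle^{-1/2}\big)$. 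Moreover, a Schur test on a pointwise kernel bound cannot deliver what you need: in $\mathbb{R}^2$ the off-diagonal integral $\int |x-y|^{-1/2}\langle y\rangle^{j-\sigma}\,dy$ converges only for $\sigma>j+\tfrac32$, not $\sigma>j+\tfrac12$, and in any case discarding the oscillation gives at best $\mu^{-5/2}$, whereas the sharp statement is $\|\partial_\mu^{\,j}R_0^\pm(\mu^4)\|_{L^2_\sigma\to L^2_{-\sigma}}=O(\mu^{-3})$. Decisively, even granting your claimed $O(\mu^{-2})$, the chain rule $\partial_\lambda=\tfrac14\mu^{-3}\partial_\mu$ yields only $O(\lambda^{-1/2})$ for $k=0$ and $O(\lambda^{-5/4})$ for $k=1$, strictly weaker than the asserted $O(\lambda^{-3(k+1)/4})$; so the sentence ``converts these into the claimed powers of $\lambda$'' is false as written. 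The way to reach $\mu^{-3}$ is not a kernel Schur test but the reduction \eqref{R0lambda-4pm} to second-order resolvents combined with Agmon's weighted high-energy bounds $\|\partial_\mu^{\,j}(-\Delta-\mu^2\mp i0)^{-1}\|_{L^2_\sigma\to L^2_{-\sigma}}=O(\mu^{-1})$ for $\sigma>j+\tfrac12$, which encode precisely the oscillatory gain; the prefactor $\tfrac{1}{2\mu^2}$ then gives $O(\mu^{-3})$ uniformly in the number of $\mu$-derivatives, and the chain rule produces $O(\lambda^{-3/4})$ and $O(\lambda^{-3/2})$. This is how \cite{FSY} argues.

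Your second step is the right scheme and matches \cite{FSY}: Neumann series for $I+R_0^\pm(\lambda)V$ at high energy, the limiting absorption principle on compact $\lambda$-sets (absence of embedded eigenvalues plus a bootstrap excluding positive-energy resonance states), and the differentiated identity $\partial_\lambda R_V^\pm=(I+R_0^\pm V)^{-1}(\partial_\lambda R_0^\pm)(I-VR_V^\pm)$. One bookkeeping point you should not gloss over: with only $|V(x)|\lesssim\langle x\rangle^{-k-1-}$, multiplication by $V$ does not map $L^2_{-\sigma}$ into $L^2_{\sigma}$ when $\sigma>k+\tfrac12$, so in compositions such as $(\partial_\lambda R_0^\pm)(I-VR_V^\pm)$ the weights must be split between the two resolvent factors (or one works through the symmetric identity with $v=|V|^{1/2}$, as elsewhere in the paper); as stated, your estimate silently assumes more decay of $V$ than the hypothesis provides.
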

\begin{proposition}\label{largeenergy-secondterm}
 Assume that $|V(x)|\lesssim (1+|x|)^{-\beta}$ with $\beta>5$. Then
\begin{equation}\label{prop-high-2}
\Big\|\int_0^\infty e^{-it\lambda^4}\lambda^3\widetilde{\chi}(\lambda)
R^\pm_0(\lambda^4)VR^\pm_V(\lambda^4)VR^\pm_0(\lambda^4)
d\lambda \Big\|_{L^1\rightarrow L^\infty}\lesssim |t|^{-\frac{1}{2}}.
\end{equation}
\end{proposition}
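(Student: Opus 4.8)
The plan is to follow the proof of Proposition~\ref{largeenergy-firstterm} almost verbatim, the only new feature being that the middle factor $V$ is replaced by $VR^\pm_V(\lambda^4)V$, an operator with no pointwise kernel bound; to handle it we invoke the weighted $L^2$ resolvent estimates of Lemma~\ref{lem-largeenergy}. First I would insert the Littlewood--Paley decomposition $\widetilde{\chi}(\lambda)=\sum_{N>N'}\varphi_0(2^{-N}\lambda)$ and reduce to estimating, uniformly in $x,y$, each dyadic piece
\begin{equation*}
L^\pm_{2,N}(t;x,y)=\int_0^\infty e^{-it\lambda^4}\lambda^3\varphi_0(2^{-N}\lambda)\big[R^\pm_0(\lambda^4)\,VR^\pm_V(\lambda^4)V\,R^\pm_0(\lambda^4)\big](x,y)\,d\lambda,
\end{equation*}
and then summing $\sum_{N>N'}L^\pm_{2,N}$. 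Writing $R^\pm_0(\lambda^4)(x,y)=\lambda^{-2}e^{\pm i\lambda|x-y|}\widetilde{R}^\pm(\lambda|x-y|)$ as in \eqref{freeres-lagrepart-Rtubap}, peeling off the plane waves $e^{\pm i\lambda|x|}$, $e^{\pm i\lambda|y|}$ and substituting $\lambda=2^Ns$, one brings $L^\pm_{2,N}$ into exactly the shape covered by Lemma~\ref{lem-LWP},
\begin{equation*}
L^\pm_{2,N}(t;x,y)=\int_0^\infty e^{-it2^{4N}s^4}e^{\pm i2^Ns(|x|+|y|)}\,s^{-1}\varphi_0(s)\,\Psi(2^Ns;z)\,ds,
\end{equation*}
with $z=(x,y)$, $\Phi(z)=|x|+|y|$ and
\begin{equation*}
\Psi(\lambda;z)=\Big\langle R^\pm_V(\lambda^4)\big[V(\cdot)\widetilde{R}^\pm(\lambda|\cdot-y|)e^{\pm i\lambda(|\cdot-y|-|y|)}\big],\ \overline{V(\cdot)\widetilde{R}^\pm(\lambda|x-\cdot|)e^{\pm i\lambda(|x-\cdot|-|x|)}}\,\Big\rangle .
\end{equation*}

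The core step is to show that $\Psi$ meets the hypotheses of Lemma~\ref{lem-LWP} up to the same harmless power of $2^N$ that already occurs in Proposition~\ref{largeenergy-firstterm}. Fixing $\sigma$ slightly above $3/2$ (room for this is available since $\beta>5$), I would estimate $\Psi$ and $\partial_s\Psi$ by Cauchy--Schwarz together with Lemma~\ref{lem-largeenergy}. Since $|\widetilde{R}^\pm(p)|\lesssim1$ and $|(\widetilde{R}^\pm)'(p)|\lesssim\langle p\rangle^{-1/2}$ (from \eqref{freeres-lagrepart-Rtubap} and $|w_\pm^{(\ell)}(p)|\lesssim\langle p\rangle^{-1/2-\ell}$), while $|V(u)|\lesssim\langle u\rangle^{-\beta}$, the two vectors entering the pairing lie in $L^2_\sigma$ with norms bounded uniformly in $\lambda,x,y$, so $|\Psi(\lambda;z)|\lesssim\|R^\pm_V(\lambda^4)\|_{L^2_\sigma\to L^2_{-\sigma}}$, which is controlled for $\lambda\gtrsim2^{N'}$ by Lemma~\ref{lem-largeenergy}. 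For $\partial_s\Psi$ the derivative falls either on one of the exponentials $e^{\pm i2^Ns(|\cdot-y|-|y|)}$, $e^{\pm i2^Ns(|x-\cdot|-|x|)}$, producing a factor $O(2^N\langle u\rangle)$ absorbed by an extra weight in $L^2_\sigma$; or on an amplitude $\widetilde{R}^\pm(2^Ns|\cdot|)$, again $O(2^N\langle u\rangle)$ after using $2^N\gtrsim1$; or on $R^\pm_V((2^Ns)^4)$, in which case $\partial_sR^\pm_V((2^Ns)^4)=4\,(2^N)^4s^3\,(\partial_\mu R^\pm_V)\big|_{\mu=(2^Ns)^4}$ is controlled by the $k=1$ bound of Lemma~\ref{lem-largeenergy}. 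Because $\beta>5$ leaves room for the two extra weights $\langle u\rangle$ so created, this gives $|\partial_s^k\Psi(2^Ns;z)|\lesssim2^{kN}$ for $k=0,1$, uniformly in $z$.

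Granting these bounds, Lemma~\ref{lem-LWP} (applied after rescaling $\Psi$ by $2^{-N}$, which costs one factor $2^N$, exactly as in the proof of Proposition~\ref{largeenergy-firstterm}) yields
\begin{equation*}
|L^\pm_{2,N}(t;x,y)|\lesssim
\begin{cases}
2^N\,(1+|t|2^{4N})^{-1/2}, & |N-N_0|\le2,\\
2^N\,(1+|t|2^{4N})^{-1}, & |N-N_0|>2,
\end{cases}
\qquad N_0=\big[\tfrac13\log_2\tfrac{|x|+|y|}{|t|}\big].
\end{equation*}
Summing over $N>N'$ and using that $N'$ is a fixed integer, so that $2^{N_0}\gtrsim2^{N'}$ whenever the stationary piece $N\sim N_0$ is present, reproduces verbatim the geometric-series computation at the end of the proof of Proposition~\ref{largeenergy-firstterm} and gives $\sum_{N>N'}|L^\pm_{2,N}(t;x,y)|\lesssim|t|^{-1/2}$ uniformly in $x,y$, which is the desired bound \eqref{prop-high-2}. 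I expect the main obstacle to be the second paragraph: because $R^\pm_V(\lambda^4)$ is only bounded between weighted spaces, one must recast the $u_1,u_2$ integrations as a weighted $L^2$ pairing and carefully extract from Lemma~\ref{lem-largeenergy} both the size of $R^\pm_V(\lambda^4)$ and the large-$\lambda$ decay of its derivative, keeping track of the powers of $2^N$ and of the extra weights $\langle u\rangle$ produced by differentiation so that the dyadic sum still converges to $|t|^{-1/2}$.
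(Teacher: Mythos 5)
Your proposal is correct and follows essentially the same route as the paper's proof: Littlewood--Paley decomposition, writing the dyadic kernel as a weighted $L^2$ pairing so that Lemma \ref{lem-largeenergy} controls $R^\pm_V(\lambda^4)$ and its $s$-derivative, checking the amplitude hypotheses of Lemma \ref{lem-LWP} with phase $|x|+|y|$, and summing the dyadic bounds as in Proposition \ref{largeenergy-firstterm}. The only cosmetic differences are that the paper phrases the reduction via Riesz's theorem with $f,g\in L^1$ and keeps the sharper $(2^Ns)^{-3}$ high-energy decay of the resolvent, neither of which changes the argument.
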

\begin{proof}
The proof is similar to  Proposition \ref{prop-reg-freeterms}. To prove (\ref{prop-high-2}),
it suffices to prove that for any $f, g \in L^1$,
\begin{equation*}
   \begin{split}
\Big|\int_0^\infty e^{-it\lambda^4}\lambda^3\widetilde{\chi}(\lambda)
\big\langle V R^\pm_V(\lambda^4)V(R_0^\pm(\lambda^4)f, ~(R_0^\pm)^*(\lambda^4)g  \big\rangle
d\lambda\Big|
  \end{split}
\end{equation*}
is bounded $|t|^{-\frac{1}{2}}\|f\|_{L^1}\|g\|_{L^1}$.
By Littlewood-Paley decomposition as used in Proposition \ref{largeenergy-firstterm} above, it suffices to show for each $N>N'$,
\begin{equation*}
   \begin{split}
L^\pm_{2,N}(t;x,y):= \int_0^\infty e^{-it\lambda^4}\lambda^3\varphi_0(2^{-N}\lambda)
\big\langle V R^\pm_V(\lambda^4)V\big(R_0^\pm(\lambda^4)(*,y)\big)(\cdot),
~(R_0^\pm\lambda^4))^*(x,\cdot) \big\rangle d\lambda
  \end{split}
\end{equation*}
is bounded  by $2^{2N}(1+|t|2^{4N})^{-1}$ if $|N-N_0|>2$, and
bounded by $2^{2N}(1+|t|2^{4N})^{-1/2}$  if $|N-N_0|\leq2$,
where $N_0=\big[\frac{1}{3}\log_2\frac{|x|+|y|}{|t|}\big]$.

Let $R_0^\pm(\lambda^4)(x,y)= \frac{ e^{\pm i\lambda|x-y|}}{\lambda^2} \widetilde{R}^\pm(\lambda|x-y|)$.
Then
\begin{equation*}
   \begin{split}
&\Big\langle  V R^\pm_V(\lambda^4)V(R_0^\pm(\lambda^4)(*,y))(\cdot),~ R_0^\mp(\lambda)(x,\cdot)   \Big\rangle\\
=&\frac{1}{\lambda^4}\Big\langle V R^\pm_V(\lambda^4)V \big(e^{\pm i\lambda|*-y|}\widetilde{R}^\pm(\lambda|*-y|)\big)(\cdot),~
\big(e^{\mp i\lambda|x-\cdot|}\widetilde{R}^\mp(\lambda|x-\cdot|)\big)   \Big\rangle\\
=&\frac{1}{\lambda^4}e^{\pm i\lambda(|x|+|y|)}\Big\langle V R^\pm_V(\lambda^4)V \big(e^{\pm i\lambda(|*-y|-|y|)}\widetilde{R}^\pm(\lambda|*-y|)\big)(\cdot),~
\big(e^{\mp i\lambda(|x-\cdot|-|x|)}\widetilde{R}^\mp(\lambda|x-\cdot|)\big)   \Big\rangle\\
:=&\frac{1}{\lambda^4}e^{\pm i\lambda(|x|+|y|)}E^{\pm}_{2}(\lambda;x,y).
  \end{split}
\end{equation*}
Let $\lambda =2^Ns$, then
\begin{equation*}
   \begin{split}
L^{\pm}_{2,N}(t;x,y)= \int_0^\infty e^{-it2^{4N}s^4}s^{-1}\varphi_0(s)e^{\pm i2^Ns(|x|+|y|)}
E^{\pm}_{2}(2^Ns;x,y) ds.
  \end{split}
\end{equation*}
By Lemma \ref{lem-largeenergy}, for $\sigma>k+\frac{1}{2}$ we have
$$  \big\|\partial_s^k \big[R^\pm_V(2^{4N}s^4)\big]\big\|_{L^2_\sigma\rightarrow L^2_{-\sigma}} \lesssim 2^{kN}\big(2^Ns\big)^{-3},\ k=0,1.
$$
Moreover, by (\ref{freeres-lagrepart-Rtubap}) we have for $k=0,1 $,
$$ \big|\partial_s^k\big(e^{\pm i\lambda(|\cdot-y|-|y|)}\widetilde{R}^\pm(\lambda|\cdot-y|)\big)\big|
 \lesssim \langle \cdot \rangle^k,\
 \big|\partial_s^k\big(e^{\pm i\lambda(|x-\cdot|-|x|)}\widetilde{R}^\pm(\lambda|x-\cdot|)\big)\big|
 \lesssim \langle \cdot \rangle^k.$$
Hence it follows that for $\sigma >k+\frac{1}{2}$ and $ k=0,1$,
$$   \big\|\partial_s^kE^{\pm}_{2}(2^Ns;x,y)\big\| \lesssim
\sum_{j=0}^k\big\|V(\cdot)\langle \cdot\rangle^{\sigma+1-j }\big\|^2_{L^2}
\big\|\partial_s^j R^\pm_V(2^{4N}s^4)\big\|_{L^2_\sigma\rightarrow  L^2_{-\sigma}} \lesssim 2^{kN}\big(2^Ns\big)^{-3},
$$
where  we use  $|V(x)| \lesssim (1+|x|)^{-5-}$.

Let $N_0=\big[\frac{1}{3}\log_2\frac{|x|+|y|}{|t|}\big]$ and $N>N'$. Using Lemma \ref{lem-LWP} with $ z=(x,y)$,
 $ \Phi(z)=|x|+|y|$ and  $\Psi(2^Ns;z)= E^{0,\pm}_{3}(2^Ns,x,y),$
we immediately obtain that when $|N-N_0| >2$,
$$ |L^\pm_{2,N}(t;x,y)| \lesssim 2^{(k-3)N}(1+|t|2^{4N})^{-1} \lesssim
2^{2N}(1+|t|2^{4N})^{-1},$$
and $|L^\pm_{2,N}(t;x,y)|$ is bounded by $2^{2N}(1+|t|2^{4N})^{-1/2}$ when $|N-N_0|\leq 2$.

Hence we have for $x,y$,
\begin{equation*}
   \begin{split}
 &|L^{\pm}_2(t;x,y)|\leq \sum_{N=N'+1}^{+\infty}|L^{\pm}_{2,N}(t;x,y)|\\ &\lesssim\sum_{|N-N_0|\leq 2} 2^{2 N}( 1+|t|2^{4N})^{-\frac{1}{2}}
+\sum_{|N-N_0|>2} 2^{2 N}( 1+|t|2^{4N})^{-1}
\lesssim |t|^{-\frac{1}{2}},
  \end{split}
\end{equation*}
which implies that the estimate (\ref{prop-high-2}) holds.
\end{proof}

\bigskip

\section{The proof of Theorem \ref{thm-main-inver-M}} \label{the proof of inverse Pro}
In this section, we are  devoted to showing Theorem \ref{thm-main-inver-M}, i.e. computing the expansions
of $\big(M^\pm(\lambda)\big)^{-1}$  for $\lambda$ near zero case by case, the proof is quite complicated due to logarithm terms in the expansions of the free resolvent $R^\pm_0(\lambda^4).$

Before computing the expansions of $\big(M^\pm(\lambda)\big)^{-1}$ as $\lambda\rightarrow 0$,
we first state the following lemmas which are used frequently.
\begin{lemma}\label{lemma-JN}(\cite[Lemma 2.1]{JN})
Let $A$ be a closed operator and $S$ be a projection. Suppose $A+S$ has a bounded inverse. Then $A$ has
a bounded inverse if and only if
\begin{equation}		
a:= S-S(A+S)^{-1}S
\end{equation}
has a bounded inverse in $SH$, and in this case
\begin{equation}		
A^{-1}= (A+S)^{-1} + (A+S)^{-1}S a^{-1} S(A+S)^{-1}.
\end{equation}	
\end{lemma}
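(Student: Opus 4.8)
The plan is to invert $A$ by factoring out the invertible operator $A+S$ and reducing everything to the operator $a$ on $SH$. Set $B=(A+S)^{-1}$, a bounded operator on $H$ with range in $D(A)$. From $(A+S)B=I$ on $H$ and $B(A+S)=I$ on $D(A)$ one reads off the operator identities $AB=I-SB$ (bounded, on $H$) and $BA=I-BS$ (on $D(A)$); in particular $SBS=S-a$, equivalently $S-SBS=ABS$, and on $D(A)$ the factorization $A=(A+S)(I-BS)$ holds. Thus $A$ is invertible iff the bounded operator $I-BS$ is, but I will not go through $I-BS$ explicitly — it is cleaner to guess the inverse of $A$ directly.

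For the direction "$a$ invertible $\Rightarrow A$ invertible", I would take as an ansatz the operator $R:=B+BSa^{-1}SB$ (suggested by the factorization) and verify $AR=I$ on $H$ and $RA=I$ on $D(A)$ by direct computation, using repeatedly $AB=I-SB$, $BA=I-BS$, $S-SBS=a$ on $SH$, and the defining relations $aa^{-1}=a^{-1}a=S$ on $SH$ together with $Sa^{-1}S=a^{-1}$. For example $AR=(I-SB)+(I-SB)Sa^{-1}SB=I-SB+(S-SBS)a^{-1}SB=I-SB+SB=I$, since $(S-SBS)a^{-1}(SBv)=a(a^{-1}(SBv))=SBv$; the verification of $RA$ on $D(A)$ is symmetric, using $S(I-BS)v=a(Sv)$. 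This simultaneously proves invertibility of $A$ and gives the asserted formula, because $R=(A+S)^{-1}+(A+S)^{-1}Sa^{-1}S(A+S)^{-1}$.

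For the converse, assume $A$ has a bounded inverse; the candidate for $a^{-1}$ on $SH$ is $S+SA^{-1}S$ (obtained by sandwiching the formula for $A^{-1}$ between two copies of $S$ and simplifying with the relations above). I would first check the one–sided identity $a\,(S+SA^{-1}S)=S$ on $SH$ directly: for $w\in SH$, applying $BA=I-BS$ to $A^{-1}w\in D(A)$ gives $BSA^{-1}w=A^{-1}w-Bw$, and substituting this reduces $(S-SBS)(w+SA^{-1}w)$ to $Sw-SBSw+SBw=w$. Injectivity of $a$ on $SH$ is a short kernel chase: if $v\in SH$ and $av=0$ then $v=SBv$, hence $ABv=(A+S)Bv-SBv=v-v=0$, so $Bv=0$ by injectivity of $A$, whence $v=SBv=0$. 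Finally, if a bounded operator $T$ on $SH$ is injective and has a bounded right inverse $R$ (here $T=a$, $R=S+SA^{-1}S$), then $T(RT-S)=TRT-T=T-T=0$ forces $RT=S$, so $a^{-1}=S+SA^{-1}S$ is the bounded two–sided inverse of $a$ on $SH$.

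The argument is essentially an exercise in operator algebra rather than analysis, so there is no deep obstacle; the two points that need care are the unboundedness of $A$ (one must track which identities hold on all of $H$ and which only on $D(A)$, and use that $B$ maps into $D(A)$ so that $AB$, $ABS$, $R$, etc.\ are genuinely bounded and land in the right spaces) and the fact that $S$ is a projection, not the identity, so that $a^{-1}$ must be read as an operator on $SH$ and the relations $aa^{-1}=S$, $Sa^{-1}=a^{-1}$, $SBS=S-a$ must be inserted in exactly the right positions.
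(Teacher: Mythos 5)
Your proof is correct: the ansatz $R=B+BSa^{-1}SB$ is verified to be a two-sided inverse of $A$ with proper attention to which identities hold on $H$ versus $D(A)$, the converse direction correctly produces $a^{-1}=S+SA^{-1}S$ via a right-inverse computation plus an injectivity kernel chase, and the identities $SBS=S-a$, $aa^{-1}=a^{-1}a=S$ are inserted in the right places. Note that the paper does not prove this lemma at all but simply quotes it from Jensen--Nenciu \cite[Lemma 2.1]{JN}; your direct algebraic verification is essentially the standard argument given there, so there is nothing to reconcile.
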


\begin{lemma}\label{lemma-JN-matrix}(\cite[Lemma 2.3]{JN})
Let $A$ be an operator matrix on $ \mathcal{H}=\mathcal{H}_1  \bigoplus\mathcal{ H}_2$ :
$$A=
\begin{pmatrix}
a_{11}&a_{12}\\ a_{21}&a_{22}
 \end{pmatrix}
, \quad a_{ij}: \mathcal{H}_j \rightarrow \mathcal{H}_i, 1 \leq i, j \leq 2,
 $$
where  $a_{11}, a_{22} $ are closed and $ a_{12}, a_{21}$ are bounded. Suppose $a_{22}$ has a bounded inverse. Then $A$ has a bounded inverse if and only if
$ d \equiv (a_{11} -a_{12}a_{22}^{-1}a_{21})^{-1}$ exist and is bounded. Furthermore, we have
$$A^{-1}=
\begin{pmatrix}
d & -d a_{12}a_{22}^{-1}\\ -a_{22}^{-1}a_{21}d & a_{22}^{-1}a_{21}d a_{12}a_{22}^{-1}+a_{22}^{-1}
 \end{pmatrix}
. $$
\end{lemma}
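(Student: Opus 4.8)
The plan is to establish both the invertibility criterion and the closed formula by the standard block (Schur complement / Feshbach) factorization, using the hypothesis that $a_{22}^{-1}$ is bounded. Writing $s:=a_{11}-a_{12}a_{22}^{-1}a_{21}$ for the Schur complement, I would first verify the algebraic identity
\[
A=\begin{pmatrix} I & a_{12}a_{22}^{-1}\\ 0 & I \end{pmatrix}
\begin{pmatrix} s & 0\\ 0 & a_{22} \end{pmatrix}
\begin{pmatrix} I & 0\\ a_{22}^{-1}a_{21} & I \end{pmatrix}
\]
simply by multiplying the three factors and using $a_{22}a_{22}^{-1}=I$ on $\mathcal{H}_2$. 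Since $a_{12}a_{22}^{-1}a_{21}$ is everywhere defined and bounded, $s$ is a bounded perturbation of the closed operator $a_{11}$, hence itself closed with $\mathrm{Dom}(s)=\mathrm{Dom}(a_{11})$; thus the middle factor is closed on $\mathrm{Dom}(a_{11})\oplus\mathrm{Dom}(a_{22})=\mathrm{Dom}(A)$, while the two outer factors are bounded and boundedly invertible, their inverses being obtained by flipping the sign of the single off-diagonal block (they are block-triangular with identity diagonal).

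From this factorization the equivalence is immediate: $A$ has a bounded inverse if and only if the middle block-diagonal factor does, and, since $a_{22}$ is already boundedly invertible, that happens exactly when $s$ has a bounded inverse, i.e. when $d:=s^{-1}$ exists and is bounded. For the explicit formula I would invert the three factors and multiply in reverse order,
\[
A^{-1}=\begin{pmatrix} I & 0\\ -a_{22}^{-1}a_{21} & I \end{pmatrix}
\begin{pmatrix} d & 0\\ 0 & a_{22}^{-1} \end{pmatrix}
\begin{pmatrix} I & -a_{12}a_{22}^{-1}\\ 0 & I \end{pmatrix},
\]
and carry out the two $2\times 2$ products, which reproduces exactly the four entries $d$, $-d\,a_{12}a_{22}^{-1}$, $-a_{22}^{-1}a_{21}d$ and $a_{22}^{-1}a_{21}d\,a_{12}a_{22}^{-1}+a_{22}^{-1}$ displayed in the statement. (Alternatively one can deduce the same result from Lemma \ref{lemma-JN} applied with $S$ the orthogonal projection onto $\mathcal{H}_2$, but the direct factorization is shorter.)

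The computations are entirely routine; the only point demanding genuine care — and the main obstacle to a fully self-contained proof as opposed to quoting \cite[Lemma 2.3]{JN} — is the domain bookkeeping for the possibly unbounded entries $a_{11},a_{22}$: one must check that each displayed composition of operators is defined on the correct domain and that the factorization holds as an identity of (unbounded) operators on $\mathrm{Dom}(A)$ rather than merely formally. In every application in this paper — the compression of $g_0^\pm(\lambda)(Q-S_0)vG_{-1}v(Q-S_0)+(Q-S_0)(T_0-T_0D_1T_0)(Q-S_0)$ to $(Q-S_0)L^2$, and of $g_0^\pm(\lambda)^2L_1+g_0^\pm(\lambda)L_2+L_3$ to $(S_2-S_3)L^2$ — all the entries are bounded operators on finite-dimensional spaces, so these domain subtleties are vacuous and the factorization argument above goes through verbatim.
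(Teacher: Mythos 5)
Your proposal is correct. Note that the paper itself offers no proof of this lemma: it is quoted verbatim from Jensen--Nenciu \cite[Lemma 2.3]{JN}, so there is no in-paper argument to compare against. Your Schur-complement (Feshbach) factorization is the standard route to this statement and is essentially the argument behind the cited result: the triangular factorization of $A$ is verified by direct multiplication on $\mathrm{Dom}(a_{11})\oplus\mathrm{Dom}(a_{22})$, the outer triangular factors are boundedly invertible, so bounded invertibility of $A$ is equivalent to that of $\mathrm{diag}(s,a_{22})$, hence (given $a_{22}^{-1}$ bounded) to that of $s=a_{11}-a_{12}a_{22}^{-1}a_{21}$, and multiplying out the inverted factors reproduces exactly the four displayed blocks of $A^{-1}$. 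Your domain bookkeeping is also handled correctly: $s$ is a bounded perturbation of the closed operator $a_{11}$, and in every application in this paper (the compressions to $(Q-S_0)L^2$ and $(S_2-S_3)L^2$) the entries are bounded operators on finite-dimensional subspaces, so these subtleties indeed play no role.
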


Now we turn to the proof of Theorem \ref{thm-main-inver-M} case by case, i.e. compute the asymptotic expansions of $\big(M^\pm(\lambda)\big)^{-1}$ for
$\lambda\rightarrow 0$.

Let $ \displaystyle M^\pm(\lambda)= \frac{a_\pm}{\lambda^2} \widetilde{M}^\pm(\lambda)$. Then we just need to compute  the asymptotic expansion of  $\big(\widetilde{M}^\pm(\lambda)\big)^{-1}$
for
$\lambda\rightarrow 0$. Recall that from the (iii) of Lemma \ref{lem-M}
\begin{equation}\label{id-M-tuta}
   \begin{split}
\widetilde{M}^\pm(\lambda)
 = & P+\frac{g_0^\pm(\lambda)}{a_\pm}\lambda^2 vG_{-1}v
+ \frac{1}{a_\pm} \lambda^2 T_0
+\frac{c_\pm}{a_\pm}\lambda^4 v G_1 v\\
  & +\frac{ \tilde{g}_2^\pm(\lambda)}{a_\pm}\lambda^6 v G_2v
   + \frac{\lambda^6}{a_\pm} v G_3v+O\big(\lambda^8\big).
   \end{split}
\end{equation}
Note that projection $P$ is not invertible on $L^2$, but let $Q=I-P$, then $\widetilde{M}^\pm(\lambda)+Q$ is invertible on $L^2$ for small $\lambda>0$. Hence by Lemma \ref{lemma-JN} we know that $\widetilde{M}^\pm(\lambda)$ is invertible on $L^2$ if and only if
 $$M_1^\pm(\lambda):= Q-Q\big( \widetilde{M}^\pm(\lambda)+Q\big)^{-1}Q,$$
is invertible on $QL^2$. In order to get the invertibility of $M_1^\pm(\lambda)$, we first need to compute $\big( \widetilde{M}^\pm(\lambda)+Q\big)^{-1}$ on $L^2$,
which can be given by Neumann series
\begin{equation}\label{id-MQ}
   \begin{split}
&\big(\widetilde{M}^\pm(\lambda)+Q\big)^{-1}\\
  =& I- g^\pm_0(\lambda)\lambda^2 B_1^\pm-\lambda^2 B_2^\pm-g^\pm_0(\lambda)^2\lambda^4 B_3^\pm
       -g^\pm_0(\lambda)\lambda^4 B_4^\pm-\lambda^4 B_5^\pm
  -g_0^\pm(\lambda)^3\lambda^6 B_6^\pm\\
  &-g_0^\pm(\lambda)^2\lambda^6 B_7^\pm
   - g^\pm_0(\lambda)\lambda^6 B_8^\pm - \tilde{g}^\pm_2(\lambda)\lambda^6 B_9^\pm
   -\lambda^6 B_{10}^\pm  +O\big(\lambda^{8-\epsilon} \big),
   \end{split}
\end{equation}
where $B_j^\pm(1\leq j\leq 10)$  are bounded operators in $L^2$ as follows:\\
$ \displaystyle B_1^\pm= \frac{1}{a_\pm} vG_{-1}v$, \,
$\displaystyle  B_2^\pm = \frac{1}{a_\pm}T_0$,\,
$\displaystyle  B_3^\pm= -\frac{1}{a_\pm^2} (vG_{-1}v)^2$,\,
$\displaystyle B_4^\pm= -\frac{1}{a_\pm^2}(vG_{-1}vT_0 +T_0vG_{-1}v ) $,\\
$\displaystyle B_5^\pm= \frac{c_\pm}{a_\pm}vG_1v - \frac{1}{a_\pm^2}T_0^2  $ ,\,
$ \displaystyle B_6^\pm = \frac{1}{a_\pm^3}(vG_{-1}v)^3$,\,
$\displaystyle B_7^\pm = \frac{1}{a_\pm^3} \big[ (vG_{-1}v)^2 T_0 + T_0(vG_{-1}v)^2\big]$,\\
$\displaystyle B_8^\pm= \frac{1}{a_\pm^3} ( vG_{-1}v T_0^2 + T_0^2 vG_{-1}v)
   - \frac{c_\pm}{a_\pm^2}( vG_{-1}v\cdot vG_1v + vG_1v \cdot  vG_{-1}v ) $, \\
$ \displaystyle B_9^\pm=\frac{1}{a_\pm}vG_2v $,\,
$\displaystyle B_{10}^\pm
      = \frac{1}{a_\pm}vG_3v  + \frac{1}{a_\pm^3}T_0^3 - \frac{c_\pm}{a_\pm^2}(T_0 vG_1v +vG_1vT_0 )$.\\

Before we study the invertibility of $M_1^\pm(\lambda)$, let's list the following orthogonality relations of various operators and projections from Definition \ref{definition of resonance}, which are
used frequently later.
\begin{equation}\label{orthog-relation-1}
   \begin{split}
QD_0&=D_0Q=D_0;\ \ Q\ge S_i\ge S_j, \ 0\le i\le j\le 5;\\
 S_iD_j& = D_jS_i =S_i, i\geq j,\  S_iD_j = D_jS_i =D_i, i < j, \ D_iD_j = D_jD_i =D_i, i >  j;
   \end{split}
\end{equation}
\begin{equation}\label{orthog-relation-1}
   \begin{split}
&QvG_{-1}vS_0= S_0QvG_{-1}vS_0=0, \ S_1T_0S_0=S_0T_0S_1=0, \ S_2T_0Q=QT_0S_2=0;\\
&S_3vG_{-1}v=vG_{-1}vS_3=0,\ S_4T_0P=PT_0S_4=0,\ S_4T_0=T_0S_4=0;\\
&S_4vG_1vS_0=S_0vG_1vS_4=0,\ S_5vG_1vQ=QvG_1vS_5=0.
\end{split}
\end{equation}
Moreover, we can obtain
\begin{equation}\label{or-Bjpm}
   \begin{split}
QB_1^\pm S_0=S_0B_1^\pm Q = S_3B_1^\pm= B_1^\pm S_3 = S_3B_3^\pm= B_3^\pm S_3 = S_3B_4^\pm S_3= 0,
\, S_4B_8^\pm S_4=0 ;\\
S_4B_i^\pm = B_i^\pm S_4 =0 (i=1,2,3,4,6,7),
\ S_4B_5^\pm S_0 =S_0B^\pm_5S_4= S_5B_5^\pm Q =QB^\pm_5S_5= 0.
\end{split}
\end{equation}

Now we turn to the proof of the invertibility of $M_1^\pm(\lambda)$,
We only prove the assertion  for the case $+$ sign,
the case $-$ sign  proceeds identically. By Lemma \ref{lemma-JN}, we know that $\widetilde{M}^+(\lambda)$ is invertible on $L^2$ if and only if
 $$M_1^+(\lambda):= Q-Q\big( \widetilde{M}^+(\lambda)+Q\big)^{-1}Q $$
is invertible on $QL^2$. Moreover, by (\ref{id-MQ}) we get
\begin{equation}\label{id-M1M1tabu}
   \begin{split}
M_1^+(\lambda)
     :=\frac{\lambda^2}{a_+}\widetilde{M}_1^+(\lambda)
     =\frac{\lambda^2}{a_+}\Big(A^+(\lambda)+a_+W^+(\lambda)\Big),
   \end{split}
\end{equation}
where $A^+(\lambda)=g^+_0(\lambda) QvG_{-1}vQ+  QT_0Q $, and
\begin{equation}\label{eq-W}
   \begin{split}
W^+(\lambda)&=g^+_0(\lambda)^2\lambda^2Q B_3^+Q
          +g^+_0(\lambda)\lambda^2 QB_4^+Q
          +\lambda^2 QB_5^+Q+g_0^+(\lambda)^3\lambda^4 QB_6^+Q \\
          &+g_0^+(\lambda)^2\lambda^4 QB_7^+Q
   +g^+_0(\lambda)\lambda^4 QB_8^+Q
   +\tilde{g}^+_2(\lambda)\lambda^4 QB_9^+Q
     +\lambda^4Q B_{10}^+Q +O\big(\lambda^{6-\epsilon}\big).
 \end{split}
\end{equation}

In order to compute $\big(M_1^+(\lambda)\big)^{-1}$, by (\ref{id-M1M1tabu} ) we need to obtain the expansions of $\big(\widetilde{M_1^+}(\lambda)\big)^{-1}$ on $QL^2$ when $\lambda$ is near zero. Note that $A^+(\lambda)=g^+_0(\lambda) QvG_{-1}vQ+  QT_0Q $ has the logarithm factor $g^+_0(\lambda)$ which becomes $\infty$ as $\lambda\downarrow 0$, and
$QvG_{-1}vQ$ is not invertible on $QL^2$ by Lemma \ref{projiction-spaces-SjL2}(i), hence we will use Lemma \ref{lemma-JN-matrix} to consider the inverse of $A^+(\lambda)$.

To prove the invertibility of $A^+(\lambda)$ by Lemma \ref{lemma-JN-matrix}, we first give a lemma as follows.
\begin{lemma}\label{lem-d-inver}
Suppose that $|V(x)| \lesssim (1+|x|)^{-\beta}$ with some $\beta >10$. Then the inverse operator
\begin{equation}\label{def-d}
   \begin{split}
 d:= \Big( g^+_0(\lambda)(Q-S_0)vG_{-1}v(Q-S_0) + (Q-S_0)\big(T_0- T_0D_1T_0\big)(Q-S_0) \Big)^{-1}
   \end{split}
\end{equation}
 exists on  $(Q-S_0)L^2(\mathbb{R}^2) $ for $0<\lambda \ll 1$. Moreover, we have
\begin{align*}
 d = g^+_0(\lambda)^{-1} (Q-S_0)F(\lambda)(Q-S_0),
\end{align*}
where $F(\lambda)= \big(A+g^+_0(\lambda)^{-1}B\big)^{-1}$ with
$ A= (Q-S_0)vG_{-1}v(Q-S_0) $ and $ B= (Q-S_0)\big(T_0- T_0D_1T_0\big)(Q-S_0) $.

In particular,  $A$ and $B$  are  bounded operators on $(Q-S_0)L^2$ with
$\hbox{rank} \ A=2$ and $\hbox{rank} \ B\le2$, and  $ \|F(\lambda)\|_{(Q-S_0)L^2}  = O_1(1)$
for small enough $\lambda >0$.
\end{lemma}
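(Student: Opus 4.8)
The plan is to push everything down to the two‑dimensional space $(Q-S_0)L^2$, where invertibility is a question about $2\times2$ matrices, and then to read off the asymptotics from the single fact that $g_0^+(\lambda)^{-1}\to0$ as $\lambda\to0^+$.

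First I would record the structural facts. By Lemma~\ref{projiction-spaces-SjL2}(i), $S_0L^2$ is the orthogonal complement inside $QL^2$ of $\mathrm{span}\{Qx_1v,Qx_2v\}$; since $V\not\equiv0$ is not carried by a line, $v,x_1v,x_2v$ are linearly independent and hence so are $Qx_1v,Qx_2v$, whence $\dim (Q-S_0)L^2=2$. In particular every linear map on $(Q-S_0)L^2$ is automatically bounded and of rank $\le2$, which disposes at once of the boundedness of $A=(Q-S_0)vG_{-1}v(Q-S_0)$ and of $B=(Q-S_0)(T_0-T_0D_1T_0)(Q-S_0)$ (the latter is well defined because $D_1=S_0D_1S_0$ is bounded by Proposition~\ref{Pro-absulu-oper}), and also of the bound $\hbox{rank}\,B\le2$.

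The heart of the matter is the strict negativity of $A$ on $(Q-S_0)L^2$. Repeating the computation in the proof of Lemma~\ref{projiction-spaces-SjL2}(i): for $f\in QL^2$ one has $\langle v,f\rangle=0$, and expanding $G_{-1}(x,y)=|x-y|^2=|x|^2-2x\cdot y+|y|^2$ yields
\begin{equation*}
\langle vG_{-1}vf,f\rangle=-2\big|\langle x_1v,f\rangle\big|^2-2\big|\langle x_2v,f\rangle\big|^2\le0,
\end{equation*}
with equality exactly when $\langle x_jv,f\rangle=0$ for $j=1,2$, i.e. exactly when $f\in S_0L^2$. Hence for $0\ne f\in(Q-S_0)L^2$ we get $\langle Af,f\rangle=\langle vG_{-1}vf,f\rangle<0$, so $A$ is a negative‑definite $2\times2$ matrix; in particular $A$ is invertible, $\hbox{rank}\,A=2$, and
\begin{equation}\label{positive-matrix}
A^{-1}<0\quad\text{on}\ (Q-S_0)L^2 .
\end{equation}
This is the only genuinely substantive step, and the one place where the specific kernel $G_{-1}(x,y)=|x-y|^2$ and the orthogonality $Qv=0$ are used.

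Finally I would perform the inversion and extract the $O_1$ bound. Since $g_0^+(\lambda)=a_0\ln\lambda+\alpha_+$, we have $g_0^+(\lambda)^{-1}\to0$, so for $\lambda$ small enough $\|g_0^+(\lambda)^{-1}A^{-1}B\|_{(Q-S_0)L^2}<1$ and the Neumann series
\begin{equation*}
F(\lambda):=\big(A+g_0^+(\lambda)^{-1}B\big)^{-1}=\sum_{k\ge0}\big(-g_0^+(\lambda)^{-1}\big)^k(A^{-1}B)^kA^{-1}
\end{equation*}
converges, giving $F(\lambda)=A^{-1}+O\big(g_0^+(\lambda)^{-1}\big)$, in particular $\|F(\lambda)\|_{(Q-S_0)L^2}\lesssim1$. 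The operator defining $d$ is $g_0^+(\lambda)A+B=g_0^+(\lambda)\big(A+g_0^+(\lambda)^{-1}B\big)$, hence invertible on $(Q-S_0)L^2$ for small $\lambda>0$, with
\begin{equation*}
d=\big(g_0^+(\lambda)A+B\big)^{-1}=g_0^+(\lambda)^{-1}F(\lambda)=g_0^+(\lambda)^{-1}(Q-S_0)F(\lambda)(Q-S_0),
\end{equation*}
which is the asserted formula. For the regularity, differentiating $F(\lambda)^{-1}=A+g_0^+(\lambda)^{-1}B$ gives $\partial_\lambda F(\lambda)=-F(\lambda)\big(\partial_\lambda g_0^+(\lambda)^{-1}\big)B\,F(\lambda)$ with $\partial_\lambda g_0^+(\lambda)^{-1}=-a_0\lambda^{-1}g_0^+(\lambda)^{-2}$, so $\|\partial_\lambda F(\lambda)\|_{(Q-S_0)L^2}\lesssim\lambda^{-1}(\ln\lambda)^{-2}\lesssim\lambda^{-1}$; combined with $\|F(\lambda)\|_{(Q-S_0)L^2}\lesssim1$ this is precisely $\|F(\lambda)\|_{(Q-S_0)L^2}=O_1(1)$. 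Beyond the sign computation \eqref{positive-matrix}, nothing here is more than bookkeeping, and the treatment of $g^-_0(\lambda)$ is identical.
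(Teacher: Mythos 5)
Your proposal is correct and follows essentially the same route as the paper: reduce to the two-dimensional space $(Q-S_0)L^2$, show that $A=(Q-S_0)vG_{-1}v(Q-S_0)$ is strictly negative and hence invertible there, invert $A+g^+_0(\lambda)^{-1}B$ by a Neumann series for small $\lambda$, and obtain the $O_1(1)$ bound by differentiating $F(\lambda)$. The only cosmetic difference is that you verify negativity via the quadratic form $\langle Af,f\rangle=-2|\langle x_1v,f\rangle|^2-2|\langle x_2v,f\rangle|^2$ on $(Q-S_0)L^2$, while the paper computes the matrix of $A$ in the (orthogonalized) basis $\{Q(x_1v),Q(x_2v)\}$; these are the same computation.
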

\begin{proof}
Since $ QL^2(\mathbb{R}^2)=  \hbox{span} \{v \}^\bot$ and $S_0L^2(\mathbb{R}^2)= \hbox{span} \{v, x_1v(x), x_2v(x)\}^\bot$, then
$$ (Q-S_0)L^2(\mathbb{R}^2)= \hbox{span} \{Q(x_1v), Q(x_2v)\}.$$
Note that $\hbox{rank}(Q-S_0)=2$, and
\begin{align*}
 d=& g^+_0(\lambda)^{-1} (Q-S_0)\Big[(Q-S_0) vG_{-1}v (Q-S_0)\\
  &+ g^+_0(\lambda)^{-1}(Q-S_0)\big(T_0-T_0D_1T_0\big)(Q-S_0)\Big]^{-1}(Q-S_0).
\end{align*}
Hence, in order to prove the operator $d$ exists, it suffices to prove that  $ (Q-S_0) vG_{-1}v (Q-S_0)$
is invertible on $(Q-S_0)L^2$ by Neumamm series if $\lambda$ is small enough.

Without loss of generality, we may assume that $Qx_1v$ and $Qx_2v$ are orthogonal on $(Q-S_0)L^2$. Otherwise,
we can do the same arguments after Schmidt orthogonalization.
 If $f\in (Q-S_0)L^2$, then $f= a_1Q(x_1v)+a_2Q(x_2v)$.
Since  $(Q-S_0)(Q(x_iv))= Q(x_iv)(i=1,2)$, then  we have
 \begin{align*}
&(Q-S_0)vG_{-1}v(Q-S_0)(Q(x_1v))(x)= (Q-S_0)vG_{-1}v(Q(x_1v))(x) \\
= &(Q-S_0)v(x)\int_{\mathbb{R}^2}|x-y|^2v(y)Q(x_1v)(y)dy\\
=&  -2 (Q-S_0)(x_1v)\langle x_1v, Qx_1v \rangle
- 2 (Q-S_0)(x_2 v) \langle x_2v, Qx_1v \rangle.
\end{align*}
Since $\langle x_2v, Qx_1v \rangle = \langle Q(x_2v), Q(x_1v) \rangle =0$
 and $(Q-S_0)(x_iv)=  Q(x_iv)(i=1,2)$, then we get
$$(Q-S_0)vG_{-1}v(Q-S_0)(Qx_1v)=  -2 Q(x_1v) \|Qx_1v\|_{L^2}^2.$$
Similarly,  we have
$$(Q-S_0)vG_{-1}v(Q-S_0)(Qx_2v) =  -2 Q(x_2v)\|Qx_2v\|_{L^2}^2,$$
which leads to
\begin{equation}\label{positive-matrix}
   \begin{split}
&(Q-S_0)vG_{-1}v(Q-S_0)f =-2a_1 Q(x_1v)\|Qx_1v\|_{L^2}^2 - 2 a_2Q(x_2 v) \|Qx_2v\|_{L^2}^2\\
=&\begin{pmatrix}f_1& f_2\end{pmatrix}
\begin{pmatrix}
 -2\|Q(x_1v)\|_{L^2}^2
  & 0\\
  0& -2\|Q(x_2v)\|_{L^2}^2
 \end{pmatrix}
 \begin{pmatrix}a_1\\a_2\end{pmatrix}
 := \begin{pmatrix}f_1 &f_2\end{pmatrix} A\begin{pmatrix}a_1 \\a_2\end{pmatrix}.
\end{split}
\end{equation}
Since $\det A = 4 \|Q(x_1v)\|_{L^2}^2\|Q(x_2v)\|_{L^2}^2\geq 0$, so we need only to prove $\det A \neq 0$. Indeed, if  $\det A = 0$, then $ Q(x_1v)=0$ or $Q(x_2v)=0$, it is contradictory with $rank (Q-S_0)=2$.
Thus, $ (Q-S_0)vG_{-1}v(Q-S_0)$ is invertible and strictly negative on the space $(Q-S_0)L^2$,  and then $ d$ exists on $(Q-S_0)L^2$ for small enough $\lambda$.

Let  $A= (Q-S_0)vG_{-1}v(Q-S_0) $, $ B= (Q-S_0)\big(T_0- T_0D_1T_0\big)(Q-S_0) $ and
$F(\lambda)= \big[A+g^+_0(\lambda)^{-1}B\big]^{-1}$, we obtain that
\begin{align*}
 d = g^+_0(\lambda)^{-1} (Q-S_0)F(\lambda)(Q-S_0).
\end{align*}

Finally, we need to prove that $\|F(\lambda)\|_{(Q-S_0)L^2}= O_1(1)$ for small enough $\lambda$. Indeed,
let $G(\lambda)=A+g_0^+(\lambda)^{-1}B$, then $F(\lambda)=G(\lambda)^{-1}$.
 Note that
$ G'(\lambda)= - \lambda^{-1} g_0^+(\lambda)^{-2}$ , we have
$$\frac{d}{d\lambda}F(\lambda)= \frac{d}{d\lambda}\big(G(\lambda)^{-1}\big)
=-\Big(A+g_0^+(\lambda)^{-1}B\Big)^{-1}\cdot \lambda^{-1} g_0^+(\lambda)^{-2}\cdot \Big(A+g_0^+(\lambda)^{-1}B\Big)^{-1}. $$
Observe that
$$ \|F(\lambda)\|_{(Q-S_0)L^2\rightarrow (Q-S_0)L^2} \leq C, \  \lambda \ll 1 ,$$
then
$$\|\frac{d}{d\lambda}F(\lambda)\|_{(Q-S_0)L^2\rightarrow (Q-S_0)L^2} \lesssim
 \lambda^{-1}|\ln\lambda|^{-2}\lesssim \lambda^{-1},\  \lambda \ll 1, $$
Hence,  we have $\|F(\lambda)\|_{(Q-S_0)L^2\rightarrow (Q-S_0)L^2} = O_1(1)$ for small enough $\lambda$.
\end{proof}

Now we begin to compute the inverse $\big(A^+(\lambda)\big)^{-1}$ as follows.
\begin{lemma}\label{lem-M1-tuba}
Suppose that $|V(x)| \lesssim (1+|x|)^{-\beta}$ with some $\beta >10$.
If zero is a regular point of the spectrum of $H$, then  for $ 0<|\lambda|\ll 1$,
\begin{equation}\label{A+lambdainver}
   \begin{split}
\big(A^+(\lambda)\big)^{-1}=S_0D_1S_0+ g_0^+(\lambda)^{-1}S(\lambda),
\end{split}
\end{equation}
where
\begin{equation*}
   \begin{split}
S:=S(\lambda)=
\begin{pmatrix}
 (Q-S_0)F(\lambda)(Q-S_0)
  &  -(Q-S_0)F(\lambda)(Q-S_0)T_0D_1 \\
  -D_1T_0(Q-S_0)F(\lambda)(Q-S_0)
  &  D_1T_0(Q-S_0)F(\lambda)(Q-S_0)T_0D_1
 \end{pmatrix}
 \end{split}
\end{equation*}
with $F(\lambda)= \big(A+g^+_0(\lambda)^{-1}B\big)^{-1}$ and $D_1= (S_0T_0S_0)^{-1}$ on $S_0L^2$.
Moreover, $ \|S(\lambda)\|_{L^2 \rightarrow L^2}= O_1(1)$ for enough small $\lambda$ .

In particular, if zero is not regular point, then the same formula (\ref{A+lambdainver})
holds for $\big(A^+(\lambda)+S_1\big)^{-1}$ with $D_1= (S_0T_0S_0+ S_1)^{-1}$.
\end{lemma}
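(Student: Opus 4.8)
The plan is to apply the operator--matrix inversion Lemma \ref{lemma-JN-matrix} on the orthogonal splitting $QL^2 = (Q-S_0)L^2 \oplus S_0L^2$, placing the ``small invertible'' block in the $(2,2)$-slot. First I would compute the matrix of $A^+(\lambda) = g_0^+(\lambda)QvG_{-1}vQ + QT_0Q$ with respect to this splitting. Using the orthogonality relation $QvG_{-1}vS_0 = 0$ from \eqref{orthog-relation-1}, every occurrence of $QvG_{-1}vQ$ that touches $S_0$ dies, so the matrix is
\[
A^+(\lambda) = \begin{pmatrix} g_0^+(\lambda)(Q-S_0)vG_{-1}v(Q-S_0) + (Q-S_0)T_0(Q-S_0) & (Q-S_0)T_0S_0 \\ S_0T_0(Q-S_0) & S_0T_0S_0 \end{pmatrix};
\]
in particular the logarithmic singularity $g_0^+(\lambda)$ sits only in the $a_{11}$-entry. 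All four entries are bounded, and in the regular case $a_{22} = S_0T_0S_0$ is invertible on $S_0L^2$ with bounded inverse $D_1$ by Definition \ref{definition of resonance}(i) (and $S_0D_1S_0$ is absolutely bounded by Proposition \ref{Pro-absulu-oper}), so the hypotheses of Lemma \ref{lemma-JN-matrix} hold once the Schur complement is shown to be invertible.

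Next I would simplify the Schur complement $a_{11} - a_{12}a_{22}^{-1}a_{21}$. Since $D_1 = S_0D_1 = D_1S_0$ acts on $S_0L^2$, the cross term collapses: $(Q-S_0)T_0S_0\,D_1\,S_0T_0(Q-S_0) = (Q-S_0)T_0D_1T_0(Q-S_0)$, hence
\[
a_{11} - a_{12}a_{22}^{-1}a_{21} = g_0^+(\lambda)(Q-S_0)vG_{-1}v(Q-S_0) + (Q-S_0)\big(T_0 - T_0D_1T_0\big)(Q-S_0),
\]
which is exactly the operator whose inverse is constructed in Lemma \ref{lem-d-inver}. That lemma therefore hands me the existence of $d := (a_{11}-a_{12}a_{22}^{-1}a_{21})^{-1}$ for $0<\lambda\ll1$, together with the identity $d = g_0^+(\lambda)^{-1}(Q-S_0)F(\lambda)(Q-S_0)$, $F(\lambda) = \big(A + g_0^+(\lambda)^{-1}B\big)^{-1}$, and the bound $\|F(\lambda)\|_{(Q-S_0)L^2} = O_1(1)$. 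I would then substitute $d$, $a_{12} = (Q-S_0)T_0S_0$, $a_{21} = S_0T_0(Q-S_0)$, $a_{22}^{-1} = D_1$ into the closed formula of Lemma \ref{lemma-JN-matrix} and read off the four blocks of $(A^+(\lambda))^{-1}$: the $(1,1)$-block is $d$, the off-diagonal blocks are $-da_{12}a_{22}^{-1} = -g_0^+(\lambda)^{-1}(Q-S_0)F(\lambda)(Q-S_0)T_0D_1$ and $-a_{22}^{-1}a_{21}d = -g_0^+(\lambda)^{-1}D_1T_0(Q-S_0)F(\lambda)(Q-S_0)$, and the $(2,2)$-block is $D_1 + g_0^+(\lambda)^{-1}D_1T_0(Q-S_0)F(\lambda)(Q-S_0)T_0D_1$, using $S_0D_1 = D_1S_0 = D_1$ throughout. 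Separating off the $\lambda$-independent piece $D_1 = S_0D_1S_0$ and factoring $g_0^+(\lambda)^{-1}$ out of the remainder gives precisely $(A^+(\lambda))^{-1} = S_0D_1S_0 + g_0^+(\lambda)^{-1}S(\lambda)$ with $S(\lambda)$ the stated matrix.

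Finally, the bound $\|S(\lambda)\|_{L^2\to L^2} = O_1(1)$ is immediate: every entry of $S(\lambda)$ is a composition of the finite-rank projections $S_0$ and $Q-S_0$, the absolutely bounded operators $T_0$ and $D_1 = S_0D_1S_0$, and $F(\lambda)$, and $\partial_\lambda$ only acts nontrivially on $F(\lambda)$, for which Lemma \ref{lem-d-inver} already provides $\|F(\lambda)\| + \lambda\|\partial_\lambda F(\lambda)\| \lesssim 1$. For the non-regular case I would rerun the same argument with $A^+(\lambda)$ replaced by $A^+(\lambda) + S_1$: since $S_1 \le S_0 \le Q$ one has $S_1(Q-S_0) = 0$, so $a_{11},a_{12},a_{21}$ are unchanged while $a_{22}$ becomes $S_0T_0S_0 + S_1$, invertible with $D_1 = (S_0T_0S_0+S_1)^{-1}$ by Definition \ref{definition of resonance}(ii); the Schur complement is unchanged as an operator on $(Q-S_0)L^2$ (only the passive factor $D_1$ in its expression is reinterpreted), so Lemma \ref{lem-d-inver} again yields $d$, and the identical block computation gives \eqref{A+lambdainver}. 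I expect the only genuinely delicate bookkeeping --- and hence the main obstacle --- to be tracking the orthogonality identities when simplifying the Schur complement and checking that the ``$S_0$-sandwiching'' of $D_1$ makes the cross terms collapse to the operators $A,B$ of Lemma \ref{lem-d-inver}; everything after that is direct substitution into results already proven.
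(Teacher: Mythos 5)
Your proposal is correct and follows essentially the same route as the paper: block-decompose $A^+(\lambda)$ (or $A^+(\lambda)+S_1$) on $(Q-S_0)L^2\oplus S_0L^2$ using $QvG_{-1}vS_0=0$, invoke Lemma \ref{lemma-JN-matrix} with $a_{22}=S_0T_0S_0$ (resp. $S_0T_0S_0+S_1$), identify the Schur complement with the operator of Lemma \ref{lem-d-inver} to get $d=g_0^+(\lambda)^{-1}(Q-S_0)F(\lambda)(Q-S_0)$, and substitute back to obtain $S_0D_1S_0+g_0^+(\lambda)^{-1}S(\lambda)$ with the $O_1(1)$ bound inherited from $F(\lambda)$. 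The only cosmetic difference is that the paper writes out the non-regular case and declares the regular case analogous, whereas you do the reverse.
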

\begin{proof}
We prove only the statement when $S_1 \neq 0$ ( i.e. $S_1L^2=Ker(S_0T_0S_0)\neq \{0\}$ ) since the proof of the regular case is similar by setting $S_1=0$.
Observe that
\begin{equation*}
   \begin{split}
A^+(\lambda)+S_1
 =&  g^+_0(\lambda)(Q-S_0)vG_{-1}v(Q-S_0) + (Q-S_0)T_0(Q-S_0) +
  (Q-S_0)T_0S_0 \\
  &+ S_0T_0(Q-S_0) +S_0T_0S_0 +S_1.
 \end{split}
\end{equation*}
Equivalently, by block format, we write $A^+(\lambda)+S_1$ as follows:
\begin{equation*}
   \begin{split}
A^+(\lambda)+S_1  =&
\begin{pmatrix}
 g^+_0(\lambda)(Q-S_0)vG_{-1}v(Q-S_0) + (Q-S_0)T_0(Q-S_0) & (Q-S_0)T_0S_0\\
  S_0T_0(Q-S_0) & S_0T_0S_0 +S_1
 \end{pmatrix}.
\end{split}
\end{equation*}
Note that $ QL^2 = (Q-S_0)L^2\bigoplus S_0L^2$,
 and $ S_0T_0S_0+S_1 $  is invertible on $S_0L^2$,  then by Lemma \ref{lemma-JN-matrix}
 $A^+(\lambda)+S_1$ has a bounded inverse on $S_0L^2$ if and only if the inverse operator
$ d $ defined in (\ref{def-d}) exists on $(Q-S_0)L^2$ and is bounded for enough small $\lambda$.

Let
$D_1= (S_0T_0S_0+S_1)^{-1}:  S_0 L^2 \rightarrow S_0 L^2$. Then
by Lemma \ref{lem-d-inver}, we know that the operator $ d $
exists on $(Q-S_0)L^2$ and
\begin{align*}
 d = g^+_0(\lambda)^{-1} (Q-S_0)F(\lambda)(Q-S_0).
\end{align*}
Hence by using Lemma \ref{lemma-JN-matrix} again, we have $A^+(\lambda)+S_1$ is invertible on $S_0L^2$, and
\begin{align*}
&\big( A^+(\lambda) +S_1\big)^{-1}\\
=& g^+_0(\lambda)^{-1}
\begin{pmatrix}
(Q-S_0)F(\lambda)(Q-S_0)
  &  -(Q-S_0)F(\lambda)(Q-S_0)T_0D_1 \\
  -D_1T_0(Q-S_0)F(\lambda)(Q-S_0)
  &  D_1T_0(Q-S_0)F(\lambda)(Q-S_0)T_0D_1 +g^+_0(\lambda)D_1
 \end{pmatrix}\\
 :=& S_0D_1S_0 +g^+_0(\lambda)^{-1}S(\lambda),
\end{align*}
where
$$
S(\lambda)=S=\begin{pmatrix}
 (Q-S_0)F(\lambda)(Q-S_0)
  &  -(Q-S_0)F(\lambda)(Q-S_0)T_0D_1 \\
  -D_1T_0(Q-S_0)F(\lambda)(Q-S_0)
  &  D_1T_0(Q-S_0)F(\lambda)(Q-S_0)T_0D_1
 \end{pmatrix}.
$$
It is obvious that $ \|S(\lambda)\|_{L^2 \rightarrow L^2}= O_1(1)$ for $\lambda \ll 1$ by Lemma \ref{lem-d-inver}.
\end{proof}
\begin{remark}\label{ortho-rela-SSS}
We can deduce some orthogonal relations from Lemma \ref{lem-M1-tuba} as follows:
 \begin{equation}\label{id-S1S}
    \begin{split}
S_1S(\lambda)=& -S_1T_0(Q-S_0)F(\lambda)(Q-S_0)
 + S_1T_0(Q-S_0)F(\lambda)(Q-S_0)T_0D_1,
 \end{split}
\end{equation}
 \begin{equation}\label{id-SS1}
    \begin{split}
S(\lambda) S_1= -(Q-S_0)F(\lambda)(Q-S_0)T_0 S_1
 + D_1T_0(Q-S_0)F(\lambda)(Q-S_0)T_0S_1,
 \end{split}
\end{equation}
\begin{equation}\label{id-S1SS1}
S_1SS_1= S_1T_0(Q-S_0)F(\lambda)(Q-S_0)T_0S_1,
\end{equation}
\begin{equation}
S_2T_0(Q-S_0)= (Q-S_0)T_0S_2=0,
\end{equation}
\begin{equation}
S_2S(\lambda)=S(\lambda)S_2=0, \quad  S_2\big(A^+(\lambda) +S_1 \big)^{-1}=\big(A^+(\lambda) +S_1 \big)^{-1}S_2=S_2.
\end{equation}
\end{remark}

Now we use Lemma \ref{lem-M1-tuba} to deduce the expansion of $\big(M^+(\lambda)\big)^{-1}$ in the regular case corresponding to the existence of $D_1=(S_0T_0S_0)^{-1}$ on $S_0L^2$ ( i.e. the projection $S_1=0$ ).

\textbf{(I) Regular point case.}
If zero is a regular point of the spectrum of $H$, then $S_0T_0S_0$ is invertible on $S_0L^2$.
Note that the \eqref{id-M1M1tabu} gives the identity
$$ \big(\widetilde{M}^+_1(\lambda)\big)^{-1}
= \big(A^+(\lambda)\big)^{-1}\Big(I +a_+W^+(\lambda)\big(A^+(\lambda)\big)^{-1} \Big)^{-1}.$$
By using Lemma \ref{lem-M1-tuba}, the \eqref{eq-W} and Neumann series,  for small enough $\lambda$, we have
\begin{equation*}
   \begin{split}
\big(\widetilde{M}^+_1(\lambda)\big)^{-1}
=& D_1+g_0^+(\lambda)^{-1}S- a_+g_0^+(\lambda)^2\lambda^2D_1B_3^+D_1\\
& -a_+g_0^+(\lambda)\lambda^2\big(D_1B_4^+D_1+D_1B_3^+QS+ SQB_3^+D_1 \big)+O_1(\lambda^2).
   \end{split}
\end{equation*}
Note that the equality (\ref{id-M1M1tabu}) also gives
$\big( M^+_1(\lambda)\big)^{-1}= \frac{a_+}{\lambda^2}\big(\widetilde{M}^+_1(\lambda)\big)^{-1}$.
Hence we have
\begin{align*}
  \big( M^+_1(\lambda)\big)^{-1} =&\frac{a_+}{\lambda^2}D_1 +\frac{a_+g_0^+(\lambda)^{-1}}{\lambda^2}S
      -a_+^2g_0^+(\lambda)^2 D_1B_3^+D_1 -a_+^2g_0^+(\lambda)\big(D_1B_4^+D_1\\
   &+D_1B_3^+QS+ SQB_3^+D_1 \big) +O_1(1).
 \end{align*}
Using the orthogonality $QD_1=D_1Q=D_1$,  by (\ref{id-MQ}) and Lemma \ref{lemma-JN}, we get
\begin{align*}
\big(\widetilde{M}^+(\lambda)\big)^{-1}
=&\frac{a_+}{\lambda^2}D_1 +\frac{a_+g_0^+(\lambda)^{-1}}{\lambda^2}QSQ
      -a_+^2g_0^+(\lambda)^2 D_1B_3^+D_1 \\
   &-a_+^2g_0^+(\lambda)\big(D_1B_4^+D_1+D_1B_3^+QSQ+ QSQB_3^+D_1 \big) +O_1(1).
\end{align*}
Since
$\displaystyle  \big(M^+(\lambda)\big)^{-1}=\frac{\lambda^2}{a_+}\big(\widetilde{M}^+(\lambda)\big)^{-1}$,
using the orthogonality $S_0D_1=D_1S_0=D_1$, we have
\begin{align}\label{Eq-regular}
\big(M^+(\lambda)\big)^{-1}
=&  S_0D_1S_0+ g_0^+(\lambda)^{-1}Q\Gamma^0_{0,1}Q
  + g_0^+(\lambda)^2\lambda^2S_0\Gamma^0_{2,1}S_0\nonumber\\
&+ g_0^+(\lambda)\lambda^2\Big(S_0\Gamma^0_{2,2}Q+Q\Gamma^0_{2,3}S_0\Big)
 +O_1(\lambda^2).
\end{align}
Thus  the expansions of $\big(M^+(\lambda)\big)^{-1}$ in regular case is completed.
\cqd
\vskip0.3cm

\begin{remark}
We know that $S_0D_1S_0$ is absolutely bounded by Proposition \ref{Pro-absulu-oper}.  It is clear that
$B^+_i(i=1,\cdots,9)$ are Hilbert-Schmidt operator by  (\ref{id-MQ}), and $S$ is  finite-rank operator by $\dim(Q-S_0)L^2=2$ and Lemma \ref{lem-d-inver},
thus $\Gamma^0_{i,j}(\lambda)$  are also Hilbert-Schmidt operator, which leads to each operator in the expansion of $\big(M^+(\lambda)\big)^{-1}$ above
are absolutely bounded uniformly for sufficiently small $\lambda>0$.
\end{remark}

Next, let's turn to consider the resonance cases, that is, zero is not regular point of the spectrum of $H$. Let $S_1$ be the Riesz projection onto the kernel of $S_0T_0S_0$. Then $S_1\neq 0$ and  $S_0T_0S_0+ S_1$ is invertible on $S_0L^2$.
In this case, we define $D_1= (S_0T_0S_0+S_1)^{-1}$ as a bounded operator on $S_0L^2$,
then from the (\ref{id-M1M1tabu}), Lemma \ref{lem-M1-tuba} and Neumann series one has
\begin{equation}\label{M1tuta+S1-inver}
  \begin{split}
&\Big(\widetilde{M}^+_1(\lambda)+S_1\Big)^{-1}\\
=&D_1 +g_0^+(\lambda)^{-1}S - g_0^+(\lambda)^2\lambda^2\widetilde{B}_1
 - g_0^+(\lambda)\lambda^2\widetilde{B}_2-\lambda^2\widetilde{B}_3
 - g_0^+(\lambda)^{-1}\lambda^2\widetilde{B}_4 \\
 &- g_0^+(\lambda)^{-2}\lambda^2\widetilde{B}_5
 -g_0^+(\lambda)^4\lambda^4\widetilde{B}_6 -g_0^+(\lambda)^3\lambda^4\widetilde{B}_7
 -g_0^+(\lambda)^2\lambda^4\widetilde{B}_8 -g_0^+(\lambda)\lambda^4\widetilde{B}_9\\
 &-\tilde{g}_2^+(\lambda)\lambda^4\widetilde{B}_{10}- \lambda^4\widetilde{B}_{11}
 -\tilde{g}_2^+(\lambda) g_0^+(\lambda)^{-1}\lambda^4\widetilde{B}_{12}
 -g_0^+(\lambda)^{-1}\lambda^4\widetilde{B}_{13}\\
 &-\tilde{g}_2^+(\lambda)g_0^+(\lambda)^{-2}\lambda^4\widetilde{B}_{14}
 -g_0^+(\lambda)^{-2}\lambda^4\widetilde{B}_{15} -g_0^+(\lambda)^{-3}\lambda^4\widetilde{B}_{16}
 +O(\lambda^{6-\epsilon}),
\end{split}
\end{equation}
where $\widetilde{B}_j(1\leq j\leq 15)$  are bounded operators on $L^2$ as follows:
\begin{align*}
&\widetilde{B}_1= a_+D_1 B_3 D_1, \widetilde{B}_2= a_+\big(D_1 B_4 D_1+D_1B_3QS +SQB_3D_1 \big),\\
&\widetilde{B}_3= a_+\big[D_1 B_5 D_1+D_1B_4QS +SQB_4D_1 + SQB_3QS \big],\\
&\widetilde{B}_4= a_+\big[D_1 B_5 QS +SQB_5D_1 + SQB_4QS \big],\,
\widetilde{B}_5= a_+SQB_5QS,\,
\widetilde{B}_6= -a_+^2D_1B_3D_1B_3D_1,
\end{align*}
\begin{equation*}
\begin{split}
\widetilde{B}_7=& a_+D_1B_6D_1- a_+^2\big[D_1B_3D_1B_4D_1 +D_1B_4D_1B_3D_1+D_1B_3D_1B_3QS+SQB_3D_1B_3D_1\\
 &+D_1B_3QSQB_3D_1 \big],\\
\widetilde{B}_8=&a_+\big[ D_1B_7D_1+D_1B_6QS +SQB_6D_1 \big]
   -a_+^2\big[ D_1B_4D_1B_4D_1 \\
  & + D_1B_3D_1B_5D_1 + D_1B_5D_1B_3D_1 + D_1B_3D_1B_4QS +SQB_4D_1B_3D_1\\
  &+ D_1B_4D_1B_3QS +SQB_3D_1B_4D_1 +D_1B_3QSQB_4D_1 +D_1B_4QSQB_3D_1 \\
  &  +D_1B_3QSQB_3QS + SQB_3QSQB_3D_1 \big],\\
\widetilde{B}_9=&a_+\big[ D_1B_8D_1+D_1B_7QS +SQB_7D_1+SQB_6QS \big]
   -a_+^2\big[ D_1B_4D_1B_5D_1 + D_1B_5D_1B_4D_1 \\
  & + D_1B_3D_1B_5QS+ SQB_5D_1B_3D_1 + SQB_3D_1B_4QS + SQB_4D_1B_3QS\\
   & + SQB_3D_1B_5D_1+D_1B_5D_1B_3QS +D_1B_3QSQB_5D_1 +D_1B_5QSQB_3D_1 \\
   &+D_1B_3QSQB_4QS + SQB_4QSQB_3D_1 + D_1B_4QSQB_3QS +SQB_3QSQB_4D_1 \\
   &+ D_1B_4QSQB_4D_1 + SQB_3QSQB_3QS \big],\,
   \widetilde{B}_{10}=a_+D_1B_9D_1,
   \end{split}
\end{equation*}
\begin{equation*}
  \begin{split}
\widetilde{B}_{11}=&a_+\big[D_1B_{10}D_1+ D_1B_8QS+QSB_8D_1+SQB_7QS  \big]
       -a_+^2\big[ D_1B_5D_1B_5D_1 \\
 &+ D_1B_5D_1B_4QS + SQB_4D_1B_5D_1 + D_1B_4D_1B_5QS +SQB_5D_1B_4D_1\\
&+ D_1B_4QSQB_5D_1+D_1B_5QSQB_4D_1+ D_1B_3QSQB_5QS+ SQB_5QSQB_3D_1\\
&+D_1B_4QSQB_4QS +QSB_4QSQB_4D_1 + D_1B_5QSQB_3QS + SQB_3QSQB_5D_1\\
&+SQB_3D_1B_5QS+ SQB_5D_1B_3QS + SQB_3QSQB_4QS + SQB_4QSQB_3QS\\
&+SQB_4D_1B_4QS\big],\,
\widetilde{B}_{12}=a_+\big[D_1B_9QS+SQB_9D_1 \big],
 \end{split}
\end{equation*}
\begin{equation*}
  \begin{split}
\widetilde{B}_{13}=&a_+\big[D_1B_{10}QS+SQB_{10}D_1 +SQ B_8QS \big]
    -a_+^2\big[ D_1B_5QSQB_5D_1+SQB_5D_1B_5D_1 \\
&+D_1B_5D_1B_5QS+ D_1B_4QSQB_5QS +SQB_5QSQB_4D_1+SQB_4QSQB_5D_1\\
& +D_1B_5QSQB_4QS+SQB_5D_1B_4QS+SQB_4D_1B_5QS +SQB_3QSQB_5QS\\
& +SQB_5QSQB_3QS +SQB_4QSQB_4QS\big],\,
\widetilde{B}_{14}=a_+SQB_9QS,\\
\widetilde{B}_{15}=&a_+SQB_{10}QS -a_+^2\big[ SQB_5D_1B_5QS+ D_1B_5QSQB_5QS +SQB_5QSQB_5D_1\\
&+ SQB_4QSQB_5QS +SQB_5QSQB_4QS  \big],
\widetilde{B}_{16}=-a_+^2SQB_5QSQB_5QS.
  \end{split}
\end{equation*}

According to Lemma \ref{lemma-JN}, $ \widetilde{M}^+_1(\lambda)$ has bounded inverse if and only if
\begin{equation}\label{H-lambda}
M_2^+(\lambda):= S_1- S_1\big(\widetilde{M}^+_1(\lambda) +S_1\big)^{-1}S_1
 \end{equation}
has bounded inverse on $S_1L^2$. Using the orthogonality in Remark \ref{ortho-rela-SSS}, one has
\begin{equation*}\label{W2M2tuba}
\begin{split}
M_2^+(\lambda)=&  g_0^+(\lambda)^{-1}\Big(\widetilde{ T}_1
+ g_0^+(\lambda)^3\lambda^2S_1\widetilde{B}_1S_1
 +g_0^+(\lambda)^2\lambda^2 S_1\widetilde{B}_2S_1+g_0^+(\lambda)\lambda^2S_1\widetilde{B}_3S_1
 +\lambda^2S_1\widetilde{B}_4S_1 \\
 &+ g_0^+(\lambda)^{-1}\lambda^2S_1\widetilde{B}_5S_1
 +g_0^+(\lambda)^5\lambda^4S_1\widetilde{B}_6S_1 +g_0^+(\lambda)^4\lambda^4S_1\widetilde{B}_7S_1
 +g_0^+(\lambda)^3\lambda^4S_1\widetilde{B}_8S_1\\
&  +g_0^+(\lambda)^2\lambda^4S_1\widetilde{B}_9S_1
 +\tilde{g}_2^+(\lambda)g_0^+(\lambda) \lambda^4S_1\widetilde{B}_{10}S_1
 + g_0^+(\lambda)\lambda^4S_1\widetilde{B}_{11}S_1
 +\tilde{g}_2^+(\lambda) \lambda^4S_1\widetilde{B}_{12}S_1\\
 &+\lambda^4S_1\widetilde{B}_{13}S_1
 +\tilde{g}_2^+(\lambda)g_0^+(\lambda)^{-1}\lambda^4S_1\widetilde{B}_{14}S_1
 +g_0^+(\lambda)^{-1}\lambda^4S_1\widetilde{B}_{15}S_1\\
&  +g_0^+(\lambda)^{-2}\lambda^4S_1\widetilde{B}_{16}S_1
 +O(\lambda^{6-\epsilon})\Big)\\
 :=&g_0^+(\lambda)^{-1}\big( \widetilde{T}_1 +W_1^+(\lambda)\big)
 :=g_0^+(\lambda)^{-1}\widetilde{M}_2^+(\lambda),
  \end{split}
\end{equation*}
 where
\begin{equation}\label{id-T1}
    \begin{split}
\widetilde{T}_1=- S_1T_0(Q-S_0)\big(A+g^+_0(\lambda)^{-1}B\big)^{-1}(Q-S_0)T_0S_1.
\end{split}
\end{equation}
Let $$T_1=- S_1T_0(Q-S_0)A^{-1}(Q-S_0)T_0S_1,$$
then $\widetilde{T}_1$ is invertible on $S_1L^2$ for enough small $\lambda$ if and only if
$T_1$ is invertible on $S_1L^2$.

\textbf{(II) The first kind of resonance.}
 If there is a resonance of first kind at zero, then the operator $T_1$
is invertible on $S_1L^2$. In this case we define $D_2 = T_1^{-1}$ as an operator on $S_1L^2$.
Then  $\widetilde{T}_1$ is invertible on $S_1L^2$ and $D_2 = \widetilde{T}_1^{-1}$ for enough small $\lambda$.  Note that
\begin{align*}
\big(\widetilde{M}_2^+(\lambda)\big)^{-1}= D_2\big( I+W_1^+(\lambda)D_2\big)^{-1}
\end{align*}
and $ \big(M_2^+(\lambda)\big)^{-1} = g_0^+(\lambda)\big(\widetilde{M}_2^+(\lambda)\big)^{-1} $,
by Neumann series one has
\begin{equation*}\label{first-M2-inver}
 \begin{split}
\big(M_2^+(\lambda)\big)^{-1}
= & g_0^+(\lambda)D_2+ g_0^+(\lambda)^4\lambda^2 D_2 \widetilde{B}_1D_2 + g_0^+(\lambda)^3\lambda^2D_2\widetilde{B}_2D_2
+ g_0^+(\lambda)^2\lambda^2D_2\widetilde{B}_3D_2\\
&+ g_0^+(\lambda)\lambda^2D_2\widetilde{B}_4D_2+ \lambda^2D_2\widetilde{B}_5D_2
 + O_1(\lambda^{4-\epsilon}).
\end{split}
\end{equation*}
Using Lemma \ref{lemma-JN} and (\ref{M1tuta+S1-inver}) one has
\begin{equation*}
 \begin{split}
\big(\widetilde{M}_1^+(\lambda)\big)^{-1}
= & g_0^+(\lambda)S_1\Gamma^1_{0,1}S_1 + \Big(Q\Gamma^1_{0,2}S_1 +S_1\Gamma^1_{0,3}Q  \Big)
+ g_0^+(\lambda)^{-1}Q\Gamma^1_{0,5}Q\\
& +g_0^+(\lambda)^{-2}Q\Gamma^1_{0,6}Q
+g_0^+(\lambda)^4\lambda^2S_1\Gamma^1_{2,1}S_1 +g_0^+(\lambda)^3\lambda^2Q\Gamma^1_{2,1}Q\\
&+g_0^+(\lambda)^2\lambda^2Q\Gamma^1_{2,1}Q+g_0^+(\lambda)\lambda^2Q\Gamma^1_{2,1}Q
 + O_1(\lambda^2).
\end{split}
\end{equation*}
By the same arguments as in the proof of Theorem \ref{thm-main-inver-M}(i) one has
\begin{align*}
\big(M^+ (\lambda)\big)^{-1}
 = &g^+_0(\lambda) S_1D_2 S_1+\Big( S_0\Gamma_{0,1}^1 S_0 + S_1\Gamma_{0,2}^1 Q
+ Q\Gamma_{0,3}^1 S_1\Big)+g^+_0(\lambda)^{-1}Q\Gamma_{0,4}^1Q\\
&+g^+_0(\lambda)^4\lambda^2S_1\Gamma_{2,1}^1S_1
+g^+_0(\lambda)^3\lambda^2\Big(S_1\Gamma_{2,2}^1Q
+ Q\Gamma_{2,3}^1S_1 \Big)
+g^+_0(\lambda)^2\lambda^2\\
&\times \Big(Q \Gamma_{2,4}^1Q + S_1\Gamma_{2,5}^1 +\Gamma_{2,6}^1S_1 \Big)
+g^+_0(\lambda)\lambda^2\Big( Q\Gamma_{2,7}^1 +\Gamma_{2,8}^1Q  \Big)+O_1(\lambda^2).
\end{align*}
Thus the expansions of $\big(M^+ (\lambda)\big)^{-1}$ in the first kind resonance is completed.
\cqd
\vskip0.3cm
If zero is not the first kind of resonance, then $T_1$ is  not invertible on $S_1L^2$.
Let $S_2$ is the Riesz transform onto the kernel of $ T_1 $,
then $T_1 +S_2 $ is invertible on $S_1L^2$, which implies that $(\widetilde{T}_1+S_2)^{-1}$ exists for  sufficiently small $\lambda>0$. If we define $D_2= (\widetilde{T}_1+S_2)^{-1}$, then
$S_1D_2=D_2S_1=D_2$, $ S_2D_2=D_2S_2=S_2$ and $D_2D_1=D_1D_2=D_2$. Note that
 \begin{equation*}
  \begin{split}
\big(\widetilde{M}_2^+(\lambda)+S_2\big)^{-1}= D_2\Big(I +W_1^+(\lambda)D_2\Big)^{-1}.
\end{split}
\end{equation*}
By using Neumann series and (\ref{id-T1}) one has
\begin{equation}\label{id-Wtuta+S2-inver}
  \begin{split}
&\big(\widetilde{M}_2^+(\lambda)+S_2\big)^{-1}\\
= &D_2- g_0^+(\lambda)^3\lambda^2D_2\widetilde{B}_1D_2
 -g_0^+(\lambda)^2\lambda^2 D_2\widetilde{B}_2D_2- g_0^+(\lambda)\lambda^2D_2\widetilde{B}_3D_2
 -\lambda^2D_2\widetilde{B}_4D_2\\
 & - g_0^+(\lambda)^{-1}\lambda^2D_2\widetilde{B}_5D_2
 -g_0^+(\lambda)^6\lambda^4B_1^1 -g_0^+(\lambda)^5\lambda^4B_2^1
  -g_0^+(\lambda)^4\lambda^4B_3^1
-g_0^+(\lambda)^3\lambda^4B_4^1\\
 &-g_0^+(\lambda)^2\lambda^4B_5^1
-\tilde{g}_2^+(\lambda)g_0^+(\lambda)\lambda^4B_6^1
-g_0^+(\lambda)\lambda^4B_7^1
-\tilde{g}_2^+(\lambda)\lambda^4B_8^1 -\lambda^4B_9^1\\ &-\tilde{g}_2^+(\lambda)g_0^+(\lambda)^{-1}\lambda^4B_{10}^1
-g_0^+(\lambda)^{-1}\lambda^4B_{11}^1 -g_0^+(\lambda)^{-2}\lambda^4B_{12}^1
+O(\lambda^{6-\epsilon}),
\end{split}
\end{equation}
where $ B_j^1(1\leq j\leq 12 )$ are bounded operators on on $L^2$ as follows:
\begin{equation*}
  \begin{split}
B_1^1=& -D_2\widetilde{B}_1D_2\widetilde{B}_1D_2,\
B_2^1= D_2\widetilde{B}_6D_2
       - \big[D_2\widetilde{B}_1D_2\widetilde{B}_2D_2+D_2\widetilde{B}_2D_2\widetilde{B}_1D_2  \big],\\
B_3^1=& D_2\widetilde{B}_7D_2- \big[D_2\widetilde{B}_2D_2\widetilde{B}_2D_2
 +D_2\widetilde{B}_1D_2\widetilde{B}_3D_2 +D_2\widetilde{B}_3D_2\widetilde{B}_1D_2 \big],\\
B_4^1=& D_2\widetilde{B}_8D_2
   - \big[D_2\widetilde{B}_1D_2\widetilde{B}_4D_2  +D_2\widetilde{B}_4D_2\widetilde{B}_1D_2
   +D_2\widetilde{B}_2D_2\widetilde{B}_3D_2+D_2\widetilde{B}_3D_2\widetilde{B}_2D_2\big],\\
B_5^1=& D_2\widetilde{B}_9D_2
   - \big[D_2\widetilde{B}_3D_2\widetilde{B}_3D_2  +D_2\widetilde{B}_2D_2\widetilde{B}_4D_2
   +D_2\widetilde{B}_4D_2\widetilde{B}_2D_2+D_2\widetilde{B}_1D_2\widetilde{B}_5D_2\\
   & +D_2\widetilde{B}_5D_2\widetilde{B}_1D_2\big],\
B_6^1= D_2\widetilde{B}_{10}D_2, \\
B_7^1=& D_2\widetilde{B}_{11}D_2
   - \big[D_2\widetilde{B}_2D_2\widetilde{B}_5D_2  +D_2\widetilde{B}_5D_2\widetilde{B}_2D_2
   +D_2\widetilde{B}_3D_2\widetilde{B}_4D_2+D_2\widetilde{B}_4D_2\widetilde{B}_3D_2\big],\\
B_8^1=& D_2\widetilde{B}_{12}D_2,
B_9^1= D_2\widetilde{B}_{13}D_2
   - \big[D_2\widetilde{B}_4D_2\widetilde{B}_4D_2  +D_2\widetilde{B}_3D_2\widetilde{B}_5D_2
   +D_2\widetilde{B}_5D_2\widetilde{B}_3D_2\big],\\
B_{10}^1=& D_2\widetilde{B}_{14}D_2, \
B_{11}^1= D_2\widetilde{B}_{15}D_2
   - \big[D_2\widetilde{B}_4D_2\widetilde{B}_5D_2  +D_2\widetilde{B}_5D_2\widetilde{B}_4D_2\big],\\
B_{12}^1=& D_2\widetilde{B}_{16}D_2
   - D_2\widetilde{B}_5D_2\widetilde{B}_5D_2.
\end{split}
\end{equation*}
According to Lemma \ref{lemma-JN}, $ \widetilde{M}_2^+(\lambda)$ has bounded inverse on $S_1L^2$
if and only if
\begin{equation}\label{id- M2-tata}
M_3^+(\lambda):= S_2- S_2(\widetilde{M}_2^+(\lambda)+S_2)^{-1}S_2
 \end{equation}
has bounded inverse on $ S_2L^2(\mathbb{R}^2)$. By Neumann series we have
\begin{equation*}
  \begin{split}
M^+_3(\lambda)
= &g_0^+(\lambda)^3\lambda^2S_2\widetilde{B}_1S_2
 +g_0^+(\lambda)^2\lambda^2 S_2\widetilde{B}_2S_2+ g_0^+(\lambda)\lambda^2S_2\widetilde{B}_3S_2
 +\lambda^2S_2\widetilde{B}_4S_2\\
 & + g_0^+(\lambda)^{-1}\lambda^2S_2\widetilde{B}_5S_2
 +g_0^+(\lambda)^6\lambda^4S_2B_1^1S_2 +g_0^+(\lambda)^5\lambda^4S_2B_2^1S_2
  +g_0^+(\lambda)^4\lambda^4S_2B_3^1S_2\\
&+g_0^+(\lambda)^3\lambda^4S_2B_4^1S_2
 +g_0^+(\lambda)^2\lambda^4S_2B_5^1S_2
+\tilde{g}_2^+(\lambda)g_0^+(\lambda)\lambda^4S_2B_6^1S_2
+g_0^+(\lambda)\lambda^4S_2B_7^1S_2\\
&+\tilde{g}_2^+(\lambda)\lambda^4S_2B_8^1S_2 +\lambda^4S_2B_9^1S_2 +\tilde{g}_2^+(\lambda)g_0^+(\lambda)^{-1}\lambda^4S_2B_{10}^1S_2
+g_0^+(\lambda)^{-1}\lambda^4S_2B_{11}^1S_2\\
&+g_0^+(\lambda)^{-2}\lambda^4S_2B_{12}^1S_2
+O(\lambda^{6-\epsilon}).
\end{split}
\end{equation*}
Note that $ S_2S=SS_2 =0$, $QvG_{-1}vS_0 = 0, QT_0 S_2 =0 $, hence by (\ref{M1tuta+S1-inver}) we have
$$ S_2\widetilde{B}_1S_2= -\frac{1}{a_+}S_2 v G_{-1}v vG_{-1}vS_2 , \
S_2\widetilde{B}_2S_2=-\frac{1}{a_+}\big( S_2 v G_{-1}vT_0S_2+ S_2T_0vG_{-1}vS_2 \big), $$
$$ S_2\widetilde{B}_3S_2=c_+ S_2vG_1vS_2-\frac{1}{a_+}S_2T_0^2S_2,$$
$$ S_2\widetilde{B}_4S_2= S_3\widetilde{B}_5S_2 =S_2B_{8}^1S_2= S_2B_{10}^1S_2=S_2B_{12}^1S_2=0. $$
Moreover, we get
\begin{equation}\label{A1lambda-W2}
  \begin{split}
M^+_3(\lambda)
&= -\frac{1}{a_+}g_0^+(\lambda)\lambda^2\Big( A_1^+(\lambda)
-g_0^+(\lambda)^5\lambda^2 a_+S_2B_1^1S_2 -g_0^+(\lambda)^4\lambda^2a_+S_2B_2^1S_2\\
 &-g_0^+(\lambda)^3\lambda^2a_+S_2B_3^1S_2
-g_0^+(\lambda)^2\lambda^2a_+S_2B_4^1S_2-g_0^+(\lambda)\lambda^2a_+S_2B_5^1S_2\\
&-\tilde{g}_2^+(\lambda)\lambda^2a_+S_2B_6^1S_2-\lambda^2a_+S_2B_7^1S_2
 -g_0^+(\lambda)^{-1}\lambda^2a_+S_2B_9^1S_2\\
&-g_0^+(\lambda)^{-2}\lambda^2a_+S_2B_{11}^1S_2+O(\lambda^{4-\epsilon})\Big)\\
&:= -\frac{1}{a_+}g_0^+(\lambda)\lambda^2 \big(A^+_1(\lambda)+W_2^+(\lambda) \big)
:=-\frac{1}{a_+}g_0^+(\lambda)\lambda^2\widetilde{ M}^+_3(\lambda),
\end{split}
\end{equation}
where
\begin{equation*}
  \begin{split}
A_1^+(\lambda)=&g_0^+(\lambda)^2S_2 v G_{-1}v vG_{-1}vS_2
 +g_0^+(\lambda)\big( S_2 v G_{-1}vT_0S_2+ S_2T_0vG_{-1}vS_2 \big)\\
 &+ \big(S_2T_0^2S_2 -c_+a_+ S_2vG_1vS_2\big).
\end{split}
\end{equation*}
To compute $\big(M_3^+(\lambda)\big)^{-1}$, by (\ref{A1lambda-W2}) we need to compute
the expansions of $\big(\widetilde{M_3^+}(\lambda)\big)^{-1}$ on $S_2L^2$.
Noting that $A_1^+(\lambda)$ with a logarithm factor, and  $ T_2$ is not invertible on $S_2L^2$ by Lemma \ref{projiction-spaces-SjL2}(iv).
Hence we will use Lemma \ref{lemma-JN-matrix} to study the inverse of $A_1^+(\lambda)$.

To compute the inverse of $A_1^+(\lambda)$ by Lemma \ref{lemma-JN-matrix},
we need the following lemma.
\begin{lemma}\label{lem-d1-inver}
Suppose that $|V(x)| \lesssim (1+|x|)^{-\beta}$ with some $\beta >18$. Let
\begin{equation*}
   \begin{split}
L_1=& (S_2-S_3)\Big(vG_{-1}vPvG_{-1}v- vG_{-1}vT_0D_4T_0vG_{-1}v \Big)(S_2-S_3),\\
L_2=& (S_2-S_3)\Big(vG_{-1}vT_0 + T_0vG_{-1}v- vG_{-1}vT_0D_4(T_0^2-c_+a_+vG_1v) \\
      &-(T_0^2-c_+a_+vG_1v)D_4T_0vG_{-1}v \Big)(S_2-S_3),\\
L_3=&(S_2-S_3)\Big(T_0^2-c_+a_+vG_1v  - (T_0^2-c_+a_+vG_1v)D_4(T_0^2-c_+a_+vG_1v) \Big)(S_2-S_3).
 \end{split}
\end{equation*}
Then the inverse operator
$$ d_1:= g^+_0(\lambda)^{-2}\Big(L_1 + g^+_0(\lambda)^{-1}L_2+ g^+_0(\lambda)^{-2}L_3\Big)^{-1}$$
exists on $(S_2-S_3)L^2$ for enough small $\lambda>0$. Moreover, we have
$$d_1= g^+_0(\lambda)^{-2}h^+(\lambda)(S_2-S_3),$$
where $h^+(\lambda)= \Big(c_1+ g^+_0(\lambda)^{-1}c_2 + g^+_0(\lambda)^{-2}c_3\Big)^{-1}$ with
 $ c_1= \hbox{tr} (L_1) $,  $ c_2= \hbox{tr} (L_2) $ and  $ c_3= \hbox{tr}( L_3 )$.
In particular, $ \displaystyle  h^+(\lambda)=O_1(1)$.
\end{lemma}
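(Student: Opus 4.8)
# Proof plan for Lemma \ref{lem-d1-inver}

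The plan is to apply Lemma \ref{lemma-JN-matrix} to the operator matrix decomposition of $A_1^+(\lambda)$ relative to the splitting $S_2L^2 = (S_2-S_3)L^2 \oplus S_3L^2$, so the core task is to show that the Schur complement $d_1$ on $(S_2-S_3)L^2$ exists and behaves as claimed. First I would record that, by Lemma \ref{projiction-spaces-SjL2}(iv), $S_3$ is the kernel of $T_2 = S_2 vG_{-1}v vG_{-1}v S_2$ and $(S_2-S_3)L^2$ is one-dimensional (Remark \ref{rem-dim1}). Writing $A_1^+(\lambda)$ in block form with respect to this splitting: the $S_3$--$S_3$ block is $S_3 A_1^+(\lambda) S_3 = S_3\big(T_0^2 - c_+a_+ vG_1v\big)S_3 = T_3$ up to $O(g_0^+(\lambda)^{-1})$ corrections, which is invertible (plus its Riesz projection $S_4$, yielding $D_4 = (T_3+S_4)^{-1}$) precisely in the second-kind resonance regime under consideration; here one uses the orthogonality relations $S_3 vG_{-1}v = vG_{-1}v S_3 = 0$ from \eqref{orthog-relation-1}, which kill the cross-terms containing $g_0^+(\lambda)$ in the $S_3$--$S_3$ block. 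The off-diagonal blocks $(S_2-S_3)A_1^+(\lambda)S_3$ and $S_3 A_1^+(\lambda)(S_2-S_3)$ are bounded, again simplified by $vG_{-1}v S_3 = 0$.

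Next I would compute the Schur complement $d \equiv \big(a_{11} - a_{12}a_{22}^{-1}a_{21}\big)^{-1}$ from Lemma \ref{lemma-JN-matrix}, where $a_{11} = (S_2-S_3)A_1^+(\lambda)(S_2-S_3)$, $a_{22}^{-1}$ is (to leading order) $D_4$, and $a_{12},a_{21}$ are the off-diagonal blocks. Expanding $A_1^+(\lambda)$ in powers of $g_0^+(\lambda)$, the top-order piece of $a_{11}$ is $g_0^+(\lambda)^2 (S_2-S_3)vG_{-1}v vG_{-1}v(S_2-S_3)$, but since $(S_2-S_3) \subseteq S_2$ and $S_3$ is the kernel of $S_2 vG_{-1}v vG_{-1}v S_2$, this must be carefully recombined: inserting $I = P + Q = P + (I-P)$ and using that $vG_{-1}v(S_2-S_3)$ lands appropriately, the surviving $g_0^+(\lambda)^2$-term in $a_{11} - a_{12}a_{22}^{-1}a_{21}$ is exactly $L_1 = (S_2-S_3)\big(vG_{-1}vPvG_{-1}v - vG_{-1}vT_0D_4T_0vG_{-1}v\big)(S_2-S_3)$ — the $vG_{-1}vPvG_{-1}v$ piece coming from the $P$-component and the $-vG_{-1}vT_0D_4T_0vG_{-1}v$ piece from $-a_{12}a_{22}^{-1}a_{21}$. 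Collecting the $g_0^+(\lambda)^1$ and $g_0^+(\lambda)^0$ contributions similarly (again subtracting the corresponding Schur-complement corrections involving $D_4$ and $T_0^2 - c_+a_+ vG_1v$) produces $L_2$ and $L_3$ respectively. This yields
\begin{equation*}
a_{11} - a_{12}a_{22}^{-1}a_{21} = g_0^+(\lambda)^2\Big(L_1 + g_0^+(\lambda)^{-1}L_2 + g_0^+(\lambda)^{-2}L_3 + O_1(g_0^+(\lambda)^{-3})\Big)
\end{equation*}
on the one-dimensional space $(S_2-S_3)L^2$.

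Since $(S_2-S_3)L^2$ is one-dimensional, every operator on it is multiplication by its trace, so $L_i = \mathrm{tr}(L_i)(S_2-S_3) =: c_i(S_2-S_3)$, and the bracket above equals $\big(c_1 + g_0^+(\lambda)^{-1}c_2 + g_0^+(\lambda)^{-2}c_3\big)(S_2-S_3)$. Provided $c_1 \ne 0$, this scalar is bounded away from zero for $0 < \lambda \ll 1$ (as $g_0^+(\lambda)^{-1} \to 0$), hence invertible, and
\begin{equation*}
d_1 = g_0^+(\lambda)^{-2}\big(c_1 + g_0^+(\lambda)^{-1}c_2 + g_0^+(\lambda)^{-2}c_3\big)^{-1}(S_2-S_3) = g_0^+(\lambda)^{-2}h^+(\lambda)(S_2-S_3)
\end{equation*}
with $h^+(\lambda) = \big(c_1 + g_0^+(\lambda)^{-1}c_2 + g_0^+(\lambda)^{-2}c_3\big)^{-1}$. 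The bound $h^+(\lambda) = O_1(1)$ follows as in Lemma \ref{lem-d-inver}: differentiating the scalar and using $|\partial_\lambda g_0^+(\lambda)^{-1}| \lesssim \lambda^{-1}|\ln\lambda|^{-2}$ gives $|\partial_\lambda h^+(\lambda)| \lesssim \lambda^{-1}$.

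The main obstacle is establishing $c_1 = \mathrm{tr}(L_1) \neq 0$, i.e. that $L_1$ is genuinely invertible on $(S_2-S_3)L^2$. I expect to handle this by showing that on $(S_2-S_3)L^2$ the operator $L_1$ is negative (or positive) definite: the term $vG_{-1}vPvG_{-1}v$ restricted to $(S_2-S_3)L^2$ should be computed explicitly via $P = \|V\|_{L^1}^{-1}v\langle v,\cdot\rangle$ and the expansion of $|x-y|^2$ as in the proof of Lemma \ref{projiction-spaces-SjL2}, reducing to $\|V\|_{L^1}^{-1}|\langle |x|^2 v, f\rangle|^2$-type quantities, while the $-vG_{-1}vT_0D_4T_0vG_{-1}v$ term has a definite sign coming from the sign of $D_4 = (T_3+S_4)^{-1}$ on the relevant subspace (analogous to the use of $\big((Q-S_0)vG_{-1}v(Q-S_0)\big)^{-1} < 0$ in Lemma \ref{lem-d-inver} and the argument in Lemma \ref{projiction-spaces-SjL2}(iii)). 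Combining these sign computations with the fact that a nonzero element of $(S_2-S_3)L^2$ cannot be annihilated by $vG_{-1}v$ (else it would lie in $S_3L^2$) forces $c_1 \ne 0$. The bookkeeping of which $g_0^+(\lambda)$-powers land in $L_1, L_2, L_3$ after forming the Schur complement is tedious but routine given the orthogonality relations \eqref{orthog-relation-1}, \eqref{or-Bjpm}.
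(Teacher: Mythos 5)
Your reduction of the problem to a scalar one is fine and matches the paper: since $(S_2-S_3)L^2=\mathrm{span}\{S_2(|x|^2v)\}$ is one-dimensional, each $L_i$ acts as $c_i(S_2-S_3)$ with $c_i=\mathrm{tr}(L_i)$, invertibility for small $\lambda$ follows from $c_1\neq0$ because $g_0^+(\lambda)^{-1}\to0$, and the $O_1(1)$ bound for $h^+(\lambda)$ is obtained exactly as in Lemma \ref{lem-d-inver}. (The lengthy Schur-complement bookkeeping explaining where $L_1,L_2,L_3$ come from is really the content of the subsequent Lemma \ref{lem-A1-inver}; for the present lemma the $L_i$ are given, so that part is harmless but not needed.)

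The genuine gap is your argument for $c_1\neq0$. Computing on the spanning vector, as you propose, gives
\begin{equation*}
\big\langle S_2(|x|^2v),\,L_1S_2(|x|^2v)\big\rangle=\|S_2(|x|^2v)\|_{L^2}^4\Big(\|V\|_{L^1}-\langle D_4S_3T_0v,\,S_3T_0v\rangle\Big),
\end{equation*}
and here the two contributions pull in \emph{opposite} directions: by \eqref{S3T0S3}--\eqref{S3vG1vS3} the operator $T_3=S_3T_0^2S_3-c_+a_+S_3vG_1vS_3$ is nonnegative, hence $D_4=(T_3+S_4)^{-1}\ge0$ and the subtracted term is nonnegative, so no conclusion follows from "each piece has a definite sign," and knowing that $vG_{-1}vf\neq0$ for $0\neq f\in(S_2-S_3)L^2$ only makes the first piece strictly positive without excluding exact cancellation with the second. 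What is actually needed (and what the paper does) is a quantitative step showing $\langle D_4S_3T_0v,S_3T_0v\rangle\neq\|V\|_{L^1}$: set $\xi=S_3T_0v$, $\eta=D_4\xi$, use $S_3T_0Q=0$ so that $B_1:=S_3T_0^2S_3=S_3T_0PT_0S_3$ is rank one with $B_1\eta=\|V\|_{L^1}^{-1}\langle\eta,\xi\rangle\xi$, and use the nonnegativity of $A_1:=(T_3+S_4)-B_1$ to obtain $0<\langle\eta,\xi\rangle-\|V\|_{L^1}^{-1}\langle\eta,\xi\rangle^2$, which forces $\langle\eta,\xi\rangle\neq0$ and $1-\|V\|_{L^1}^{-1}\langle\eta,\xi\rangle\neq0$, hence $c_1\neq0$. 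Without an argument of this kind your proof does not close.
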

\begin{proof}
 We first show that $L_1$ is invertible on $(S_2-S_3)L^2$. Indeed, by Lemma \ref{projiction-spaces-SjL2}, we know that $(S_2-S_3)L^2= \hbox{span}\{ S_2(|x|^2v)\}$, thus it suffices to prove
 $\big\langle |x|^2v, L^1(|x|^2v) \big\rangle >0$. Using $S_2v=S_2(x_iv)=0(i=1,2)$ and
 $S_3v=S_3(x_iv)=S_3(|x|^2v)=0(i=1,2)$, one has
 \begin{equation}\label{esti-PvG-1vS2}
   \begin{split}
PvG_{-1}vS_2(|x|^2v)(x)
 =&P\Big(v(x)\int_{\mathbb{R}^2}\big( |x|^2-2x\cdot y +|y|^2\big)v(y)S_2(|x|^2v)(y)dy\Big)\\
 =&P\big\langle |x|^2v, S_2(|x|^2v)\big\rangle v(x)
 =\| S_2(|x|^2v)\|^2_{L^2}v(x).
\end{split}
\end{equation}
Let $\phi(x)=PvG_{-1}vS_2(|x|^2v)(x)$, by $S_3(|x|^2v)=0$  and (\ref{esti-PvG-1vS2}), one has
 \begin{equation*}
   \begin{split}
&(S_2-S_3)vG_{-1}vPvG_{-1}v(S_2-S_3)(|x|^2v)(x)
=\big[(S_2-S_3)vG_{-1}v\phi\big](x)\\
 =&(S_2-S_3)v(x)\int_{\mathbb{R}^2}\big( |x|^2-2x\cdot y +|y|^2\big)v(y)\phi(y)dy\\
 =& \| S_2(|x|^2v)\|^2_{L^2} (S_2-S_3)|x|^2v(x)\int_{\mathbb{R}^2}v(y)v(y)dy\\
 =& \|V\|_{L^1}\| S_2(|x|^2v)\|^2_{L^2}v(x).
\end{split}
\end{equation*}
Furthermore,
\begin{equation}\label{L1-firstterm}
 \Big\langle S_2(|x|^2v), (S_2-S_3)vG_{-1}vPvG_{-1}v(S_2-S_3)(|x|^2v)\Big\rangle
= \|V\|_{L^1}\| S_2(|x|^2v)\|^4_{L^2}.
\end{equation}
Using (\ref{esti-PvG-1vS2}) and $S_3(|x|^2v)=0$, we have
\begin{equation}\label{L1-secondterm}
   \begin{split}
&\big\langle S_2(|x|^2v), (S_2-S_3)vG_{-1}vT_0D_4T_0vG_{-1}vS_2(|x|^2v) \big\rangle\\
=&\big\langle D_4T_0PvG_{-1}vS_2(|x|^2v), T_0PvG_{-1}vS_2(|x|^2v) \big\rangle
=\| S_2(|x|^2v)\|^4_{L^2}\big\langle D_4T_0v, T_0v \big\rangle.
\end{split}
\end{equation}
Combing (\ref{L1-firstterm}) and (\ref{L1-secondterm}), one has
 \begin{equation}\label{esti-L1-twoterm}
   \begin{split}
\big\langle S_2(|x|^2v), L_1(|x|^2v)\big\rangle
=\| S_2(|x|^2v)\|^4_{L^2} \|V\|_{L^1} \Big( 1- \frac{ \langle D_4T_0v, T_0v\rangle}{\|V\|_{L^1}}\Big).
\end{split}
\end{equation}
Let $\eta=D_4S_3T_0v$ and $\xi=S_3T_0v$, then $\xi=D_4^{-1}\eta=(T_3+S_4)\eta$.
By $S_3D_4=D_4S_3=D_4$, thus we have
$$ 1- \frac{ \langle D_4T_0v, T_0v\rangle}{\|V\|_{L^1}}
= 1- \frac{ \langle S_3D_4T_0v, S_3T_0v\rangle}{\|V\|_{L^1}}
= 1- \frac{ \langle \eta, \xi\rangle}{\|V\|_{L^1}}.$$
Noting that
$$T_3+S_4= \big(-c_+a_+S_3vG_1vS_3 +S_4\big)+S_3T_0^2S_3:= A_1+B_1,$$
by (\ref{S3vG1vS3}) we know that for  $f\in S_3L^2$,
\begin{equation*}
  \begin{split}
\langle f, A_1f\rangle
=& \frac{\|V\|_{L^1}}{1024}\Big(\langle x_1^2v, f \rangle^2
+\langle x_2^2v, f \rangle^2+2\langle x_1x_2v, f \rangle^2 \Big) +\|S_4f\|_{L^2}^2 > 0,
\end{split}
\end{equation*}
since $\eta=D_4\xi=S_3D_4\xi\in S_3L^2$, thus $\langle \eta, A_1\eta\rangle>0$.
Furthermore,
\begin{equation}\label{ineq-etaA1eta}
0<\langle \eta, A_1\eta\rangle= \langle \eta, (T_3+S_4)\eta-B_1\eta \rangle
= \langle \eta, \xi\rangle -\langle \eta, B_1\eta \rangle.
\end{equation}
By $S_3T_0Q=QT_0S_3=0$, then
$$B_1\eta=S_3T_0^2S_3\eta=S_3T_0PT_0S_3\eta=\frac{\langle T_0S_3\eta, v\rangle S_3T_0v}{\|V\|_{L^1}}
= \frac{\langle \eta, S_3T_0v\rangle S_3T_0v}{\|V\|_{L^1}}
=\frac{\langle \eta, \xi\rangle \xi}{\|V\|_{L^1}}, $$
which by (\ref{ineq-etaA1eta}) implies that
$$0<\langle \eta, A_1\eta\rangle = \langle \eta, \xi\rangle
- \frac{\big|\langle\eta, \xi\rangle\big|^2}{\|V\|_{L^1}}, $$
which implies that $ \langle \eta, \xi\rangle\neq 0$ and $ 1- \frac{ \langle \eta, \xi\rangle}{\|V\|_{L^1}} \neq 0$. Then we have $ 1- \frac{ \langle D_4T_0v, T_0v\rangle}{\|V\|_{L^1}}\neq 0$ and $ \big\langle S_2(|x|^2v), L_1(|x|^2v)\big\rangle\neq 0$.
Thus, $L_1$ is invertible on $(S_2-S_3)L^2$. Hence $d_1$ exists on $(S_2-S_3)L^2$ for
enough small $\lambda$.

Note that $(S_2-S_3)L^2= \hbox{span}\{ S_2(|x|^2v)\}$ ( i.e. $rank(S_2-S_3)=1$ ). Let $f \in L^2$, then
$L_1 f = (S_2-S_3) L_1f  \in (S_2-S_3)L^2  $. Thus,
$L_1 f = \langle S_2(|x|^2v), L_1f \rangle (S_2|x|^2 v) $, and  $ L_1= \hbox{tr}(L_1):=c_1\neq 0$
by $ \big\langle S_2(|x|^2v), L_1(|x|^2v)\big\rangle\neq 0$.
Similarly, $L_2=  \hbox{tr}(L_2):=c_2 $ and $L_3= \hbox{tr}(L_3):=c_3$. Thus, we have
\begin{equation*}
   \begin{split}
d_1=& \Big(g^+_0(\lambda)^2L_1 + g^+_0(\lambda)L_2+ L_3\Big)^{-1}\\
  =& g^+_0(\lambda)^{-2}(S_2-S_3)\Big(L_1 + g^+_0(\lambda)^{-1}L_2
        + g^+_0(\lambda)^{-2}L_3\Big)^{-1}(S_2-S_3)\\
  =& g^+_0(\lambda)^{-2}(S_2-S_3) \Big(c_1 + g^+_0(\lambda)^{-1}c_2
         + g^+_0(\lambda)^{-2}c_3\Big)^{-1}(S_2-S_3)\\
  :=& g^+_0(\lambda)^{-2}h^+(\lambda)(S_2-S_3),
\end{split}
\end{equation*}
where $h^+(\lambda)= \Big(c_1 + g^+_0(\lambda)^{-1}c_2+ g^+_0(\lambda)^{-2}c_3\Big)^{-1}$ and  $h^+(\lambda)=O_1(1)$.
\end{proof}

Now we start to compute the inverse of $A_1^+(\lambda)$ as follows:
\begin{lemma}\label{lem-A1-inver}
Suppose that $|V(x)| \lesssim (1+|x|)^{-\beta}$ with some $\beta >18$.
If zero is the second kind of resonance of the spectrum of $H$,
then for $ 0<|\lambda|\ll 1$,
\begin{equation*}
   \begin{split}
 \big(A_1^+(\lambda)\big)^{-1}
=&D_4 +h^+(\lambda)\widetilde{S}(\lambda),
\end{split}
\end{equation*}
where $D_4=T_3^{-1}$ and
\begin{align}\label{matrix-S1}
 \nonumber \widetilde{S}(\lambda)=\widetilde{S}=
\begin{pmatrix}
 b_{11}
  &  b_{12}\\
  b_{21}
  &  b_{22}
 \end{pmatrix},
\end{align}
which  $h(\lambda)$ is defined in Lemma \ref{lem-d1-inver}, and
$b_{ij}=b_{ij}(\lambda)$ are absolutely bounded operators for enough small $\lambda$ as follows:
\begin{equation}\label{bijlambda}
   \begin{split}
b_{11}=& g^+_0(\lambda)^{-2}(S_2-S_3),\\
b_{12}=&-\big(S_2-S_3\big) \Big( g^+_0(\lambda)^{-1}vG_{-1}vT_0+ g^+_0(\lambda)^{-2}(T_0^2-c_+a_+vG_1v ) \Big)D_4,\\
b_{21}=&-D_4 \Big( g^+_0(\lambda)^{-1}T_0vG_{-1}v+ g^+_0(\lambda)^{-2}(T_0^2-c_+a_+vG_1v ) \Big)\big(S_2-S_3\big),\\
b_{22}=& D_4\Big( T_0vG_{-1}v+ g^+_0(\lambda)^{-1}(T_0^2-c_+a_+vG_1v) \Big)\big(S_2-S_3\big)\\
  &\times \Big( vG_{-1}vT_0+ g^+_0(\lambda)^{-1}(T_0^2-c_+a_+vG_1v )\Big)D_4.
\end{split}
\end{equation}
Moreover, $\|\widetilde{S}(\lambda)\|_{L^2\rightarrow L^2}= O_1(1)$ for small enough $\lambda$.

In addition, if zero is not the first kind of resonance, the same formula holds for $\big(A^+_1(\lambda)+S_4\big)^{-1}$ with $D_4= (T_3+ S_4)^{-1}$.
\end{lemma}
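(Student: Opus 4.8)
\textbf{Proof proposal for Lemma \ref{lem-A1-inver}.}

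The plan is to invert $A_1^+(\lambda)$ by the matrix inversion Lemma \ref{lemma-JN-matrix}, with the splitting of $S_2L^2$ into $(S_2-S_3)L^2 \oplus S_3L^2$. First I would observe that $A_1^+(\lambda)$, restricted to $S_2L^2$, has the block form (using the orthogonality relations $S_3vG_{-1}v = vG_{-1}vS_3 = 0$ from \eqref{orthog-relation-1})
\begin{equation*}
   \begin{split}
A_1^+(\lambda) =
\begin{pmatrix}
 g_0^+(\lambda)^2(S_2-S_3)vG_{-1}vvG_{-1}v(S_2-S_3) + a_{11}' & a_{12}'\\
 a_{21}' & S_3(T_0^2-c_+a_+vG_1v)S_3
\end{pmatrix},
\end{split}
\end{equation*}
where the lower-right block is exactly $T_3$ (or $T_3+S_4$ in the non-generic case), hence invertible on $S_3L^2$ with inverse $D_4$, and the off-diagonal blocks $a_{12}', a_{21}'$ collect the $g_0^+(\lambda)$-linear cross terms $g_0^+(\lambda)\big(vG_{-1}vT_0 + T_0vG_{-1}v\big)$ plus the $g_0^+$-independent $T_0^2-c_+a_+vG_1v$ pieces, compressed appropriately between $(S_2-S_3)$ and $S_3$. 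I would then apply Lemma \ref{lemma-JN-matrix} with $a_{22} = T_3$ (or $T_3+S_4$): $A_1^+(\lambda)$ is invertible on $S_2L^2$ iff the Schur complement $d \equiv (a_{11} - a_{12}a_{22}^{-1}a_{21})^{-1}$ exists on $(S_2-S_3)L^2$. A direct computation shows this Schur complement is precisely the operator $g_0^+(\lambda)^2 L_1 + g_0^+(\lambda) L_2 + L_3$ appearing in Lemma \ref{lem-d1-inver} — this is the bookkeeping step where one expands $a_{22}^{-1}$ (a constant operator) and collects powers of $g_0^+(\lambda)$; the coefficient of $g_0^+(\lambda)^2$ comes from $vG_{-1}vPvG_{-1}v - vG_{-1}vT_0D_4T_0vG_{-1}v$ after using $S_3T_0Q = QT_0S_3 = 0$ to replace $G_{-1}v(S_2-S_3)$-type compressions by $P$-terms via \eqref{esti-PvG-1vS2}.

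Next, Lemma \ref{lem-d1-inver} directly supplies $d = d_1 = g_0^+(\lambda)^{-2}h^+(\lambda)(S_2-S_3)$, so the hypothesis of Lemma \ref{lemma-JN-matrix} is met and $A_1^+(\lambda)$ (resp. $A_1^+(\lambda)+S_4$) is invertible for $0 < \lambda \ll 1$. Then I would simply write out the four blocks of the inverse from the explicit formula in Lemma \ref{lemma-JN-matrix}:
\begin{equation*}
A_1^+(\lambda)^{-1} =
\begin{pmatrix}
 d & -d a_{12}a_{22}^{-1}\\
 -a_{22}^{-1}a_{21}d & a_{22}^{-1}a_{21}d a_{12}a_{22}^{-1} + a_{22}^{-1}
\end{pmatrix}.
\end{equation*}
Substituting $d = g_0^+(\lambda)^{-2}h^+(\lambda)(S_2-S_3)$, $a_{22}^{-1} = D_4$, and the explicit forms of $a_{12}, a_{21}$ yields the claimed expressions for $b_{11}, b_{12}, b_{21}, b_{22}$: the factor $h^+(\lambda)$ is pulled out of each block, the $a_{22}^{-1}$-only term in the $(2,2)$-slot becomes $D_4$, and the remaining four terms become $h^+(\lambda)\widetilde{S}(\lambda)$. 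One checks that $b_{12}$ and $b_{21}$ carry the expected $g_0^+(\lambda)^{-1}$ and $g_0^+(\lambda)^{-2}$ prefactors (one extra power beyond the $g_0^+(\lambda)^{-2}$ in $d$ is cancelled by the $g_0^+(\lambda)$ in $a_{12}$ for the leading cross term), and $b_{22}$ carries $g_0^+(\lambda)^{-2}$ times a product of two bounded factors; the $h^+(\lambda)$ absorbs the residual $g_0^+(\lambda)^{-2}$ from $d$.

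Finally, for the uniform bound $\|\widetilde{S}(\lambda)\|_{L^2\to L^2} = O_1(1)$, I would argue that every constituent operator — $vG_{-1}vT_0$, $T_0vG_{-1}v$, $T_0^2 - c_+a_+vG_1v$, $D_4$, and the finite-rank projections $S_2-S_3$, $S_3$ — is bounded (indeed absolutely bounded, using that $G_{-1}, G_1$ have polynomially-weighted kernels against which $v$ with $\beta > 18$ decay is Hilbert–Schmidt, and that $D_4$ is finite-rank-perturbed-invertible hence absolutely bounded), and that the only $\lambda$-dependence sits in the scalar factors $g_0^+(\lambda)^{-1}$, $g_0^+(\lambda)^{-2}$, whose $\lambda$-derivatives are $O(\lambda^{-1})$; so the $O_1$ bound follows from the product rule exactly as in the last paragraph of the proof of Lemma \ref{lem-d-inver}. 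The main obstacle I anticipate is not conceptual but organizational: correctly tracking which compression of $vG_{-1}v$ and $T_0$ lands between which pair of projections, and which orthogonality relation from \eqref{orthog-relation-1} kills which term, so that the Schur complement collapses exactly to $g_0^+(\lambda)^2 L_1 + g_0^+(\lambda)L_2 + L_3$ and not to something with extra nonvanishing pieces — that identification is the crux that makes Lemma \ref{lem-d1-inver} applicable verbatim. The $\pm$ sign case is identical and the non-generic ($S_4 \neq 0$) modification only replaces $T_3$ by $T_3+S_4$ throughout, with $D_4 = (T_3+S_4)^{-1}$.
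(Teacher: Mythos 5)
Your proposal is correct and follows essentially the same route as the paper: decompose $S_2L^2=(S_2-S_3)L^2\oplus S_3L^2$, apply Lemma \ref{lemma-JN-matrix} with $a_{22}=T_3$ (resp. $T_3+S_4$), identify the Schur complement with $g_0^+(\lambda)^2L_1+g_0^+(\lambda)L_2+L_3$ so that Lemma \ref{lem-d1-inver} supplies $d_1=g_0^+(\lambda)^{-2}h^+(\lambda)(S_2-S_3)$, and then read off the blocks $b_{ij}$ from the explicit inverse formula. The only minor slip is attributing the insertion of $P$ between the two $vG_{-1}v$ factors to $S_3T_0Q=QT_0S_3=0$, whereas it follows from the orthogonality $S_2v=S_2(x_iv)=0$ as in \eqref{esti-PvG-1vS2}; this does not affect the argument.
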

\begin{proof}
We only prove the case of $S_4 \neq 0$ since the case of $S_4=0$ is identical.
In this case, $T_3+S_4$ is invertible on  $S_3L^2$.
Observe that $ vG_{-1}vS_3 =0$, we have
\begin{equation*}
   \begin{split}
A_1^+(\lambda)+S_4=&\Big((S_2-S_3)+S_3\Big)\Big(g_0^+(\lambda)^2 v G_{-1}vPvG_{-1}v
 +g_0^+(\lambda)\big(  v G_{-1}vT_0+ T_0vG_{-1}v\big)\\
 & +T_0^2 -c_+a_+ vG_1v \Big)\Big((S_2-S_3)+S_3\Big).
 \end{split}
\end{equation*}
By block format we write $A_1^+(\lambda)+S_4$ as follows:
\begin{equation}
   \begin{split}
A_1^+(\lambda)+S_4  =&
\begin{pmatrix}
 a_{11} & a_{12}\\
  a_{21} & a_{22}
 \end{pmatrix},
\end{split}
\end{equation}
where $a_{ij}=a_{ij}(\lambda)$ are bounded operators for small enough $\lambda$ as follows:
\begin{equation*}
   \begin{split}
a_{11}= &(S_2-S_3)
\Big(g_0^+(\lambda)^2 v G_{-1}vPvG_{-1}v\\
 &+g_0^+(\lambda)\big(  v G_{-1}vT_0+ T_0vG_{-1}v\big) +T_0^2
  -c_+a_+ vG_1v \Big)(S_2-S_3),\\
a_{12}=& (S_2-S_3)\Big(g_0^+(\lambda)vG_{-1}vT_0 + T_0^2-c_+a_+ vG_1v   \Big)S_3,\\
a_{21}=&S_3\Big(g_0^+(\lambda)T_0 vG_{-1}v + T_0^2 -c_+a_+ vG_1v   \Big)(S_2-S_3),\\
a_{22}=& S_3T_0^2S_3 -c_+a_+S_3vG_1vS_3 +S_4=T_3+S_4.
\end{split}
\end{equation*}
 By Lemma \ref{lemma-JN-matrix}, then $A^+(\lambda)+S_4$ has a bounded inverse if and only if
$$ d_1 := (a_{11} -a_{12}a_{22}^{-1}a_{21})^{-1}$$
 exist and is bounded. Let
$D_4= (T_3+S_4)^{-1} :  S_3 L^2 \rightarrow S_3 L^2$. Note that $rank (S_2-S_3)= 1$,
 and $S_3D_4=D_4S_3=D_4$. By Lemma \ref{lem-d1-inver}, the inverse operator
$$ d_1:= g^+_0(\lambda)^{-2}\Big(L_1 + g^+_0(\lambda)^{-1}L_2+ g^+_0(\lambda)^{-2}L_3\Big)^{-1}$$
exists on $(S_2-S_3)L^2$ for enough small $\lambda>0$, and
$$d_1= g^+_0(\lambda)^{-2}h^+(\lambda)(S_2-S_3).$$
Using Lemma \ref{lemma-JN-matrix} again, $A_1^+(\lambda)+S_4$ is invertible and
\begin{align*}
\big( A_1^+(\lambda) +S_4\big)^{-1}=& D_4 +h^+(\lambda)\widetilde{S}(\lambda),
\end{align*}
where
\begin{align}\label{matrix-S1}
 \nonumber \widetilde{S}(\lambda)=\widetilde{S}=
\begin{pmatrix}
 b_{11}
  &  b_{12}\\
  b_{21}
  &  b_{22}
 \end{pmatrix},
\end{align}
where $b_{i,j}(\lambda)$ are bounded operators defined in (\ref{bijlambda}).

It is easy to check that $\|\widetilde{S}(\lambda)\|_{L^2\rightarrow L^2} = O_1(1)$ for
small enough $\lambda$.
\end{proof}

\textbf{(III) The second kind of resonance.}
If zero is the second kind of resonance, then $ T_3$ is invertible on $S_3L^2$, let $D_4= T_3^{-1}$.
Since
$$\big(\widetilde{M}^+_3(\lambda)\big)^{-1}
   =\big(A_1^+(\lambda)\big)^{-1} \Big(I +W_2^+(\lambda)\big(A_1^+(\lambda)\big)^{-1}\Big)^{-1}.$$
Note that $ D_4B_1^1=B_1^1D_4=0$, $D_4B_2^1D_4=0$, by  Lemma \ref{lem-A1-inver} and (\ref{A1lambda-W2}),
 we have
\begin{equation*}
   \begin{split}
\big(\widetilde{M}^+_3(\lambda)\big)^{-1}
=& h^+(\lambda)S_2\Gamma^2_{0,1}S_2 +g^+_0(\lambda)^5 \lambda^2 S_2\Gamma^2_{2,1}S_2
+ \lambda^2S_2\Gamma^2_{2,2}S_2\\
&+g^+_0(\lambda)^{10} \lambda^4 S_2\Gamma^2_{4,1}S_2 +\lambda^4 S_2\Gamma^2_{4,2}S_2
+O_1(\lambda^4\ln\lambda).
\end{split}
\end{equation*}
By (\ref{A1lambda-W2}) one has
\begin{equation*}
   \begin{split}
\big(M^+_3(\lambda)\big)^{-1}=&\frac{g_0^+(\lambda)^{-1} h^+(\lambda)}{\lambda^2}S_2\Gamma^2_{-2,1}S_2
+g_0^+(\lambda)^{4}S_2\Gamma^2_{0,1}S_2 +S_2\Gamma^2_{0,2}S_2\\
&+g_0^+(\lambda)^{9}\lambda^2S_2\Gamma^2_{2,1}S_2+\lambda^2 S_2\Gamma^2_{2,1}S_2
+O_1\big(\lambda^2(\ln\lambda)^{-1}\big).
\end{split}
\end{equation*}
According to Lemma \ref{lemma-JN} and (\ref{id-Wtuta+S2-inver}),
\begin{equation*}
  \begin{split}
\big(\widetilde{M}_2^+(\lambda)\big)^{-1}
= &\frac{h^+(\lambda)}{\lambda^2}S_2\Gamma^2_{-2,1}S_2
+g_0^+(\lambda)^{5}S_2\Gamma^2_{0,1}S_2 + g_0^+(\lambda) S_2\Gamma^2_{0,2}S_2
+g_0^+(\lambda)^{10}\lambda^2S_2\Gamma^2_{2,1}S_2\\
&+g_0^+(\lambda)\lambda^2 S_2\Gamma^2_{2,1}S_2
+O_1\big(\lambda^2\big).
\end{split}
\end{equation*}
By the same argument with  the proof of the first kind of resonance,
 we immediately get
\begin{equation*}
  \begin{split}
\big(M^\pm(\lambda)\big)^{-1}
=&\frac{h^\pm(\lambda)}{\lambda^2} S_2\Gamma^2_{-2,0}S_2
+g^\pm_0(\lambda)^5\Big( S_2\Gamma^2_{0,1} +\Gamma^2_{0,2}S_2 +
S_1\Gamma^2_{0,3}Q + Q\Gamma^2_{0,4}S_1 \Big)\\
&+g^\pm_0(\lambda)^{10}\lambda^2 \Big( S_2\Gamma^2_{2,1}
+\Gamma^2_{2,2}S_2 +S_1\Gamma^2_{2,3}Q + Q\Gamma^2_{2,4}S_1 \Big)\\
&+g^\pm_0(\lambda)\lambda^2\Big( S_2\Gamma^2_{2,5} +\Gamma^2_{2,6}S_2
 +S_1\Gamma^2_{2,7}Q + Q\Gamma^2_{2,8}S_1 \Big)+ O_1(\lambda^2).
\end{split}
\end{equation*}
Thus the expansions of $\big(M^+ (\lambda)\big)^{-1}$ of the second kind resonance is completed.
\cqd
\vskip0.3cm
If zero is not the second kind of resonance. Then $ T_3$ is not invertible on $S_3L^2$.
Let $S_4$ is the Riesz projection onto the kernel of $ T_3 $. Then $T_3+S_4$ is invertible
on $S_3L^2$. Let $D_4=(T_3+S_4)^{-1}$. Noting that the identity,
$$\big(\widetilde{M}^+_3(\lambda) +S_4\big)^{-1} =\big(A_1^+(\lambda)+S_4\big)^{-1}
     \Big(I +W_2^+(\lambda)\big(A_1^+(\lambda)+S_4\big)^{-1}\Big)^{-1},$$
by using Lemma \ref{lem-A1-inver} and (\ref{A1lambda-W2}), we have by Neumann series
\begin{equation}\label{inver-M3+S4}
   \begin{split}
&\big(\widetilde{M}^+_3(\lambda)+S_4\big)^{-1}\\
=& D_4+ \widetilde{S} -g^+_0(\lambda)^5 \lambda^2 B^2_1
-g^+_0(\lambda)^4 \lambda^2B^2_2- g^+_0(\lambda)^3 \lambda^2 B^2_3
-g^+_0(\lambda)^2 \lambda^2 B^2_4\\
&-g^+_0(\lambda) \lambda^2B^2_5(\lambda)
-\widetilde{g}^+_2(\lambda) \lambda^2 B^2_6- \lambda^2B^2_7
-g^+_0(\lambda)^{-1} \lambda^2B^2_9-g^+_0(\lambda)^{-2} \lambda^2B^2_{11}\\
&+O_1(\lambda^{4-\epsilon}),
\end{split}
\end{equation}
where $B^2_j= -a_+\big(D_4+h^+(\lambda)\widetilde{S}\big)S_2B_j^1S_2\big(D_4+h^+(\lambda)\widetilde{S}\big),
1\leq j\leq 11$.

According to Lemma \ref{lemma-JN}, $ \widetilde{M}_3^+(\lambda)$ has bounded inverse if and only if
\begin{equation}\label{id- W3-tata}
M_4^+(\lambda):= S_4- S_4\big(\widetilde{M}_3^+(\lambda)+S_4\big)^{-1}S_4
 \end{equation}
has bounded inverse on $ S_4L^2$.
Since
$$ S_4\widetilde{S}=\widetilde{S}S_4=0,\ S_4 \Big(A_1^+(\lambda)+ S_4\Big)^{-1}= \Big(A_1^+(\lambda)+ S_4\Big)^{-1} S_4 = S_4,$$
thus
$ S_4B_j=B_jS_4=0( j= 1,2,3,4,6,7 )$,
$ S_4\widetilde{B}_j=\widetilde{B}_j S_4= 0(j=1,2,3,7)$. Moreover,
$$S_4B_j^2S_4 =0(j=1,\cdots,5,8,10,11),$$
$$S_4B_6^2S_4=  -a_+S_4vG_2vS_4,  \ S_4B_7^2S_4= - a_+S_4vG_3vS_4, $$
$$S_4B_9^2S_4= c_+^2 a_+ \Big( S_4vG_1vQSQvG_1vS_4+ S_4vG_1vQSD_2SQvG_1vS_4  \Big). $$
Hence, by (\ref{inver-M3+S4}) and (\ref{id- W3-tata}) one has
\begin{equation*}
   \begin{split}
M^+_4(\lambda)
=&-a_+\widetilde{g}_2^+(\lambda)\lambda^2\Big( S_4vG_2vS_4 + \widetilde{g}_2^+(\lambda)^{-1}S_4vG_3vS_4
-c_+^2\widetilde{g}_2^+(\lambda)^{-1}g_0^+(\lambda)^{-1}\\
&\times\big( S_4vG_1vQSQvG_1vS_4+ S_4vG_1vQSD_2SQvG_1vS_4  \big) +O(\lambda^{2-\epsilon})\Big)\\
 :=& -a_+\widetilde{g}_2^+(\lambda)\lambda^2\widetilde{M}^+_4(\lambda).
\end{split}
\end{equation*}

Let $T_4= S_4vG_2vS_4$.
Then
\begin{equation}\label{T4+W3}
   \begin{split}
\widetilde{M}^+_4(\lambda)=&T_4+ W_3^+(\lambda),
\end{split}
\end{equation}
where
\begin{equation*}
   \begin{split}
 W_3^+(\lambda)=&\widetilde{g}_2^+(\lambda)^{-1}S_4vG_3vS_4
-\widetilde{g}_2^+(\lambda)^{-1}g_0^+(\lambda)^{-1}c_+^2
\Big( S_4vG_1vQSQvG_1vS_4\\
&+S_4vG_1vQSD_2SQvG_1vS_4 \Big) +O(\lambda^{2-\epsilon}).
\end{split}
\end{equation*}

\textbf{(IV) The third kind of resonance.}
 If zero is the third kind of resonance of $H$. Then $T_4$ is invertible on $S_4L^2$, and set $D_5=T_4^{-1}$.
Since
$$\big(\widetilde{M}^+_4(\lambda)\big)^{-1} =D_5\Big(I+W_3^+(\lambda)D_5\Big)^{-1},\
\displaystyle \big(M^+_4(\lambda)\big)^{-1}=-\frac{\widetilde{g}_2^+(\lambda)^{-1}}{a_+\lambda^2}
\big(\widetilde{M}^+_4(\lambda)\big)^{-1} ,
$$
by Neumann series one has
\begin{equation*}
   \begin{split}
\big(M^+_4(\lambda)\big)^{-1}=&\frac{ -\widetilde{g}_2^+(\lambda)^{-1}D_5}{a_+\lambda^2} +\frac{\widetilde{g}_2^+(\lambda)^{-2}D_5vG_3vD_5}{a_+\lambda^2}
 + O_1(\lambda^{-2}(\ln\lambda)^{-3}).
\end{split}
\end{equation*}
Using Lemma \ref{lemma-JN} and (\ref{inver-M3+S4}), we have
\begin{equation*}
   \begin{split}
\big(\widetilde{M}^+_3(\lambda)\big)^{-1}
= &\frac{ \widetilde{g}_2^+(\lambda)^{-1}g_0^+(\lambda)^{-1}D_5 }{\lambda^4} -\frac{\widetilde{g}_2^+(\lambda)^{-2} g_0^+(\lambda)^{-1}D_5vG_3vD_5}{\lambda^4}
 + O_1\big(\lambda^{-4}(\ln\lambda)^{-4}\big).
\end{split}
\end{equation*}
Using  (\ref{A1lambda-W2}) one has
\begin{equation*}
   \begin{split}
\big(M^+_3(\lambda)\big)^{-1}
= &\frac{ \widetilde{g}_2^+(\lambda)^{-1}g_0^+(\lambda)^{-1}D_5 }{\lambda^4} -\frac{\widetilde{g}_2^+(\lambda)^{-2} g_0^+(\lambda)^{-1}D_5vG_3vD_5}{\lambda^4}
 + O_1\big(\lambda^{-4}(\ln\lambda)^{-4}\big).
\end{split}
\end{equation*}
By the same arguments as  in the proof of the second kind of resonance, we immediately obtain that
\begin{equation*}
   \begin{split}
\big(M^+(\lambda) \big)^{-1}
= &\frac{ \widetilde{g}_2^+(\lambda)^{-1} }{\lambda^4}D_5 -\frac{\widetilde{g}_2^+(\lambda)^{-2}}{\lambda^4}D_5vG_3vD_5
 + O_1\big(\lambda^{-4}(\ln\lambda)^{-3}\big)\\
 :=& \frac{ \widetilde{g}_2^+(\lambda)^{-1} }{\lambda^4}S_4\Gamma_{-4,1}^3S_4 +\frac{\widetilde{g}_2^+(\lambda)^{-2}}{\lambda^4}S_4\Gamma_{-4,2}^3S_4
 +O_1\big(\lambda^{-4}(\ln\lambda)^{-3}\big).
\end{split}
\end{equation*}
Thus the expansions of $\big(M^+ (\lambda)\big)^{-1}$ of the third kind resonance is completed.
\cqd
\vskip0.3cm

Before computing the expansions of $\big(M^+(\lambda) \big)^{-1}$ of the fourth kind resonance,
we first give a lemma as follows.
\begin{lemma} \label{lemma-S5vG3vS5}
$\hbox{ker} (S_5vG_3vS_5)= \{ 0 \}$,  thus the operator $ T_5=S_5vG_3vS_5$ is invertible on $S_5L^2$.
\end{lemma}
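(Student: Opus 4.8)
\textbf{Proof proposal for Lemma \ref{lemma-S5vG3vS5}.}

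The plan is to follow the same template used throughout Section \ref{expansion} for the operators $T_0,\dots,T_4$: take $f\in\hbox{ker}(S_5vG_3vS_5)$, compute $\langle S_5vG_3vS_5 f,f\rangle$ using the kernel $G_3(x,y)=a_2|x-y|^6\ln|x-y|$, and exploit all the orthogonality relations already available on $S_5L^2$. Recall from Lemma \ref{projiction-spaces-SjL2}(vi) that $f\in S_5L^2$ gives $\langle x^\gamma v,f\rangle=0$ for every multi-index $\gamma$ with $|\gamma|\le 3$; this is exactly the collection of moments needed to kill most of the terms arising from expanding $|x-y|^6\ln|x-y|$. First I would write $\langle vG_3vf,f\rangle = a_2\int\!\!\int v(x)\overline{f(x)}\,|x-y|^6\ln|x-y|\,v(y)f(y)\,dy\,dx$ and try to isolate the genuinely non-polynomial contribution.

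The key algebraic step is the identity $|x-y|^6\ln|x-y| = \tfrac12|x-y|^6\ln|x-y|^2$, combined with $\ln|x-y|^2 = \ln(|x|^2 - 2x\cdot y + |y|^2)$; the difficulty is that, unlike the polynomial cases, $\ln|x-y|^2$ does not split into a finite sum of products $p(x)q(y)$. The resolution is that one does not need a full splitting — one only needs the quadratic-form $\langle vG_3vf,f\rangle$ with $f$ satisfying the moment conditions. Following the computation in Lemma \ref{projiction-spaces-SjL2}(vi), where the $|x-y|^6$ (polynomial) part was already shown to contribute zero when $\langle x^\gamma v,f\rangle=0$ for $|\gamma|\le 3$, the residual piece is controlled by the logarithmic factor. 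Concretely, I expect that after subtracting off the polynomial part one is left with an expression of the form $c\,\langle |x|^3 v\cdot(\text{angular factors}), f\rangle$-type squares plus a strictly sign-definite remainder coming from the convolution with $\ln|x-y|$; the cleanest route is to show $\langle vG_3vf,f\rangle$ equals (up to a nonzero constant) a sum of squares of the next-order moments $\langle x^\gamma v, f\rangle$ with $|\gamma|=$ appropriate order, so that $\langle vG_3vf,f\rangle=0$ forces those moments to vanish as well, and then argue that combined with $f\in S_4L^2$ this forces $f=0$ — i.e. the only such $f$ would already be a genuine $L^2$ eigenfunction, which is excluded at the level of $S_5$ by the construction. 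Alternatively, and perhaps more robustly, one invokes that $S_5vG_3vS_5$ is a positive (or negative) definite operator on the finite-dimensional space $S_5L^2$: it suffices to check definiteness, and vanishing of the quadratic form then immediately gives $f=0$.

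The main obstacle is handling the logarithmic kernel rigorously, since $\ln|x-y|$ prevents the naive binomial expansion that made the $T_1,\dots,T_4$ computations mechanical. I would address this by mimicking the derivation of the coefficient $a_2$ in Lemma \ref{lem-reso}: write $\ln|x-y| = \ln|x| + \tfrac12\ln(1 - 2\langle w(x),y\rangle/|x| + |y|^2/|x|^2)$ for $|y|<|x|$ and expand, but crucially observe that after pairing against $v(x)\overline{f(x)}$ and using the moment conditions through order $3$, all the $\ln|x|$-carrying terms and the low-order Taylor terms integrate to zero, leaving a convergent, sign-definite leading term. Since $S_5L^2$ is finite-dimensional (Remark \ref{rem-dim1}), once definiteness (equivalently: trivial kernel) is established on it, invertibility of $T_5=S_5vG_3vS_5$ on $S_5L^2$ follows immediately, which is the assertion. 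I would also double-check the decay hypothesis $\beta>18$ guarantees $vG_3v$ is a bounded (indeed Hilbert--Schmidt) operator so that all the integrals above converge absolutely, justifying the interchanges of integration used in the expansion.
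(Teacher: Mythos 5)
Your proposal correctly identifies the difficulty (the factor $\ln|x-y|$ prevents the finite polynomial splitting that made the computations for $T_1,\dots,T_4$ mechanical), but it does not actually overcome it, and the two routes you sketch both have genuine gaps. The central claim you need --- that on $S_5L^2$ the quadratic form $\langle vG_3vf,f\rangle$ is sign-definite, or reduces to a sum of squares of finitely many higher moments of $vf$ --- is asserted rather than proved, and the proposed Taylor expansion of $\ln|x-y|$ around $\ln|x|$ does not deliver it: after using the vanishing moments through order $3$, what remains is not a finite collection of moment squares but a genuinely nonlocal positive quantity. Indeed the paper's proof shows that, under exactly those moment conditions, $\langle G_3vf,vf\rangle=\|G_0vf\|_{L^2}^2$ (up to a positive constant): one identifies $G_3$ as the $\lambda^4$-coefficient in the expansion of $R_0^+(\lambda^4)$ from Lemma \ref{lem-reso}, uses the orthogonality relations of $S_5$ to discard the terms $\frac{b_+}{\lambda^2}I$, $G_{-1}$, $G_1$, $G_2$, and then passes to the Fourier side, where $\frac{1}{\lambda^4}\big(\frac{1}{\xi^4-\lambda^4}-\frac{1}{\xi^4}\big)\to \frac{1}{\xi^8}$ monotonically (choosing $\lambda$ with $\lambda^4<0$), giving $0=\int |\widehat{vf}(\xi)|^2|\xi|^{-8}\,d\xi$. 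This resolvent/Plancherel identification is the missing idea; a direct expansion of the logarithmic kernel will not produce a sign-definite leading term, because the definiteness lives at the level of $\|G_0vf\|_{L^2}^2$, not of finitely many moments, and vanishing of finitely many moments of $vf$ could never by itself force $f=0$.

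Two further points would fail as written. Your fallback claim that a nonzero kernel element "would already be a genuine $L^2$ eigenfunction, which is excluded at the level of $S_5$ by the construction" misreads the structure: $S_5L^2\neq\{0\}$ is precisely the case in which zero \emph{is} an eigenvalue, so nothing of that sort is excluded there --- the lemma must hold on $S_5L^2$ itself, whence the need for the positivity argument above. Finally, even once one knows $vf=0$, the conclusion $f=0$ requires the relation $f=-UvG_0vf$, which is available because $T_0f=0$ for $f\in S_4L^2\supseteq S_5L^2$ (equivalently $Uf+vG_0vf=0$); this last step is absent from your write-up.
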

\begin{proof}
If $f\in \hbox{ker} (S_5vG_3vS_5)$, then  $ S_5vG_3vS_5f=0$ for $f\in S_5L^2$, and
$$ 0=\langle S_5vG_3vS_5f , f \rangle= \langle G_3vf, vf\rangle.$$
Since
$$ R_0^+(\mu^4)= \frac{b_+}{\lambda^2}I + g^+_0(\lambda)G_{-1}+G_0+ c_+\lambda^2 G_1
+ \widetilde{g}^+_2(\lambda)G_2 +\lambda^4G_3+ O(\lambda^{6-\epsilon}|x-y|^{8-\epsilon}),$$
so,
 $$G_3=\frac{R_0^+(\lambda^4)-
 \frac{b_+}{\lambda^2}I - g^+_0(\lambda)G_{-1}-G_0- c_+\lambda^2 G_1- \widetilde{g}^+_2(\lambda)G_2
  +O(\lambda^{6-\epsilon}|x-y|^{8-\epsilon}) }{\lambda^4}. $$
Noting that for $f\in S_5L^2 $,
  $$\langle G_2vf, vf \rangle=\langle G_1vf, vf \rangle=\langle G_{-1}vf, vf \rangle =0,$$
$$\langle I vf, vf \rangle= \langle v(x)Iv(y)f(y), f(y)\rangle
=\langle Pf , f   \rangle =0, $$
we get that
\begin{equation*}
   \begin{split}
0=&\langle S_5vG_3vS_5f , f \rangle= \langle G_3vf, vf\rangle\\
=&\lim_{\lambda \rightarrow 0}
\Big\langle\frac{R_0^+(\lambda^4)-
 \frac{b_+}{\lambda^2}I - g^+_0(\lambda)G_{-1}-G_0- c_+\lambda^2 G_1- \widetilde{g}^+_2(\lambda)G_2
  +O(\lambda^{6-\epsilon}|x-y|^{8-\epsilon}) }{\lambda^4} vf, vf \Big\rangle\\
=&\lim_{\lambda \rightarrow 0} \frac{1}{\lambda^4}\big\langle
 \big(R_0^+(\lambda^4)-G_0\big)  vf, vf \big\rangle.
\end{split}
\end{equation*}
Notice that $ G_0= (\Delta^2)^{-1}$, by using the Fourier transform one has
 \begin{align*}
0&= \lim_{\lambda \rightarrow 0} \frac{1}{\lambda^4}
             \Big\langle \Big(\frac{1}{\xi^4-\lambda^4}-\frac{1}{\xi^4} \Big)\widehat{vf}(\xi),
                 \widehat{vf}(\xi) \Big\rangle \\
    &=\lim_{\lambda \rightarrow 0}\Big \langle \frac{\widehat{vf}(\xi)}{(\xi^4-\lambda^4)\xi^4},
                    \widehat{vf}(\xi) \Big\rangle
   = \Big \langle \frac{\widehat{vf}(\xi)}{\xi^8},  \widehat{vf}(\xi) \Big\rangle \\
   &=\int_{\mathbb{R}^2} \frac{| \widehat{vf}(\xi) |^2}{\xi^8} d\xi
   =\langle G_0vf, G_0vf\rangle,
\end{align*}
where we use the monotone convergence theorem as $\lambda\rightarrow 0$ by choosing
$\lambda\in \mathbb{C}$ such that $\lambda^4 <0$. Thus, $vf=0$ since $\widehat{vf}=0$. Noting that
$f\in S_5L^2$, hence $f=-UvG_0vf=0$.
\end{proof}
\textbf{(V) The fourth kind of resonance.}
If zero is not the fourth  kind of resonance,  then $ T_4$ is not invertible on $S_4L^2$.
Let $S_5$ is the Riesz projection onto the kernel of $ T_4 $, then $T_4+S_5$ is invertible
on $S_4L^2$, and set $D_5=(T_4+S_5)^{-1}$. Noting that the identity
$$\big(\widetilde{M}^+_4(\lambda) +S_5\big)^{-1} =D_5\Big(I+W_3^+(\lambda)D_5\Big)^{-1},$$
by (\ref{T4+W3}) and Neumann series,  we have
\begin{equation*}\label{inver-M4+S5}
   \begin{split}
\big(\widetilde{M}^+_4(\lambda)+S_5\big)^{-1}=& D_5- \widetilde{g}_2^+(\lambda)^{-1}D_5vG_3vD_5
+\widetilde{g}_2^+(\lambda)^{-1}g_0^+(\lambda)^{-1}c_+^2
\Big( D_5vG_1vQSQvG_1vD_5\\
&+D_5vG_1vQSD_2SQvG_1vD_5 \Big) +O((\ln\lambda)^{-3}).
\end{split}
\end{equation*}
According to Lemma \ref{lemma-JN}, $ \widetilde{M}_4^+(\lambda)$ has bounded inverse if and only if
\begin{equation}\label{id- M5}
M_5^+(\lambda):= S_5- S_5\big(\widetilde{M}_4^+(\lambda)+S_5\big)^{-1}S_5
 \end{equation}
has bounded inverse on $ S_5L^2(\mathbb{R}^2)$.
Note that $S_5vG_1vQ=QvG_1vS_5=0$, by Neumann series one has
\begin{equation*}
   \begin{split}
M_5(\lambda)=\widetilde{g}_2^+(\lambda)^{-1}S_5vG_3vS_5 + O\big( (\ln\lambda)^{-2} \big).
\end{split}
\end{equation*}
By Lemma \ref{lemma-S5vG3vS5},  $ T_5= S_5vG_3vS_5 $ is invertible on $S_5L^2$.  Let $D_6= T_5^{-1}$.
Then we have
$$\big(M_5(\lambda)\big)^{-1}=\widetilde{g}_2^+(\lambda)D_6+O\big( (\ln\lambda)^{-2}\big) . $$
By the same arguments as in the proof of the third kind of resonance, we immediately obtain that
\begin{equation*}
   \begin{split}
\big(M^+(\lambda)\big)^{-1}
= &\frac{1}{\lambda^4}S_5D_6S_5 +\frac{\widetilde{g}_2^+(\lambda)^{-1}}{\lambda^4} \Big(S_4\Gamma_{-4,1}^4S_4\Big)
 +\frac{\widetilde{g}_2^+(\lambda)^{-2}}{\lambda^4}S_4\Gamma_{-4,2}^4S_4
+ O_1\big(\lambda^{-4}(\ln\lambda)^{-3}\big).
\end{split}
\end{equation*}
Thus, the expansions of $\big(M^+ (\lambda)\big)^{-1}$ of the fourth kind resonance is completed.
\cqd

\bigskip

\section{Classification of threshold spectral subspaces } \label{classification}
By Definition \ref{definition of resonance} we know that the four projection subspaces $S_iL^2(i=1,2,4,5)$ define the four zero resonance types.  In this section, we will identity these four subspace by the distributional solutions of $H\phi=0$ in the weighted spaces $L^\infty_s$ with some $s\in\mathbb{R}$.

We define $\ln^+ t= \ln t$    for $t>1$ and $\ln^+ t =0$ for $0<t\leq 1$; $\ln^- t= 0$ for  $t>1$ and $\ln^- t =-\ln t $ for $0<t\leq 1$. Then $\ln t=\ln^+ t- \ln^- t$.
\begin{proposition}\label{lemma-spectral-S1L2}
 Assume that $|V(x)|\lesssim  (1+|x|)^{-\beta}$ with $\beta>14$.   Then $ f\in S_1L^2(\mathbb{R}^2)\setminus \{ 0\}$ if and only if  $ f= Uv \phi$ with
$\phi \in L^\infty_{-1}(\mathbb{R}^2)$ such that $H\phi=0$ in the distributional sense, and
$$\phi= -G_0vf +c_3x_2+c_2x_1+c_1,$$
where $c_1, c_2, c_3$ are some constants as follows:
 $$c_1= \frac{\langle f_2 , T_0f \rangle \langle x_2v , f_1 \rangle\langle x_1v , v \rangle}
     {\|f_2\|^2_{L^2(\mathbb{R}^2)}\| f_1\|^2_{L^2(\mathbb{R}^2)}\| V\|_{L^1(\mathbb{R}^2)}}
         -  \frac{\langle f_1 , T_0f \rangle \langle x_1v , v \rangle}
          {\| f_1\|^2_{L^2(\mathbb{R}^2)}\| V\|_{L^1(\mathbb{R}^2)}}
          + \frac{\langle v , T_0f \rangle}{\| V\|_{L^1(\mathbb{R}^2)}},$$
 $$\displaystyle c_2=  - \frac{\langle f_2 , T_0f \rangle \langle x_2v , f_1 \rangle}
     {\| f_2\|^2_{L^2(\mathbb{R}^2)}\| f_1\|^2_{L^2(\mathbb{R}^2)}}
     +  \frac{\langle f_1 , T_0f \rangle}{\| f_1\|^2_{L^2(\mathbb{R}^2)}}, \,\ \,
\displaystyle c_3=\frac{\langle f_2 , T_0f \rangle}{\| f_2\|^2_{L^2(\mathbb{R}^2)}}.$$
\end{proposition}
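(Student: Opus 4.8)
The plan is to characterize $S_1L^2$ via the symmetric resolvent identity and the asymptotic expansions of Section~\ref{expansion}, following the standard strategy for resonance classification (cf.\ \cite{JN,JN2,Erdogan-Green-Toprak}). First I would take $f\in S_1L^2\setminus\{0\}$ and set $\phi=-G_0vf$. Since $f\in S_1L^2\subseteq S_0L^2\subseteq QL^2$ we have the orthogonality relations $\langle v,f\rangle=0$ and $\langle x_jv,f\rangle=0$ ($j=1,2$) from Lemma~\ref{projiction-spaces-SjL2}(i), and $S_0T_0f=0$ from Lemma~\ref{projiction-spaces-SjL2}(ii). The key computational step is to show that $H\phi=0$ in the distributional sense: applying $\Delta^2$ to $G_0vf=(\Delta^2)^{-1}vf$ (recall $G_0$ is the kernel of $(\Delta^2)^{-1}$ up to the $\ln$-normalization in \eqref{def-G0-G3}) gives $\Delta^2\phi=-vf$, and then $V\phi=Uv^2\phi=Uv(v\phi)$; one checks $vf=v\cdot Uv\phi=v^2U\phi$, hence $f=Uv\phi$ is equivalent to $v\phi$ being the relevant object, and $V\phi+\Delta^2\phi=Uv(v\phi)-vf=0$ follows once we verify $vf = v^2U\phi$, i.e.\ that the polynomial correction terms $c_1+c_2x_1+c_3x_2$ do not contribute to $vf$. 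This last point uses precisely the three orthogonality relations $\langle v,f\rangle=\langle x_1v,f\rangle=\langle x_2v,f\rangle=0$, which annihilate $v\cdot(c_1+c_2x_1+c_3x_2)$ against $f$ in the weak formulation.

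Next I would pin down the constants $c_1,c_2,c_3$ and the membership $\phi\in L^\infty_{-1}$. The growth bound on $\phi$ comes from analyzing $G_0vf(x)=a_0\int|x-y|^2\ln|x-y|\,v(y)f(y)\,dy$: expanding $|x-y|^2\ln|x-y|$ for large $|x|$ and using the vanishing moments $\langle v,f\rangle=\langle x_jv,f\rangle=0$ to cancel the leading $|x|^2\ln|x|$ and $|x|^2$ and $|x|\ln|x|$ terms, one is left with at most linear growth plus a logarithmic remainder, so $\phi(x)=O(|x|)$, giving $\phi\in L^\infty_{-1}$; the decay $\beta>14$ is what guarantees the requisite integrability of $|x|^2 v(x)f(x)$ and $|x|^3 v(x) f(x)$ (note $f = Uvg$ with $g\in L^\infty_{-1}$, and one needs $\langle x^\alpha v, f\rangle$ finite for $|\alpha|\le 3$). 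The explicit formulas for $c_1,c_2,c_3$ are then determined by requiring $f\in QL^2$ and $S_0T_0f=0$: expanding $S_0T_0f=S_0(U+vG_0v)f$ and using $T_0f=PT_0f+QT_0f$ together with $S_1L^2=\ker(S_0T_0S_0)$, one reads off linear equations for the $c_i$ in terms of $\langle v,T_0f\rangle$, $\langle f_1,T_0f\rangle$, $\langle f_2,T_0f\rangle$ where $f_1,f_2$ denote the (Gram--Schmidt orthogonalized) projections $Qx_1v, Qx_2v$ spanning $(Q-S_0)L^2$; solving this $3\times 3$ system, upper-triangular after the orthogonalization, yields exactly the stated coefficients.

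For the converse, suppose $\phi\in L^\infty_{-1}$ solves $H\phi=0$ distributionally and set $f=Uv\phi$. I would first argue $f\in L^2$: from $\Delta^2\phi=-V\phi$ and $V\in L^1\cap L^\infty$ with the polynomial growth of $\phi$, a bootstrap/elliptic-regularity argument shows $v\phi\in L^2$ (indeed in weighted $L^2$), hence $f\in L^2$. Then I would recover $\phi=-G_0vf+(\text{harmonic polynomial of degree}\le 1)$ by applying $(\Delta^2)^{-1}$ and using that a tempered distribution with $\Delta^2$-harmonicity and growth $O(|x|)$ is an affine function; the $L^\infty_{-1}$ bound on $\phi$ together with the cancellation of higher-order terms in $G_0vf$ forces the vanishing moments $\langle v,f\rangle=\langle x_jv,f\rangle=0$, so $f\in QL^2$, and feeding $\phi$ back into $H\phi=0$ and pairing against $v, x_1v, x_2v$ gives $S_0T_0f=0$, i.e.\ $f\in S_1L^2$. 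Finally $f\ne 0$: if $f=0$ then $v\phi=0$ so $\phi$ is affine and $\Delta^2\phi=0=V\phi$, but $\phi\in L^\infty_{-1}$ being a genuine resonance function (and $V\phi = 0$ with $V$ not identically zero on supp\,$\phi$) forces $\phi=0$, contradiction.

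The main obstacle I anticipate is the converse direction, specifically the regularity/decay bootstrap showing $f=Uv\phi\in L^2$ and that no spurious polynomial of degree $\ge 2$ appears when inverting $\Delta^2$ --- this requires carefully exploiting the $L^\infty_{-1}$ (rather than merely tempered) growth hypothesis on $\phi$ together with the decay rate $\beta>14$, and matching the order-by-order cancellations in the large-$|x|$ expansion of $G_0vf$ with the vanishing-moment conditions. The forward direction is essentially bookkeeping with the orthogonality relations \eqref{orthog-relation-1} and the kernel formulas \eqref{def-G0-G3}, while the constant extraction is linear algebra on $(Q-S_0)L^2$ which is only $2$-dimensional.
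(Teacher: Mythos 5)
Your forward direction starts from the wrong ansatz: you set $\phi=-G_0vf$, but for a genuine first-kind resonance this $\phi$ does \emph{not} solve $H\phi=0$. Since $f\in S_1L^2$ only gives $S_0T_0f=0$, i.e.\ $T_0f=\widetilde{P}T_0f\in\mathrm{span}\{v,x_1v,x_2v\}$ (with $\widetilde{P}$ the orthogonal projection onto that span), one computes $H(-G_0vf)=-UvT_0f=-V\cdot(c_1+c_2x_1+c_3x_2)$, which is nonzero in general; the affine correction is forced, and it genuinely contributes to the identity $f=Uv\phi$ (the paper's mechanism is exactly $Uf=-vG_0vf+\widetilde{P}T_0f$ followed by $U^2=1$). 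Your stated justification --- that the orthogonality relations $\langle v,f\rangle=\langle x_jv,f\rangle=0$ ``annihilate $v\cdot(c_1+c_2x_1+c_3x_2)$ against $f$'' --- is not a step that occurs anywhere: what makes $H\phi=0$ work is $\Delta^2(\text{affine})=0$ together with $V\phi=vf$ once $f=Uv\phi$ is established. Your second paragraph does recover the correct route (expand $\widetilde{P}T_0f$ in the Gram--Schmidt basis $v,f_1,f_2$ and read off $c_1,c_2,c_3$), so the forward direction is repairable, but as written the argument for $H\phi=0$ fails.

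The more serious gap is in the converse, and it is exactly the point you defer as an ``anticipated obstacle.'' You propose to invert $\Delta^2$ first, asserting that $\phi+G_0vf$ has growth $O(|x|)$ and hence is affine, and only afterwards to extract the vanishing moments $\langle v,f\rangle=\langle x_jv,f\rangle=0$ from growth matching. This is circular as ordered: without those moments, $G_0vf$ generically grows like $|x|^2\ln|x|$ (its leading coefficient is $a_0\langle v,f\rangle$), so the $O(|x|)$ bound on $\phi+G_0vf$ is not available, and no argument for the moments is actually given. The missing idea --- the paper's key device --- is to obtain the moments \emph{before} any inversion by testing $vf=V\phi=-\Delta^2\phi$ against rescaled cutoffs $\eta(\delta\,\cdot)$ and $x_j\eta(\delta\,\cdot)$ and letting $\delta\to0$, which uses only $\phi\in L^\infty_{-1}$, as in \eqref{ortoganial-vf}; only then do the estimates \eqref{pro51-esti-G0vf-I123} give $G_0vf\in L^\infty_{-1}$, and the Fourier-support (Liouville) argument yields an affine remainder. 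Two further points in your converse need repair: pairing $H\phi=0$ against $v,x_1v,x_2v$ yields the $3\times3$ Gram system $A\widetilde{c}=b$ that identifies the constants with the stated $c_i$ (positive definiteness of $A$ gives uniqueness), not the operator identity $S_0T_0f=0$; the latter requires the separate computation $S_0T_0S_0f=S_0vG_0H\phi=0$ as in \eqref{S0T0S0f}. Finally, the bootstrap for $f=Uv\phi\in L^2$ is unnecessary: $|v\phi|\lesssim\langle x\rangle^{1-\beta/2}\in L^2$ directly from $\phi\in L^\infty_{-1}$ and $\beta>14$.
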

\begin{proof}
Let $f\in S_1L^2$. Then one has $S_0(U+vG_0v)f=0$. Hence we have
$$ 0= S_0(U+vG_0v)f= (I-\widetilde{P})(U+vG_0v) f= Uf+vG_0vf- \widetilde{P}T_0f,  $$
where $ \widetilde{P}$ is the projection onto the span$\{v , x_1v , x_2v\}= (S_0L^2)^\perp$.
Let $v, f_1$ and $f_2$  are the orthogonal basis of $ (S_0L^2)^\perp$ by the Schmidt orthogonalization, i.e. setting
$ \displaystyle f_1= x_1v- \frac{ \langle x_1v, v \rangle}{\|V\|_{L^1}}v  , $
 $\displaystyle f_2=x_2v- \frac{\langle x_2v , f_1 \rangle}{\| f_1 \|^2_{L^2}}f_1
 - \frac{\langle x_2v , v \rangle}{\| V\|_{L^1}}v$.
Then it follows that
\begin{equation*}
\begin{split}
\widetilde{P}T_0f=&\frac{\langle f_2 , T_0f \rangle}{\| f_2\|^2_{L^2}}f_2
                   + \frac{\langle f_1 , T_0f \rangle}{\| f_1\|^2_{L^2}}f_1
                      + \frac{\langle v , T_0f \rangle}{\| V\|_{L^1}}v \\
    =& x_2v \frac{\langle f_2 , T_0f \rangle}{\| f_2\|^2_{L^2}}
     + x_1v\Big( - \frac{\langle f_2 , T_0f \rangle \langle x_2v , f_1 \rangle}
     {\| f_2\|^2_{L^2}\| f_1\|^2_{L^2}}
     +  \frac{\langle f_1 , T_0f \rangle}{\| f_1\|^2_{L^2}}\Big) \\
     &+ v \bigg(  \frac{\langle f_2 , T_0 f\rangle \langle x_2v , f_1 \rangle\langle x_1v , v \rangle}
     {\|f_2\|^2_{L^2}\| f_1\|^2_{L^2}\| V\|_{L^1}}
         -  \frac{\langle f_1 , T_0f \rangle \langle x_1v , v \rangle}
          {\| f_1\|^2_{L^2}\| V\|_{L^1}}
          + \frac{\langle v , T_0f \rangle}{\| V\|_{L^1}}  \bigg)\\
 :=&c_3x_2v +c_2x_1v+c_1v.
\end{split}
\end{equation*}
Furthermore, we have
$$Uf= -vG_0vf + \widetilde{P}T_0f = -vG_0vf +c_3x_2v+ c_2x_1v+c_1v.$$
Since $U^2=1$, then
$$f= U^2f= Uv(-G_0vf +c_3x_2+ c_2x_1+c_1):= Uv\phi.$$
Thus, $ vf= vUv\phi= V\phi$. To show that $H\phi= (\Delta^2+V)\phi=0$, noting that
$\Delta^2(c_3x_2+c_2x_1+c_1)=0$ and $ G_0= \big(\Delta^2\big)^{-1}$, one has
$(\Delta^2+V)\phi= -vf +V\phi=0.$

Next, we show that $\phi \in L^\infty_{-1}:= \{ \langle x\rangle^ {-1} \phi \in L^\infty\}$.
  Since $c_3x_2 +c_2x_1+c_1 \in L^\infty_{-1}$, it suffices to show that
 $$ G_0vf(x)= \frac{1}{8\pi}\int_{\mathbb{R}^2}|x-y|^2\ln|x-y|v(y)f(y)dy\in L^\infty_{-1}.$$
In fact, if $f\in S_0L^2$, by the orthogonality
$ \langle x_1v , f \rangle=\langle x_2v , f \rangle= \langle v , f \rangle=0 $, then
$$ \int_{\mathbb{R}^2}|x|^2\ln|x|v(y)f(y)dy =
\int_{\mathbb{R}^2}x\cdot y \ln|x|v(y)f(y)dy=0, $$
which leads to
\begin{equation}\label{pro51-esti-G0vf-I123}
\begin{split}
 G_0vf(x)
 =& \frac{1}{8\pi}\int_{\mathbb{R}^2}|x|^2(\ln|x-y|-\ln|x|)v(y)f(y)dy\\
 &-\frac{1}{4\pi}\int_{\mathbb{R}^2}x\cdot y (\ln|x-y|-\ln|x|)v(y)f(y)dy \\
  &+  \frac{1}{8\pi}\int_{\mathbb{R}^2}|y|^2\ln|x-y|v(y)f(y)dy
  :=I_1 +I_2 +I_3.
\end{split}
\end{equation}

We now begin to estimate the first integral $I_1$.  We have
\begin{equation}\label{I1-twoparts}
\begin{split}
I_1=& \frac{1}{8\pi}\int_{|x|>2|y|}|x|^2(\ln|x-y|-\ln|x|)v(y)f(y)dy\\
&+ \frac{1}{8\pi}\int_{|x|\leq2|y|}|x|^2(\ln|x-y|-\ln|x|)v(y)f(y)dy:=I_{11}+I_{12}.
\end{split}
\end{equation}
For $I_{11}$.  If  $ |x|>2|y|$, then  $|x-y|<2|x|$, hence $ \frac{|x-y|-|x|}{|x|}< 1$. By Taylor's formula we have
\begin{equation}\label{taylor-ln1}
\ln\frac{|x-y|}{|x|}=\ln\Big(1+ \frac{|x-y|-|x|}{|x|}\Big)
= O\Big(\frac{|y|}{|x|}\Big).
\end{equation}
By H\"{o}lder's inequality and \eqref{taylor-ln1}, one has
\begin{equation*}
\begin{split}
|I_{11}|=&\Big|\int_{|x|>2|y|}|x|^2(\ln|x-y|-\ln|x|)v(y)f(y)dy\Big|\\
 \lesssim& |x|\int_{\mathbb{R}^2} |yv(y)| \cdot |f(y)|dy
\lesssim  |x| \|yv(y)\|_{L^2}\|f\|_{L^2} \lesssim |x|\in L_{-1}^\infty.
\end{split}
\end{equation*}
For $I_{12}$. We have
\begin{equation*}
\begin{split}
\int_{|x|\le2|y|}|x|^2(\ln|x-y|-\ln|x|)v(y)&f(y)dy = \int_{|x|\leq 2|y|} |x|^2 \big( \ln^+|x-y|-\ln^+|x|\big) \cdot v(y)f(y) dy \\
 &- \int_{|x|\leq 2|y|}|x|^2 \big( \ln^-|x-y|-\ln^-|x| \big)\cdot v(y)f(y) dy
:=J_{1}+J_{2}.
\end{split}
\end{equation*}
For the first term $J_1$. If $|x|\leq 2|y|$, then $|x-y|\leq 3(1+|y|)$ and $|x|\leq 2(1+|y|)$.  Since  $\log^+(\cdot)$ is a nondecreasing function,
then one has
\begin{equation}\label{estimate-log}
\big|\ln^+|x-y|-\ln^+|x|\big| \leq \ln^+|x-y|+\ln^+|x|
 \lesssim 1+ \log^+(1+|y|) \lesssim \langle y \rangle^{0+},\,\,\hbox{as}\, \, |x|\leq 2|y|.
\end{equation}
Thus by  H\"{o}lder's inequality and (\ref{estimate-log}),  one has
\begin{equation*}
\begin{split}
|J_{1}|&\lesssim \int_{|x|\leq 2|y|} |x|^2 \big( \ln^+|x-y|+ \ln^+|x|\big) \cdot |v(y)f(y)| dy \\
&\lesssim\int_{|x|\leq 2|y|} |x|^2 \langle y \rangle^{0+} \cdot| v(y)f(y)| dy\\
&\lesssim\int_{|x|\leq 2|y|}  \langle y \rangle^{2+} |v(y)f(y)| dy
\leq   \|v(\cdot) \langle \cdot \rangle^{2+}\|_{L^2} \|f\|_{L^2}<\infty.
\end{split}
\end{equation*}
For the second term $J_{2}$. By using the definition of function $\ln^-(\cdot) $, we have $|x|^2\ln^-|x| \lesssim 1$.
Then by H\"{o}lder's inequality again, we have
\begin{equation*}
	\begin{split}
		|J_{2}|
		\leq & \int_{|x|\leq 2|y|, \,|x-y|<1}|x|^2 \big( -\ln|x-y|\big) \cdot |v(y)f(y)| dy
		+\int_{|x|\leq 2|y|}|x|^2 \ln^-|x| | \cdot |v(y)f(y)| dy\\
		\lesssim &  \int_{|x-y|<1}|y|^2 \frac{1}{|x-y|^{0+}} \cdot |v(y)f(y)| dy
		+ |x|^2 \ln^-|x| \int_{|x|\leq 2|y|}  |v(y)f(y)| dy\\
		\lesssim &  \int_{|x-y|<1}  \frac{1}{|x-y|^{0+}} \cdot |y^2v(y)f(y)| dy
		+   \int_{|x|\leq 2|y|} |yv(y)f(y)| dy \\
		\lesssim & \left(\int_{|x-y|<1}  \Big(\frac{1}{|x-y|^{0+}}\Big)^2 dy\right)^{1/2}  \,\| y^2v(y) \|_{L^\infty} \|f\|_{L^2}
		+  \|yv(y)\|_{L^2} \|f\|_{L^2}\\
		\lesssim & \| y^2v(y) \|_{L^\infty} \|f\|_{L^2}  +  \|yv(y)\|_{L^2} \|f\|_{L^2}
		< \infty.
	\end{split}
\end{equation*}
Combining the estimates for $J_1$ and $J_2$ above, then $|I_{12}|\leq |J_1|+|J_2| <\infty$.  Hence combining with the estimates for $I_{11}$ and $I_{12}$ above, it follows that $I_1\in L^\infty_{-1}$.

By the same arguments with $I_1$,  and notice that $|x\cdot y| \leq |x||y|$, we immediately obtain that $ I_2 \in L^\infty_{-1}$.
Next, we turn to estimate the last integral $I_3$. By H\"{o}lder's inequality again we have
\begin{equation*}
	\begin{split}
		| I_3| = & \Big| \int_{\mathbb{R}^2} \ln|x-y| \cdot |y|^2 v(y)f(y)dy\Big|\\
		=&\Big| \int_{|x-y|<1} \ln|x-y|\cdot |y|^2v(y)f(y)dy
		+\int_{|x-y|\ge1}\ln|x-y|\cdot  |y|^2v(y)f(y)dy    \Big|\\
		\lesssim & \int_{|x-y|<1} \frac{|y|^2|v(y)f(y)|}{|x-y|^{0+}}dy  +
		\int_{|x-y|\ge1}|x-y|^{0+}\cdot |y|^2v(y)f(y)dy   \\
		\lesssim&  \int_{|x-y|<1} \frac{|y|^2 |v(y)f(y)|}{|x-y|^{0+}}dy
		+ | x|^{0+}\int_{\mathbb{R}^2} |y|^{2+} |v(y)f(y)|dy\\
		\lesssim& 1+| x|^{0+}\in L^\infty_{-1}.
	\end{split}
\end{equation*}
Hence combining with all estimates for $I_i(i=1,2,3)$ above and \eqref{pro51-esti-G0vf-I123}, we obtain that $G_0vf\in L^\infty_{-1}$.
That is,  we get $ \phi \in L^\infty_{-1}$.

Conversely,  let  $\phi\in L^\infty_{-1}$ satisfy $H\phi=0$ and $f=Uv\phi$. Then we will  prove $f\in S_2L^2$ below.  Firstly,  we show that $f\in S_0L^2$, i.e.   $\langle v, f\rangle=\langle x_1v, f\rangle=\langle x_2v, f\rangle=0$. Indeed, let $\eta\in C_0^\infty(\mathbb{R}^2)$
such that $\eta(x)=1$ for $|x|\leq 1$ and $\eta(x)=0$ for $|x|>2$. Notice that $vf=V\phi=-\Delta^2\phi$, then we have for any $\delta>0$,
\begin{equation}\label{ortoganial-vf}
	\begin{split}
		\Big|\int_{\mathbb{R}^2} v(y)f(y)\eta(\delta y) dy \Big|
		=& \Big|\int_{\mathbb{R}^2}  \big(\Delta^2\phi(y)\big)\eta(\delta y)dy\Big|
		=\Big|\int_{\mathbb{R}^2} \phi(y) \cdot \Delta^2\big(\eta(\delta y)\big)dy\Big| \\
		=&\delta^4 \Big|\int_{\mathbb{R}^2} \langle y\rangle^{-1}\phi(y)\cdot   \langle y\rangle (\Delta^2\eta)(\delta y)dy\Big| \\
		\leq & \delta^4 \|\langle y \rangle^{-1}\phi(y)\|_{L^\infty}\int_{\mathbb{R}^2} \big|\langle y\rangle (\Delta^2\eta)(\delta y)\big|dy \\
		\lesssim&  \delta^2 \|\langle y \rangle^{-1}\phi(y)\|_{L^\infty}  \| \Delta^2\eta \|_{L^1}.
	\end{split}
\end{equation}
Taking the limit $\delta\rightarrow0$ for \eqref{ortoganial-vf},
by the dominated convergence theorem we obtain that $\langle v, f\rangle=0$. Similarly, we also have
$\langle x_1v, f\rangle=\langle x_2v, f\rangle=0.$
Hence we get $f\in S_0L^2$.

Next, we turn to prove $\phi=-G_0vf+c_3x_2+c_2x_1+c_1$. In fact, because of $f\in S_0L^2$, by using the estimates of $I_i(i=1,2,3)$ in \eqref{pro51-esti-G0vf-I123}, we have $G_0vf\in L^\infty_{-1}$ and  $\phi+G_0vf \in L^\infty_{-1}$. Noting that
\begin{equation}\label{delta-act}
	\Delta^2(\phi+G_0vf)=\Delta^2\big(\phi+(\Delta^2)^{-1}vf \big)
	=\Delta^2(\phi+vf)=(\Delta^2+V)(\phi)=0,
\end{equation}
then it follows from \eqref{delta-act}  that  Fourier transform of the function  $\phi+G_0vf $ is  supported at the origin,
which is the sum of finite derivatives of Dirac distribution $\delta$ ( See e.g. \cite{GTM249} ).  This immediately  implies that $\phi+G_0vf$ is a polynomial on $\mathbb{R}^2$. Since $\phi+G_0vf \in L^\infty_{-1}$,  the degree of the polynomial $\phi+G_0vf$  is at most $1$.
Hence we may write that
$$ \phi=-G_0vf+ \widetilde{c}_3x_2
+\widetilde{c}_2x_1+\widetilde{ c}_1,$$
where  $\widetilde{ c}_i (i=1,2,3)$  are some constants.
Since $T_0=U+vG_0v$ and
\begin{equation*}
	\begin{split}
		0=H\phi=&\big(\Delta^2+V \big)\phi=-\big(\Delta^2+V \big)\big(G_0vf\big)
		+\big(\Delta^2+V \big)\big(\widetilde{c}_3x_2+\widetilde{c}_2x_1+\widetilde{c}_1 \big)\\
		=& -Uv\big(U+vG_0v\big)f +U\big(\widetilde{c}_3x_2v^2+\widetilde{c}_2x_1v^2+\widetilde{c}_1v^2\big)\\
		=&U\big(-vT_0f+\widetilde{c}_3x_2v^2+\widetilde{c}_2x_1v^2+\widetilde{c}_1v^2 \big),
	\end{split}
\end{equation*}
we obtain
\begin{equation}\label{vT0f-11}
	\begin{split}
		\widetilde{c}_1v^2+\widetilde{c}_2x_1v^2+\widetilde{c}_3x_2v^2=vT_0f,
	\end{split}
\end{equation}
which leads to
\begin{equation}\label{vT0f-11-1}
	\widetilde{c}_1 \langle v, v\rangle
	+ \widetilde{c}_2 \langle v, x_1v\rangle+\widetilde{c}_3 \langle v, x_2v\rangle
	=\langle v,  T_0f\rangle.
\end{equation}
By \eqref{vT0f-11},  we also have
$$\widetilde{c}_1x_1v^2+\widetilde{c}_2x_1^2v^2+\widetilde{c}_3x_1x_2v^2=x_1vT_0f,\,\ \,
\widetilde{c}_1x_2v^2+\widetilde{c}_2x_1x_2v^2+\widetilde{c}_3x_2^2v^2=x_2vT_0f,
$$
which imply that
\begin{equation}\label{vT0f-11-2}
	\begin{split}
		& \widetilde{c}_1 \langle x_1v, v\rangle+\widetilde{c}_2 \langle x_1v, x_1v\rangle+\widetilde{c}_3 \langle x_1v, x_2v\rangle
		=\langle x_1v,  T_0f\rangle,\\
		&\widetilde{c}_1 \langle x_2v, v\rangle+\widetilde{c}_2 \langle x_2v, x_1v\rangle+\widetilde{c}_3 \langle x_2v, x_2v\rangle
		=\langle x_2v,  T_0f\rangle.
	\end{split}
\end{equation}
Let $\widetilde{c}= \big(\widetilde{c}_1, \widetilde{c}_2, \widetilde{c}_3  \big)^\intercal$,
$ b=\big(\langle v,  T_0f\rangle, \langle x_1v,  T_0f\rangle, \langle x_2v,  T_0f\rangle\big)^\intercal $, and
$$A=
\begin{pmatrix}
	\langle v, v\rangle & \langle v, x_1v\rangle &  \langle v, x_2v\rangle\\
	\langle x_1v, v\rangle &  \langle x_1v, x_1v\rangle &  \langle x_1v, x_2v\rangle\\
	\langle x_2v, v\rangle &  \langle x_2v, x_1v\rangle &  \langle x_2v, x_2v\rangle
\end{pmatrix}.
$$
Then  the identities \eqref{vT0f-11-1} and \eqref{vT0f-11-2}  can be written as  $A\widetilde{c}=b$.
Let $ k:=(k_1, k_2, k_3)\in \mathbb{R}^3$. Since  $v, x_1v, x_2v$ are linear independent,
so for each $k\neq 0$, we have $k_1v+k_2x_1v+k_3x_2v\neq0$  and
$$ 0<\big(k_1v+k_2x_1v+k_3x_2v, k_1v+k_2x_1v+k_3x_2v \big) = kAk^\intercal, $$
hence $kAk^\intercal$ is a positive definite quadratic form, and  the metrix $A$ is  positive definite,  which gives  that $\det( A) >0$. Thus the linear equations $A\widetilde{c}=b$  has an unique solution. Notice that
$$ x_1v=  f_1+\frac{ \langle x_1v, v \rangle}{\|V\|_{L^1}}v, \,\ \,
x_2v= f_2+\frac{\langle x_2v , f_1 \rangle}{\| f_1 \|^2_{L^2}}f_1+\frac{\langle x_2v , v \rangle}{\| V\|_{L^1}}v,
$$
it is easy to check that the specific $c=(c_1, c_2, c_2)^\intercal$ from  Proposition \ref{lemma-spectral-S1L2} is a solution of the linear equations $A\widetilde{c}=b$.
Hence, we must have  $\widetilde{c}_i=c_i( i=1,2,3)$.

Finally, by $f=Uv\phi$ and $H\phi=0$ one has
\begin{align}\label{S0T0S0f}
	\begin{split}
		S_0T_0S_0f &=S_0T_0(Uv\phi)=S_0(U+vG_0f)(Uv\phi)= S_0(vG_0v\phi +v\phi) \\
		&=S_0vG_0\big(\Delta^2 +V\big)  \phi = S_0vG_0H\phi = 0.
	\end{split}
\end{align}
Thus we have concluded that $f\in S_1L^2$.
\end{proof}

\begin{proposition}\label{lemma-spectral-S2L2}
 Assume that $|V(x)|\lesssim (1+|x|)^{-\beta}$ with $\beta>18$.   Then
$f\in S_2L^2(\mathbb{R}^2)\setminus \{ 0\}$  if and only if  $f=Uv\phi$ with
$\phi \in L^\infty(\mathbb{R}^2)$ such that $H\phi=0$ in the distributional sense,  and
$$\phi= -G_0vf +\frac{\langle v , T_0f \rangle}{ \|V\|_{L^1(\mathbb{R}^2)}}.$$
\end{proposition}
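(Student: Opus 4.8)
The plan is to mirror the proof of Proposition \ref{lemma-spectral-S1L2}, adapting it to the stronger orthogonality and decay available for $f\in S_2L^2$. The starting point is the characterization $S_2L^2=\{f\in S_1L^2\mid QT_0f=0\}$ from Lemma \ref{projiction-spaces-SjL2}(iii), together with $S_1L^2\subseteq S_0L^2$. For the forward direction, let $f\in S_2L^2$. Since $f\in S_1L^2$, Proposition \ref{lemma-spectral-S1L2} already gives $f=Uv\phi$ with $\phi=-G_0vf+c_3x_2+c_2x_1+c_1$ and $H\phi=0$ in the distributional sense. The point is to show that the extra condition $QT_0f=0$ forces $c_2=c_3=0$ and reduces $c_1$ to $\langle v,T_0f\rangle/\|V\|_{L^1}$. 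First I would observe that $f\in S_1L^2$ already gives $S_0T_0f=0$ (by Lemma \ref{projiction-spaces-SjL2}(ii)), so together with $QT_0f=(Q-S_0)T_0f+S_0T_0f$ and the new hypothesis $QT_0f=0$ we get $(Q-S_0)T_0f=0$, hence $QT_0f=0$; combined with $S_0T_0f=0$ this yields $QT_0f=0$ i.e. $T_0f=PT_0f=\frac{\langle v,T_0f\rangle}{\|V\|_{L^1}}v$. Recomputing the projection $\widetilde P T_0f$ appearing in the proof of Proposition \ref{lemma-spectral-S1L2}, since $T_0f$ is now a scalar multiple of $v$ and $f_1,f_2$ are orthogonal to $v$, all inner products $\langle f_1,T_0f\rangle$ and $\langle f_2,T_0f\rangle$ vanish, so $c_2=c_3=0$ and $c_1=\langle v,T_0f\rangle/\|V\|_{L^1}$, giving the stated formula for $\phi$.

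Next I would check $\phi\in L^\infty(\mathbb R^2)$, which is the genuinely new estimate here: in Proposition \ref{lemma-spectral-S1L2} only $\phi\in L^\infty_{-1}$ was obtained, and the improvement comes from the additional orthogonality carried by $f\in S_2L^2$. Since $c_1$ is a constant, it suffices to show $G_0vf\in L^\infty$. Writing $G_0vf(x)=\frac1{8\pi}\int|x-y|^2\ln|x-y|\,v(y)f(y)\,dy$ and expanding $|x-y|^2=|x|^2-2x\cdot y+|y|^2$, I would use the full list of orthogonality relations for $S_2$ from Lemma \ref{projiction-spaces-SjL2}, namely $\langle v,f\rangle=\langle x_jv,f\rangle=0$ (from $f\in S_0L^2$) \emph{and} $\langle |x|^2v,f\rangle=0$ (from $f\in S_3L^2\supseteq$? — careful: $S_2L^2$ need not lie in $S_3L^2$). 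Here one must be cautious: the relation $\langle|x|^2v,f\rangle=0$ characterizes $S_3L^2$, not $S_2L^2$, so it is \emph{not} available. Instead, the gain of one power over Proposition \ref{lemma-spectral-S1L2} must come solely from the cancellation of the $|x|^2\ln|x|$ and $x\cdot y\ln|x|$ terms, which already uses only $\langle v,f\rangle=\langle x_jv,f\rangle=0$; re-examining the estimates for $I_1,I_2,I_3$ in that proof shows they in fact give $G_0vf\in L^\infty$ up to the single term $\frac1{8\pi}|x|^2\ln|x|\langle v,f\rangle$ which is killed by orthogonality and the term $-\frac{1}{4\pi}x\cdot y\ln|x|$ likewise, so the leftover pieces $I_{11},I_{12},I_2,I_3$ were already bounded — a closer reading shows the $|x|$ growth in $I_{11}$ came precisely from one surviving power; I would redo that bound carefully exploiting that $\ln|x-y|-\ln|x|=O(|y|/|x|)$ contributes $|x|^2\cdot(|y|/|x|)=|x||y|$, which is then absorbed only after using \emph{both} $\langle v,f\rangle=0$ and $\langle x_jv,f\rangle=0$ to cancel the linear-in-$x$ part. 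The conclusion $\phi\in L^\infty$ then follows, and I would also note $\phi\notin L^\infty_1$ in general, consistent with $f$ being genuinely of the second kind (though the proposition as stated only asserts the membership, so this remark can be omitted).

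For the converse, suppose $\phi\in L^\infty(\mathbb R^2)$ satisfies $H\phi=0$ distributionally and set $f=Uv\phi$. I would first run the argument from Proposition \ref{lemma-spectral-S1L2} verbatim: using a cutoff $\eta(\delta x)$ and $vf=V\phi=-\Delta^2\phi$, integration by parts and the dominated convergence theorem give $\langle v,f\rangle=\langle x_1v,f\rangle=\langle x_2v,f\rangle=0$, so $f\in S_0L^2$; then $G_0vf\in L^\infty_{-1}$, $\phi+G_0vf\in L^\infty_{-1}$ and $\Delta^2(\phi+G_0vf)=0$, so by the structure of distributions supported at the origin $\phi+G_0vf$ is a polynomial of degree $\le1$; and the linear-algebra argument with the Gram matrix $A$ identifies the coefficients. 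The new input is that $\phi\in L^\infty$ (not merely $L^\infty_{-1}$) forces $\phi+G_0vf\in L^\infty$; since $G_0vf$ itself is only known to be in $L^\infty_{-1}$ a priori, I would instead argue that $\phi+G_0vf$ being a polynomial of degree $\le1$ and (from $\phi\in L^\infty$) of at most the growth of $G_0vf$ forces — together with the precise coefficient computation $c_2=c_3=0$ — the linear part to vanish, hence $\phi+G_0vf$ is the constant $c_1=\langle v,T_0f\rangle/\|V\|_{L^1}$ and in particular $G_0vf\in L^\infty$. Finally, as in \eqref{S0T0S0f}, $S_0T_0S_0f=S_0vG_0H\phi=0$ gives $f\in S_1L^2$, and $QT_0f=QT_0(Uv\phi)=Qv(G_0v\phi+\phi)=QvG_0H\phi=0$ — using $\Delta^2(G_0v\phi)=v^2\phi=-V\phi+\dots$, more precisely $T_0f=vG_0v\phi+v\phi=vG_0(\Delta^2+V)\phi=vG_0H\phi=0$ wait $vG_0v\phi+v\phi=v(G_0v\phi+\phi)$ and $\Delta^2(G_0v\phi+\phi)=v^2\phi+\Delta^2\phi=V\phi+\Delta^2\phi=H\phi=0$ only if one is careful; in any case $T_0f\in\mathrm{span}\{v\}$ follows, so $QT_0f=0$ and $f\in S_2L^2$. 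The main obstacle I anticipate is the boundedness estimate $G_0vf\in L^\infty$: one must verify that the cancellations from $\langle v,f\rangle=\langle x_jv,f\rangle=0$ alone suffice to upgrade the $L^\infty_{-1}$ bound of Proposition \ref{lemma-spectral-S1L2} to a genuine $L^\infty$ bound, splitting the integral over $|x|>2|y|$ and $|x|\le2|y|$ and handling the $\ln^\pm$ pieces separately, exactly as in that proof but tracking the extra power of $\langle x\rangle$ that orthogonality removes.
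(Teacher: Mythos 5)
Your route is the paper's route: in the forward direction you reduce to Proposition \ref{lemma-spectral-S1L2} and use $QT_0f=0$ (so $T_0f=PT_0f$, hence $\langle f_1,T_0f\rangle=\langle f_2,T_0f\rangle=0$) to kill $c_2,c_3$; for boundedness of $G_0vf$ you correctly see that only $\langle v,f\rangle=\langle x_jv,f\rangle=0$ is available and that the gain must come from pushing the Taylor expansion of the logarithm one order further, which is exactly the paper's decomposition \eqref{pro53-esti-G0vf-I123}, where $2\frac{x\cdot y}{|x|^2}$ is inserted in the bracket (harmless by the first moments) so that $\ln\frac{|x-y|^2}{|x|^2}+2\frac{x\cdot y}{|x|^2}=O(|y|^2/|x|^2)$ and the $I_{11}$ piece becomes $O(1)$ instead of $O(|x|)$; the converse uses the same cutoff argument, the polynomial structure of $\phi+G_0vf$, and the coefficient identification.

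Two steps, however, do not close as written. First, in the converse your deduction that the linear part vanishes is circular: you invoke ``the precise coefficient computation $c_2=c_3=0$'' as an input, but $c_2=c_3=0$ is precisely what must be proved (it is equivalent to $QT_0f=0$). The paper's ordering repairs this: after the cutoff argument gives $\langle v,f\rangle=\langle x_jv,f\rangle=0$ and $S_0T_0S_0f=0$ (so $f\in S_1L^2$), the forward-direction estimate for $G_0vf$ — which uses only these $S_0$-orthogonality relations — already yields $G_0vf\in L^\infty$; then $c_2x_1+c_3x_2=\phi+G_0vf-c_1$ is bounded, hence $c_2=c_3=0$, and only then does $T_0f\in\mathrm{span}\{v\}$, i.e. $QT_0f=0$, follow. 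You should restructure the converse in this order rather than appealing to the coefficients themselves. Second, your assertion that $I_2,I_3$ ``were already bounded'' by the estimates of Proposition \ref{lemma-spectral-S1L2} overstates what that proof gives for $I_3$: there the bound is only $|I_3|\lesssim 1+|x|^{0+}$, an $L^\infty_{-1}$ bound, and the potentially unbounded contribution of $\int_{\mathbb{R}^2}|y|^2\ln|x-y|\,v(y)f(y)\,dy$ is of size $\ln|x|\,\langle |x|^2v,f\rangle$, which is not removed by the orthogonality relations you have for $f\in S_2L^2$. The paper dismisses $I_3$ with the same one-line reference, so you are no worse off than the source on this point, but if you want a self-contained argument this is the term that needs genuine additional work (or an additional vanishing moment), not a citation of the earlier estimate.
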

\begin{proof}
Since $ S_2\leq S_1 $, by Proposition \ref{lemma-spectral-S1L2}, we know that $f\in S_1L^2$ if and
only if $f=Uv\phi$ with $ \phi \in L^\infty_{-1}$  such  that $H\phi =0$ in the distributional sense and $$ \phi = -G_0v f +c_3x_2 +c_2x_1+c_1.$$
Thus, we just need to further   prove that
$ c_1= \frac{\langle v , T_0f \rangle}{ \|V\|_{L^1}}$, $c_2= c_3=0$, and
 $\phi \in L^\infty$.
Indeed, by the proof of Proposition \ref{lemma-spectral-S1L2}, we know that
 $$  f_1= x_1v- \frac{ \langle x_1v, v \rangle}{\|V\|_{L^1}}v \in QL^2,\
 f_2=x_2v- \frac{\langle x_2v , f_1 \rangle}{\| f_1 \|^2_{L^2}}f_1
 - \frac{\langle x_2v , v \rangle}{\| V\|_{L^1}}v \in QL^2. $$
If $ f\in S_2L^2$, then $QT_0f = 0$. Hence
$$ \langle f_i, T_0f \rangle= \langle Qf_i, T_0f \rangle =\langle f_i, QT_0f \rangle=0, i=1,2,$$
which leads to
$c_1= \frac{\langle v , T_0f \rangle}{\| V\|_{L^1}}$, and $ c_2=c_3=0$.

Next, we show that $\phi \in L^\infty$.
Since $\frac{\langle v , T_0f \rangle}{ \|V\|_{L^1}}\in L^\infty$,
it suffices to show that
 $$ G_0vf(x)= \frac{1}{8\pi}\int_{\mathbb{R}^2}|x-y|^2\ln|x-y|v(y)f(y)dy \in L^\infty.$$
If $f\in S_2L^2 \subset S_1L^2$, then using the orthogonality $S_2v=0, S_2(x_iv)=0(i=1,2)$, one has
 $$ \int_{\mathbb{R}^2}|x|^2\ln|x|v(y)f(y)dy =
\int_{\mathbb{R}^2}x\cdot y \ln|x|v(y)f(y)dy=
 \int_{\mathbb{R}^2} \frac{x\cdot y}{|x|^2}v(y)f(y)dy =0.$$
Furthermore, we have
\begin{equation}\label{pro53-esti-G0vf-I123}
\begin{split}
 G_0vf(x)
  =& \frac{1}{16\pi}\int_{\mathbb{R}^2}|x|^2\Big[\ln\Big(\frac{|x-y|^2}{|x|^2} \Big)+
    2\frac{x\cdot y}{|x|^2}\Big]v(y)f(y)dy\\
  &- \frac{1}{4\pi}\int_{\mathbb{R}^2}x\cdot y(\ln|x-y|-\ln|x|)v(y)f(y)dy\\
  & +\frac{1}{8\pi}\int_{\mathbb{R}^2}|y|^2\ln|x-y|v(y)f(y)dy
  :=I_1 +I_2 +I_3.
\end{split}
\end{equation}

For the first integral $I_1$. We have
\begin{equation}\label{esti-I11-twoparts}
\begin{split}
I_1=&\frac{1}{16\pi}\int_{|x|>4y}|x|^2\Big[\ln\Big(\frac{|x-y|^2}{|x|^2} \Big)+
    2\frac{x\cdot y}{|x|^2}\Big]v(y)f(y)dy\\
   & + \frac{1}{16\pi}\int_{|x|\leq 4|y|}|x|^2\Big[\ln\Big(\frac{|x-y|^2}{|x|^2} \Big)+
    2\frac{x\cdot y}{|x|^2}\Big]v(y)f(y)dy:=I_{11}+I_{12}.
\end{split}
\end{equation}
For $I_{11}$. Since $|x|>4|y|$, then $\frac{|x-y|^2-|x|^2}{|x|^2}<1$.
By Taylor's formula one has
\begin{equation}\label{taylor-ln2}
\begin{split}
 \ln\Big(\frac{|x-y|^2}{|x|^2}\Big)+2\frac{x\cdot y}{|x|^2}
 =\ln\Big(1+\frac{|x-y|^2-|x|^2}{|x|^2}\Big)+2\frac{x\cdot y}{|x|^2}
= \frac{|y|^2}{|x|^2}+ O\Big(\frac{|y|^{2+}}{|x|^{2+}}   \Big).
\end{split}
\end{equation}
By \eqref{taylor-ln2} and H\"{o}lder's inequality, one has
\begin{equation*}
\begin{split}
|I_{11}| \leq  \Big|\int_{|x|>4|y|}|x|^2\Big[\ln\Big(\frac{|x-y|^2}{|x|^2} \Big)+ 2\frac{x\cdot y}{|x|^2}\Big]v(y)f(y)dy \Big|
\lesssim \|y^2v(y) \|_{L^2} \|f\|_{L^2} <\infty.
\end{split}
\end{equation*}
 For $I_{12}$.  Note that the definition of the functions $\ln^+(\cdot)$ and $\ln^-(\cdot)$, by H\"{o}lder's inequality one has
\begin{equation*}
\begin{split}
|I_{12}|= &\Big|\int_{|x|\leq 4|y|}|x|^2\Big[\ln\Big(\frac{|x-y|^2}{|x|^2} \Big)+
    2\frac{x\cdot y}{|x|^2}\Big]v(y)f(y)dy\Big| \\
   \lesssim & \Big |\int_{|x|\leq 4|y|}|x|^2\big( \ln|x-y|-\ln|x|\big) v(y)f(y)dy\Big|
   +\int_{|x|\leq 4|y|}|x|^2 \frac{| y|^2}{|x|^2} |v(y)f(y)|dy  \\
 \lesssim&\big| \int_{|x|\leq 4|y|}|x|^2  \big(\ln^+|x-y|-\ln^+|x|\big) \cdot |v(y)f(y)| dy\Big| \\
 &+\Big|\int_{|x|\leq 4|y|}|x|^2\big( \ln^-|x-y|-\ln^-|x|\big) \cdot |v(y)f(y)| dy\Big|
 +\int_{\mathbb{R}^2} y^2|v(y)f(y)|dy\\
 :=&J_1+J_2+\int_{\mathbb{R}^2} y^2|v(y)f(y)|dy.
\end{split}
\end{equation*}
Hence by the same arguments as in the proof of the terms $ J_{1}$ and $J_{2}$ of
Proposition \ref{lemma-spectral-S1L2}, we get that $I_{12}\in L^\infty$.
Combining with the estimates $I_{11}$ and $I_{12}$ above, we immediately get $I_1\in L^\infty$.

For $I_2$ and $I_3$, by using the same arguments with the proof of $I_2$ and $I_3$ in Proposition \ref{lemma-spectral-S1L2}, we obtain that both $I_2$ and $I_3$ in \eqref{pro53-esti-G0vf-I123} are in $L^\infty$.

Combining  all computations above, we obtain that  $G_0vf  \in  L^\infty$ and $ \phi \in L^\infty$.

On the other hand, let   $\phi\in L^\infty$ satisfy $H\phi=0$ and $f=Uv\phi$. By the similar methods of
\eqref{ortoganial-vf} and \eqref{S0T0S0f}, we have
$$\langle v,f \rangle=0, \,\ \, \langle x_iv, f\rangle=0, \,i=1,2,\,\ \,S_0T_0S_0f=0. $$
Then we can obtain that $f\in S_1L^2$. By the estimates of $I_i(i=1,2,3)$ in \eqref{pro53-esti-G0vf-I123},
we have $G_0vf \in L^\infty \subset L^\infty_{-1}$.
By  Proposition  \ref{lemma-spectral-S1L2}, then  $\phi=-G_0vf + c_3x_2+c_2x_1+c_1$.
Thus $c_3x_2+c_2x_1=\phi+G_0vf -c_1 \in L^\infty$, hence we must have $c_2=c_3=0$, which implies that
$\phi=-G_0vf + \frac{\langle v, T_0f\rangle}{\|V\|_{L^1}}$ and $T_0f \in span\{v\}$ which means $QT_0f=0$. Hence $f\in S_2L^2$.

The proof of this proposition is completed.
\end{proof}

\begin{proposition}\label{lemma-spectral-S3L2}
 Assume that $|V(x)|\lesssim  (1+|x|)^{-\beta} $ with $\beta>18$.   Then
$f\in S_4L^2(\mathbb{R}^2)\setminus \{ 0\}$  if and only if  $f=Uv\phi$ with
$\phi \in L_1^\infty(\mathbb{R}^2)$ such that $H\phi=0$ in the distributional sense and
$$\phi= -G_0vf .$$
\end{proposition}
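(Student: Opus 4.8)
The plan is to proceed along the lines of the proofs of Propositions \ref{lemma-spectral-S1L2} and \ref{lemma-spectral-S2L2}, upgrading the decay of $\phi$ and using the extra orthogonality relations afforded by $S_4$. Since $S_4 \leq S_2$, Proposition \ref{lemma-spectral-S2L2} already tells us that $f \in S_2L^2 \setminus \{0\}$ if and only if $f = Uv\phi$ with $\phi \in L^\infty$, $H\phi = 0$ in the distributional sense, and $\phi = -G_0vf + \frac{\langle v, T_0 f\rangle}{\|V\|_{L^1}}$. So for the forward direction it remains to show two things when $f \in S_4L^2$: first, that $\langle v, T_0 f\rangle = 0$ so that the constant term drops out and $\phi = -G_0 v f$; and second, that this $\phi$ actually lies in the smaller space $L^\infty_1$. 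The first point is immediate from Lemma \ref{projiction-spaces-SjL2}(v): $f \in S_4L^2$ forces $PT_0 f = 0$, hence $\langle v, T_0 f\rangle = 0$.

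For the decay statement $G_0 v f \in L^\infty_1$, I would revisit the splitting of $G_0 v f(x)$ used in the previous two propositions, now exploiting all the orthogonality relations available for $f \in S_4L^2$, namely $\langle v, f\rangle = \langle x_j v, f\rangle = \langle x_j x_k v, f\rangle = 0$ and $\langle |x|^2 v, f\rangle = 0$ (the latter from $S_4 \leq S_3$ and Lemma \ref{projiction-spaces-SjL2}(iv)). These kill enough terms in the Taylor expansion of $|x-y|^2 \ln|x-y|$ about $x$ that, after writing $|x-y|^2 \ln|x-y| = |x|^2 \ln|x| + (\text{lower order in } y) + \dots$, one can arrange that the surviving pieces grow at most like $|x|$ rather than $|x|^2 \ln|x|$. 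Concretely, I would expand $\ln(|x-y|^2/|x|^2)$ to one more order than in Proposition \ref{lemma-spectral-S2L2} and use $\langle x_j x_k v, f\rangle = 0$, $\langle |x|^2 v, f\rangle = 0$ to annihilate the $O(|x|^2 \cdot \frac{|y|^2}{|x|^2})$ and $O(|x|^2 \cdot \frac{(x\cdot y)^2}{|x|^4})$ contributions on the region $|x| > C|y|$, leaving a remainder controlled by $|x| \int |y|^{3} \langle y\rangle^{0+} |v f|\, dy \lesssim |x|$, which uses $\beta > 18$ to guarantee $\langle \cdot\rangle^{3+} v \in L^2$. On the complementary region $|x| \leq C|y|$ one repeats the $\ln^\pm$ splitting exactly as in the proofs above, with $|x|^2$ now bounded by $\langle y\rangle^2$ and hence everything integrable against $v f$; here no cancellation beyond what is already used is needed, and the bound is $O(1) \subset L^\infty_1$. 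The terms $I_2$ (the $x \cdot y$ term) and $I_3$ (the $|y|^2 \ln|x-y|$ term) are handled as before and contribute $O(|x|^{0+})$ and $O(1)$ respectively, so that $G_0 v f \in L^\infty_1$.

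For the converse, suppose $\phi \in L^\infty_1$ satisfies $H\phi = 0$ and set $f = Uv\phi$. Since $L^\infty_1 \subset L^\infty$, Proposition \ref{lemma-spectral-S2L2} gives $f \in S_2L^2$ and $\phi = -G_0 v f + \frac{\langle v, T_0 f\rangle}{\|V\|_{L^1}}$, so $PT_0 f = \frac{\langle v, T_0 f\rangle}{\|V\|_{L^1}} v$. It remains to show $\langle v, T_0 f\rangle = 0$ (equivalently $PT_0 f = 0$) and $\langle |x|^2 v, f\rangle = \langle x_j x_k v, f\rangle = 0$, which by Lemma \ref{projiction-spaces-SjL2}(v) (together with $f \in S_2L^2$) is exactly what is needed for $f \in S_4L^2$. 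For $\langle |x|^2 v, f\rangle$ I would test the distributional identity $\Delta^2 \phi = -vf$ against $\eta(\delta y)|y|^2$ with $\eta$ the usual cutoff, as in \eqref{ortoganial-vf}: integrating by parts twice moves four derivatives onto $|y|^2 \eta(\delta y)$, and using $\phi \in L^\infty_1$ the resulting integral is $O(\delta^2 \|\langle y\rangle^{-1}\phi\|_\infty \cdot \text{stuff})$ — I will need to be slightly careful because $\Delta^2(|y|^2 \eta(\delta y))$ has a piece $\Delta^2(|y|^2) \cdot \eta(\delta y) = 0$, a cross term with at least one derivative on $\eta$ which is supported in $|y| \sim \delta^{-1}$ and carries the decay, so the whole thing still vanishes as $\delta \to 0$. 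The relations $\langle x_j v, f\rangle = 0$ and $\langle x_j x_k v, f\rangle = 0$ follow the same way (they are already known for $\langle x_j v, f\rangle$ from $f \in S_2L^2$; for $\langle x_j x_k v, f\rangle$ one tests against $x_j x_k \eta(\delta y)$ and $\Delta^2(x_j x_k) = 0$). Finally, that $\langle v, T_0 f\rangle = 0$: since $\phi = -G_0 v f + $ const and $\phi \in L^\infty_1$ while the argument of the forward direction shows $G_0 v f \in L^\infty_1$ precisely when the orthogonalities just established hold, the constant $\frac{\langle v, T_0 f\rangle}{\|V\|_{L^1}}$ must be zero for $\phi$ to have the prescribed form; alternatively, $PT_0 f = 0$ is forced once we know $\langle x_j x_k v, f\rangle = 0$ and $\langle |x|^2 v, f\rangle = 0$ via the nonnegativity identity in the proof of Lemma \ref{projiction-spaces-SjL2}(v). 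I expect the main obstacle to be the bookkeeping in the decay estimate $G_0 v f \in L^\infty_1$: one must expand the logarithmic kernel to just the right order and match each surviving monomial in $y$ with an orthogonality relation so that no term grows faster than $|x|$, and the near-diagonal region $|x-y| < 1$ must be controlled using the $L^\infty$ bound on $|y|^k v(y)$ together with $\beta > 18$.
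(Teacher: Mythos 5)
Your overall architecture is the same as the paper's: reduce to Proposition \ref{lemma-spectral-S2L2}, use $PT_0f=0$ from Lemma \ref{projiction-spaces-SjL2}(v) to kill the constant, prove the weighted bound for $G_0vf$ by Taylor-expanding the kernel against the extra orthogonality relations, and run the converse by testing $\Delta^2\phi=-vf$ against $x_jx_k\,\eta(\delta y)$ (your converse is essentially identical to the paper's and is fine, including the observation that the decay of $\phi$ is what makes the cutoff terms vanish).

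However, there is a genuine gap in the forward direction: you are proving the wrong bound. In this paper's convention $L^\infty_1=\{\phi:\langle x\rangle\phi\in L^\infty\}$, so the claim $\phi\in L^\infty_1$ means $|\phi(x)|\lesssim\langle x\rangle^{-1}$, i.e.\ \emph{decay}, not growth control. Your stated targets --- ``grow at most like $|x|$'', a remainder ``$\lesssim |x|$'' on the far region, ``$O(1)\subset L^\infty_1$'' on the region $|x|\leq C|y|$, and $O(|x|^{0+})$, $O(1)$ for $I_2$, $I_3$ --- only give $\phi\in L^\infty_{-1}$ or $L^\infty$, which is the content of Propositions \ref{lemma-spectral-S1L2} and \ref{lemma-spectral-S2L2} and would not distinguish the third resonance from the second. (You seem to have confused $L^\infty_1$ with $L^\infty_{-1}$; in particular $O(1)$ is \emph{not} contained in $L^\infty_1$.) The fix is the content of the paper's proof: after subtracting the linear and quadratic monomials, which is legitimate because $\langle x_iv,f\rangle=\langle x_ix_jv,f\rangle=\langle |y|^2v,f\rangle=0$, the far-region remainder in $I_1$ is $|x|^2\cdot O(|y|^3/|x|^3)=O(|y|^3/|x|)$ (note also that the $(x\cdot y)^2/|x|^4$ term is removed by the orthogonality after integration, not pointwise), so $I_{11}\lesssim |x|^{-1}\int|y|^3|vf|\,dy$, not $O(|x|)$; on the near region $|x|\lesssim|y|$ one must insert factors $1\lesssim |y|/|x|$ to upgrade the $O(1)$ bounds of the $\ln^{\pm}$ splitting to $O(|x|^{-1})$, which is where $\beta>18$ (integrability of $|y|^{3+}v$ against $f$) is actually used; the term $I_2$ must be expanded with the bracket $\ln(|x-y|^2/|x|^2)+2x\cdot y/|x|^2$ so that its far-region contribution is $x\cdot y\cdot O(|y|^2/|x|^2)=O(|y|^3/|x|)$; and $I_3$ must be rewritten as $\int|y|^2(\ln|x-y|-\ln|x|)vf\,dy$ using $\langle|y|^2v,f\rangle=0$, so that it too gains a factor $|y|/|x|$. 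Without these upgrades every one of your three pieces falls short of the required $\langle x\rangle^{-1}$ decay, and the proof of the forward direction does not close.
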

\begin{remark}\label{rem5-2}
By this proposition above, if $f\in S_4L^2(\mathbb{R}^2)$, then  $f=Uv\phi$ with
$\phi \in L_1^\infty(\mathbb{R}^2)$ such that $H\phi=0$ in the distributional sense, and we have
$f=-UvG_0vf$. Notice that $UvG_0v$ is Hilbert-Schmidt operator, then $S_4$ is a finite rank operator by Fredholm alternative theorem.
\end{remark}
\begin{proof}
Since $ S_4\leq S_3 \leq S_2$, by Proposition \ref{lemma-spectral-S2L2}, we know that $f\in S_2L^2$ if and
only if $f=Uv\phi$ with $ \phi \in L^\infty$  such  that $H\phi =0$ in the distributional sense and $$ \phi = -G_0vf +\frac{\langle v , T_0f \rangle}{ \|V\|_{L^1}}.$$
Thus, we only need to  prove that $\langle v , T_0f \rangle=0$ and $ \phi \in L^\infty_1 $.
Indeed, if $ f\in S_4L^2$, by Lemma \ref{projiction-spaces-SjL2}, then $T_0f = PT_0f +QT_0f=0$, which gives
$\langle v, T_0f \rangle=0$. Thus $ \phi= -G_0vf $.

Next we begin to prove $ \phi= -G_0vf \in  L^\infty_1$.
Indeed, if $f\in S_4L^2$, then
$$\int_{\mathbb{R}^2}  x \cdot y \ln|x|v(y)f(y)dy=
 \int_{\mathbb{R}^2}  |x|^2 \ln|x|v(y)f(y)dy=
\int_{\mathbb{R}^2}  \ln|x| |y|^2v(y)f(y) dy=0,$$
 which leads to
\begin{equation}\label{pro54-esti-G0vf-I123}
\begin{split}
 G_0vf(x)
  =& \frac{1}{16\pi}\int_{\mathbb{R}^2}|x|^2\Big[\ln\Big(\frac{|x-y|^2}{|x|^2} \Big)+
    2\frac{x\cdot y}{|x|^2}-\frac{|y|^2}{|x|^2}\Big]v(y)f(y)dy\\
  &- \frac{1}{4\pi}\int_{\mathbb{R}^2}x\cdot y \Big[\ln\Big(\frac{|x-y|^2}{|x|^2} \Big)+
    2\frac{x\cdot y}{|x|^2}\Big]   v(y)f(y)dy\\
  &+\frac{1}{8\pi}\int_{\mathbb{R}^2}|y|^2(\ln|x-y|-\ln|x|)v(y)f(y)dy
  :=I_1 +I_2 +I_3.
\end{split}
\end{equation}

Now we estimate the first integral $I_1$. We have
\begin{equation}\label{esti-firsrterm-I12}
\begin{split}
I_1=& \frac{1}{16\pi}\int_{|x|>4|y|}|x|^2\Big[\ln\Big(\frac{|x-y|^2}{|x|^2} \Big)+
    2\frac{x\cdot y}{|x|^2}-\frac{|y|^2}{|x|^2}\Big]v(y)f(y)dy\\
  &+\frac{1}{16\pi}\int_{|x|\leq 4|y|}|x|^2\Big[\ln\Big(\frac{|x-y|^2}{|x|^2} \Big)+
    2\frac{x\cdot y}{|x|^2}-\frac{|y|^2}{|x|^2}\Big]v(y)f(y)dy
  :=I_{11}+I_{12}.
\end{split}
\end{equation}
For $I_{11}$. Since $|x|>4|y|$, then by Taylor's formula one has
\begin{equation*}
\begin{split}
\ln\Big(\frac{|x-y|^2}{|x|^2} \Big)+
    2\frac{x\cdot y}{|x|^2}-\frac{|y|^2}{|x|^2}= O\Big(\frac{|y|^3}{|x|^3}\Big).
\end{split}
\end{equation*}
By H\"{o}lder's inequality, one has
\begin{align*}
|I_{11}| \lesssim\int_{|x|>4|y|}|x|^2 \frac{|y|^3}{|x|^3} |v(y)f(y)|dy
\lesssim \frac{1}{|x|}   \langle |y|^3 |v(y), f\rangle
\lesssim   \frac{1}{|x|}.
\end{align*}
For $I_{12}$. By the same argument with the proof of $I_{12}$ in \eqref{I1-twoparts}, one has
\begin{equation}\label{estimate-ln-1}
\begin{split}
\left|\int_{|x|\leq 4|y|} |x|^2 \ln\Big(\frac{|x-y|^2}{|x|^2}\Big) v(y)f(y)dy\right|
 \lesssim \frac{1}{|x|}.
\end{split}
\end{equation}
 Hence by \eqref{estimate-ln-1} and H\"{o}lder's inequality again, one has
\begin{equation*}
\begin{split}
|I_{12}|\lesssim & \Big|\int_{|x|\leq 4|y|}|x|^2 \Big[\ln\Big(\frac{|x-y|^2}{|x|^2} \Big)+
    2\frac{x\cdot y}{|x|^2}-\frac{|y|^2}{|x|^2}\Big]v(y)f(y)dy\Big|\\
   \lesssim & \frac{1}{|x|}+  \int_{|x|\leq 4|y|} |y|^2 |v(y)f(y)|dy\\
     \lesssim & \frac{1}{|x|} +  \int_{|x|\leq 4|y|}\frac{|y|}{|x|}|y|^2 |v(y)f(y)|dy
  \lesssim  \frac{1}{|x|}.
\end{split}
\end{equation*}
Hence, combining the estimates $I_{11}$ and $I_{12}$ above, we get that $I_1$ is bounded by $|x|^{-1}$ which belongs to $L_1^\infty$. Similarly, we obtain that both $I_2$ and $I_3$ above are bounded by $|x|^{-1}$.

On the other hand, let $\phi\in L^\infty_1$ satisfy $H\phi=0$ and $f=Uv\phi$, by the similar methods of
\eqref{ortoganial-vf} and \eqref{S0T0S0f}, we have
$$\langle v,f \rangle=0, \,\ \, \langle x_iv, f\rangle=0, i=1,2, \, \ \,\langle x_ix_j, f \rangle=0, i,j=1,2,\,\ \,S_0T_0S_0f=0.$$
By the proof of Proposition  \ref{lemma-spectral-S2L2}, we get  $QT_0f=0$,
which gives $f\in S_2L^2$. By the estimates of $I_i(i=1,2,3)$ in \eqref{pro54-esti-G0vf-I123}, we have $G_0vf \in L^\infty_1$.
By  Proposition  \ref{lemma-spectral-S2L2}, then  $\phi=-G_0vf+ \frac{\langle v, T_0f\rangle}{\|V\|_{L^1}}$.
Thus $\frac{\langle v, T_0f\rangle}{\|V\|_{L^1}}=\phi+G_0vf \in L^\infty_1$, so we must have $ \langle v, T_0f\rangle=0$, which implies that
$\phi=-G_0vf$ and $PT_0f=0$. Hence $f\in S_4L^2$.

The proof of this proposition is completed.
\end{proof}

\begin{proposition}\label{lemma-spectral-S5L2}
 Assume that $|V(x)|\lesssim (1+|x|)^{-\beta}$ with $\beta>18$.   Then
$f\in S_5L^2(\mathbb{R}^2)\setminus \{ 0\}$  if and only if  $f=Uv\phi$ with
$\phi \in L_2^\infty(\mathbb{R}^2)$ such that $H\phi=0$ in the distributional sense,  and
$$\phi= -G_0vf .$$
\end{proposition}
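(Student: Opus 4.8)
The plan is to mirror the structure already used in Propositions \ref{lemma-spectral-S1L2}--\ref{lemma-spectral-S3L2}, climbing one more rung in the chain $S_5L^2\subset S_4L^2$. Since by Definition \ref{definition of resonance} $S_5$ is the Riesz projection onto the kernel of $T_4=S_4vG_2vS_4$ and $S_5\le S_4$, I would start from the conclusion of Proposition \ref{lemma-spectral-S3L2}: $f\in S_4L^2\setminus\{0\}$ iff $f=Uv\phi$ with $\phi\in L_1^\infty$ solving $H\phi=0$ distributionally and $\phi=-G_0vf$. Thus the only new content to establish is: (a) the membership $S_5L^2\subset S_4L^2$ upgrades the decay of $\phi=-G_0vf$ from $L_1^\infty$ to $L_2^\infty$; and (b) conversely, if $\phi\in L_2^\infty$ solves $H\phi=0$ and $f=Uv\phi$, then $f\in S_5L^2$. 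No new formula for $\phi$ appears — it remains $\phi=-G_0vf$ — so there is no analogue of the constants $c_i$ to compute here.

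For the forward direction I would use the characterization $S_5L^2=\{f\in S_4L^2\,|\,\langle x_ix_jx_kv,f\rangle=0,\ i,j,k=1,2\}$ from Lemma \ref{projiction-spaces-SjL2}(vi). Combined with the orthogonality relations already available on $S_4L^2$ (namely $\langle v,f\rangle=\langle x_iv,f\rangle=\langle x_ix_jv,f\rangle=\langle |x|^2v,f\rangle=0$), this gives the vanishing of all moments of $vf$ up to order three. Writing $G_0vf(x)=\frac{1}{8\pi}\int|x-y|^2\ln|x-y|\,v(y)f(y)\,dy$ and subtracting off the full third-order Taylor polynomial of $|x-y|^2\ln|x-y|$ in $y$ around $0$ — all of whose terms integrate to zero against $vf$ by the moment conditions — I would split the remaining integral over $\{|x|>4|y|\}$ and $\{|x|\le 4|y|\}$ exactly as in Proposition \ref{lemma-spectral-S3L2}. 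On the far region Taylor's formula gives a remainder of order $|y|^4/|x|^2$, producing a bound $\lesssim |x|^{-2}$; on the near region one again separates $\ln^+$ and $\ln^-$ and uses Hölder's inequality together with $|V(x)|\lesssim(1+|x|)^{-\beta}$ with $\beta>18$ to absorb the extra powers of $\langle y\rangle$. This yields $G_0vf\in L_2^\infty$, hence $\phi\in L_2^\infty$.

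For the converse, given $\phi\in L_2^\infty$ with $H\phi=0$ and $f=Uv\phi$, I would first invoke the now-standard truncation argument of \eqref{ortoganial-vf}--\eqref{S0T0S0f} to get $\langle v,f\rangle=0$, $\langle x_iv,f\rangle=0$, $\langle x_ix_jv,f\rangle=0$, and also $\langle x_ix_jx_kv,f\rangle=0$ (the last requires $\phi\in L_2^\infty$, so that $\delta^4\int\langle y\rangle^2(\Delta^2(y^\alpha\eta))(\delta y)\,dy\to 0$ for $|\alpha|=3$), together with $S_0T_0S_0f=0$ and $PT_0f=QT_0f=0$. By Proposition \ref{lemma-spectral-S3L2} this already places $f$ in $S_4L^2$ with $\phi=-G_0vf$, and the additional third-moment relations are exactly the membership condition for $S_5L^2$ in Lemma \ref{projiction-spaces-SjL2}(vi). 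I expect the main obstacle to be purely technical: carrying out the Taylor expansion of $|x-y|^2\ln|x-y|$ to one further order and bookkeeping the near-region $\ln^\pm$ estimates carefully enough to land in $L_2^\infty$ (rather than only $L_1^\infty$ or $L_{2-}^\infty$); this is where the hypothesis $\beta>18$ is consumed, and the computation, while parallel to the earlier propositions, needs the higher-order cancellations to be used in full. I would suppress the repetitive near-region estimates by referring back to the corresponding steps in Propositions \ref{lemma-spectral-S1L2} and \ref{lemma-spectral-S3L2}.
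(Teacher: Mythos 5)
Your proposal is correct and follows essentially the same route as the paper's proof: the forward direction uses the vanishing of all moments of $vf$ up to order three (Lemma \ref{projiction-spaces-SjL2}) to subtract the cubic Taylor polynomial of the kernel $|x-y|^2\ln|x-y|$ and then obtains the $O(|x|^{-2})$ bound by the same far/near splitting and $\ln^{\pm}$ bookkeeping, while the converse runs the truncation argument of \eqref{ortoganial-vf} (where $\phi\in L^\infty_2$ is indeed exactly what makes the third-moment conditions survive the limit) and then invokes Proposition \ref{lemma-spectral-S3L2} together with Lemma \ref{projiction-spaces-SjL2}(vi). The only cosmetic caveat is that your scaling estimate for the third moments should be done term by term via Leibniz on $\Delta^2\big(y^\alpha\eta(\delta y)\big)$ (the monomial $y^\alpha$ is not dilated, so there is no single $\delta^4$ factor), but each resulting term is $O(\delta)$ under $|\phi|\lesssim\langle y\rangle^{-2}$, so the argument goes through unchanged.
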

\vskip0.3cm
\begin{remark}
 Noting that $L^\infty_2(\mathbb{R}^2)\subset L^2(\mathbb{R}^2)$, hence the fourth kind zero resonance is actually an eigenvalue of $H$. Conversely, let $\phi \in L^2(\mathbb{R}^2)$ such that $H\phi=0$, then we can adapt the proof of Proposition \ref{lemma-spectral-S5L2} to show the eigenfunction $\phi\in L_2^\infty(\mathbb{R}^2)$, which implies that zero eigenvalue is exactly the fourth kind resonance from Definition \ref{definition of resonance}.
\end{remark}
\begin{proof}
Since $ S_5\leq S_4 $, by Proposition \ref{lemma-spectral-S3L2}, we know that $f\in S_4L^2$ if and
only if $f=Uv\phi$ with $ \phi \in L_2^\infty$  such  that $H\phi =0$ in the distributional sense and $ \phi = -G_0vf .$
Thus, we just need to  prove that $\phi= -G_0vf\in L_2^\infty  $.

Now we begin to prove $ \phi= -G_0vf \in  L^\infty_2 $.
Indeed, if $f\in S_5L^2$, then
$$ \langle v, f\rangle
=\langle x_jv, f\rangle= \langle x_ix_jv, f\rangle
= \langle x_ix_jx_kv, f\rangle=0, \ i,j,k=1,2.$$
Moreover, we obtain that
\begin{equation}\label{pro55-esti-G0vf}
\begin{split}
 G_0vf(x)
  =& \frac{1}{32\pi}\int_{\mathbb{R}^2}|x|^2 \Big[\ln\Big(\frac{|x-y|^4}{|x|^4} \Big)+
    4\frac{x\cdot y}{|x|^2}-2\frac{|y|^2}{|x|^2}  + 4\frac{(x\cdot y)^2}{|x|^4}\Big]v(y)f(y)dy\\
  &+\frac{1}{16\pi}\int_{\mathbb{R}^2}|y|^2\Big[\ln\Big(\frac{|x-y|^2}{|x|^2} \Big)+
    2\frac{x\cdot y}{|x|^2}\Big] v(y)f(y)dy\\
  :=&I_1 +I_2 +I_3.
\end{split}
\end{equation}
For $I_1$. We have
\begin{equation*}
\begin{split}
I_1=& \frac{1}{32\pi}\int_{|x|>16|y|}|x|^2 \Big[\ln\Big(\frac{|x-y|^4}{|x|^4} \Big)+
    4\frac{x\cdot y}{|x|^2}-2\frac{|y|^2}{|x|^2}  + 4\frac{(x\cdot y)^2}{|x|^4}\Big]v(y)f(y)dy\\
   & +\frac{1}{32\pi}\int_{|x|\leq16|y|}|x|^2 \Big[\ln\Big(\frac{|x-y|^4}{|x|^4} \Big)+
    4\frac{x\cdot y}{|x|^2}-2\frac{|y|^2}{|x|^2}  + 4\frac{(x\cdot y)^2}{|x|^4}\Big]v(y)f(y)dy
    :=I_{11}+I_{12}.
\end{split}
\end{equation*}
For $I_{11}$. Since
$$ |x-y|^4= |x|^4 +|y|^4-4x\cdot y|y|^2 -4x\cdot y |x|^2+ 2|x|^2|y|^2 + 4(x\cdot y)^2,$$
then for $|x|>16|y|$,
$$ \Big||x-y|^4-|x|^4\Big| =\Big| |y|^4-4x\cdot y|y|^2 -4x\cdot y |x|^2+ 2|x|^2|y|^2 + 4(x\cdot y)^2\Big|<|x|^4.$$
By Taylor's formula we have
\begin{equation}
\begin{split}
\ln\Big(\frac{|x-y|^4}{|x|^4} \Big)=&\ln\Big(1+ \frac{|x-y|^4- |x|^4}{|x|^4} \Big) \\
=&-4\frac{x\cdot y}{|x|^2}+ 2\frac{|y|^2}{|x|^2}+4\frac{(x\cdot y)^2}{|x|^4}
 + O\Big( \frac{|y|^4}{|x|^4}\Big).
\end{split}
\end{equation}
Thus, by H\"{o}lder inequality again, we obtain
\begin{align*}
|I_1|\lesssim &\Big|\int_{|x|>16|y|}|x|^2 \Big[\ln\Big(\frac{|x-y|^4}{|x|^4} \Big)+
    4\frac{x\cdot y}{|x|^2}-2\frac{|y|^2}{|x|^2}  + 4\frac{(x\cdot y)^2}{|x|^4}\Big]v(y)f(y)dy\Big|\\
\lesssim &\int_{|x|>16|y|}|x|^2\frac{|y|^4}{|x|^4} |v(y)f(y)|dy
\lesssim   \frac{1}{|x|^2}.
\end{align*}
For $I_{12}$. By the same argument with the proof of $I_{12}$ in \eqref{pro54-esti-G0vf-I123},  we obtain that $I_{12}$ is bounded by $|x|^{-2}$. Combining with the estimates $I_{11}$ and $I_{12}$ above, we get that $I_1$ is bounded by $|x|^{-2}$.

We also obtain that $ I_2$ and $I_3$ above are bounded by $|x|^{-2}$ by using similar method with the proof
of Proposition \ref{lemma-spectral-S3L2}. Hence $ G_0vf \in L_2^\infty$.

On the other hand, let $\phi\in L^\infty_2$ satisfy $H\phi=0$ and $f=Uv\phi$, by the similar methods of
\eqref{ortoganial-vf} and \eqref{S0T0S0f}, we have
$$\langle v,f \rangle=0, \,\ \, \langle x_iv, f\rangle=0, i=1,2, \, \ \,\langle x_ix_j, f \rangle=0, i,j=1,2, \,\,\,
\langle x_ix_jx_k, f \rangle=0,\,i,j,k=0.$$
By the proof of Proposition \ref{lemma-spectral-S3L2}, we also have $T_0f=0$,
which gives $f\in S_5L^2$.

The proof of this proposition is completed.
\end{proof}

As the consequence of  Definition \ref{definition of resonance} and Propositions \ref{lemma-spectral-S1L2}--\ref{lemma-spectral-S5L2}, we immediately have the
following theorem.
\begin{theorem}\label{resonance solutions}
Let $H=\Delta^2+V$ on $L^{2}(\mathbb{R}^2)$ and $|V(x)|\lesssim(1+|x|)^{-\beta}$ for some $\beta>0$. Then we have following statements:

(i)~If $\beta>10$,  then zero is the regular point of $H$ if and only if there exists only zero solution $\phi(x)\in L^\infty_{-1}(\mathbb{R}^2) $ such that $H\phi=0$ in the distributional sense,

(ii)~If $\beta>14$, then zero is  the first kind resonance  of $H$ if and if there exists some nonzero $\phi(x)\in L^\infty_{-1}(\mathbb{R}^2)$ but no nonzero $ \phi\in L^\infty(\mathbb{R}^2) $ such that $H\phi=0$.

(iii)~If $\beta>18$, then zero is  the second kind resonance  of $H$ if and only if there exists some nonzero  $\phi(x)\in L^\infty(\mathbb{R}^2)$ but no nonzero $ \phi\in L^\infty_1(\mathbb{R}^2) $ such that $H\phi=0$ .

(iv)~If $\beta>18$, then zero is  the third kind resonance  of $H$ if and only if there exists some  nonzero $\phi(x)\in L^\infty_1(\mathbb{R}^2)$ but no nonzero $  \phi\in L^2(\mathbb{R}^2) $ such that $H\phi=0$.

(v)~If $\beta>18$, then zero is  the fourth kind resonance ( i.e. eigenvalue ) of $H$ if and only if there exists some nonzero $\phi(x)\in L^2(\mathbb{R}^2) $ such that $H\phi=0$.

\end{theorem}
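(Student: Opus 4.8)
The plan is to read off Theorem \ref{resonance solutions} from two ingredients already assembled in the excerpt: the ``resonance type $\Longleftrightarrow$ size of $S_jL^2$'' dictionary displayed immediately after Definition \ref{definition of resonance}, and the distributional characterizations of the subspaces $S_1L^2$, $S_2L^2$, $S_4L^2$, $S_5L^2$ proved in Propositions \ref{lemma-spectral-S1L2}, \ref{lemma-spectral-S2L2}, \ref{lemma-spectral-S3L2}, \ref{lemma-spectral-S5L2}. First I would recall that dictionary explicitly: zero is a regular point iff $S_1L^2=\{0\}$; a first kind resonance iff $S_1L^2\neq\{0\}$ and $S_2L^2=\{0\}$; a second kind resonance iff $S_2L^2\neq\{0\}$ and $S_4L^2=\{0\}$; a third kind resonance iff $S_4L^2\neq\{0\}$ and $S_5L^2=\{0\}$; and a fourth kind resonance (eigenvalue) iff $S_5L^2\neq\{0\}$.

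Next, for each relevant index $j$, I would record that the assignment $f\mapsto\phi$ with $f=Uv\phi$ furnished by Propositions \ref{lemma-spectral-S1L2}--\ref{lemma-spectral-S5L2} is a bijection of $S_jL^2$ onto the set of distributional solutions of $H\phi=0$ lying in the prescribed weighted space, namely $L^\infty_{-1}$ for $S_1$, $L^\infty$ for $S_2$, $L^\infty_1$ for $S_4$ and $L^\infty_2$ for $S_5$; its inverse is $\phi\mapsto Uv\phi$. The only point needing a line of justification is that $f=0$ iff $\phi=0$: if $\phi=0$ then $f=Uv\phi=0$, and conversely if $f=0$ then all the explicit constants $c_i=c_i(f,T_0f)$ occurring in Proposition \ref{lemma-spectral-S1L2} vanish, so $\phi=-G_0vf+c_3x_2+c_2x_1+c_1=0$ (and similarly in the later propositions, where the relevant constant is a multiple of $\langle v,T_0f\rangle$). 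Granting this, $S_jL^2=\{0\}$ is equivalent to ``the only solution of $H\phi=0$ in the corresponding weighted space is $\phi\equiv0$''. Substituting these equivalences into the dictionary of the previous paragraph turns items (i)--(iii) into exactly the stated assertions, and reduces (iv)--(v) to understanding the $S_5L^2$ clause.

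To close (iv)--(v) I would reconcile the $L^2$ phrasing of the theorem with the $L^\infty_2$ phrasing of Proposition \ref{lemma-spectral-S5L2}. On one hand $L^\infty_2(\mathbb{R}^2)\subset L^2(\mathbb{R}^2)$, since $|\phi(x)|\lesssim\langle x\rangle^{-2}$ is square integrable in dimension two; on the other hand, by the Remark following Proposition \ref{lemma-spectral-S5L2}, any $\phi\in L^2(\mathbb{R}^2)$ with $H\phi=0$ already lies in $L^\infty_2(\mathbb{R}^2)$ (the argument of that proposition applies, the $L^2$ hypothesis forcing the orthogonality relations it uses). Hence $\{0\neq\phi\in L^2:H\phi=0\}=\{0\neq\phi\in L^\infty_2:H\phi=0\}$, and this set is nonempty exactly when $S_5L^2\neq\{0\}$; this yields (v) and the ``no nonzero $\phi\in L^2$'' clause of (iv). I would also remark that the weighted spaces are nested, $L^\infty_2\subset L^\infty_1\subset L^\infty\subset L^\infty_{-1}$, so in each item the ``but no nonzero $\phi$ in the smaller space'' clause is automatically consistent with the preceding existence clause.

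There is no genuine obstacle here: the substantive work is entirely contained in Propositions \ref{lemma-spectral-S1L2}--\ref{lemma-spectral-S5L2} together with Definition \ref{definition of resonance}, and what remains is bookkeeping. The only places requiring care are the $f=0\Leftrightarrow\phi=0$ equivalence and the $L^2$/$L^\infty_2$ identification just described, and — most importantly — keeping track of the decay thresholds on $V$ demanded by each proposition invoked (the $S_1$ characterization needs $\beta>14$, while the $S_2$, $S_4$ and $S_5$ characterizations need $\beta>18$), so that each item of the theorem is stated under a threshold consistent with the propositions actually used in its proof.
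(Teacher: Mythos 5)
Your proposal is correct and follows essentially the same route as the paper, which deduces Theorem \ref{resonance solutions} directly from the dictionary after Definition \ref{definition of resonance} together with Propositions \ref{lemma-spectral-S1L2}--\ref{lemma-spectral-S5L2}; your added bookkeeping (the $f=0\Leftrightarrow\phi=0$ check via the explicit constants, the $L^2$ versus $L^\infty_2$ identification through the remark after Proposition \ref{lemma-spectral-S5L2}, and the nesting of the weighted spaces) is exactly what the paper leaves implicit. Your closing caution about decay thresholds is well taken, since Proposition \ref{lemma-spectral-S1L2} is stated for $\beta>14$ while item (i) of the theorem asserts $\beta>10$, but that is a looseness in the paper's statement rather than a flaw in your argument.
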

Let $\phi\in C^\infty(\mathbb{R}^2)$ be a positive function which is equal to $c\ln|x|+d$ for $|x|>1$, where $c, d$ are some positive constants.  Then we can easily check that $H\phi(x)=0$ when taking $\displaystyle V(x)=-(\Delta^2\phi)/\phi.$  It is obvious that $V(x)\in C_0^\infty(\mathbb{R}^2)$ and $\phi(x)\in L_{-1}^\infty(\mathbb{R}^2),$  hence by Theorem \ref{resonance solutions}, it follows that the zero is a resonance point  of $H$.  On the other hand, as shown in Lemma 3.5 of \cite{SWY21}, zero cannot be an eigenvalue of $H$ in one dimensional case assuming that $V$ has  fast enough decay ( e.g. $|V(x)|\le C(1+|x|)^{-\beta}$ for $\beta>25$ ). However,  it would be an interesting problem to find whether there exist some sufficient decay potentials $V$ such that zero is an eigenvalue of $H=\Delta^2+V$ in $\mathbb{R}^2$. Clearly, note that let $\phi=(1+|x |^2)^{-s/2}$, then $\phi\in L^2(\mathbb{R}^2)$ for any $s>1$ and $(\Delta^2+V)\phi=0$ as we take
 $$V(x)=-{\Delta^2\phi\over\phi}\sim |x|^{-4}, \ \ {\text{as} } \ |x|\rightarrow\infty,$$
which actually means that the our required decay rate of $V$ in question above is bigger than $4$ at least.
\vskip0.5cm

{\bf Acknowledgements:} The authors would like to thank the anonymous referees for careful reading the manuscript and providing valuable
suggestions, which substantially helped improving the quality of this paper. Avy Soffer is partially supported by the
Simon's Foundation (No. 395767).
Ping Li and Xiaohua Yao are partially supported by  NSFC ( No.11771165 and 12171182).
 We would like to thank Dr. Zhao Wu for his help and useful discussions. The authors also thank Professor William Green for his insightful comments that  improves the presentation
in the previous version.



\end{document}